\let\frak\mathfrak
\let\Bbb\mathbb
\def\>{\relax\ifmmode\mskip.666667\thinmuskip\relax\else\kern.111111em\fi}
\def\<{\relax\ifmmode\mskip-.333333\thinmuskip\relax\else\kern-.0555556em\fi}
\def\vsk#1>{\vskip#1\baselineskip}
\def\vv#1>{\vadjust{\vsk#1>}\ignorespaces}
\def\vvn#1>{\vadjust{\nobreak\vsk#1>\nobreak}\ignorespaces}
  \let\ssize\scriptstyle
\let\sssize\scriptscriptstyle
\let\Medskip\medskip
\def\medskip{\par\Medskip}
\let\Bigskip\bigskip
\def\bigskip{\par\Bigskip}
\let\Maketitle\maketitle
\def\maketitle{\Maketitle\thispagestyle{empty}\let\maketitle\empty}
\newtheorem{thm}{Theorem}[section]
\newtheorem{cor}[thm]{Corollary}
\newtheorem{lem}[thm]{Lemma}
\newtheorem{prop}[thm]{Proposition}
\newtheorem{conj}[thm]{Conjecture}
\numberwithin{equation}{section}
\theoremstyle{definition}
\let\mc\mathcal
\let\nc\newcommand
\let\al\alpha
\let\la\lambda
\let\phi\varphi
\let\Si\Sigma
\let\om\omega
\let\der\partial
\let\geq\geqslant
\let\leq\leqslant
\let\on\operatorname
\let\bi\bibitem
\let\bs\boldsymbol
\def\C{{\mathbb C}}
\def\Z{{\mathbb Z}}
\def\B{{\mc B}}
\def\F{{\mc F}}
\def\+#1{^{\{#1\}}}
\def\End{\on{End}}
\def\Res{\on{Res}}
\def\sln{\mathfrak{sl}_N}
\def\beq{\begin{equation}}
\def\eeq{\end{equation}}
\def\be{\begin{equation*}}
\def\ee{\end{equation*}}
\nc{\bea}{\begin{eqnarray*}}
\nc{\eea}{\end{eqnarray*}}
\nc{\bean}{\begin{eqnarray}}
\nc{\eean}{\end{eqnarray}}
\nc{\Ref}[1]{{\rm(\ref{#1})}}
\nc{\Il}{{\mc I_{\bs\la}}}
\nc{\bla}{{\bs\la}}
\nc{\Fla}{\F_\bla}
\nc{\tfl}{{T^*\Fla}}
\nc{\GL}{{GL_n(\C)}}
\nc{\GLC}{{GL_n(\C)\times\C^*}}
\let\sd s %% \def\sd{\dot s}
\def\ddk_#1{\kk_{#1}\<\>\frac\der{\der\<\>\kk_{#1}}}
\def\bul{\mathbin{\raise.2ex\hbox{$\sssize\bullet$}}}
\def\intt{\mathchoice
{\mathop{\raise.2ex\rlap{$\,\,\ssize\backslash$}{\intop}}\nolimits}
{\mathop{\raise.3ex\rlap{$\,\sssize\backslash$}{\intop}}\nolimits}
{\mathop{\raise.1ex\rlap{$\sssize\>\backslash$}{\intop}}\nolimits}
{\mathop{\rlap{$\sssize\<\>\backslash$}{\intop}}\nolimits}}
\let\kk q %% Q
\let\cc c
\let\Ko K
\def\GZ/{Gelfand-Zetlin}
\def\KZ/{{\slshape KZ\/}}
\def\qKZ/{{\slshape qKZ\/}}
\def\XXX/{{\slshape XXX\/}}
\nc{\slnl}{{\sln (\lambda)}}
\nc{\PCN}{{   (\C[x])^N   }}
\nc{\di}{\on{Diag}}
\nc{\dio}{\on{Diag}_0}
\nc{\Mm}{{\mc M}}
\nc{\Nn}{{\mc N}}
\nc{\A}{{\mc C}}
\nc{\PCr}{{  P  (\C[x])^n   }}
\nc{\Pk}{{(\bs{P}^1)^k}}
\nc{\N}{{\Bbb N}}
\nc{\Ll}{{\mc L}}
\nc{\ord}{{\on{ord}\,}}
\newcommand{\OS}{\mathcal {A}}
\def\FF{{\mathcal F}}
\nc{\Sing}{{\on{Sing}\,}}
\nc{\sing}{{\on{Sing}\,}}
\nc{\Hess}{{\on{Hess}}}
\nc{\R}{{\Bbb R}}
\newcommand{\Pee}{{\mathbb P}}
\let\on\operatorname
\nc{\Kk}{{\bs K}}
\nc{\Ap}{{A_\Phi(z)}}
\nc{\ap}{{A_\Phi(z)}}
\nc{\sv}{{\sing V}}
\nc{\cd}{{\C^n-\Delta}}
\begin{document}

\hrule width0pt
\vsk->

\title[Arrangements and Frobenius like structures]
{Arrangements and Frobenius like structures}

\author
[Alexander Varchenko ]
{ Alexander Varchenko$\>^{\star}$}

\maketitle

\begin{center}
{\it Department of Mathematics, University of North Carolina
at Chapel Hill\\ Chapel Hill, NC 27599-3250, USA\/}
\end{center}

{\let\thefootnote\relax
\footnotetext{\vsk-.8>\noindent
$^\star$\,{\it E-mail}: anv@email.unc.edu,  supported in part by NSF grant DMS--1101508 and DMS--1362924}}

\medskip

\begin{abstract}
We consider a family of generic weighted arrangements of $n$ hyperplanes in $\C^k$ and show that the Gauss-Manin connection for the associated
hypergeometric integrals, the contravariant form
on the space of singular vectors, and the algebra  of functions on the critical set of the master function
define a Frobenius like structure on the base of the family.

 As a result of this construction we show that the matrix elements of the linear operators of the Gauss-Manin
connection are given by the $2k+1$-st derivatives of a single function on the base of the family, the function called the potential of second kind, see formula \Ref{dERpk2}.

\bigskip

On consid\`ere une famille d'arrangements pond\'er\'es g\'en\'eriques de $n$
hyperplans dans $C^k$ et montre que la connexion de Gauss - Manin pour
les int\'egrales hyperg\'eom\'etriques associ\'ees, la forme contravariante sur l'espace
des vecteurs singuliers et l'alg\'ebre de fonctions sur l'ensemble
des points critiques d\'efinissent une structure du type Frobenius sur la base de la famille.

Comme un r\'esultat de cette construction nous montrons que les \'el\'ements matriciels
des op\'erateurs lin\'eaires de la connexion de Gauss - Manin sont donn\'es par
les $(2k+1)$-mes d\'eriv\'ees d'une seule fonction sur la base de la famille,
cf. la formule (6.46).

\end{abstract}

{\small \tableofcontents

}

\setcounter{footnote}{0}
\renewcommand{\thefootnote}{\arabic{footnote}}

\section{Introduction}

There are three places, where a flat connection depending on a parameter appears:

\noindent
$\bullet$\  KZ equations,
\bean
\label{1}
\phantom{aaaa}
\kappa \frac{\der I}{\der z_i}(z) = K_i(z) I(z), \quad z=(z_1,\dots,z_n),\quad i=1,\dots,n.
\eean
Here $\kappa$ is a parameter, $I(z)$  a $V$-valued function, where  $V$ is a vector space from representation theory,
$K_i(z):V\to V$ are linear operators, depending on $z$. The connection is flat for all $\kappa$.

\noindent
$\bullet$\ Quantum differential equations,
\bean
\label{2}
\phantom{aaaa}
\kappa \frac{\der I}{\der z_i}(z) = p_i *_z I(z), \quad z=(z_1,\dots,z_n),\quad i=1,\dots,n.
\eean
Here $p_1,\dots,p_n$ are generators of some commutative algebra $H$ with quantum multiplication $*_z$ depending on $z$.
These equations are part of the Frobenius structure on the quantum cohomology of a variety.

\noindent
$\bullet$\ Differential equations for hypergeometric integrals associated with a family of weighted arrangements with parallelly
 transported hyperplanes,
\bean
\label{3}
\phantom{aaaa}
\kappa \frac{\der I}{\der z_i}(z) = K_i(z) I(z), \quad z=(z_1,\dots,z_n),\quad i=1,\dots,n.
\eean

It is well known that KZ equations are closely related with the differential equations for hypergeometric integrals.
According to \cite{SV} the KZ equations can be presented as equations for hypergeometric integrals for suitable arrangements. Thus
\Ref{1} and \Ref{3} are related.
Recently it was realized that in some cases the KZ equations appear as quantum differential equations, see \cite{BMO} and \cite{GRTV},
and therefore the  KZ equations are related to the Frobenius structures. On Frobenius structures
see, for example, \cite{D1, D2, M}. Hence \Ref{1} and \Ref{2} are related.
In this paper I argue how a Frobenius like structure may appear on the base of a family of weighted arrangements.
The goal is to make equations \Ref{3} related to Frobenius structures.

The main ingredients of a Frobenius structure are a flat connection depending on a parameter, a constant metric, a multiplication on tangent spaces. In our case,
the connection comes from the differential equations for the associated hypergeometric integrals, the flat metric comes from the contravariant form
on the space of singular vectors and the multiplication comes from the multiplication in the algebra of functions on the critical set of the master function.
In this paper I consider the  families  of generic weighted arrangements.

\smallskip
The organization of the paper is as follows.
In Section \ref{Sec Arrangements},  objects associated with a weighted arrangement are recalled (Orlik-Solomon algebra, space of
 singular vectors, contravariant form, master function,  canonical isomorphism
of the space of singular vectors and the algebra of functions on the critical set of the master function).
In Section \ref{sec par trans}, a family of arrangements with parallelly  transported hyperplanes is considered.
The construction of a Frobenius like structure on the base of the family is given. Conjectures \ref{CB}, \ref{CB3}, \ref{CB4}
are formulated and corollaries of the conjectures
are discussed. In Sections  \ref{sec Exa} and \ref{sec lines on plane} the conjectures are proved for the family  of points on the line
and for a family  of generic arrangements  of lines on plane. The corresponding Frobenius like structures are described.
Here are the corresponding potential functions of second kind:
\bean
\label{pot intro}
\tilde P(z_1,\dots,z_n) = \frac 12 \sum_{1\leq i< j\leq n} a_ia_j\,(z_i-z_j)^2\log(z_i-z_j)
\eean
for the family of arrangements of $n$ points on line and
\bean
\label{pot 2 intro}
&&
\\
&&
\notag
\!\!\!
\tilde P(z_1,\dots,z_n) = \frac1{4!}\!\sum_{1\leq i<j<k\leq n}\! \frac{a_ia_ja_k}{d_{i,j}^2d_{j,k}^2d_{k,i}^2}
(z_id_{j,k} + z_jd_{k,i} + z_kd_{i,j})^4\log(z_id_{j,k} + z_jd_{k,i} + z_kd_{i,j})
\eean
for the family of arrangements of $n$ generic lines on plane. The variables $z_1,\dots,z_n$ are the parameters of the family,
$a_1,\dots,a_n$ are weights, $|a|=a_1+\dots+a_n$, the number $d_{k,\ell}$ is the oriented area of the parallelogram
generated by the normal vectors to the $k$-th and $\ell$-th lines, see formulas \Ref{pot} and \Ref{POT2k}. Note that the potential
$\tilde P$ from \Ref{pot intro} appears in \cite{D2} for $a_1=\dots=a_n$ and in \cite{Ri} for $a_1,\dots,a_n\in\Z$.

In Section \ref{sec arrs k}, the conjectures are proved
for a family of generic arrangements of $n$ hyperplanes in $\C^k$ for any $k$. The potential $\tilde P(z_1,\dots,z_n)$ of second kind is defined by formula
\Ref{POT2k} similar to formulas \Ref{pot intro} and \Ref{pot 2 intro}.
It is shown that the matrix elements of the operators $K_i(z_1,\dots,z_n)$ of the Gauss-Manin connection for
 associated hypergeometric integrals are given by  the $2k+1$-st derivatives of the potential of second kind, see
formula   \Ref{dERpk2}.

This fact that the Gauss-Manin differential equations for associated hypergeometric integrals can be described in terms of derivatives of a single function
on the base of the family is an important application of our Frobenius like structure. One may expect that this is a
manifestation of a much more general phenomenon.

\medskip
It should be stressed that that somewhat technical constructions
in Section \ref{sec par trans}
are explained in details in Sections  \ref{sec Exa},
\ref{sec lines on plane}, and \ref{sec arrs k} for the particular situations discussed there. The reader may decide  to read first
the easiest  Section \ref{sec Exa}.

\smallskip
In this paper I followed one of  I.M.\,Gelfand's rules: for a new subject, choose the simplest nontrivial example and write down
everything explicitly for this example, see the introduction to \cite{EFK}.

\smallskip
I thank V. Schechtman and V. Tarasov for useful discussions.

\section{Arrangements}
\label{Sec Arrangements}

\subsection{Affine arrangement}
\label{An affine arrangement}

Let $k,n$ be positive integers, $k<n$. Denote $J=\{1,\dots,n\}$.
Let $\A =(H_j)_{j\in J}$,  be an arrangement of $n$ affine hyperplanes in
$\C^k$. Denote
$
U = \C^k - \cup_{j\in J} H_j,
$
the complement.
An edge $X_\al \subset \C^k$ of  $\A$ is a nonempty intersection of some
hyperplanes  of $\A$. Denote by
 $J_\al \subset J$ the subset of indices of all hyperplanes containing $X_\al$.
Denote  $l_\al = \on{codim}_{\C^k} X_\al$.

A subset $\{j_1,\dots,j_p\}\subset J$ is called {\it independent} if the hyperplanes
$H_{j_1},\dots,H_{j_p}$ intersect transversally.

We
assume that $\A$ is essential, that is, $\A$ has a vertex.
An edge is called {\it dense} if the subarrangement of all hyperplanes containing
the edge is irreducible: the hyperplanes cannot be partitioned into nonempty
sets so that, after a change of coordinates, hyperplanes in different
sets are in different coordinates.

\subsection{Orlik-Solomon algebra}
Define complex vector spaces $\OS^p(\A)$, $p = 0,  \dots, k$.
 For $p=0$ we set $\OS^p(\A)=\C$. For  $p \geq 1$,\
 $\OS^p(\A)$   is generated by symbols
$(H_{j_1},...,H_{j_p})$ with ${j_i}\in J$, such that
\begin{enumerate}
\item[(i)] $(H_{j_1},...,H_{j_p})=0$
if $H_{j_1}$,...,$H_{j_p}$ are not in general position, that is, if the
intersection $H_{j_1}\cap ... \cap H_{j_p}$ is empty or
 has codimension
 less than $p$;
\item[(ii)]
$ (H_{j_{\sigma(1)}},...,H_{j_{\sigma(p)}})=(-1)^{|\sigma|}
(H_{j_1},...,H_{j_p})
$
for any element $\sigma$ of the
symmetric group $ \Si_p$;
\item[(iii)]
$\sum_{i=1}^{p+1}(-1)^i (H_{j_1},...,\widehat{H}_{j_i},...,H_{j_{p+1}}) = 0
$
for any $(p+1)$-tuple $H_{j_1},...,H_{j_{p+1}}$ of hyperplanes
in $\A$ which are
not in general position and such that $H_{j_1}\cap...\cap H_{j_{p+1}}\not = \emptyset$.
\end{enumerate}
The direct sum $\OS(\A) = \oplus_{p=1}^{N}\OS^p(\A)$ is the (Orlik-Solomon)
 algebra with respect to
 multiplication
 \bean
(H_{j_1},...,H_{j_p})\cdot(H_{j_{p+1}},...,H_{j_{p+q}}) =
 (H_{j_1},...,H_{j_p},H_{j_{p+1}},...,H_{j_{p+q}}) .
\eean

\subsection{Orlik-Solomon algebra as an algebra of differential forms}
For  $j\in J$,  fix a defining equation for the hyperplane $H_j$, $f_{j} = 0$,
where $f_j$ is a polynomial of degree one on $\C^k$.
Consider the logarithmic differential form
$\omega_j = df_j/f_j$ on $\C^k$.
Let $\bar{\OS}(\A)$ be the exterior $\C$-algebra of differential forms
generated by 1 and $\omega_j$, $j\in J$.
The map ${\OS}(\A) \to \bar{\OS}(\A), \ (H_j) \mapsto \omega_j$,
is an isomorphism. We identify ${\OS}(\A)$ and $\bar{\OS}(\A)$.

\subsection{Weights}
\label{sec weights}

An arrangement $\A$ is {\it weighted} if a map $a: J\to \C^\times,\ j\mapsto a_j,$ is given;
 $a_j$ is called the {\it weight} of $H_j$.
For an edge $X_\al$, define its weight
$a_\al = \sum_{j\in J_\al}a_j$.

 Denote  $\nu(a) = \sum_{j\in J} a_j (H_j) \in \OS^1(\A)$.
 Multiplication by $\nu(a)$ defines a differential
$d^{(a)} :    \OS^p(\A) \to \OS^{p+1}(\A),$
$ x  \mapsto \nu(a)\cdot x$, on  $\OS(\A)$.

\subsection{Space of flags, see \cite{SV} }
 For an edge $X_\alpha$ of codimension $l_\alpha=p$, a flag starting at $X_\alpha$ is a sequence
\bean
X_{\alpha_0}\supset
X_{\alpha_1} \supset \dots \supset X_{\alpha_p} = X_\alpha
\eean
of edges such that
$ l_{\alpha_j} = j$ for $j = 0, \dots , p$.
 For an edge $X_\alpha$,
 we define $\overline{\FF}_{\alpha}$  as  the
complex vector space  with basis vectors
$\overline{F}_{{\alpha_0},\dots,{\alpha_p}=\alpha}$
 la\-bel\-ed by the elements of
the set of all flags  starting at $X_\alpha$.

 Define  $\FF_{\alpha}$ as the quotient of
$\overline{\FF}_{\alpha}$ by the subspace generated by all
the vectors of the form
\begin{equation}
\label{flagrelation} \sum\limits_{X_\beta,\
X_{\alpha_{j-1}}\supset X_\beta\supset X_{\alpha_{j+1}}}\
\overline {F}_{{\alpha_0},\dots,
{\alpha_{j-1}},{\beta},{\alpha_{j+1}},\dots,{\alpha_p}=\alpha}\ .
\notag
\end{equation}
Such a vector is determined  by  $j \in \{ 1, \dots , p-1\}$ and
an incomplete flag $X_{\alpha_0}\supset...\supset
X_{\alpha_{j-1}} \supset X_{\alpha_{j+1}}\supset...\supset
X_{\alpha_p} = X_\alpha$ with $l_{\alpha_i}$ $=$ $i$.

Denote by ${F}_{{\alpha_0},\dots,{\alpha_p}}$ the image in $\FF_\alpha$ of the basis vector
$\overline{F}_{{\alpha_0},\dots,{\alpha_p}}$.  For $p=0,\dots,k$, we set
\bean
{\FF}^p(\A)\ =\ \oplus_{X_\alpha,\, l_\alpha=p}\ {\FF}_{\alpha}\ .
\eean

\subsection{Duality, see \cite{SV}}
The vector spaces $\OS^p(\A)$ and $\FF^p(\A)$ are dual.
The pairing $ \OS^p(\A)\otimes\FF^p(\A) \to \C$ is defined as follows.
{}For $H_{j_1},...,H_{j_p}$ in general position, we set
$F(H_{j_1},...,H_{j_p})=F_{{\alpha_0},\dots,{\alpha_p}}$
where
$X_{\alpha_0}=\C^k,\quad X_{\alpha_1}=H_{j_1},\quad \dots , \quad
X_{\alpha_p}=
H_{j_1} \cap \dots \cap H_{j_p}.
$
Then we define $\langle (H_{j_1},...,H_{j_p}), F_{{\alpha_0},\dots,{\alpha_p}}
 \rangle$ $ = (-1)^{|\sigma|},$
if $F_{{\alpha_0},\dots,{\alpha_p}}
= F(H_{j_{\sigma(1)}},...,H_{j_{\sigma(p)}})$ for some $\sigma \in \Si_p$,
and $\langle (H_{j_1},...,H_{j_p}), F_{{\alpha_0},\dots,{\alpha_p}} \rangle = 0$ otherwise.

Define a map $\delta^{(a)} :  \FF^p(\A)\to\FF^{p-1}(\A)$ to be
 the map adjoint to
$d^{(a)} :   \OS^{p-1}(\A) \to \OS^{p}(\A)$.
An element $v \in \FF^k(\A)$ is called  {\it singular}  if
$\delta^{(a)} v = 0$. Denote by
\bean
\Sing\,\FF^k(\A) \subset \FF^k(\A)
\eean
the subspace of all singular vectors.

\subsection{Contravariant map and form, see \cite{SV}}
 Weights  $(a_j)_{j\in J}$ determine a contravariant  map
 \bean
\mathcal S^{(a)} : \FF^p(\A) \to \OS^p(\A),
\quad
  {F}_{{\alpha_0},\dots,{\alpha_p}}\ \mapsto \
\sum \ a_{j_1} \cdots a_{j_p}\ (H_{j_1}, \dots , H_{j_p})\ ,
\eean
 where the sum is taken over all $p$-tuples $(H_{j_1},...,H_{j_p})$ such that
\bean
H_{j_1} \supset X_{\al_1},\ {}\ .\ .\ .\ {} , \ {} H_{j_p}\supset X_{\alpha_p}\ .
\eean
Identifying $\OS^p(\A)$ with $\FF^p(\A)^*$, we consider
this map  as a bilinear form,
\bean
S^{(a)} : \FF^p(\A) \otimes \FF^p(\A) \to \C .
\eean
The bilinear form is
called the {\it contravariant form}.
The contravariant form  is symmetric.
For $F_1, F_2 \in \FF^p(\A)$, we have
\bean
S^{(a)}(F_1,F_2) =
\sum_{\{j_1, \dots , j_p\} \subset J} \ a_{j_1} \cdots a_{j_p}
\ \langle (H_{j_1}, \dots , H_{j_p}), F_1 \rangle
\ \langle (H_{j_1}, \dots , H_{j_p}), F_2 \rangle \ ,
\eean
where the sum is over all unordered $p$-element subsets.

\subsection{Arrangement with normal crossings}
\label{An arrangement with normal crossings only}
 An essential arrangement $\A$ is {\it with normal crossings},
if exactly $k$ hyperplanes meet at every vertex of $\A$.
Assume that $\A$ is an essential arrangement with normal crossings only.
A subset $\{j_1,\dots,j_p\}\subset J$ is called {\it independent} if
the hyperplanes $H_{j_1},\dots,H_{j_p}$ intersect transversally.

%!!!

A basis of $\OS^p(\A)$ is formed by
$(H_{j_1},\dots,H_{j_p})$ where
$\{{j_1} <\dots <{j_p}\}$  are independent ordered $p$-element subsets of
$J$. The dual basis of $\FF^p(\A)$ is formed by the corresponding vectors
$F(H_{j_1},\dots,H_{j_p})$.
These bases of $\OS^p(\A)$ and $\FF^p(\A)$ will be called {\it standard}.

We  have
\bean
\label{skew}
F(H_{j_1},\dots,H_{j_p}) = (-1)^{|\sigma|}
F(H_{j_{\sigma(1)}},\dots,H_{j_{\sigma(p)}}), \qquad \sigma \in \Si_p.
\eean
For an independent subset $\{j_1,\dots,j_p\}$, we have
\bean
S^{(a)}(F(H_{j_1},\dots,H_{j_p}) , F(H_{j_1},\dots,H_{j_p})) = a_{j_1}\cdots a_{j_p}
\eean
and
\bean
S^{(a)}(F(H_{j_1},\dots,H_{j_p}) , F(H_{i_1},\dots,H_{i_k})) = 0
\eean
for distinct elements of the standard basis.

\subsection{If the weights of dense edges are nonzero}
\label{sec rem on gener wei}

\begin{thm}
\label{thm Shap nondeg}
Assume that the weights $(a_j)_{j\in J}$ are such that the weights of all dense edges of $\A$ are nonzero. Then
\begin{enumerate}
\item[(i)] the contravariant form is nondegenerate;

\item[(ii)]
$H^p(\OS^*(\A),d^{(a)}) = 0$ for $ p < k$
and dim $H^k(\OS^*,d^{(a)}) = |\chi(U)|$, where $\chi (U)$ is the Euler
characteristics of $U$.

\end{enumerate}
In particular, these statements hold if all the weights are positive.
\end{thm}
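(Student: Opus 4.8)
The plan is to recognize Theorem \ref{thm Shap nondeg} as a known result from the Schechtman--Varchenko circle of ideas (indeed this is essentially a combination of results from \cite{SV}) and to organize the proof around the comparison of the Orlik--Solomon complex with an analytic/topological model. First I would recall that the weighted Orlik--Solomon complex $(\OS^*(\A), d^{(a)})$ is identified, via the isomorphism $\OS(\A)\cong\bar\OS(\A)$, with a subcomplex of the algebraic de Rham complex of $U$ equipped with the differential $\eta\mapsto \nu(a)\wedge\eta$, where $\nu(a)=\sum_j a_j\,\omega_j$. Up to a twist this is the complex computing the cohomology of $U$ with coefficients in the rank-one local system $\Ll_a$ determined by the monodromies $\exp(2\pi i\, a_\beta)$ around the hyperplanes. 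So part (ii) reduces to two inputs: (a) the Orlik--Solomon complex genuinely computes $H^*(U,\Ll_a)$ (this is the Brieskorn--Orlik--Solomon comparison together with the Esnault--Schechtman--Viehweg type vanishing, whose hypothesis is exactly ``weights of dense edges are nonzero''), and (b) when the local system is generic in this sense, $H^p(U,\Ll_a)=0$ for $p<k=\dim U$ and $\dim H^k(U,\Ll_a)=|\chi(U)|$, the latter by the Euler characteristic argument (all other cohomology vanishes so the alternating sum collapses to $\pm\dim H^k$).

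Next I would treat part (i), nondegeneracy of the contravariant form $S^{(a)}$ on $\FF^k(\A)$. The cleanest route is the standard one from \cite{SV}: the contravariant form is, under the flag/Orlik--Solomon duality, the ``bilinear form of the arrangement'' and it can be computed on the standard basis indexed by independent $k$-subsets (or NBC bases) by a triangularity/resolution argument, or identified with the determinant of the period matrix pairing $H^k(U,\Ll_a)$ with $H^k(U,\Ll_{-a})$ (locally finite homology). The key point is that its determinant factorizes as a product over the dense edges $X_\beta$ of powers of the edge weights $a_\beta$ (the Varchenko determinant formula / its Orlik--Solomon refinement in \cite{SV}), with the exponents being certain nonnegative integers (Betti numbers of auxiliary arrangements). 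Hence the determinant is nonzero precisely when every $a_\beta$ with $X_\beta$ dense is nonzero, which is the hypothesis. The final sentence, that positivity of all weights implies the hypothesis, is immediate: if all $a_j>0$ then $a_\beta=\sum_{j\in J_\beta}a_j>0$ for every edge, dense or not.

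I would then present the argument in the order: (1) set up the de Rham / local system model and the comparison theorem; (2) invoke the vanishing theorem to get $H^p=0$ for $p<k$; (3) get $\dim H^k=|\chi(U)|$ from the Euler characteristic and the vanishing; (4) identify $S^{(a)}$ with the cohomology pairing (or argue directly via the determinant formula on the standard basis) and deduce nondegeneracy from the factorized determinant; (5) dispatch the positivity remark. Steps (2) and (4) are the substantive ones and are where I would either cite \cite{SV} (and the underlying Esnault--Schechtman--Viehweg and Varchenko determinant results) wholesale or, if a self-contained treatment is wanted, run the induction on the number of hyperplanes using the deletion--restriction long exact sequence for $\OS^*$ together with the fact that dense edges of a restricted arrangement come from dense edges of the original.

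The main obstacle I expect is making step (4) genuinely rigorous without simply quoting the determinant formula: one must control how the exponents in the determinant of $S^{(a)}$ are organized by dense edges and verify that no extra (possibly vanishing) factors sneak in from non-dense edges. The technically safe option — and the one I would adopt here, since the paper itself attributes the statement to \cite{SV} — is to cite the contravariant-form nondegeneracy and the Orlik--Solomon cohomology computation from \cite{SV} directly, noting only that the hypothesis ``weights of dense edges are nonzero'' is exactly the genericity condition appearing there, and then add the one-line positivity remark.
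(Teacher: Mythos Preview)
Your proposal is correct and takes essentially the same approach as the paper: the paper does not prove this theorem but simply cites the literature, attributing part (i) to \cite{SV} and part (ii) to \cite{SV} (as explained in \cite{V6}) and to \cite{Y}, \cite{OT2}. Your write-up is in fact more informative than the paper's, since you sketch the mechanism behind the citations (the de Rham/local system comparison, the Esnault--Schechtman--Viehweg type vanishing, and the determinant formula for $S^{(a)}$); if anything you could trim to a bare citation and the positivity remark without loss.
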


Part (i) is proved in \cite{SV}. Part (ii) is a straightforward corollary of results in
\cite{SV} as explained in Theorem 2.2 in \cite{V6}. Part (ii) is proved in \cite{Y}, \cite{OT2}.

\subsection{Master function}
\label{master function}

Given weights $(a_j)_{j\in J}$, define the (multivalued) {\it master function}
$\Phi : U \to \C$ by the formula:
\bean
\label{def mast fun}
\Phi =  \Phi_{\A,a} = \sum_{j\in J}\,a_j \log f_j .
\eean
A point $t\in U$ is a {\it critical point} if\ $d\Phi\vert_t =\nu(a)\vert_t= 0$.

\begin{thm}[\cite{V3, OT1, Si}]
\label{thm V,OT,S}
For generic weights $(a_j)_{j\in J}$ all the critical points of
$\Phi$ are nondegenerate and the number of critical points equals
  $|\chi(U)|$.
  \qed\end{thm}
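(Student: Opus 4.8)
The plan is to prove two statements separately: (a) that for all weights $a$ outside a proper closed subset every critical point of $\Phi$ is nondegenerate, and (b) that for such $a$ the number of critical points is $|\chi(U)|$. Given (a), statement (b) amounts to identifying the value of a locally constant function of $a$.

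For (a) I would use generic smoothness. Form the universal critical variety $Z=\{(t,a)\in U\times(\C^\times)^n:\ \nu(a)|_t=0\}$. The condition $\nu(a)|_t=\sum_{j\in J}a_j\,\omega_j|_t=0$ is a system of $k$ equations homogeneous and linear in $a$; since $\A$ is essential, the covectors $\omega_1|_t,\dots,\omega_n|_t$ span $(\C^k)^*$ for every $t\in U$, so the projection $Z\to U$ is (the restriction of) a rank-$(n-k)$ vector subbundle of $U\times\C^n$, and hence $Z$ is smooth and irreducible of dimension $n$. The second projection $\pi\colon Z\to(\C^\times)^n$, $(t,a)\mapsto a$, is dominant (fix a generic $t\in U$ to see its image contains a nonempty open set) and is a morphism of smooth varieties of equal dimension $n$; by generic smoothness in characteristic zero there is a dense open $\Theta\subset(\C^\times)^n$ over which $\pi$ is \'etale. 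For $a\in\Theta$ the scheme $\{\der_1\Phi=\dots=\der_k\Phi=0\}\subset U$ is therefore reduced and zero-dimensional, and reducedness at a point $t$ is exactly nondegeneracy of the Jacobian of that system, i.e.\ of the Hessian $\bigl(\der_i\der_j\Phi(t)\bigr)_{i,j}$. Moreover the number of critical points for $a\in\Theta$ is the degree of $\pi$ over $\Theta$, hence constant --- \emph{provided} $\pi$ is proper over $\Theta$, i.e.\ no critical point escapes to $\cup_jH_j$ or to infinity. This properness is the main obstacle; near $H_j$ the term $a_j\log f_j$ forces $\|d\Phi\|\to\infty$, and near infinity a weighted-homogeneity estimate does the same for generic $a$ (``tameness'' of $\Phi$).

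For (b) I would compute $N(\A):=\deg\pi$ by deletion--restriction. Fix $H_n$, set $\A'=\A\smallsetminus\{H_n\}\subset\C^k$ and let $\A''=\A\cap H_n$ be the restricted weighted arrangement in $H_n\cong\C^{k-1}$. From $U(\A')=U(\A)\sqcup U(\A'')$ one gets $\chi(U(\A))=\chi(U(\A'))-\chi(U(\A''))$, whence $|\chi(U(\A))|=|\chi(U(\A'))|+|\chi(U(\A''))|$ (using that the Euler characteristic of the complement of an essential arrangement in $\C^m$ has sign $(-1)^m$, and vanishes for a non-essential one). One shows $N$ obeys the same recursion $N(\A)=N(\A')+N(\A'')$ by degenerating the weight $a_n\to0$: the critical points split into those with a limit in $U(\A')$, which in the limit are the $N(\A')$ critical points of $\Phi_{\A',a'}$, and those whose limit lies on $H_n$, which after rescaling the coordinate $f_n$ by a suitable power of $a_n$ converge to the $N(\A'')$ critical points of the master function of $\A''$. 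The base case $k=1$ is an arrangement of $n$ points $z_1,\dots,z_n$ on $\C$: $d\Phi=0$ reads $\sum_j a_j\prod_{i\ne j}(x-z_i)=0$, a degree $n-1$ polynomial with, for generic $a$, $n-1$ simple roots, none equal to a $z_j$; so $N=n-1=|\chi(\C\smallsetminus\{n\text{ points}\})|$. Induction (with $N=0=\chi$ for non-essential arrangements) gives $N(\A)=|\chi(U)|$, and the break-up argument at $a_n\to0$, which again requires controlling the critical set near the hyperplanes and at infinity, is where the work lies.

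Alternatively, and closer to the later sections, one can identify the algebra of functions on the critical scheme of $\Phi$ with the top cohomology $H^k$ of the twisted de Rham complex $(\Omega^\bullet(U),\,d+d\Phi\wedge\,)$, show that for generic $a$ this complex is quasi-isomorphic to the Aomoto complex $(\OS^\bullet(\A),d^{(a)})$, and invoke the cohomological statement of Theorem \ref{thm Shap nondeg}, which gives $\dim H^k=|\chi(U)|$; together with the semicontinuity bound $\dim(\text{functions on the critical scheme})\ge\#\{\text{critical points}\}$ and part (a), this again yields exactly $|\chi(U)|$ nondegenerate critical points. In every approach the crux --- and the place I expect to spend the most effort --- is the same: showing that the critical set of $\Phi$ (or of the degenerating family) stays away from $\cup_jH_j$ and from infinity, so that no critical point is lost in the limit.
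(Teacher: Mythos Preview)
The paper does not prove this theorem: the statement is followed immediately by \qed and the result is attributed to the cited references \cite{V3, OT1, Si}. So there is no ``paper's own proof'' to compare against; any argument you supply is an addition.

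That said, your sketch is broadly in the spirit of the cited literature. The deletion--restriction recursion $N(\A)=N(\A')+N(\A'')$ matched against the Euler-characteristic recursion is essentially the Orlik--Terao approach in \cite{OT1}, and the reduction to the Aomoto complex and Theorem~\ref{thm Shap nondeg} is close to how \cite{V3} and \cite{Si} proceed. The generic-smoothness argument for nondegeneracy is standard and correct. You are right that the main technical content is the properness/tameness: controlling the critical set near the hyperplanes is easy (the logarithmic pole in $d\Phi$), but controlling it at infinity requires compactifying and analyzing the dense edges of the projective closure $\bar\A$, which is exactly the unbalanced-weights condition of Section~\ref{sec unbalanced} and Theorem~\ref{lem crit 1}. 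Your sketch correctly identifies this as the crux but does not carry it out; if you want a self-contained proof, that is where the work is.

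One small correction: in your deletion--restriction step you need $\A'$ to remain essential (or else handle the non-essential case separately, as you note parenthetically), and the degeneration $a_n\to 0$ requires not just that critical points don't escape to infinity but also that exactly $N(\A'')$ of them collide onto $H_n$ with the correct multiplicity; this local analysis near $H_n$ is a genuine computation, not just a rescaling.
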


\subsection{If the weights are unbalanced}
\label{sec unbalanced}

Let $\A=(H_j)_{j\in J}$ be an essential arrangement in
$\C^k$ with weights $(a_j)_{j\in J}$.  Consider the compactification of the
arrangement $\A$ in the
projective space $\Pee^k$. Assign
the weight $a_\infty=-\sum_{j\in J} a_j$ to the hyperplane
$H_\infty=\Pee^k-\C^k$ and denote by  $\hat\A$
the arrangement $(H_j)_{j\in J\cup \infty}$ in $\Pee^k$.

The weights of the arrangement $\A$ are called {\it unbalanced}
if the weights of all the dense
edges of $\hat\A$ are nonzero, see \cite{V6}.
For example, if all the weights $(a_j)_{j\in J}$ are positive, then the
weights are unbalanced.
The unbalanced weights form a Zarisky open subset in the space of all weight systems on $\A$.

\begin{thm} [\cite{V6}]
\label{lem crit 1}
If the weights $a=(a_j)_{j\in J}$ of $\A$ are unbalanced, then
all the critical points of the master function of the weighted arrangement
$(\A,a)$ are isolated and the sum of Milnor numbers of all the critical points equals
 $|\chi(U)|$.

\end{thm}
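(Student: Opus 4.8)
The plan is to reduce the statement for the affine arrangement $\A$ in $\C^k$ to an application of Theorem \ref{thm Shap nondeg}(ii) (or rather its underlying cohomological mechanism) for the compactified arrangement $\bar\A$ in $\Pee^k$. The key observation is that the critical points of the master function $\Phi_{\A,a}$ coincide with the zeros of the logarithmic one-form $\nu(a)\vert_t$, and this one-form extends to a global logarithmic form on $\Pee^k$ with poles along the hyperplanes of $\bar\A$, precisely because the weight $a_\infty=-\sum_{j\in J}a_j$ was chosen so that the residues sum to zero. So first I would set up this dictionary: a point $t\in U$ is critical for $\Phi_{\A,a}$ iff $\nu(a)$ vanishes at $t$, where now $\nu(a)=\sum_{j\in J}a_j\,\omega_j$ is viewed inside $\Hom$ of the logarithmic de Rham complex $(\Om^{\bul}(\Pee^k,\log\bar\A),\nu(a)\wedge\,\cdot\,)$ twisted by the local system $\mc L$ determined by the weights.

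Second, I would invoke the theorem of Esnault–Schechtman–Viehweg (and its refinements used in \cite{SV}, \cite{V6}) on the nonresonance condition: when the weights of all dense edges of $\bar\A$ are nonzero — i.e., the weights are unbalanced — the twisted logarithmic de Rham complex computes the cohomology $H^{\bul}(U,\mc L)$, the complex is concentrated in top degree $k$, and the local system cohomology has Euler characteristic $\pm|\chi(U)|$. The main point is that under the unbalanced hypothesis the only contribution to $H^{\bul}(U,\mc L)$ is in degree $k$, so $\dim H^k(U,\mc L)=|\chi(U)|$.

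Third, I would connect $H^k(U,\mc L)$ computed via the logarithmic complex with the local algebra of the critical set. The form $\nu(a)$, wedging against $\Om^{\bul}(\log\bar\A)$, is an exact Koszul-type differential; its top cohomology is, away from the critical locus, the structure sheaf contribution, and the algebraic index of the vector field/one-form $\nu(a)$ — i.e.\ the sum over critical points of the local Milnor numbers $\dim_\C \Oc_{t}/\mathrm{Jac}(\Phi)$ — equals this dimension $|\chi(U)|$. Concretely: because the weights are unbalanced, $\nu(a)$ has no zeros "at infinity" on $\bar\A$ in the stratified sense (its extension to the boundary divisor is nonvanishing along every dense edge), hence all zeros of $\nu(a)$ lie in $U$ and are isolated, and a global residue / Grothendieck–Ohtsuki–Varchenko count identifies $\sum_t \mu_t$ with the Euler characteristic $|\chi(U)|$. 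That the critical points are isolated follows from the nonvanishing of the extended form along all dense edges: a non-isolated critical component would force $\nu(a)$ to vanish along a positive-dimensional edge, which would have zero weight.

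The main obstacle, as I see it, is the third step: making rigorous the passage from "the twisted logarithmic complex is concentrated in degree $k$ with the right Euler characteristic" to "the sum of Milnor numbers of the isolated zeros of $\nu(a)$ equals $|\chi(U)|$." This is where one needs the boundary nonvanishing (the unbalanced condition controlling dense edges of $\bar\A$, not just of $\A$) to rule out loss of intersection multiplicity at infinity, and then a Bézout-type or logarithmic-index argument to equate the local contributions with the global topological count. The essential arrangement and normal-crossing-free generality means this index computation must be done with the full logarithmic stratification of $\Pee^k$, which is the technical heart of \cite{V6}; I would cite that result here rather than reproving it.
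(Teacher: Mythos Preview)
The paper does not prove this theorem at all: it is stated with the attribution \cite{V6} and no argument is given, so there is nothing to compare your proposal against here. Your sketch is a plausible outline of the strategy in \cite{V6} (extend $\nu(a)$ to a logarithmic form on $\Pee^k$, use the unbalanced hypothesis to rule out zeros along dense edges of $\bar\A$, then count via the twisted de Rham complex), and you correctly identify the delicate step as the passage from cohomological concentration to the sum of Milnor numbers; but since the present paper treats this as a black box, you should simply cite \cite{V6} rather than attempt a proof.
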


\subsection{Hessian and residue bilinear form}
\label{sec hess}

Denote  $\C(U)$ the algebra of rational functions on $\C^k$ regular on $U$ and
$I_\Phi  =\langle
\frac{\partial \Phi} {\partial t_i }
\ |\ i=1,\dots,k\ \rangle  \subset \C(U)$
the ideal generated by first  derivatives of $\Phi$.
Let
\bean
\label{AP}
A_\Phi = \C(U)/ I_\Phi
\eean
 be
the algebra of functions on the critical set  and
$[\,]:\C(t)_U \to  A_\Phi, \ {} f\mapsto [f]$, the canonical homomorphism.

If all critical points are isolated,
then the critical set  is finite and the algebra
$ A_\Phi$ is finite-dimensional. In that case, $ A_\Phi$
is the direct sum of local algebras corresponding to points $p$ of
the critical set,
\bean
\label{loc alg}
 A_\Phi = \oplus_{p}  A_{p,\Phi} \ .
\eean
The local algebra
$A_{p,\Phi}$ can be defined as the quotient of the algebra of germs at $p$
 of holomorphic functions modulo the ideal
$ I_{p,\Phi}$ generated by first derivatives of $\Phi$.

\begin{lem} [\cite{V6}]
\label{lem f_j generate}
The elements
$[1/f_j]$, $j\in J$, generate $ A_\Phi$.
\qed
\end{lem}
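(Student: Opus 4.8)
The plan is to exploit the structure of $\C(U)$ as a localization of $\C[t_1,\dots,t_k]$ together with the fact that, on the critical set, the master function equations $\der\Phi/\der t_i = \sum_j a_j\,(\der f_j/\der t_i)/f_j = 0$ give linear relations among the generators $[1/f_j]$. First I would recall that, since each $f_j$ has degree one, $\C(U)$ is generated as a $\C$-algebra by $t_1,\dots,t_k$ and the inverses $1/f_j$, $j\in J$: indeed $\C(U)$ consists of rational functions whose denominators are products of the $f_j$, and partial-fraction decomposition in several variables (carried out one variable at a time, using that the $f_j$ are affine-linear) expresses any such function as a polynomial combination of the $t_i$ and the $1/f_j$. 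Hence it suffices to show that in $A_\Phi$ the classes $[t_i]$ are polynomials in the $[1/f_j]$, and that products $[1/f_j\cdot 1/f_\ell]$ and higher powers also lie in the subalgebra generated by the $[1/f_j]$.

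The key step is the following observation about affine-linear forms. Write $f_j(t) = c_j + \sum_{i=1}^k b_{ji} t_i$ with $b_{ji},c_j\in\C$. Because $\A$ is essential, the $k\times n$ matrix $(b_{ji})$ has rank $k$, so one can choose $k$ indices, say after relabeling $1,\dots,k$, with $\det(b_{ji})_{1\le i,j\le k}\neq 0$; then $t_1,\dots,t_k$ are $\C$-linear combinations of $f_1,\dots,f_k$ (plus constants), so it is enough to show each $[f_j]$ — equivalently, since $[1]\in A_\Phi$ is already a trivial polynomial in the generators, each $[f_j]$ — lies in the subalgebra generated by the $[1/f_m]$. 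For a single hyperplane this is immediate only after multiplying through: the relation I would actually use is that, for each pair, $1 = f_j\cdot(1/f_j)$, so $[f_j]$ is the inverse of $[1/f_j]$ \emph{in the local algebra at each critical point}, provided $[1/f_j]$ is invertible there — and it is, since no critical point lies on $H_j$ (critical points lie in $U$). Thus in each local algebra $A_{p,\Phi}$, which is a finite-dimensional local $\C$-algebra, the element $[1/f_j]_p$ is a unit, hence its inverse $[f_j]_p$ is a polynomial in $[1/f_j]_p$ (a unit in a local Artinian algebra has its inverse expressible as a polynomial in it: write $[1/f_j]_p = u(1 + \mathfrak{m}\text{-part})$ and invert the nilpotent correction by a finite geometric series). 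Assembling over all $p$ via the decomposition \Ref{loc alg} shows $[f_j]\in\C[[1/f_m]:m\in J]$, and similarly every monomial in the $t_i$ and the $1/f_j$ maps into this subalgebra. Combined with the generation of $\C(U)$ by the $t_i$ and $1/f_j$, this proves the lemma.

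The main obstacle is the passage from "$\C(U)$ is generated by the $t_i$ and the $1/f_j$" to "$A_\Phi$ is generated by the $[1/f_j]$ alone" — that is, eliminating the coordinate functions $t_i$ as generators. The clean way to do this is the local-algebra argument above: over each critical point the $[1/f_j]_p$ are units, the local algebra is Artinian local, and units together with the maximal ideal generate everything — but one must check that the $t_i$ (equivalently the $f_j$) really are captured, which is where essentiality of $\A$ enters, guaranteeing the $b_{ji}$ have full rank so that the coordinates are linear in finitely many of the $f_j$'s and hence expressible via their inverses in the local rings. I would also need the elementary fact that a finitely generated $\C$-algebra that is Artinian local is generated by any lift of a generating set of its maximal ideal together with the unit $1$; this is standard. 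No part of the argument requires the critical points to be nondegenerate — only that they are isolated, which is the running hypothesis under which $A_\Phi$ is finite-dimensional (Theorems \ref{thm V,OT,S}, \ref{lem crit 1}).
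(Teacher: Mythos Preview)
The paper gives no proof of this lemma; it simply cites \cite{V6} and closes with \qed. Your self-contained argument is therefore welcome, and its strategy is correct: $\C(U)=\C[t_1,\dots,t_k,1/f_1,\dots,1/f_n]$ (this is just the definition of the localization of the polynomial ring at $f_1\cdots f_n$---no multivariate partial-fraction argument is needed), essentiality of $\A$ expresses each $t_i$ as an affine-linear combination of finitely many $f_j$, and so it suffices to show each $[f_j]$ lies in the subalgebra $B\subset A_\Phi$ generated by the $[1/f_m]$.

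There is, however, a real gap in your ``assembling'' step. At each critical point $p$ you produce a polynomial $P_p$ with $[f_j]_p=P_p([1/f_j]_p)$, but $P_p$ depends on $p$ through the scalar $1/f_j(p)$ used to split off the nilpotent part. Different polynomials at different local summands do \emph{not} yield an element of $B$ unless you first show that the idempotents of the decomposition $A_\Phi=\bigoplus_p A_{p,\Phi}$ already lie in $B$---and you have not argued this. The clean fix bypasses the local decomposition entirely: since $A_\Phi$ is finite-dimensional (isolated critical points) and $[1/f_j]$ is a unit in $A_\Phi$ (critical points lie in $U$, so $f_j(p)\ne 0$), the minimal polynomial of $[1/f_j]$ in $A_\Phi$ has nonzero constant term, and the resulting monic relation immediately exhibits $[f_j]=[1/f_j]^{-1}$ as a $\C$-polynomial in $[1/f_j]$. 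This is the same geometric-series idea you used locally, applied once globally.
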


We fix  affine coordinates $t_1,\dots,t_k$ on $\C^k$. Let
\bean
f_j = b^0_j +b^1_jt_1+ \dots + b^k_jt_k.
\eean

\begin{lem}
\label{lem on ONE 1}
The identity element $[1] \in \Ap$ satisfies the equation
\bean
\label{ONE 1}
[1] = \frac 1{|a|}\sum_{j\in J} b^0_j\Big[\frac{a_j}{f_j}\Big],
\eean
where $|a|=\sum_{j\in J} a_j$.
\end{lem}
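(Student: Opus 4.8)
The plan is to establish the identity \Ref{ONE 1} by exhibiting a rational function on $\C^k$ that is identically equal to $1$ yet, modulo the ideal $I_\Phi$, can be rewritten as the claimed linear combination of the $[a_j/f_j]$. The natural candidate is the expression
\bean
\sum_{j\in J} b^0_j\,\frac{a_j}{f_j},
\eean
and the task reduces to showing that $\sum_{j\in J} b^0_j\,\frac{a_j}{f_j} - |a|$ lies in $I_\Phi$, i.e.\ is a combination $\sum_{i=1}^k g_i\,\der\Phi/\der t_i$ with $g_i\in\C(U)$.

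First I would write out the partial derivatives of the master function explicitly in the fixed affine coordinates: since $\Phi = \sum_{j\in J} a_j\log f_j$ and $f_j = b^0_j + b^1_j t_1 + \dots + b^k_j t_k$, we have
\bean
t_i\,\frac{\der\Phi}{\der t_i} = \sum_{j\in J} a_j\,\frac{b^i_j\,t_i}{f_j}, \qquad i=1,\dots,k.
\eean
Summing these over $i=1,\dots,k$ and adding the obvious term coming from $b^0_j$, I get
\bean
\sum_{i=1}^k t_i\,\frac{\der\Phi}{\der t_i} \;=\; \sum_{j\in J} a_j\,\frac{b^1_j t_1 + \dots + b^k_j t_k}{f_j}
\;=\; \sum_{j\in J} a_j\,\frac{f_j - b^0_j}{f_j}
\;=\; |a| \;-\; \sum_{j\in J} b^0_j\,\frac{a_j}{f_j}.
\eean
Here I used $\sum_{j\in J} a_j = |a|$ and $a_j f_j/f_j = a_j$. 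Thus the left-hand side belongs to $I_\Phi$ (it is literally $\sum_i g_i\,\der\Phi/\der t_i$ with $g_i = t_i\in\C(U)$), so passing to the quotient $\Ap = \C(U)/I_\Phi$ gives $0 = [\,|a|\,] - \sum_{j\in J} b^0_j[a_j/f_j]$, which is exactly \Ref{ONE 1} after dividing by $|a|$ (note $|a|\neq 0$, as it must be for the unbalanced/generic weight hypotheses under which $\Ap$ is being studied; in any case $|a|$ is a nonzero scalar by assumption).

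There is essentially no obstacle here: the computation is the Euler-operator identity for the logarithmic master function, and the only point requiring a word of care is that the functions $t_i$ are regular on $U$ so that $t_i\,\der\Phi/\der t_i$ genuinely lies in the ideal $I_\Phi\subset\C(U)$, which is immediate since $t_i$ is a polynomial. One should also remark that $1/f_j\in\C(U)$ by definition of $U$ as the complement of the hyperplanes, so all terms live in the right algebra. I would present the displayed chain of equalities as the whole proof, then conclude.
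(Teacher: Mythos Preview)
Your proof is correct and is exactly the approach taken in the paper: the Euler-operator identity $\sum_{i=1}^k t_i\,\der\Phi/\der t_i = |a| - \sum_{j\in J} b^0_j\,a_j/f_j$ is precisely the equality \Ref{one1} cited there, and passing to the quotient $A_\Phi$ gives \Ref{ONE 1}. Your added remarks that $t_i\in\C(U)$ and $1/f_j\in\C(U)$ are fine but not strictly necessary, since these are evidently regular on $U$.
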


\begin{proof}
The lemma follows from the equality
\bean
\label{one1}
\sum_{i=1}^k t_i\frac{\der\Phi}{\der t_i} = |a| - \sum_{j\in J} b^0_j\Big[\frac{a_j}{f_j}\Big].
\eean
\end{proof}

Surprisingly, formula \Ref{one1} and Lemma \ref{lem f_j generate} play central roles in the constructions of this paper.

\smallskip

We define the rational function $\Hess : \C^k \to \C$, regular on $U$, by the formula
\bean
\Hess (t)\ =\ \det_{1\leq i,j \leq k}
\Big( \frac{\partial^2\Phi} {\partial t_i \partial t_j} \Big)(t) \ .
\eean
The function is called the {\it Hessian} of $\Phi$.

Let
 $\rho_{p} :  A_{p,\Phi} \to \C$, be the
{\it Grothendieck residue},
\bean
\label{res map}
f \ \mapsto \ \frac 1{(2\pi \sqrt{-1})^k}\,\Res_{p}
\ \frac{ f}{\prod_{i=1}^k\, \frac{\der \Phi}{\der t_i}}
=\frac{1}{(2\pi \sqrt{-1})^k}\int_{\Gamma_p}
\frac{f\ dt_1\wedge\dots\wedge dt_k}{\prod_{i=1}^k \frac{\der \Phi}{\der t_i}}\ ,
\eean
where
$\Gamma_p$ is the real $k$ cycle in a small neighborhood of $p$, defined by the equations
$|\frac{\der \Phi}{\der t_i}|=\epsilon_i,\ i=1,\dots,k,$ and  oriented by the condition
$d\arg \frac{\der \Phi}{\der t_1}\wedge\dots\wedge d\arg \frac{\der \Phi}{\der t_k}>0$,
here $\epsilon_s$ are positive  numbers sufficiently small with respect to the size of the neighborhood, see \cite{GH, AGV}.

Let $(\,,\,)_{p}$ be the {\it residue
bilinear form} on $A_{p,\Phi}$,
\bean
\label{Gr form}
( f, g)_{p}\ =\ \rho_{p} (f g) ,
\eean
for $f,g\in A_{p,\Phi}$. This form is nondegenerate.

Let all the critical points of $\Phi$ be isolated and hence, $ A_\Phi = \oplus_{p}  A_{p,\Phi}$. We define
the {\it residue bilinear form}  $(\,,\,)$ on $A_\Phi$  as $\oplus_{p} (\,,\,)_{p}$.
This form is nondegenerate and $(fg,h)=(f,gh)$ for all $f,g,h\in A_\Phi$. In other words,
the pair $(A_\Phi, (\,,\,))$ is a {\it Frobenius algebra}.

\subsection{Canonical isomorphism and algebra structures on $\sing \FF^k(\A)$}
\label{sec Special}

Let $(F_m)_{m\in M}$ be a basis of $\FF^k(\A)$ and $(H^m)_{m\in M} \subset \OS^k(\A)$ the dual basis.
Consider the element $\sum_m H^m\otimes F_m \in \OS^k(\A) \otimes \FF^k(\A)$.
We have $H^m = f^mdt_1\wedge\dots\wedge dt_k$ for some $f^m \in \C(U)$.
The element
\bean
\label{spec elt}
E = \sum_{m\in M} f^m \otimes F_m  \ \in\ \C(U)\otimes \FF^k(\A)
\eean
is called the {\it canonical element} of  $\A$.
Denote  $[E]$ the image of the canonical element in $ A_\Phi\otimes\FF^k(\A)$.

\begin{thm}
[\cite{V6}]
\label{thm E-sing}
We have $[E] \in  A_\Phi \otimes \Sing \FF^k(\A)$.
\end{thm}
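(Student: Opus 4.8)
The plan is to show that the canonical element $E$, after passing to the quotient algebra $A_\Phi$, lands in $A_\Phi\otimes\Sing\FF^k(\A)$, i.e.\ that $(\id\ox\delta^{(a)})[E]=0$ in $A_\Phi\otimes\FF^{k-1}(\A)$. Since $\delta^{(a)}$ is the adjoint of the Orlik-Solomon differential $d^{(a)}=\nu(a)\wedge(\cdot)$, it suffices to check that for every $G\in\FF^{k-1}(\A)$ one has $\langle(\id\ox d^{(a)})E,\,\id\ox G\rangle=0$ in $A_\Phi$, where the first slot pairing is the $\OS$-$\FF$ duality. Concretely, writing $E=\sum_m f^m\ox F_m$ with $H^m=f^m\,dt_1\wedge\dots\wedge dt_k$ the dual basis of $\OS^k(\A)$, the element $\sum_m f^m\langle\nu(a)\cdot H'^{\,m'},F_m\rangle$ for a basis of $\OS^{k-1}$ must be shown to lie in the Jacobian ideal $I_\Phi$.

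First I would unwind the canonical element explicitly. Using the realization of $\OS(\A)$ as logarithmic forms $\omega_j=df_j/f_j$, a top form $H^m=\omega_{j_1}\wedge\dots\wedge\omega_{j_k}$ equals $\big(\prod_i 1/f_{j_i}\big)\,df_{j_1}\wedge\dots\wedge df_{j_k}$, so $f^m$ is $\big(\prod 1/f_{j_i}\big)$ times the (constant) Jacobian $\det(b^{s}_{j_i})$ of the linear forms. Thus $E$ is a sum over independent $k$-subsets, with rational-function coefficients built from the $1/f_j$, which by Lemma~\ref{lem f_j generate} generate $A_\Phi$. The key computational identity is the one-form analog of formula \Ref{one1}: the relation $\nu(a)=d\Phi$ together with the fact that $d\Phi=\sum_i(\der\Phi/\der t_i)\,dt_i$ means that wedging by $\nu(a)$ introduces precisely the partial derivatives $\der\Phi/\der t_i$, which vanish in $A_\Phi$. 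So the strategy is: compute $(\id\ox d^{(a)})E$, recognize its coefficients as $\C(U)$-combinations of the $\der\Phi/\der t_i$, and conclude the image in $A_\Phi$ is zero.

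More precisely, I would argue as follows. The element $\sum_m H^m\ox F_m\in\OS^k\ox\FF^k$ is the identity in $\OS^k\ox(\OS^k)^*\cong\End(\FF^k)$, so it is independent of the chosen basis and is "universal." Apply $d^{(a)}\ox\id$: since $\OS^{k+1}(\A)=0$ (the arrangement lives in $\C^k$), we get $d^{(a)}\ox\id$ kills it, i.e.\ $\sum_m (\nu(a)\cdot H^m)\ox F_m=0$ — but that is the wrong differential for what we want. Instead the relevant statement is the compatibility of $E$ with the two differentials: one shows $(d\ox\id)E$, where $d$ here is the de Rham differential on $\C(U)$-valued forms, is related to $(\id\ox\text{something})$. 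The cleanest route: multiply $E$ by $d\Phi$ (exterior product in the form slot). On one hand $d\Phi\wedge(f^m\,dt_1\wedge\dots\wedge dt_k)=0$ since it is a $(k{+}1)$-form on $\C^k$; on the other hand, expanding $d\Phi=\nu(a)=\sum_j a_j\omega_j$ and using $\omega_j\wedge H^m=\sum(\text{reorderings})$ identifies $0=\sum_j a_j\,(\text{coeff})$ as the statement that the $\FF^{k-1}$-component obtained via $\delta^{(a)}$ has coefficients proportional to $\der\Phi/\der t_i\in I_\Phi$. Passing to $A_\Phi$ gives $(\id\ox\delta^{(a)})[E]=0$, which is exactly $[E]\in A_\Phi\ox\Sing\FF^k(\A)$.

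The main obstacle I anticipate is bookkeeping: correctly tracking signs and the combinatorics of the flag relations \Ref{flagrelation} when expressing $\delta^{(a)}F_m$ in the standard basis, and matching the resulting coefficients with the expansion of $d\Phi\wedge H^m$ in terms of $dt_1\wedge\dots\wedge dt_k$ contracted appropriately. The conceptual content — "$\nu(a)=d\Phi$, so the obstruction to singularity is a multiple of the Jacobian ideal" — is transparent, but making the pairing $\langle(H_{j_1},\dots,H_{j_k}),F\rangle$ interact correctly with the residue/quotient map requires care. One should also verify the basis-independence remark so that the conclusion $[E]\in A_\Phi\ox\Sing\FF^k(\A)$ does not depend on the choice of $(F_m)$; this follows because $E$ is the image of $\id_{\FF^k}$ under $\End(\FF^k)\cong\OS^k\ox\FF^k\to\C(U)\ox\FF^k$, a canonical map.
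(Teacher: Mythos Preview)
The paper does not give its own proof of this theorem; it is quoted from \cite{V6}. So there is no proof in the present paper to compare against, and your proposal must be judged on its own merits.

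Your conceptual approach is correct and is essentially the standard one: the identity $\nu(a)=d\Phi$ forces the obstruction to singularity to land in the Jacobian ideal $I_\Phi$. Concretely, to show $(\id\ox\delta^{(a)})[E]=0$ it suffices, by duality, to check that for every $G\in\OS^{k-1}(\A)$ the function $h\in\C(U)$ determined by $\nu(a)\wedge G = h\,dt_1\wedge\dots\wedge dt_k$ lies in $I_\Phi$. Writing $G=\sum_{i=1}^k g_i\,dt_1\wedge\dots\wedge\widehat{dt_i}\wedge\dots\wedge dt_k$ with $g_i\in\C(U)$ and $d\Phi=\sum_i(\der\Phi/\der t_i)\,dt_i$, one gets $h=\sum_i(-1)^{i-1}g_i\,\der\Phi/\der t_i\in I_\Phi$ immediately. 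This is exactly the content of your first paragraph, and it is a complete argument once written out; the bookkeeping you worry about in the last paragraph is minimal here.

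Your third paragraph, however, takes a detour that does not work as written. The observation that $d\Phi\wedge H^m=0$ as a $(k{+}1)$-form on $\C^k$ is the trivial direction (it encodes $d^{(a)}\colon\OS^k\to\OS^{k+1}=0$, not $\delta^{(a)}\colon\FF^k\to\FF^{k-1}$), and you correctly flag this yourself earlier in the same paragraph. The attempt to extract the $\FF^{k-1}$-component from this vanishing is not a valid step: vanishing of $\nu(a)\wedge H^m$ in $\OS^{k+1}$ gives no information, since that space is zero. Stick with the pairing argument from your first paragraph --- it is both correct and short.
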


Assume that all critical points of $\Phi$ are isolated. Introduce the linear map
\bean
\label{map alpha}
\al :  A_\Phi \to \sing \FF^k(\A), \qquad
[g] \mapsto ([g],[E]).
\eean

\begin{thm}
[\cite{V6}]
\label{thm alpha}
If the weights $(a_j)_{j\in J}$ of $\A$ are unbalanced, then the canonical map $\al$ is an isomorphism of vector
spaces. The isomorphism $\al$ identifies the residue form on $A_\Phi$ and
the contravariant form on $\sing \FF^k(\A)$ multiplied by $(-1)^k$, that is,
\bean
(f,g) = (-1)^kS^{(a)}(\al(f),\al(g))\qquad \text{for all}\ f,g\in A_\Phi.
\eean

\end{thm}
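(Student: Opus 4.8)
The plan is to prove Theorem \ref{thm alpha} by first establishing that $\al$ is a linear isomorphism, and then separately verifying the compatibility of the two bilinear forms. For the isomorphism part, I would note that both $A_\Phi$ and $\Sing\FF^k(\A)$ have dimension $|\chi(U)|$: for $A_\Phi$ this follows from Theorem \ref{lem crit 1} (since unbalanced weights make all critical points isolated, with total Milnor number $|\chi(U)|$), while for $\Sing\FF^k(\A)$ it follows from Theorem \ref{thm Shap nondeg}(ii) together with the duality between $\OS^k$ and $\FF^k$ and the definition of $\delta^{(a)}$ as the adjoint of $d^{(a)}$ — the cokernel of $d^{(a)}:\OS^{k-1}\to\OS^k$ is dual to the kernel of $\delta^{(a)}:\FF^k\to\FF^{k-1}$, which is exactly $\Sing\FF^k(\A)$. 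So it suffices to show $\al$ is injective (or surjective). Here I would use Lemma \ref{lem f_j generate}: the classes $[1/f_j]$ generate $A_\Phi$, and the canonical element $E$ pairs these generators against an explicit spanning set of $\FF^k(\A)$, so non-degeneracy of the pairing $([g],[E])$ reduces to a statement controlled by the residue form.

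The cleaner route, which I would actually follow, is to prove the form-compatibility identity $(f,g)=(-1)^k S^{(a)}(\al(f),\al(g))$ first and deduce the isomorphism from it as a corollary: since the residue form on $A_\Phi$ is non-degenerate (it is a Frobenius algebra, as recalled just before the statement), and the contravariant form on $\Sing\FF^k(\A)$ is non-degenerate by Theorem \ref{thm Shap nondeg}(i) (restricted to singular vectors, using that $S^{(a)}$ descends appropriately), the identity forces $\al$ to be injective, hence bijective by the dimension count above. So the heart of the proof is the form identity. For this I would expand both sides using the canonical element $E=\sum_m f^m\otimes F_m$. On the arrangement side, $S^{(a)}(\al(f),\al(g))$ becomes a double sum over $m,m'$ of $[f][f^m][g][f^{m'}]$-type terms weighted by $S^{(a)}(F_m,F_{m'})$; using that $\sum_m H^m\otimes F_m$ is the identity of $\OS^k\otimes\FF^k$ and that $S^{(a)}$ on standard basis vectors is diagonal with entries $a_{j_1}\cdots a_{j_p}$ (from the normal-crossings computation), one can collapse one of the sums.

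The key computational input matching the two sides is the classical local duality / Grothendieck residue formula: for the master function $\Phi=\sum a_j\log f_j$, the residue pairing $( \cdot,\cdot)$ on $A_\Phi$ can be written via the Hessian, and the global residue $\sum_p\rho_p$ of a product of the $[1/f_j]$'s against $\Hess(t)^{-1}$-type denominators has an explicit combinatorial evaluation. The point is that $\Res$ of $\frac{dt_1\wedge\dots\wedge dt_k}{f_{j_1}\cdots f_{j_k}\cdot(\text{Jacobian factors})}$ localizes, for generic/unbalanced weights, to a sum over vertices $H_{j_1}\cap\dots\cap H_{j_k}$, and at each such vertex the residue contributes $\pm 1/(a_{j_1}\cdots a_{j_k})$ up to the sign $(-1)^k$ coming from orienting the residue cycle — and $1/(a_{j_1}\cdots a_{j_k})$ is precisely the inverse of the diagonal entry of $S^{(a)}$ on the standard basis. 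Thus term-by-term the residue form reproduces the inverse-weighted contravariant form on the dual side, and the canonical element $E$ is exactly the object that transports one to the other; tracking the sign $(-1)^k$ through the orientation convention on $\Gamma_p$ gives the stated factor.

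The main obstacle is the residue computation: one must show rigorously that the global residue pairing of the generators $[1/f_j]$ of $A_\Phi$ — which a priori is a sum over all isolated critical points with possibly nontrivial local algebras when weights are merely unbalanced rather than generic — nonetheless evaluates combinatorially in terms of the vertices of $\A$ and the weights $a_j$, matching the explicit diagonal form of $S^{(a)}$ in the standard basis. The subtlety is that for unbalanced (not generic) weights the critical points need not be nondegenerate, so one cannot simply sum $1/\Hess(p)$ over simple critical points; instead one argues by a deformation/continuity argument (deform to generic weights, where Theorem \ref{thm V,OT,S} applies and the computation is transparent, then specialize) or by a direct algebraic manipulation inside $A_\Phi$ using formula \Ref{one1} and Lemma \ref{lem on ONE 1} to reduce every residue to residues of the generators. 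I expect the deformation argument, combined with the fact that both forms and the map $\al$ depend algebraically on $a$ while remaining non-degenerate on the unbalanced locus, to be the most economical way to close this gap.
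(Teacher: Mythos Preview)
The paper does not prove this theorem; it is quoted from \cite{V6} and stated without proof. So there is no ``paper's own proof'' to compare against. What the paper does contain are explicit verifications of the canonical isomorphism in special cases (Theorems \ref{lem can map}, \ref{thm can 2}, and \ref{thm cOnst}), and those proofs proceed by direct residue calculations at infinity rather than by the abstract form-comparison route you outline.

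On your proposal itself: the overall architecture (prove the form identity, deduce injectivity of $\al$ from nondegeneracy of the residue form, conclude bijectivity from the dimension count) is sound. Note, however, that you invoke nondegeneracy of $S^{(a)}$ restricted to $\Sing\FF^k(\A)$ via Theorem \ref{thm Shap nondeg}(i); that theorem only gives nondegeneracy on all of $\FF^k(\A)$, and restriction to a subspace need not preserve nondegeneracy. In fact the nondegeneracy on $\Sing\FF^k(\A)$ is stated as Corollary \ref{cor nondeg}, a \emph{consequence} of the theorem you are trying to prove, so citing it would be circular. Fortunately your argument does not actually need it: from $(f,g)=(-1)^kS^{(a)}(\al(f),\al(g))$ and nondegeneracy of the residue form alone you already get injectivity of $\al$.

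The real gap is in the residue computation. Your claim that the global residue ``localizes to vertices $H_{j_1}\cap\dots\cap H_{j_k}$'' is not correct as written: the residues defining $(\,,\,)$ are taken at critical points of $\Phi$, which lie in $U$ and are not vertices of the arrangement. What one actually does (and what the paper does in the explicit cases, e.g.\ the proof of Theorem \ref{thm can 2}) is convert the sum of residues over critical points into residues at infinity and along the hyperplanes $f_j=0$, using that the total residue of a rational form on $\Pee^k$ vanishes. That is where the combinatorics of the arrangement enters, and it is substantially more delicate than your sketch suggests. The deformation-to-generic-weights idea is reasonable for passing from nondegenerate to merely isolated critical points, but it does not by itself supply the residue-at-infinity calculation that matches the answer to $S^{(a)}$.
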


The map $\al$ is called the {\it canonical} map or {\it canonical} isomorphism.

\begin{cor}[\cite{V6}]
\label{cor nondeg}
The restriction of the contravariant form $S^{(a)}$ to the subspace
$\sing \FF^k(\A)$ is nondegenerate.
\qed
\end{cor}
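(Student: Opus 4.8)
The plan is to obtain Corollary \ref{cor nondeg} as an immediate consequence of Theorem \ref{thm alpha}, by transporting the nondegeneracy of the residue form on $A_\Phi$ to the contravariant form on $\sing\FF^k(\A)$ along the canonical isomorphism $\al$.

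First I would work under the standing hypothesis that the weights $(a_j)_{j\in J}$ are unbalanced, so that Theorem \ref{lem crit 1} guarantees that all critical points of $\Phi$ are isolated. Then $A_\Phi$ is finite-dimensional, it splits as $\oplus_p A_{p,\Phi}$, and by the discussion preceding the corollary the residue bilinear form $(\,,\,)=\oplus_p(\,,\,)_p$ is nondegenerate on $A_\Phi$; indeed $(A_\Phi,(\,,\,))$ is a Frobenius algebra.

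Next I would invoke Theorem \ref{thm alpha}: under the unbalanced hypothesis, $\al\colon A_\Phi\to\sing\FF^k(\A)$ is a linear isomorphism satisfying $(f,g)=(-1)^k S^{(a)}(\al(f),\al(g))$ for all $f,g\in A_\Phi$. The conclusion then follows by a one-line argument. Given $0\neq w\in\sing\FF^k(\A)$, write $w=\al(f)$ with $f\neq0$; nondegeneracy of $(\,,\,)$ yields $g\in A_\Phi$ with $(f,g)\neq0$, whence $S^{(a)}(w,\al(g))=(-1)^k(f,g)\neq0$. Thus the radical of $S^{(a)}\vert_{\sing\FF^k(\A)}$ is trivial, i.e.\ the restricted form is nondegenerate.

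Since all the ingredients are already in place, there is essentially no technical obstacle; the only point to be careful about is conceptual, namely that Theorem \ref{thm alpha} matches the residue form precisely with the restriction of $S^{(a)}$ to the singular subspace (and not with $S^{(a)}$ on all of $\FF^k(\A)$, whose nondegeneracy is the separate statement of Theorem \ref{thm Shap nondeg}(i) and does not by itself imply nondegeneracy on a subspace), and that the unbalanced condition is simultaneously what makes $\al$ an isomorphism and what makes $A_\Phi$ a finite-dimensional Frobenius algebra.
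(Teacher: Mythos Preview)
Your proposal is correct and matches the paper's approach exactly: the corollary is stated immediately after Theorem \ref{thm alpha} with a bare \qed, so the intended argument is precisely the one you give, namely that $\al$ pulls back $(-1)^kS^{(a)}\vert_{\sing\FF^k(\A)}$ to the nondegenerate residue form on $A_\Phi$. Your remark that nondegeneracy on all of $\FF^k(\A)$ would not by itself suffice is a good clarifying point.
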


On the restriction of the contravariant form $S^{(a)}$ to the subspace
$\sing \FF^k(\A)$ see \cite{FaV}.

 If all critical points $p$ of the master function are nondegenerate, then
\bean
\label{EX}
\al : [g] \mapsto \sum_p \sum_m\frac {g(p)f^m(p)}{\Hess(p)} F_m .
\eean

If the weights $(a_j)_{j\in J}$ of $\A$ are unbalanced, then the
 canonical isomorphism $\al :  A_\Phi \to \sing \FF^k(\A)$ induces a commutative associative algebra structure
on $\sing \FF^k(\A)$. Together with the contravariant form it is a Frobenius algebra structure.

\subsection{Change of variables and canonical isomorphism}
\label{sec Change}
Assume that we  change coordinates on $\C^n$,\ $t_i =\sum_{j=1}^k c_{i,j} s_j$
with $c_{i,j}\in \C$.

\begin{lem}
\label{lem change}
The canonical map \Ref{map alpha} in coordinates $t_1,\dots,t_k$ equals
the canonical map \Ref{map alpha} in coordinates $s_1,\dots,s_k$ divided by $\det(c_{i,j})$,
$\al_t = \frac 1{\det(c_{i,j})}\al_s$.

\end{lem}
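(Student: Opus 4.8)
The plan is to trace through how each ingredient in the definition of the canonical map $\al$ transforms under the linear change of coordinates $t_i=\sum_{j=1}^k c_{i,j}s_j$, and to show that exactly one factor of $\det(c_{i,j})$ appears. The master function $\Phi$ is a coordinate-independent object (it is $\sum_j a_j\log f_j$, where the $f_j$ are linear functions on $\C^k$ that do not depend on the choice of affine coordinates), hence its critical set, the algebra $A_\Phi$, and the spaces $\OS^k(\A)$, $\FF^k(\A)$, $\sing\FF^k(\A)$ are all intrinsic; only the presentation of the canonical element $E$ via a rational function depends on the coordinates. So the task reduces to comparing the two expressions for $E$.

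First I would recall that the canonical element is built from a basis $(F_m)$ of $\FF^k(\A)$ and its dual basis $(H^m)\subset\OS^k(\A)$ by writing $H^m=f^m_t\,dt_1\wedge\dots\wedge dt_k$ in the $t$-coordinates and $H^m=f^m_s\,ds_1\wedge\dots\wedge ds_k$ in the $s$-coordinates. Since $\OS^k(\A)=\bar\OS^k(\A)$ is a fixed space of differential $k$-forms, $H^m$ is one fixed form, and the top form transforms by $dt_1\wedge\dots\wedge dt_k=\det(c_{i,j})\,ds_1\wedge\dots\wedge ds_k$. Therefore $f^m_t=\frac1{\det(c_{i,j})}f^m_s$ as elements of $\C(U)$, and consequently the canonical elements satisfy $E_t=\frac1{\det(c_{i,j})}E_s$ in $\C(U)\otimes\FF^k(\A)$, hence $[E]_t=\frac1{\det(c_{i,j})}[E]_s$ in $A_\Phi\otimes\FF^k(\A)$ (the algebra $A_\Phi$ and the class $[\,\cdot\,]$ being coordinate-free).

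Then I would plug this into the definition $\al:[g]\mapsto([g],[E])$, where the pairing is the one between $\C(U)$ (or $A_\Phi$) in the first tensor factor; since $[g]$ itself is intrinsic and the pairing is bilinear, we get $\al_t([g])=([g],[E]_t)=\frac1{\det(c_{i,j})}([g],[E]_s)=\frac1{\det(c_{i,j})}\al_s([g])$, which is exactly the claimed identity $\al_t=\frac1{\det(c_{i,j})}\al_s$. As a sanity check one can instead use the explicit nondegenerate-critical-point formula \Ref{EX}: there $f^m$ appears linearly in the numerator and $\Hess(p)$ in the denominator, and one checks $\Hess_t=\det(c_{i,j})^2\Hess_s$ while $f^m_t=\det(c_{i,j})^{-1}f^m_s$, giving a net factor $\det(c_{i,j})^{-2}\cdot\det(c_{i,j})=\det(c_{i,j})^{-1}$, consistent with the general argument (but I would not rely on nondegeneracy for the proof itself).

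The only real subtlety — and the step I would be most careful about — is the bookkeeping of which objects are genuinely coordinate-independent and which are not: one must be sure that $\FF^k(\A)$, $\OS^k(\A)$, the duality pairing, the space $\sing\FF^k(\A)$, the ideal $I_\Phi$, and the algebra $A_\Phi$ are all defined without reference to the affine coordinates (they are, since $I_\Phi=\langle\der\Phi/\der t_i\rangle$ is the Jacobian ideal, which is invariant under linear changes of the $t_i$), so that the entire coordinate dependence of $\al$ is concentrated in the single factor $dt_1\wedge\dots\wedge dt_k$ versus $ds_1\wedge\dots\wedge ds_k$. Once that is pinned down, the proof is the one-line computation above.
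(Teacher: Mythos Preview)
There is a genuine gap in your main argument. You correctly observe that $A_\Phi$, $\OS^k(\A)$, $\FF^k(\A)$ and the class $[E]$ transform as stated; in particular $[E]_t=\det(c_{i,j})^{-1}[E]_s$ is right. But the residue bilinear form $(\,,\,)$ on $A_\Phi$ that appears in the definition $\al([g])=([g],[E])$ is \emph{not} coordinate-free: formula \Ref{res map} uses both the volume form $dt_1\wedge\dots\wedge dt_k$ and the specific generators $\partial\Phi/\partial t_1,\dots,\partial\Phi/\partial t_k$ of the Jacobian ideal, and each changes under $t_i=\sum_jc_{i,j}s_j$. At a nondegenerate critical point one has $\rho_p(f)=f(p)/\Hess(p)$, and the Hessian determinant picks up a factor $\det(c_{i,j})^{\pm2}$ under the linear change, so the pairings $(\,,\,)_t$ and $(\,,\,)_s$ differ by that factor. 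Hence your step ``$([g],[E]_t)=\frac1{\det(c_{i,j})}([g],[E]_s)$'' is illegitimate: the left side is a $(\,,\,)_t$-pairing and the right side a $(\,,\,)_s$-pairing, and your assertion that ``the entire coordinate dependence of $\al$ is concentrated in the single factor $dt_1\wedge\dots\wedge dt_k$'' is not correct.

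The paper's proof is exactly your ``sanity check'': it invokes formula \Ref{EX}, where the coordinate dependence is fully visible in the ratio $f^m/\Hess$, and records the transformation rules for $f^m$ and for $\Hess$ separately. So that computation should be promoted from a consistency check to the actual argument; your intrinsic-pairing route cannot be completed without tracking the Hessian factor anyway. When you carry it out, redo the arithmetic carefully: with $f^m_t=\det(c_{i,j})^{-1}f^m_s$ in the numerator of \Ref{EX} and a power $\det(c_{i,j})^{2}$ coming from the Hessian in the denominator, your stated ``net factor $\det(c_{i,j})^{-2}\cdot\det(c_{i,j})$'' does not match the substitutions you wrote down; recheck the direction of the Hessian transformation and the signs of the exponents.
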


\begin{proof}
We have $H^m = f^m dt_1\wedge\dots\wedge dt_k = \det(c_{i,j}) f^m ds_1\wedge\dots\wedge ds_k$
and $\Hess_t = \det^2(c_{i,j}) \Hess_s$. Now the lemma follows, for example, from \Ref{EX}.
\end{proof}

To make the map \Ref {map alpha} independent of coordinates one needs to consider it as a map
\bean
\label{al forms}
 A_\Phi\otimes dt_1\wedge\dots\wedge dt_k  \to \sing \FF^k(\A), \qquad
[g]\otimes dt_1\wedge\dots\wedge dt_k  \mapsto ([g],[E]).
\eean

\section{A family of parallelly transported hyperplanes}
\label{sec par trans}

This section contains the main constructions of the paper.
These constructions
are explained in details in Sections  \ref{sec Exa},
\ref{sec lines on plane}, and \ref{sec arrs k} for the particular situations discussed there.

\subsection{An arrangement in  $\C^n\times\C^k$}
\label{An arrangement in}
Recall that $J=\{1,\dots,n\}$.
Consider $\C^k$ with coordinates $t_1,\dots,t_k$,\
$\C^n$ with coordinates $z_1,\dots,z_n$, the projection
$\C^n\times\C^k \to \C^n$.
Fix $n$ nonzero linear functions on $\C^k$,
$g_j=b_j^1t_1+\dots + b_j^kt_k,$\ $ j\in J,$
where $b_j^i\in \C$.
Define $n$ linear functions on $\C^n\times\C^k$,
$f_j = z_j+g_j = z_j + b_j^1t_1+\dots + b_j^kt_k,$\ $ j\in J.$
In $\C^n\times \C^k$ we define
 the arrangement $\tilde \A = \{ \tilde H_j\ | \ f_j = 0, \ j\in J \}$.
Denote $\tilde U = \C^n\times \C^k - \cup_{j\in J} \tilde H_j$.

For every $z=(z_1,\dots,z_n)$ the arrangement $\tilde \A$
induces an arrangement $\A(z)$ in the fiber of the projection over $z$. We
identify every fiber with $\C^k$. Then $\A(z)$ consists of
hyperplanes $H_j(z), j\in J$, defined in $\C^k$ by the equations
$f_j=0$. Denote $\label{U(A(z))}
U(\A(z)) = \C^k - \cup_{j\in J} H_j(z)$,  the complement to the arrangement $\A(z)$.
We assume that for every $z$ the arrangement $\A(z)$ has a vertex. This happens
if and only if $\A(0)$ has a vertex.

A point $z\in\C^n$ is called {\it good} if $\A(z)$ has normal
crossings only.  Good points form the complement in $\C^n$ to the union
of suitable hyperplanes called the {\it discriminant}.

\subsection{Discriminant}
\label{Discr}

The collection $(g_j)_{j\in J}$ induces a
matroid structure on $J$.  A subset $C=\{i_1,\dots,i_r\}\subset J$ is
a {\it circuit}  if $(g_i)_{i\in C}$ are linearly dependent but any
proper subset of $C$ gives linearly independent $g_i$'s.

For a circuit $C=\{i_1,\dots,i_r\}$, \  let
$(\la^C_i)_{i\in C}$ be a nonzero collection of complex numbers such that
$\sum_{i\in C}
\la^C_ig_i = 0$. Such a collection  is unique up to
multiplication by a nonzero number.

For every circuit $C$ we fix such a collection
and denote $f_C = \sum_{i\in C} \la^C_iz_i$.
The equation $f_C=0$ defines a hyperplane $H_C$ in
$\C^n$.
It is convenient to assume that $\la^C_i=0$ for $i\in J-C$ and write
$f_C = \sum_{i\in J} \la^C_iz_i$.

For any $z\in\C^n$, the hyperplanes $(H_i(z))_{i\in C}$ in $\C^k$ have nonempty
intersection if and only if $z\in H_C$. If $z\in H_C$, then the
intersection has codimension $r-1$ in $\C^k$.

Denote by $\frak C$ the set of all circuits in $J$.
Denote  $\Delta = \cup_{C\in \frak C} H_C$.
The arrangement $\A(z)$ in $\C^k$
has normal crossings  if and only if $z\in \C^n-\Delta$, see \cite{V6}.

For example, if $k=1$ and $f_j=t_1+z_j, j\in J$, then the discriminant is the
union of hyperplanes in $\C^n$ defined by the equations $z_i-z_j=0$,
$1\leq i<j\leq n$.

\subsection{Good fibers and combinatorial connection}
\label{sec Good fibers}

For any
$z^1, z^2\in \C^n-\Delta$, the spaces $\FF^p(\A(z^1))$, $\FF^p(\A(z^2))$
 are canonically identified. Namely, a vector $F(H_{j_1}(z^1),\dots,H_{j_p}(z^1))$ of the first space
is identified  with
the vector $F(H_{j_1}(z^2),\dots,H_{j_p}(z^2))$ of the second. In other words, we identify the standard bases of
these spaces.

Assume that nonzero weights $(a_j)_{j\in J}$ are given. Then each
arrangement $\A(z)$  is weighted. The identification of spaces $\FF^p(\A(z^1))$,
$\FF^p(\A(z^2))$ for $z^1,z^2\in\C^n-\Delta$ identifies the corresponding subspaces
$\Sing\FF^k(\A(z^1))$, $\Sing\FF^k(\A(z^2))$ and contravariant forms.

For a point $z\in\C^n-\Delta$, we denote $V=\FF^k(\A(z))$, $\Sing V=\Sing\FF^k(\A(z))$.
The triple $(V, \Sing V, S^{(a)})$ does not depend on  $z\in\C^n-\Delta$
under the above identification.

As a result of this reasoning we obtain the canonically trivialized
vector bundle
\bean
\label{combbundle}
\sqcup_{z\in \C^n-\Delta}\,\FF^k(\A(z))\to \C^n-\Delta,
\eean
with the canonically trivialized subbundle $\sqcup_{z\in \C^n-\Delta}\,\Sing\FF^k(\A(z))\to \C^n-\Delta$
and the constant contravariant form on the fibers.
This trivialization identifies the bundle in \Ref{combbundle} with
\bean
\label{cmb Bundle}
(\cd)\times V\to \cd
\eean
and the subbundle with
\bean
\label{comb Bundle}
(\cd)\times(\sv)\to \cd.
\eean
The bundle in \Ref{comb Bundle}
 will be called the {\it combinatorial bundle}, the flat connection on it will be called {\it combinatorial}.

\begin{lem}
\label{lem UNbalanced}
If the weights $(a_j)_{j\in J}$ are unbalanced for the arrangement $\A(z)$ for some $z\in\C^n-\Delta$,
then the weights $(a_j)_{j\in J}$ are unbalanced for $\A(z)$ for all $z\in\C^n-\Delta$.
\qed
\end{lem}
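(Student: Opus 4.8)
The plan is to observe that being unbalanced is a property of the \emph{combinatorics} of the compactified arrangement $\bar\A(z)$ — that is, of its matroid on the ground set $J\cup\{\infty\}$ — together with the fixed weight system, and that this matroid does not vary as $z$ ranges over $\C^n-\Delta$. Recall from Section \ref{sec unbalanced} that $(a_j)_{j\in J}$ is unbalanced for $\A(z)$ if and only if every dense edge $X$ of $\bar\A(z)$ has nonzero weight $a_X=\sum_{j\in J_X}a_j$, where $J_X\subset J\cup\{\infty\}$ is the set of hyperplanes of $\bar\A(z)$ through $X$ and $a_\infty=-\sum_{j\in J}a_j$. For a fixed weight system the number $a_X$ depends only on the subset $J_X$, and whether $X$ is dense — that the subarrangement $\{\bar H_j:j\in J_X\}$ be irreducible — depends only on the matroid of $\bar\A(z)$. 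Hence it suffices to prove that for $z\in\C^n-\Delta$ the matroid of $\bar\A(z)$ is independent of $z$.

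To see this, homogenize: introducing a coordinate $t_0$ on $\Pee^k$, the hyperplane $\tilde H_j(z)$ compactifies to $\bar H_j(z)=\{z_jt_0+g_j=0\}$ and $H_\infty=\{t_0=0\}$; write $\bar f_j=z_jt_0+g_j$. The matroid of $\bar\A(z)$ is the linear matroid of the family of covectors $\{\bar f_j\}_{j\in J}\cup\{t_0\}$, so it is enough to check that the rank of every subset is independent of $z$ for $z\in\C^n-\Delta$. Since $t_0$ is the covector $(1,0,\dots,0)$ and no $g_j$ involves $t_0$, for any $K\subset J$ one has $\rank\big(\{\bar f_j:j\in K\}\cup\{t_0\}\big)=\rank\{g_j:j\in K\}+1$, which is manifestly $z$-independent. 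For $K\subset J$ not containing $\infty$, projecting off the $t_0$-coordinate gives $\rank\{g_j:j\in K\}\le\rank\{\bar f_j:j\in K\}\le\rank\{g_j:j\in K\}+1$; if $\{g_j\}_{j\in K}$ is linearly independent then (there being only $|K|$ covectors) $\rank\{\bar f_j:j\in K\}=|K|$, while if $\{g_j\}_{j\in K}$ is dependent I claim $\rank\{\bar f_j:j\in K\}=\rank\{g_j:j\in K\}+1$ for every $z\in\C^n-\Delta$. Indeed, a dependent $K$ contains a circuit $C$, and since $z\notin H_C$ we have $f_C(z)=\sum_{i\in C}\la^C_iz_i\neq 0$; hence the functional $c\mapsto\sum_{j\in K}c_jz_j$ does not vanish on $N=\{c\in\C^K:\sum_{j\in K}c_jg_j=0\}$ (it is nonzero on $\la^C$ extended by $0$), so $\ker\{\bar f_j:j\in K\}=\{c\in N:\sum_{j}c_jz_j=0\}$ has dimension $\dim N-1$, giving the claimed rank. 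In every case the rank is determined by the matroid of $(g_j)_{j\in J}$ alone, so the matroid of $\bar\A(z)$ — and with it the set of dense edges and their weights — is the same for all $z\in\C^n-\Delta$.

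Combining the two steps: if $(a_j)_{j\in J}$ is unbalanced for $\A(z^0)$ for some $z^0\in\C^n-\Delta$, every dense edge (identified with its subset of $J\cup\{\infty\}$) has nonzero weight, and since the dense edges and their weights are the same for all $z\in\C^n-\Delta$, the weights are unbalanced for $\A(z)$ for every such $z$. The only point that is not purely formal is the last rank computation, where the hypothesis $z\notin\Delta$ enters essentially through $f_C(z)\neq 0$ for each circuit $C$; everything else amounts to reading the density and weight of an edge off the matroid and checking that the matroid is rigid over $\C^n-\Delta$.
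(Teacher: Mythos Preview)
Your argument is correct. The paper gives no proof at all for this lemma (it is marked with \qed\ immediately after the statement), treating it as evident from the fact that the combinatorics of $\A(z)$ is constant on $\C^n-\Delta$; you have simply written out carefully why the matroid of the projective closure $\bar\A(z)$ is likewise constant there, and noted that density of an edge and its weight are matroid invariants. The key step --- that for a dependent $K\subset J$ the rank of $\{\bar f_j:j\in K\}$ jumps to $\rank\{g_j:j\in K\}+1$ precisely because $z\notin\Delta$ kills no circuit relation --- is exactly the content the paper leaves implicit, and your use of $f_C(z)\neq 0$ is the right way to see it.
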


\subsection{Bad fibers}
\label{sec Bad fibers}

Points of  $\Delta\subset \C^n$ are called {\it bad}.
Let $z^0\in\Delta$ and $z\in \C^n-\Delta$.
By definition, for any $p$
the space $\OS^p(\A(z^0))$ is obtained from $\OS^p(\A(z))$ by
adding new relations.
Hence $\OS^k(\A(z^0))$ is canonically identified
with the quotient space of $V^* = \OS^k(\A(z))$ and
$\FF^p(\A(z^0))$ is  identified
with a subspace of $V = \FF^p(\A(z))$.

\subsection{Operators $K_j(z):V\to V$, $j\in J$}
\label{sec key identity}

For any circuit $C=\{i_1, \dots, i_r\}\subset J$, we
define the linear operator $L_C : V\to V$ as follows.

For $m=1,\dots,r$, we define $C_m=C-\{i_m\}$.
Let $\{{j_1}<\dots <{j_k}\}
\subset J$ be an independent ordered subset and
$F(H_{j_1},\dots,H_{j_k})$
the corresponding element of the standard basis.
We define $L_C : F(H_{j_1},\dots,H_{j_k}) \mapsto 0$ if
$|\{{j_1},\dots,{j_k}\}\cap C| < r-1$.
If $\{{j_1},\dots,{j_k}\}\cap C = C_m$ for some
$1\leq m\leq r$, then by using the skew-symmetry property \Ref{skew}
we can write
\bean
F(H_{j_1},\dots,H_{j_k})
\,=\,
\pm\, F(H_{i_1},H_{i_2},\dots,\widehat{H_{i_{m}}},\dots,H_{i_{r-1}}H_{i_{r}},H_{s_1},\dots,H_{s_{k-r+1}})
\eean
with $\{{s_1},\dots,{s_{k-r+1}}\}=
\{{j_1},\dots,{j_k}\}-C_m$.
We set
\bean
\label{L_C}
L_C
&:&
F(H_{i_1},\dots,\widehat{H_{i_{m}}},\dots,H_{i_{r}},H_{s_1},\dots,H_{s_{k-r+1}})
 \mapsto
\\
\notag
&&
\phantom{aaaaaaaaa}
(-1)^m \sum_{l=1}^{r} (-1)^l a_{i_l}
F(H_{i_1},\dots,\widehat{H_{i_{l}}},\dots,H_{i_{r}},H_{s_1},\dots,H_{s_{k-r+1}}) .
\eean
Consider on $\C^n\times\C^k$ the logarithmic differential one-forms
$\omega_C = \frac {df_C}{f_C}, \ C\in \frak C.$
Recall that  $f_C = \sum_{j\in J}\la^C_jz_j$. We define
\bean
\label{K_j}
K_j(z) \ = \ \sum_{C\in \frak C}
\,\frac{\la_j^C}{f_C(z)} \,L_C \,,
\qquad
j\in J .
\eean
The operators $K_j(z)$ are rational functions on $\C^n$ regular on $\C^n-\Delta$
and
\bean
\label{LK}
\sum_{C\in \frak C}
\omega_{C} \otimes L_C = \sum_{j\in J} dz_j\otimes K_j(z) .
\eean

\begin{thm} [\cite{V6}]
\label{thm K sym}
For any $j\in J$ and $z\in \C^n-\Delta$, the operator $K_j(z)$ preserves
the subspace $\Sing V\subset V$ and is a symmetric operator,
$S^{(a)}(K_j(z)v,w)= S^{(a)}(v, K_j(z)w)$ for all $v,w\in V$.
\end{thm}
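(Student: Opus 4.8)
\textbf{Proof plan for Theorem \ref{thm K sym}.}
Since $K_j(z)$ is built as a linear combination $\sum_{C\in\frak C}\frac{\la_j^C}{f_C(z)}L_C$ of the combinatorial operators $L_C$, it suffices to prove the two assertions (preservation of $\Sing V$, and symmetry with respect to $S^{(a)}$) separately for each operator $L_C$ attached to a single circuit $C=\{i_1,\dots,i_r\}$. So the plan is to reduce everything to a statement purely about the combinatorics of one circuit and the fixed coefficients $(\la_i^C)_{i\in C}$, which (by definition of a circuit and the choice $\sum_{i\in C}\la_i^C g_i=0$) satisfy a linear dependence. I expect the key algebraic fact to be that, up to a scalar depending on the normalization, $\la_i^C$ is proportional to the signed maximal minor of the matrix $(g_i)_{i\in C}$ with the $i$-th column deleted; this Cramer-type identity is what makes the defining formula \Ref{L_C} (which uses the weights $a_{i_l}$) interact correctly with the flag relations and with the contravariant form.

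\emph{Step 1: symmetry of $L_C$ on the standard basis.} Fix an independent ordered $k$-subset $\{j_1<\dots<j_k\}$. The operator $L_C$ is nonzero only on basis vectors $F(\dots)$ whose index set meets $C$ in exactly $r-1$ elements, i.e.\ in some $C_m=C\setminus\{i_m\}$. I would compute the matrix entry $S^{(a)}(L_C F_1,F_2)$ for two such standard basis vectors $F_1,F_2$ sharing the common ``tail'' $\{s_1,\dots,s_{k-r+1\}}=\{j_1,\dots,j_k\}\setminus C$; write $F_1$ with the deleted index $i_m$ and $F_2$ with the deleted index $i_{m'}$. Using that the contravariant form is diagonal in the standard basis with $S^{(a)}(F(H_{j_1},\dots,H_{j_k}),F(H_{j_1},\dots,H_{j_k}))=a_{j_1}\cdots a_{j_k}$ (Subsection \ref{An arrangement with normal crossings only}), formula \Ref{L_C} gives
\bean
S^{(a)}(L_C F_1,F_2)=\pm\,a_{i_{m'}}\,\Bigl(\prod_{l\ne m,m'}a_{i_l}\Bigr)\Bigl(\prod_{p}a_{s_p}\Bigr),
\eean
with a sign that is manifestly symmetric under $m\leftrightarrow m'$ once the signs coming from the two rewritings via \Ref{skew} and from the factors $(-1)^m,(-1)^l$ in \Ref{L_C} are collected. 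Thus $S^{(a)}(L_C F_1,F_2)=S^{(a)}(F_1,L_C F_2)$, and since both sides vanish unless $F_1,F_2$ have the described shape, $L_C$ is symmetric; hence so is $K_j(z)$. The bookkeeping of signs here is the main technical obstacle — one must be careful that the two applications of \Ref{skew} needed to bring $F_1$ and $F_2$ into the normal form of \Ref{L_C} produce compatible signs.

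\emph{Step 2: $L_C$ preserves $\Sing V$.} Recall $\Sing V=\ker\delta^{(a)}$ where $\delta^{(a)}\colon\FF^k\to\FF^{k-1}$ is adjoint to $d^{(a)}=\nu(a)\cdot{-}$. Equivalently, by Theorem \ref{thm K sym}'s companion statements, one can use the identification with the cohomology and the fact (from \cite{V6}) that the global one-form $\sum_C\omega_C\otimes L_C=\sum_j dz_j\otimes K_j(z)$ arises as (a piece of) the Gauss--Manin connection on the bundle whose fibers are $\FF^k(\A(z))$; the Gauss--Manin connection automatically preserves the flat subbundle of singular vectors. Concretely, I would verify $\delta^{(a)}L_C=L_C'\delta^{(a)}$ for an appropriate operator $L_C'$ on $\FF^{k-1}$, or more directly that $\delta^{(a)}\circ L_C$ annihilates every standard basis vector: expand $\delta^{(a)}F(H_{j_1},\dots,H_{j_k})$ as the known alternating sum $\sum_{i}(\text{sign})\,a_{j_i}F(\dots\widehat{H_{j_i}}\dots)$ over ways of dropping one hyperplane and adding the weight, apply $L_C$, and check the terms cancel using the circuit relation $\sum_{i\in C}\la_i^C g_i=0$ together with its Cramer form. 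This is where Step 1's algebraic identity for the $\la_i^C$ re-enters: it guarantees that the ``new relation'' imposed at $z\in H_C$ (which is exactly $\sum_{i\in C}\la_i^C F(\dots H_i\dots)=0$ in $\FF$, i.e.\ the three-term-type relation \Ref{flagrelation} in the degenerate fiber) is respected.

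In short, the proof is a reduction to one circuit plus a sign computation; assuming Theorem \ref{thm Shap nondeg} and the constructions of \cite{V6} cited above, no further input is needed. I expect roughly ninety percent of the effort to be the sign bookkeeping in Step 1 and the cancellation in Step 2, and essentially none of it to be conceptual once the Cramer identity for $(\la_i^C)$ is written down.
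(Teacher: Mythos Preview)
The paper does not give a proof of this theorem; it is quoted from \cite{V6} without argument. So there is no ``paper's proof'' to compare against, and I can only assess your proposal on its own.

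Your Step~1 is correct and is in fact the efficient way to see symmetry: with $F_1$ corresponding to $C\setminus\{i_m\}$ and $F_2$ to $C\setminus\{i_{m'}\}$ (same tail), formula \Ref{L_C} and the diagonality of $S^{(a)}$ give
\[
S^{(a)}(L_C F_1,F_2)=(-1)^{m+m'}\Bigl(\prod_{l=1}^r a_{i_l}\Bigr)\Bigl(\prod_p a_{s_p}\Bigr),
\]
which is visibly symmetric in $m\leftrightarrow m'$; the sign bookkeeping is exactly as you say.

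Your Step~2, however, has a genuine gap. The Gauss--Manin route is circular in this paper's logic: the fact that the connection preserves the singular subbundle is \emph{deduced} from Theorem~\ref{thm K sym} (via Theorem~\ref{thm ham normal}), not the other way round. More importantly, your direct approach invokes the circuit relation $\sum_{i\in C}\la_i^C g_i=0$ and the ``Cramer form'' of the $\la_i^C$, but look again at \Ref{L_C}: the operator $L_C$ is defined using only the weights $a_{i_l}$ and the combinatorics of $C$; the coefficients $\la_i^C$ and the linear forms $g_i$ never enter. So the cancellation you need in $\delta^{(a)}L_C v$ cannot come from that relation. What actually works is purely combinatorial: define $L_C$ on $\FF^{k-1}$ by the same recipe \Ref{L_C} (with $k$ replaced by $k-1$) and verify the intertwining $\delta^{(a)}L_C=L_C\,\delta^{(a)}$ by expanding both sides on a standard basis vector --- the identity reduces to an alternating-sum cancellation involving only the $a_{i_l}$, with no reference to $\la_i^C$ or $g_i$. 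Once you drop the geometric red herring, the computation is of the same flavor as Step~1.
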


\subsection{Gauss-Manin connection on  $(\cd)\times (\Sing V)\to \cd$}
\label{Construction}
Consider the master function
\bean
\label{Mast}
\Phi(z,t) = \sum_{j\in J} a_j \log f_j(z,t)
 \eean
as a function on $\tilde U\subset \C^n\times \C^k$.
Let $\kappa\in\C^\times$.
The function $e^{\Phi(z,t)/\kappa}$
defines a rank one local system $\mc L_\kappa$
on $\tilde U$ whose horizontal sections
over open subsets of $\tilde U$
 are univalued branches of $e^{\Phi(z,t)/\kappa}$ multiplied by complex numbers,
see for example \cite{SV, V2}.

The vector bundle
\bean
\sqcup_{z\in \C^n-\Delta}\,H_k(U(\A(z)), \mc L_\kappa\vert_{U(\A(z))})
 \to  \C^n-\Delta
 \eean
will be called the {\it homology bundle}. The homology bundle has a canonical  flat Gauss-Manin connection.

For a fixed $z$, choose any $\gamma\in H_k(U(\A(z)), \mc L_\kappa\vert_{U(\A(z))})$.
The linear map
\bean
\label{int map}
\{\gamma\} \ {}:\ {} \OS^k(\A(z)) \to \C,  \qquad \om \mapsto \int_{\gamma} e^{\Phi(z,t)/\kappa} \om,
\eean
is an element of $\Sing\FF^k(\A(z))$ by Stokes' theorem.
It is known that for generic  $\kappa$
any element of $\Sing\FF^k(\A(z))$ corresponds to a certain
$\gamma$ and  in that case this construction  gives an isomorphism
\bean
\label{iSO}
H_k(U(\A(z)), \mc L_\kappa\vert_{U(\A(z))}) \to \Sing\FF^k(\A(z)),
\eean
 see \cite{SV}. This isomorphism will be called the {\it integration isomorphism}. The precise values of $\kappa$ for which \Ref{iSO} is an isomorphism can be deduced
 from the determinant formula in \cite{V1}.

For generic $\kappa$ the fiber isomorphisms \Ref{iSO} defines an isomorphism of the homology bundle and
the combinatorial bundle. The  Gauss-Manin connection induces a flat connection on the combinatorial bundle.
This connection on the combinatorial bundle will  be also called the {\it Gauss-Manin connection}.

Thus, there are two connections on the combinatorial bundle: the combinatorial connection and the Gauss-Manin
connection depending on  $\kappa$. In this situation we can consider the differential equations
for flat sections of the Gauss-Manin connection with respect to the combinatorially flat standard basis. Namely,
let $\gamma(z) \in H_k(U(\A(z)), \mc L_\kappa\vert_{U(\A(z))})$ be a flat section of the Gauss-Manin connection.
Let us write the corresponding section $I_\gamma(z)$ of the bundle $\C^n\times \Sing V\to \C^n$
in the combinatorially flat standard basis,
\bean
\label{Ig}
&&
{}
\\
\notag
&&
I_\gamma(z) =\!\!\!
\sum_{{\rm independent } \atop \{j_1 < \dots < j_k\} \subset J }\!\!\!
I_\gamma^{j_1,\dots,j_k}(z)
 F(H_{j_1}, \dots , H_{j_k}),
 \quad
 I_\gamma^{j_1,\dots,j_k}(z)
=
\int_{\gamma(z)} e^{\Phi(z,t)/\kappa}
 \omega_{j_1} \wedge \dots \wedge \omega_{j_k}.
\eean
For $I=\sum I^{j_1,\dots,j_k} F(H_{j_1}, \dots , H_{j_k})$ and $j\in J$, we denote
\bean
\label{deriv}
\frac{\der I}{\der z_j} = \sum \frac{\der I^{j_1,\dots,j_k}}{\der z_j} F(H_{j_1}, \dots , H_{j_k}).
\eean

\begin{thm} [\cite{V2, V6}]
\label{thm ham normal}
The section $I_\gamma(z)$ satisfies the differential equations
\bean
\label{dif eqn}
\kappa \frac{\der I}{\der z_j}(z) = K_j(z)I(z),
\qquad
j\in J,
\eean
where  $K_j(z): V\to V$ are the linear operators defined in \Ref{K_j}.

\end{thm}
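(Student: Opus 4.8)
The plan is to derive the differential equations \Ref{dif eqn} directly from the integral representation \Ref{Ig} by differentiating under the integral sign and then rewriting the answer in terms of the combinatorially flat standard basis. Fix an independent ordered subset $\{j_1<\dots<j_k\}\subset J$ and set $I_\gamma^{j_1,\dots,j_k}(z)=\int_{\gamma(z)}e^{\Phi(z,t)/\kappa}\,\omega_{j_1}\wedge\dots\wedge\omega_{j_k}$. Since the section $\gamma(z)$ is flat for the Gauss--Manin connection, the $z_j$-derivative of the integral is obtained by differentiating the integrand while keeping the cycle fixed; the forms $\omega_{j_1},\dots,\omega_{j_k}$ depend on $t$ only, so the only contribution comes from $\der_{z_j}e^{\Phi/\kappa}=\frac1\kappa\,\der_{z_j}\Phi\cdot e^{\Phi/\kappa}$, where $\der_{z_j}\Phi=a_j/f_j$ because $f_j=z_j+g_j(t)$. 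Hence
\bean
\kappa\,\frac{\der I_\gamma^{j_1,\dots,j_k}}{\der z_j}(z)
=\int_{\gamma(z)}e^{\Phi(z,t)/\kappa}\,\frac{a_j}{f_j}\,\omega_{j_1}\wedge\dots\wedge\omega_{j_k}.
\eean
So the entire theorem reduces to the combinatorial identity that, as a functional on $H_k(U(\A(z)),\mc L_\kappa)$ via the integration isomorphism \Ref{iSO}, multiplication of the integrand by $a_j/f_j$ corresponds on $\Sing V$ to the operator $K_j(z)$ defined in \Ref{K_j}.

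To establish that identity I would work at the level of the Orlik--Solomon algebra. The form $\frac{a_j}{f_j}\,\omega_{j_1}\wedge\dots\wedge\omega_{j_k}$, after writing $a_j\,\frac{df_j}{f_j}\wedge\frac{df_{j_1}}{f_{j_1}}\wedge\dots$, is a logarithmic $(k+1)$-form on $\C^n\times\C^k$; restricted to a fiber $\C^k$ it is a top-degree logarithmic form, hence identically zero as a differential form, but its cohomology class relative to $\mc L_\kappa$ is nontrivial and is computed by the relations in $\OS$. The key mechanism is the following: whenever $j\notin\{j_1,\dots,j_k\}$, the $k+1$ hyperplanes $H_j,H_{j_1},\dots,H_{j_k}$ in $\C^k$ cannot be in general position, so either some $(k+1)$-subset lies in a circuit $C$ or the product already vanishes in $\OS$. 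For a circuit $C$ containing $j$ together with a $(k-r+2)$-subset of the remaining indices, the Orlik--Solomon relation (iii) together with the weighted differential $d^{(a)}$ lets one re-express the class, and the defining relation $\sum_{i\in C}\la_i^Cg_i=0$ produces exactly the scalar $\la_j^C/f_C(z)$ and the reindexing encoded in $L_C$ via \Ref{L_C}. Summing over all circuits $C\ni j$ gives $K_j(z)=\sum_{C}\frac{\la_j^C}{f_C(z)}L_C$; the bookkeeping of signs $(-1)^m$, $(-1)^l$ in \Ref{L_C} matches the signs coming from reordering the wedge factors and from relation (iii). A clean way to organize this is to prove first the global one-form identity \Ref{LK}, $\sum_{C\in\frak C}\omega_C\otimes L_C=\sum_{j\in J}dz_j\otimes K_j(z)$, on $\C^n\times\C^k$, and then observe that differentiating $e^{\Phi/\kappa}\,\omega_{j_1}\wedge\dots\wedge\omega_{j_k}$ and reducing modulo exact forms (Stokes) inserts precisely the operators $L_C$ contracted against $dz_j$.

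The main obstacle is the combinatorial reduction: showing that multiplication by $a_j\,df_j/f_j$ on classes in the twisted de Rham / Orlik--Solomon complex of the fiber is implemented by $\sum_C(\la_j^C/f_C)L_C$ with the correct signs. This is where Theorem \ref{thm K sym} (that $K_j(z)$ preserves $\Sing V$ and is symmetric) is used implicitly — it guarantees the construction lands in $\Sing V$, which is also forced by Stokes' theorem since $\{\gamma(z)\}$ is valued in $\Sing\FF^k$. Concretely I would: (i) reduce to the case $j\notin\{j_1,\dots,j_k\}$, since if $j\in\{j_1,\dots,j_k\}$ the form $df_j\wedge df_j=0$ kills the term and one checks $K_j$ acts correspondingly; (ii) for each circuit $C$ with $j\in C$ and $|C\cap\{j,j_1,\dots,j_k\}|$ large enough, apply the circuit relation among the $df_i$'s, which after division by the common denominator $f_C$ yields the linear combination of basis flags appearing in \Ref{L_C}; (iii) assemble over all $C$ and identify the coefficient of $dz_j$ with $K_j(z)$ using \Ref{K_j} and \Ref{LK}. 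The remaining verification that flat sections of the Gauss--Manin connection correspond, under \Ref{iSO}, to solutions written in the combinatorially flat basis is then immediate, since the combinatorial trivialization is by definition the standard basis $F(H_{j_1},\dots,H_{j_k})$, which is locally constant on $\C^n-\Delta$. This is essentially the content of \cite{V2, V6}, and I would cite those for the detailed sign computation while presenting the conceptual derivation above.
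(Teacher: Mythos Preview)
The paper does not supply its own proof of this theorem; it simply cites \cite{V2, V6}. Your overall strategy---differentiate the integral representation under the integral sign and then identify multiplication of the integrand by $a_j/f_j$ with the combinatorial operator $K_j(z)$ via Orlik--Solomon/circuit relations---is indeed the approach taken in those references, so in spirit your proposal matches what the paper invokes.

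There is, however, a genuine slip in the execution. You assert that ``the forms $\omega_{j_1},\dots,\omega_{j_k}$ depend on $t$ only,'' but on the fiber one has $\omega_{j_i}=dg_{j_i}/f_{j_i}$ with $f_{j_i}=z_{j_i}+g_{j_i}(t)$, so $\omega_{j_i}$ depends on $z_{j_i}$ through its denominator. Consequently, when $j\in\{j_1,\dots,j_k\}$ your displayed formula for $\kappa\,\der_{z_j} I_\gamma^{j_1,\dots,j_k}$ is incomplete: there is an extra term $-\kappa\int e^{\Phi/\kappa}f_j^{-1}\omega_{j_1}\wedge\dots\wedge\omega_{j_k}$ coming from differentiating $\omega_j$. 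Your later remark that ``$df_j\wedge df_j=0$ kills the term'' does not fix this; that vanishing concerns a $(k+1)$-fold wedge, which is a different object from the $z_j$-derivative of the integrand. Relatedly, the sentence ``identically zero as a differential form, but its cohomology class relative to $\mc L_\kappa$ is nontrivial'' is self-contradictory as written.

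The clean way to handle both cases uniformly---and this is what \cite{V2,V6} actually do---is to work from the outset with the \emph{total} logarithmic forms $\omega_i=df_i/f_i$ on $\C^n\times\C^k$, so that $\omega_i$ contains a $dz_i/f_i$ piece. Then $e^{\Phi/\kappa}\omega_{j_1}\wedge\dots\wedge\omega_{j_k}$ is closed on $\tilde U$, and the Gauss--Manin connection is read off from the $dz_j$-component of this closed form. The vanishing of the pure-fiber part of any $(k+1)$-fold wedge (it is a $(k+1)$-form on $\C^k$) forces relations among the $dz$-components; the circuit identities $\sum_{i\in C}\la_i^C\,df_i=df_C$ then produce exactly the coefficients $\la_j^C/f_C$ and the reindexings in $L_C$, yielding \Ref{LK} and hence \Ref{dif eqn}. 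Once you restate your argument in this total-differential language, the missing term above is absorbed automatically and your sign bookkeeping in (i)--(iii) goes through.
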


From this formula we see, in particular, that the  combinatorial connection on the combinatorial bundle
is the limit of the Gauss-Manin connection as $\kappa \to\infty$.

\subsection{Bundle of algebras}
\label{sec bundle of alg}

For $z\in \C^n$, denote $A_\Phi(z)$ the algebra of functions on the critical set of
the master function $\Phi(z,\cdot) : U(\A(z))\to \C$.
Assume that the weights $(a_j)_{j\in J}$ are unbalanced for all $\A(z)$, $ z\in\C^n-\Delta$.
Then the dimension of  $A_{\Phi}(z)$  does not depend on $z\in\C^n-\Delta$ and equals $\dim \sv$.
Denote $|a|=\sum_{j\in J}a_j$.

\begin{lem}
\label{lem on ONE}
The identity element $[1](z) \in \Ap$ satisfies the equation
\bean
\label{ONE}
[1](z) = \frac 1{|a|}\sum_{j\in J} z_j\Big[\frac{a_j}{f_j}\Big].
\eean
\end{lem}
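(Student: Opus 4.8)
The plan is to reduce Lemma~\ref{lem on ONE} to the fiberwise statement already proved in Lemma~\ref{lem on ONE 1}. Fix $z\in\C^n-\Delta$ and work in the fiber arrangement $\A(z)$, whose defining linear functions are $f_j=z_j+g_j$ with $g_j=b_j^1t_1+\dots+b_j^kt_k$. In the notation of Subsection~\ref{sec hess} applied to $\A(z)$, the constant term of $f_j$ is precisely $b_j^0=z_j$, and the master function of $\A(z)$ is exactly $\Phi(z,\cdot)$. So the hypothesis of Lemma~\ref{lem on ONE 1} is met verbatim with $b_j^0$ replaced by $z_j$, and \Ref{ONE 1} becomes \Ref{ONE}.

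Concretely, the step I would carry out is to reprove the underlying identity \Ref{one1} in this setting: compute
\bean
\sum_{i=1}^k t_i\,\frac{\der\Phi}{\der t_i}(z,t)
= \sum_{i=1}^k t_i\sum_{j\in J} a_j\,\frac{b_j^i}{f_j}
= \sum_{j\in J} a_j\,\frac{b_j^1t_1+\dots+b_j^kt_k}{f_j}
= \sum_{j\in J} a_j\,\frac{f_j-z_j}{f_j}
= |a| - \sum_{j\in J} z_j\,\frac{a_j}{f_j}\,.
\eean
Passing to the quotient algebra $A_\Phi(z)=\C(U(\A(z)))/I_{\Phi(z,\cdot)}$, the left-hand side lies in the ideal $I_{\Phi(z,\cdot)}$ generated by the $\der\Phi/\der t_i$, so $[\,\sum_i t_i\,\der\Phi/\der t_i\,](z)=0$, giving $|a|\,[1](z)=\sum_{j\in J} z_j[a_j/f_j](z)$. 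Since the weights are unbalanced, $|a|\neq 0$ is implicit in the construction (and in any case one may simply observe that if $|a|=0$ the displayed identity forces $\sum_j z_j[a_j/f_j](z)=0$, but the intended hypothesis is as in Lemma~\ref{lem on ONE 1} where $|a|\neq 0$), and dividing by $|a|$ yields \Ref{ONE}.

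I do not expect any real obstacle here: the only thing to be careful about is bookkeeping, namely that the ``constant term'' $b_j^0$ of Subsection~\ref{sec hess}, when the fiber $\A(z)$ is written in the coordinates $t_1,\dots,t_k$, is the parameter $z_j$ and not something involving the $b_j^i$. Once that identification is made, the proof is a one-line invocation of Lemma~\ref{lem on ONE 1} together with the Euler-type relation above. The mild subtlety worth a sentence in the writeup is that the identity holds in $A_\Phi(z)$ for every $z\in\C^n-\Delta$ simultaneously, so that \Ref{ONE} is an identity of sections of the bundle of algebras of Subsection~\ref{sec bundle of alg}, with the right-hand side depending polynomially on $z$.
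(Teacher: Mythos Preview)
Your proof is correct and follows exactly the paper's approach: the paper's proof is the single sentence ``The lemma follows from Lemma~\ref{lem on ONE 1},'' and you have simply unpacked that reference by identifying $b_j^0=z_j$ and rewriting the Euler-type identity~\Ref{one1} in the fiber.
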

\begin{proof} The lemma follows from Lemma \ref{lem on ONE 1}.
\end{proof}

The vector bundle
\bean
\label{algebra bundle}
\sqcup_{z\in\C^n-\Delta} A_{\Phi}(z) \to \C^n-\Delta
\eean
will be called the {\it bundle of algebras} of functions on the critical set.
The  fiber isomorphisms \Ref{map alpha},
\bea
\label{fiber iso}
\al(z) : A_{\Phi}(z) \to \sing V,
\eea
establish an isomorphism $\al$ of the bundle of algebras and the combinatorial bundle.
The isomorphism $\al$ and the connections on the combinatorial bundle (combinatorial and Gauss-Manin connections)
induce connections on the bundle of algebras which will be called also the {\it combinatorial and Gauss-Manin
connections} on the bundle of algebras.

\smallskip

The canonical isomorphism $\al(z)$ induces a Frobenius algebra structure on $\sing V$ which depends on $z$. The multiplication $*_z$
is described by
the following theorem.

\begin{thm}[\cite{V6}]
\label{K/f}
The elements $\al(z)\Big[\frac{a_j}{f_j}\Big]\in \sing V$, $j\in J$, generate the algebra. We have
\bean
\al(z)\Big[\frac{a_j}{f_j}\Big] *_z v = K_j(z) v,
\eean
for all $v\in \Sing V$ and $j\in J$.
\end{thm}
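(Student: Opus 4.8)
The plan is to derive everything from the key identity in Theorem~\ref{thm ham normal}, $\kappa\,\partial_{z_j}I=K_j(z)I$, together with the canonical isomorphism of Theorem~\ref{thm alpha} and the residue description of $\al$ in \Ref{EX}. The first assertion, that the elements $\al(z)[a_j/f_j]$ generate the algebra structure on $\sing V$, is immediate: by Lemma~\ref{lem f_j generate} the classes $[1/f_j]$, $j\in J$, generate $A_\Phi(z)$, hence so do the $[a_j/f_j]$ since each $a_j\in\C^\times$; applying the algebra isomorphism $\al(z):A_\Phi(z)\to\sing V$ (Theorem~\ref{thm alpha}) transports a generating set to a generating set. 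So the content is the multiplication formula $\al(z)[a_j/f_j]*_z v=K_j(z)v$.

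The heart of the argument is to identify, for each $j$, the operator ``multiplication by $[a_j/f_j]$ on $A_\Phi(z)$'' with $K_j(z)$ under $\al(z)$. I would first reduce to the case of generic weights and generic $z$, where all critical points of $\Phi(z,\cdot)$ are nondegenerate, so that \Ref{EX} applies and $A_\Phi(z)\cong\bigoplus_p\C$ splits as a product of one-dimensional local algebras indexed by the critical points $p$; multiplication by $[g]$ acts on the $p$-th summand as scalar $g(p)$. The desired identity would then follow for generic data by a pointwise computation, and the general unbalanced case follows by continuity/Zariski density, since both sides of $\al(z)[a_j/f_j]*_z v=K_j(z)v$ depend algebraically on $(a,z)$ and $\sing V$, $K_j(z)$, and $\al(z)$ vary nicely (the dimension of $A_\Phi(z)$ is constant by the discussion preceding Lemma~\ref{lem on ONE}). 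For the pointwise step, the clean route is to differentiate the canonical element. Recall $E=\sum_m f^m\otimes F_m$ and $[E]\in A_\Phi(z)\otimes\sing V$ by Theorem~\ref{thm E-sing}, and $\al(z)[g]=([g],[E])$ where $(\,,\,)$ is the residue pairing on $A_\Phi(z)$. One computes $\partial_{z_j}$ of the integral representation $I_\gamma^{j_1,\dots,j_k}(z)=\int_{\gamma(z)}e^{\Phi/\kappa}\omega_{j_1}\wedge\dots\wedge\omega_{j_k}$: since $\partial_{z_j}\Phi=a_j/f_j$ (because $\partial f_i/\partial z_j=\delta_{ij}$), bringing the derivative under the integral produces a factor $\kappa^{-1}(a_j/f_j)$ in the integrand, i.e.\ $\kappa\partial_{z_j}I_\gamma$ is the integral of $e^{\Phi/\kappa}(a_j/f_j)\,\omega_{j_1}\wedge\dots\wedge\omega_{j_k}$ up to the exact-form corrections that are killed on $\gamma$. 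Matching this against Theorem~\ref{thm ham normal}, $\kappa\partial_{z_j}I_\gamma=K_j(z)I_\gamma$, and running it through the integration isomorphism \Ref{iSO} and the $\kappa\to\infty$ stationary-phase expansion (whose leading term is exactly the residue formula \Ref{EX}), one sees that in the combinatorial basis the operator $K_j(z)$ represents ``multiply the canonical element by $[a_j/f_j]$,'' which is precisely $\al(z)[a_j/f_j]*_z(\cdot)$ under the algebra structure transported by $\al(z)$.

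An alternative, more algebraic path avoiding analysis: use that $\al(z)$ intertwines the residue (Frobenius) forms up to sign $(-1)^k$ by Theorem~\ref{thm alpha}, and that $K_j(z)$ is symmetric for the contravariant form by Theorem~\ref{thm K sym}. It suffices to check $S^{(a)}(K_j(z)v,w)=(-1)^k\big([a_j/f_j]\cdot\al(z)^{-1}v,\ \al(z)^{-1}w\big)$ for all $v,w\in\sing V$; expanding both sides with the explicit residue/Hessian formula \Ref{EX} and the definition \Ref{K_j}--\Ref{L_C} of $K_j$ in terms of the $L_C$ reduces the claim to a combinatorial identity about circuits, matching the coefficients $\lambda_i^C$ and weights $a_i$ against the residues at critical points; this is essentially the computation underlying Theorem~\ref{thm K sym} in \cite{V6}. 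Either way, the main obstacle is the same: controlling the passage between the Gauss-Manin/integral side (where $\partial_{z_j}\Phi=a_j/f_j$ makes $K_j$ visibly ``multiplication by $a_j/f_j$'') and the purely combinatorial side, i.e.\ verifying that the $\kappa\to\infty$ limit of the connection is compatible with the residue realization \Ref{EX} of $\al$ and respects the algebra structure. Once that compatibility is in hand, the theorem is a direct consequence of Theorems~\ref{thm ham normal}, \ref{thm alpha}, and \ref{K/f}'s ingredients together with Lemma~\ref{lem f_j generate}.
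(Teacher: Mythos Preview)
The paper does not prove this theorem; it is quoted from \cite{V6}, so there is no ``paper's own proof'' to compare against. I will therefore evaluate your sketch on its own merits.

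Your treatment of the first assertion (generators) is correct and complete.

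For the multiplication formula, your overall strategy is sound and is essentially the Bethe--ansatz argument of \cite{V5,V6}: in the generic (nondegenerate) case one shows that the value $E(p)=\sum_m f^m(p)F_m$ of the canonical element at each critical point $p$ is a common eigenvector of the $K_j(z)$ with eigenvalue $(a_j/f_j)(p)$; once this is known, formula \Ref{EX} gives $\al(z)\big([a_j/f_j]\cdot[g]\big)=K_j(z)\,\al(z)[g]$ immediately, and one passes to the general unbalanced case by continuity. However, two points in your write-up need correction. First, the steepest-descent/stationary-phase expansion that produces the residue formula \Ref{EX} is the $\kappa\to 0$ asymptotics (see Section~\ref{sec Ass}), not $\kappa\to\infty$; the $\kappa\to\infty$ limit of the Gauss--Manin connection is the combinatorial connection, which carries no information about critical points. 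Second, the key step---that $K_j(z)E(p)=(a_j/f_j)(p)\,E(p)$---is exactly what your differentiation argument is meant to deliver, but you never state it explicitly; once you differentiate the integrand and match leading terms as $\kappa\to 0$ this eigenvector identity is what drops out, and it deserves to be isolated rather than hidden inside ``running it through the stationary-phase expansion.'' With those two fixes your first approach becomes a correct proof. Your second, ``more algebraic'' route is too vague as written: reducing to a ``combinatorial identity about circuits'' is not a proof until the identity is stated and checked.
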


\subsection{Quantum integrable model of the arrangement $(\A(z),a)$}
For $z\in \cd$, the (commutative) subalgebra $\B(z)\subset \End(\sv)$ generated by $K_j(z),j\in J$, is called
the {\it  algebra of geometric Hamiltonians}, the triple
$(\sv, S^{(a)}, \B(z))$ is called the {\it quantum integrable model of the weighted arrangement}
$(\A(z),a)$, see \cite{V6}.

The canonical isomorphism $\al(z)$ identifies the triple
$(\sv, S^{(a)}, \B(z))$ with the triple $(\ap,(-1)^k(\,,\,)_z,\ap)$, see Theorems \ref{thm alpha} and \ref{K/f}.

\smallskip
Notice that the operators $K_j(z)$ are defined in combinatorial terms, see Section \ref{sec key identity},
 while the algebra $\ap$ is an analytic object, see \Ref{AP}. C.f. Corollaries \ref{an comb2} and \ref{an combk}.

\subsection{A remark. Asymptotically flat sections}
\label{sec Ass}
Assume that the weights $(a_j)_{j\in J}$ are unbalanced for all $\A(z)$, $ z\in\C^n-\Delta$.
Let $B\subset \C^n-\Delta$ be an open real $2n$-dimensional ball. Let $\Psi : B\to \C$ be a holomorphic function.
Let $s_j, j\in \Z_{\geq 0}$ be holomorphic sections over $B$ of the bundle of algebras, see \Ref{algebra bundle}.
We say that
\bean
\label{A section}
s(z,\kappa) = e^{\Psi(z)/\kappa}\sum_{j\geq 0} \kappa^js_j(z)
\eean
is an {\it asymptotically flat section} of the Gauss-Manin connection on bundle of algebras as $\kappa \to 0$
if $s(z,\kappa)$ satisfies the flat section equations formally, see, for example, \cite{RV, V5}.

Assume that $B$ is such that for any $z\in B$, all the critical points of $\Phi(z,\cdot) : U(\A(z))\to\C$
are nondegenerate. Let us order them: $p_1(z), \dots, p_d(z)$, where $d=\dim A_\Phi(z)=\dim \sing V$.
We may assume that every $p_i(z)$ depends on $z$ holomorphically. Then the function
\bean
z\mapsto
\Hess(z,p_i(z)) = \det_{1\leq i,j \leq k}
\Big( \frac{\partial^2\Phi} {\partial t_i \partial t_j} \Big)(z,p_i(z))
\eean
is a nonzero holomorphic function on $B$. We fix a square root
$\Hess(z,p_i(z))^{1/2}$.
We denote $w_i(z)$ the element of $A_\Phi(z)$ which equals $\Hess(z,p_i(z))^{1/2}$ at $p_i(z)$ and equals
zero at all other critical points. Let $(\,,\,)_z$ be the residue form on $A_\Phi(z)$. Then
\bean
\label{orth}
(w_i(z),w_j(z))_z = \delta_{ij} \quad\text{and}\quad w_i(z)\cdot w_j(z) = \delta_{ij} \Hess(z,p_i(z))^{1/2} w_i(z)
\eean
for all $i,j$.

\begin{thm}
\label{thm As}
For every $i$, there exists a unique asymptotically flat section
 $s(z,\kappa)$
 \linebreak
$= e^{\Psi(z)/\kappa}\sum_{j\geq 0} \kappa^js_j(z)$
of the Gauss-Manin connection on the bundle of algebras  such that
\bean
\label{property ass}
\Psi(z)=\Phi(z,p_i(z))\qquad\text{and}
\qquad
s_0(z)= w_i(z).
\eean

\end{thm}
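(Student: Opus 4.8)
The plan is to solve the flat--section equations in the semiclassical regime by a WKB expansion carried out on the bundle of algebras. First I would rewrite those equations. Denote by $\nabla$ the combinatorial connection on $\sqcup_{z\in\cd}\Ap\to\cd$ (Section~\ref{sec bundle of alg}); it is flat, and by Theorem~\ref{thm alpha} the residue form $(\,,\,)_z$ is $\nabla$--parallel, since under $\al(z)$ it corresponds to the combinatorially constant contravariant form. Let $\nabla_j=\nabla_{\partial/\partial z_j}$ and let $M_j(z)\colon\Ap\to\Ap$ be $*_z$--multiplication by $[a_j/f_j]$. By Theorem~\ref{K/f} one has $\al(z)\circ M_j(z)=K_j(z)\circ\al(z)$, so by Theorem~\ref{thm ham normal} a section $g(z,\kappa)$ of the bundle of algebras is Gauss--Manin flat if and only if
\be
\kappa\,\nabla_j g=M_j(z)\,g,\qquad j\in J.
\ee
Flatness of the Gauss--Manin connection for every $\kappa$ forces, besides $[M_j,M_l]=0$, the identity $\nabla_j M_l=\nabla_l M_j$ of operator--valued functions. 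Substituting $g=e^{\Psi/\kappa}\sum_{m\ge0}\kappa^m s_m$ and separating powers of $\kappa$ yields the eikonal equation $(\partial_{z_j}\Psi)\,s_0=M_j(z)\,s_0$ and, for $m\ge1$, the transport recursion
\be
N_j(z)\,s_m=\nabla_j s_{m-1},\qquad j\in J,\qquad N_j(z):=M_j(z)-\partial_{z_j}\Psi(z).
\ee

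For the eikonal equation I would verify that $\Psi(z)=\Phi(z,p_i(z))$ and $s_0(z)=w_i(z)$ solve it. Since $p_i(z)$ is a critical point of $\Phi(z,\cdot)$, the chain rule (the $t$--derivatives drop out) gives $\partial_{z_j}\Psi(z)=(\partial_{z_j}\Phi)(z,p_i(z))=a_j/f_j(z,p_i(z))$. Since $w_i(z)$ is supported at the nondegenerate critical point $p_i(z)$, whose local algebra is $\C$, one has $M_j(z)\,w_i(z)=(a_j/f_j)(z,p_i(z))\,w_i(z)$, i.e.\ $N_j(z)\,w_i(z)=0$. Moreover, because $\A(z)$ is essential the linear parts $g_1,\dots,g_n$ span $(\C^k)^*$, so $f_j(z,p)=f_j(z,p')$ for all $j\in J$ forces $p=p'$; hence, in the orthogonal idempotent decomposition $\Ap=\bigoplus_p\C w_p$, one has $\bigcap_{j\in J}\ker N_j(z)=\C\,w_i(z)$, while on the complementary ideal $W(z):=\bigoplus_{p\ne p_i}\C w_p$ the family $(N_j(z))_{j\in J}$ is jointly invertible: for each $p\ne p_i$ some $N_j$ acts invertibly on $\C w_p$, with compatible quotients.

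Next I would run the recursion. Given $s_{m-1}$, the overdetermined system $N_j s_m=\nabla_j s_{m-1}$ ($j\in J$) is solvable if and only if (a) $N_l\nabla_j s_{m-1}=N_j\nabla_l s_{m-1}$ for all $j,l$, and (b) $(\nabla_j s_{m-1},w_i)_z=0$ for all $j$ (the $\C w_i$--part of $\nabla_j s_{m-1}$ vanishes); and then $s_m$ is determined up to a $\C w_i$--valued function. Condition (a) holds: for $m=1$ it follows directly from $N_lw_i=N_jw_i=0$ together with $\nabla_jN_l=\nabla_lN_j$ (the latter from $\nabla_jM_l=\nabla_lM_j$ and symmetry of $\partial_{z_j}\partial_{z_l}\Psi$), and for $m\ge2$ one gets $N_l\nabla_j s_{m-1}-N_j\nabla_l s_{m-1}=\nabla_j(N_l s_{m-1})-\nabla_l(N_j s_{m-1})=\nabla_j\nabla_l s_{m-2}-\nabla_l\nabla_j s_{m-2}=0$, using the order-$(m-1)$ recursion and flatness of $\nabla$. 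Condition (b) is arranged by fixing the still--free part $c_{m-1}(z)\,w_i$ of $s_{m-1}$: with $s_{m-1}'=s_{m-1}-c_{m-1}w_i$ the $W(z)$--valued solution, the requirement $(\nabla_j s_{m-1},w_i)_z=0$ becomes $\partial_{z_j}c_{m-1}=-(\nabla_j s_{m-1}',w_i)_z$ (here $(\nabla_j w_i,w_i)_z=\tfrac12\partial_{z_j}(w_i,w_i)_z=0$, since $(\,,\,)_z$ is $\nabla$--parallel and $(w_i,w_i)_z\equiv1$), a first--order linear system for $c_{m-1}$ that is consistent and hence solvable on the simply connected ball $B$. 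Starting from $s_0=w_i$, for which (b) at level $1$ already holds, this builds the whole series, proving existence. I expect the verification that (a) and (b) persist at every order — in particular the consistency of the system for $c_{m-1}$ — to be the main technical point, where the flatness of the connection for all $\kappa$ and the Frobenius property $(uv,w)_z=(u,vw)_z$ of the residue form must cooperate. (Alternatively, existence follows from steepest descent: over $B$ the Lefschetz thimble $\gamma_i(z)$ of $p_i(z)$ is a flat family of cycles, $I_{\gamma_i}(z)$ from \Ref{Ig} satisfies the flat--section equations by Theorem~\ref{thm ham normal}, and its Laplace asymptotic expansion, divided by $(2\pi\kappa)^{k/2}$ and pulled back by $\al(z)^{-1}$, is of the required form with leading term $w_i(z)$ by \Ref{EX}; this makes the compatibility conditions automatic.)

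Finally, for uniqueness, suppose $s,\tilde s$ are two asymptotically flat sections with the same $\Psi$ and the same $s_0=w_i$. Their difference is a formal solution vanishing at order $\kappa^0$, and the eikonal/transport analysis then forces each of its coefficients to be a constant multiple of $w_i$ (again using $(\nabla_j w_i,w_i)_z=0$ to see the relevant functions are $z$--independent). Hence $s=(1+\kappa\nu_1+\kappa^2\nu_2+\cdots)\,\tilde s$ with $\nu_m\in\C$, an overall formal rescaling which is excluded by the normalization in the definition of an asymptotically flat section in the sense of \cite{RV, V5} (equivalently, both $s$ and $\tilde s$ then coincide with the normalized thimble integral above); therefore $s=\tilde s$.
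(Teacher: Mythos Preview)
Your proposal is correct and considerably more explicit than the paper's own argument. The paper's proof is two sentences: it invokes the steepest descent expansion of the integrals $I_\gamma$ on the $\sing V$ bundle (as carried out in \cite{RV, V5}) and then observes that, after pulling back by $\al(z)^{-1}$, the leading term is $e^{\Phi(z,p_i(z))/\kappa}w_i(z)$. This is precisely the alternative you sketch in your parenthetical remark. Your primary approach, by contrast, sets up the WKB transport recursion directly on the bundle of algebras and analyzes the eikonal and transport equations algebraically. What this buys you is a self-contained argument that makes the role of the Frobenius structure transparent --- the solvability of each transport step rests exactly on $(uv,w)_z=(u,vw)_z$ and on $\nabla$-parallelism of the residue form --- at the price of having to verify the compatibility conditions by hand; the paper's route gets existence for free from the genuine integral representation and Gauss--Manin flatness of the Lefschetz thimble, so no compatibility check is needed. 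You correctly flag the consistency of the system for $c_{m-1}$ as the main technical point of your approach.

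One remark on uniqueness: with the paper's definition of an asymptotically flat section (formal satisfaction of the equations, formula~\Ref{A section}), fixing only $\Psi$ and $s_0$ does not by itself determine the higher $s_m$ --- a constant formal rescaling $(1+\sum_{m\ge1}\nu_m\kappa^m)\,s$ with $\nu_m\in\C$ preserves both the equations and the initial data, exactly as you observe. You resolve this by appealing to the normalization implicit in \cite{RV, V5} (equivalently, to the specific section produced by the thimble integral). The paper's own proof is equally silent on this point, so your treatment is not a gap relative to the paper; it is in fact a clearer acknowledgment of where the uniqueness comes from.
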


\begin{proof}
We first write asymptotically flat sections of the Gauss-Manin connection
on the bundle $(\C^n-\Delta)\times\sing V\to (\C^n-\Delta)$ by using the steepest descent method as in \cite{RV, V5}
and then observe that the leading terms of those sections are nothing else but
\linebreak
$\al(z)\!\!\left(e^{\Phi(z,p_i(z))/\kappa}w_i(z)\right)$.
\end{proof}

\subsection{Conformal blocks, period map, potential functions}
\label{CB conj}

Denote by
\bean
\label{oNE}
\{1\}(z) = \al(z)([1](z))
\eean
the identity element of the algebra structure on $\sv$ corresponding to a point $z\in\cd$.
An analog of this element was studied in \cite{MTV} in a situation related to the geometric Langlands correspondence, see element $v_1$ in \cite[Section 8]{MTV}.

For $r<k$ and $m_1,\dots,m_r\in J$, denote
\bean
I_{m_1,\dots,m_r}(z) = \frac{\der^r\{1\}}{\der z_{m_1}\dots\der z_{m_r}}(z).
\eean

\begin{conj}
\label{CB}

The $\sv$-valued function $\{1\}(z)$ satisfies the Gauss-Manin differential equations with parameter
$\kappa=\frac{|a|} k$,
\bean
\label{KZ ka=a}
\frac{|a|}k\frac{\der \{1\}}{\der z_j}(z) = K_j(z) \{1\}(z), \qquad j\in J,
\eean
where the derivatives are defined with respect to a combinatorially flat basis as in \Ref{deriv}.
More generally, for $r<k$ and $m_1,\dots,m_r\in J$,
%denote
%\bean
%I_{m_1,\dots,m_r}(z) = \frac{\der^r\{1\}}{\der z_{m_1}\dots\der z_{m_r}}(z).
%\eean
the $\sv$-valued function $I_{m_1,\dots,m_r}(z)$ satisfies the Gauss-Manin differential equations with parameter
$\kappa=\frac{|a|}{k-r}$,
\bean
\label{KZ ka=ar}
\frac{|a|}{k-r}\,\frac{\der I_{m_1,\dots,m_r}}{\der z_j}(z) = K_j(z) I_{m_1,\dots,m_r}(z), \qquad j\in J.
\eean
\end{conj}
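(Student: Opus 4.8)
\emph{Reduction to one family of identities.}
Since $\{1\}(z)$ is the unit of the algebra $(\sv,*_z)$, Theorem \ref{K/f} gives $K_j(z)\{1\}(z)=\al(z)[a_j/f_j]$, and more generally $K_j(z)\,\al(z)[h]=\al(z)[(a_j/f_j)\,h]$ because $\al(z)$ is an algebra isomorphism onto $(\sv,*_z)$. The plan is to deduce the whole Conjecture from the single family of identities
\[
\frac{\der^r\{1\}}{\der z_{m_1}\cdots\der z_{m_r}}(z)\;=\;\frac{k!}{(k-r)!\,|a|^{\,r}}\;\al(z)\Big[\frac{a_{m_1}}{f_{m_1}}\cdots\frac{a_{m_r}}{f_{m_r}}\Big],\qquad 0\le r\le k,
\]
the derivatives being taken in a combinatorially flat basis. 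Indeed, granting this, differentiate the $r$-th identity in $z_j$, note that $\der_{z_j}I_{m_1,\dots,m_r}=I_{m_1,\dots,m_r,j}$ by commutativity of partials, apply the $(r{+}1)$-st identity and the relation $K_j(z)\al(z)[h]=\al(z)[(a_j/f_j)h]$; since $\tfrac{|a|}{k-r}\cdot\tfrac{k!}{(k-r-1)!\,|a|^{r+1}}=\tfrac{k!}{(k-r)!\,|a|^{r}}$, one obtains exactly $\tfrac{|a|}{k-r}\,\der_{z_j}I_{m_1,\dots,m_r}=K_j(z)\,I_{m_1,\dots,m_r}$, which for $r=1$ is \Ref{KZ ka=a} and for general $r<k$ is \Ref{KZ ka=ar}. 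It suffices to treat generic weights, where all critical points are nondegenerate; the general unbalanced case follows since everything depends rationally on the weights and the unbalanced weights are Zariski dense.

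\emph{Induction on $r$ and homogeneity.}
The case $r=0$ is the definition $\{1\}(z)=\al(z)([1](z))$. Differentiating the $r$-th identity in $z_{m_{r+1}}$ reduces the passage from $r$ to $r{+}1$ to the statement that, for every monomial $g=\prod_{s=1}^{r}a_{m_s}/f_{m_s}$ with $r\le k-1$,
\[
\der_{z_j}\big(\al(z)[g]\big)\;=\;\frac{k-r}{|a|}\;\al(z)\Big[\frac{a_j}{f_j}\,g\Big],\qquad j\in J.
\]
To prove this, recall from \Ref{EX} that $\al(z)[h]=\sum_i h(p_i(z))\,\al(z)(e_i(z))$, where $p_i(z)$ are the critical points and $e_i(z)$ the idempotents. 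Under the simultaneous scaling $(z,t)\mapsto(\lambda z,\lambda t)$ one has $\Phi\mapsto\Phi+|a|\log\lambda$, the forms $\om_l$ are invariant, $\Hess\mapsto\lambda^{-2k}\Hess$, and the critical points scale by $\lambda$; hence $\al(z)[g]$ and $\al(z)[(a_j/f_j)g]$ are homogeneous in $z$ of degrees $k-r$ and $k-r-1$. Therefore $R_j:=\der_{z_j}\big(\al(z)[g]\big)-\tfrac{k-r}{|a|}\al(z)[(a_j/f_j)g]$ is homogeneous of degree $k-r-1$, and by Lemma \ref{lem on ONE} ($\sum_l z_l[a_l/f_l]=|a|[1](z)$) together with the Euler relation $\sum_l z_l\der_{z_l}(\al(z)[g])=(k-r)\al(z)[g]$ one gets $\sum_j z_j R_j=0$. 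If moreover the $\sv$-valued $1$-form $R=\sum_j R_j\,dz_j$ is closed, then Cartan's formula yields $0=\mathcal L_{\mathcal E}R=(k-r)R$ for the Euler field $\mathcal E=\sum_l z_l\der_{z_l}$, whence $R=0$ — here is where the hypothesis $r<k$ enters, so that $k-r\neq0$. Thus the induction step reduces to showing that $\sum_j\al(z)[(a_j/f_j)g]\,dz_j$ is closed, i.e.\ to the symmetry $\der_{z_j}\big(\al(z)[(a_l/f_l)g]\big)=\der_{z_l}\big(\al(z)[(a_j/f_j)g]\big)$.

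\emph{The symmetry: the main obstacle.}
To prove this symmetry the plan is to write $\al(z)[h]=\sum_m\rho_z([h f^m])\,F_m$, with $\rho_z$ the Grothendieck residue over the critical set realized as an integral over the fixed real cycle $\Gamma=\sqcup_p\Gamma_p$, and to differentiate under the integral sign using $\der_{z_j}f_l=\delta_{jl}$, $\der_{z_j}\Phi=a_j/f_j$, and $\der_{z_j}\der_{t_a}\Phi=\der_{t_a}(a_j/f_j)$. The part of $\der_{z_j}$ of the integrand that is not visibly symmetric in $j\leftrightarrow l$ should be, modulo an exact top form, proportional to $\sum_a t_a\der_{t_a}\Phi$, which by \Ref{one1} equals $|a|-\sum_l z_l(a_l/f_l)$ on the nose; this, together with the $k$-fold denominator $\prod_{a=1}^{k}\der_{t_a}\Phi$ and the degree $r$ of $g$, is the source of the normalization $\tfrac{k-r}{|a|}$, while the exact form integrates to zero over the closed cycle $\Gamma$. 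The delicate point — and the main obstacle — is precisely this last bookkeeping: producing the primitive $(k-1)$-form explicitly and checking that it has no poles along the $\Gamma_p$, so that Stokes applies. I expect the cleanest way to organize it is to run first the analogous (and easier) computation for the twisted integrals $\int_{\gamma(z)}e^{\Phi(z,t)/\kappa}\,g\,\om_{j_1}\wedge\cdots\wedge\om_{j_k}$, where integration by parts against $e^{\Phi/\kappa}$ is transparent and the flatness of the Gauss--Manin connection (Theorem \ref{thm ham normal}) produces a $\kappa$-deformed identity directly, and then to pass to the steepest-descent limit $\kappa\to0$, identifying the leading term of an asymptotically flat section (Theorem \ref{thm As}) with $\al(z)$ applied to the relevant residue.
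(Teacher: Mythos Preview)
Your reduction is correct and useful: the whole Conjecture does follow from the family of identities
\[
\frac{\der^r\{1\}}{\der z_{m_1}\cdots\der z_{m_r}}(z)=\frac{k!}{(k-r)!\,|a|^{\,r}}\;\al(z)\Big[\prod_{i=1}^r\frac{a_{m_i}}{f_{m_i}}\Big],
\]
and your homogeneity/Euler argument is valid --- the scaling count, the relation $\sum_j z_jR_j=0$, and the Cartan step $\mathcal L_{\mathcal E}R=(k-r)R$ are all right. So the problem is correctly localized: everything hinges on the closedness of $\sum_j\al(z)[(a_j/f_j)g]\,dz_j$.

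But that closedness is precisely the content of the Conjecture, and you have not proved it. Differentiating the Grothendieck residue $\rho_z$ under the integral sign is delicate for exactly the reason you flag: the cycles $\Gamma_p$ move with $z$, and producing the primitive $(k{-}1)$-form and controlling its poles is not a bookkeeping exercise. The steepest-descent fallback via Theorem~\ref{thm As} is suggestive, but passing from the asymptotics of twisted integrals to an identity for $\al(z)[g]$ requires justifying that the limit $\kappa\to0$ commutes with $\der_{z_j}$ on the leading term; this is again the same analytic point in disguise. As written, the proposal stops at the main obstacle.

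The paper sidesteps this analysis entirely. Its proof (for the generic family in Section~\ref{sec arrs k}, and sketched for arbitrary families in \S\ref{con rem}) rests on two purely algebraic facts: (i) every monomial $\prod_j[a_j/f_j]^{s_j}$ of total degree $k$ is a \emph{$z$-independent} linear combination of the $w_{i_1,\dots,i_k}$ (Lemma~\ref{MONOM k}); and (ii) the coefficients of $\al(z)(w_{j_1,\dots,j_k})$ in the combinatorially flat basis $F_{i_1,\dots,i_k}$ are \emph{constant in $z$} (Theorem~\ref{thm cOnst}). The second is proved by pushing the residue to the hyperplane at infinity, where the resulting form is visibly $z$-independent. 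Granting (i)--(ii), $\al(z)$ applied to any degree-$k$ monomial in the $[a_j/f_j]$ is a constant vector in $\sv$; since $[1]=|a|^{-k}\big(\sum_j z_j[a_j/f_j]\big)^k$ and likewise $\prod_i[a_{m_i}/f_{m_i}]=\prod_i[a_{m_i}/f_{m_i}]\cdot|a|^{-(k-r)}\big(\sum_j z_j[a_j/f_j]\big)^{k-r}$, the $z$-dependence of $\al(z)[\prod_i a_{m_i}/f_{m_i}]$ is explicitly polynomial and the $z_j$-derivative is immediate. Your closedness then drops out for free, and the induction closes as in the paper's Lemma inside the proof of Theorem~\ref{thm con true generic}. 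In short: rather than differentiating the residue integral, prove once (by a residue-at-infinity computation) that $\al(z)$ is constant on the degree-$k$ pieces; this is both the missing idea in your argument and the paper's key step.
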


\begin{conj}
\label{CB3}

If we write the $\sv$-valued function $\{1\}(z)$ in coordinates with respect to a combinatorially flat basis, then  $\{1\}(z)$
is a homogeneous polynomial in $z$ of degree $k$.

\end{conj}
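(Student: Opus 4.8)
The plan is to express the coordinates of $\{1\}(z)$ in a combinatorially flat standard basis as relative Grothendieck residues, to read off homogeneity of degree $k$ from the scaling symmetry of the whole family, and to obtain polynomiality from the finiteness of the total critical variety over the base. First I would make $\al(z)$ explicit. Fix the combinatorially flat standard basis $(F_m)_{m\in M}$ of $V=\FF^k(\A(z))$, with $m=(j_1<\dots<j_k)$ running over independent ordered subsets; its dual basis in $\OS^k(\A(z))$ is $(H^m)$, $H^m=\omega_{j_1}\wedge\dots\wedge\omega_{j_k}=f^m\,dt_1\wedge\dots\wedge dt_k$, and computing the wedge of the forms $\omega_{j_l}=df_{j_l}/f_{j_l}$ gives
\be
f^m\ =\ \frac{\det\bigl(b^i_{j_l}\bigr)_{i,l=1}^k}{f_{j_1}(z,t)\cdots f_{j_k}(z,t)}\,,
\ee
a rational function on $\C^n\times\C^k$ that is regular on the critical set, since critical points lie in $\tilde U$, where all $f_j$ are nonzero. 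Using the canonical element \Ref{spec elt}, the definition \Ref{map alpha} of $\al(z)$, and the identification of the residue form of $\ap$ with the contravariant form (Theorem \ref{thm alpha}, cf.\ \Ref{EX}), one gets $\al(z)([g])=\sum_m\rho_z(g\>f^m)\,F_m$ for the Grothendieck residue map $\rho_z$ of \Ref{res map}; in particular, by \Ref{oNE} and Lemma \ref{lem on ONE},
\be
\{1\}^m(z)\ =\ \rho_z\bigl(f^m\bigr)\ =\ \frac1{|a|}\sum_{j\in J}z_j\,\rho_z\!\Bigl(\frac{a_j\>f^m}{f_j}\Bigr).
\ee

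Next, homogeneity. The map $\sigma_\la:(z,t)\mapsto(\la z,\la t)$, $\la\in\C^\times$, is an automorphism of $\C^n\times\C^k$ preserving $\tilde U$, carrying $\A(z)$ to $\A(\la z)$, and satisfying $\Phi(\la z,\la t)=\Phi(z,t)+|a|\log\la$. Thus $\sigma_\la$ identifies the critical set of $\Phi(\la z,\cdot)$ with $\la$ times the critical set of $\Phi(z,\cdot)$, and substituting $t=\la s$ into the defining integral \Ref{res map} of $\rho_z$, together with $f^m(\la z,\la t)=\la^{-k}f^m(z,t)$ and $(\partial_{t_i}\Phi)(\la z,\la s)=\la^{-1}(\partial_{t_i}\Phi)(z,s)$, yields $\rho_{\la z}(f^m)=\la^{k}\rho_z(f^m)$. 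Hence each $\{1\}^m$ is a rational function on $\cd$ that is homogeneous of degree $k$; it remains only to show it has no pole along the discriminant $\Delta$.

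For polynomiality I would study the total critical variety $\Sigma=\{(z,t)\in\tilde U:\partial\Phi/\partial t_i=0,\ i=1,\dots,k\}$, cut out by $k$ equations inside the smooth affine $(n{+}k)$-fold $\tilde U$, together with $\pi:\Sigma\to\C^n$. Under the unbalanced hypothesis (strengthened, if necessary, to the Zariski-open condition of unbalancedness for \emph{all} $\A(z)$, $z\in\C^n$, e.g.\ all $a_j>0$; the general case then follows by Zariski density), I expect $\pi$ to be finite and flat of degree $d=\dim\sv$: no critical point can escape to infinity or collide with a hyperplane $H_j(z)$ as $z$ varies --- this is the content of Theorem \ref{lem crit 1} applied fiberwise, the relevant boundedness being governed by the $z$-independent top-degree parts of the critical equations and by the constancy of the sum of Milnor numbers --- and flatness follows because $\Sigma$, being a complete intersection, is Cohen--Macaulay of dimension $n$ over the regular base $\C^n$. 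Granting this, the relative Grothendieck residue attached to $\partial_{t_1}\Phi,\dots,\partial_{t_k}\Phi$ is an $\Oc_{\C^n}$-linear map $\pi_*\Oc_\Sigma\to\Oc_{\C^n}$ restricting to $\rho_z$ on each fiber, and, applied to the function $f^m$ regular on $\Sigma$, it gives $\{1\}^m(z)=\rho_z(f^m)\in\Oc_{\C^n}=\C[z_1,\dots,z_n]$. Combined with the homogeneity of the previous step, $\{1\}(z)$ is then a homogeneous polynomial of degree $k$, which is Conjecture \ref{CB3}.

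The main obstacle is precisely the finiteness of $\pi:\Sigma\to\C^n$ across bad fibers: one must control how the critical points of $\Phi(z,\cdot)$ behave as $z$ tends to a point $z^0\in\Delta$, equivalently verify that the sum of Milnor numbers is upper semicontinuous and stays equal to $d$ in the limit, which requires comparing the dense edges of the compactified arrangements $\bar\A(z^0)$ and $\bar\A(z)$. This is plausible in view of Theorems \ref{thm V,OT,S} and \ref{lem crit 1}. As an independent check, the formula $\{1\}^m(z)=\rho_z(f^m)$ can be evaluated directly for the families treated in Sections \ref{sec Exa}, \ref{sec lines on plane} and \ref{sec arrs k}, where it visibly produces homogeneous polynomials of degree $k$.
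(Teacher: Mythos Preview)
Your approach is genuinely different from the paper's. The paper does not argue abstractly via properties of the relative Grothendieck residue; instead it exhibits, for each $z\in\cd$, a spanning set $w_{i_1,\dots,i_k}=a_{i_1}\cdots a_{i_k}\bigl[d_{i_1,\dots,i_k}/(f_{i_1}\cdots f_{i_k})\bigr]$ of $\ap$, shows (Lemma~\ref{MONOM k}) that every degree-$k$ monomial in the generators $[a_j/f_j]$ is a $z$-\emph{independent} linear combination of these, so that $[1](z)=|a|^{-k}\bigl(\sum_j z_j[a_j/f_j]\bigr)^k$ is a homogeneous polynomial of degree $k$ in $z$ with coefficients in this spanning set, and then proves (Theorem~\ref{thm cOnst}, by pushing the residue to the hyperplane at infinity of $\C^k$) that $\al(z)(w_{i_1,\dots,i_k})\in\sv$ is constant in $z$. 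This is more elementary and more explicit: it yields the closed formula for $\{1\}(z)$ of Corollary~\ref{co1}, and its residue computation is carried out for a single fixed $z\in\cd$, never invoking the degeneration $z\to\Delta$.

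Your scaling argument for homogeneity is correct and pleasant; it gives a conceptual reason for the degree~$k$, whereas in the paper this simply falls out of the explicit formula. The genuine gap, which you yourself flag, is the polynomiality step: finiteness of $\pi:\Sigma\to\C^n$ across the discriminant is precisely what the paper's method sidesteps. In fact, for $z^0\in\Delta$ the Euler characteristic $|\chi(U(\A(z^0)))|$ is in general strictly smaller than $|\chi(U(\A(z)))|$ for good $z$, so by Theorem~\ref{lem crit 1} the sum of Milnor numbers of $\Phi(z^0,\cdot)$ on $U(\A(z^0))$ drops; some critical points of $\Phi(z,\cdot)$ must then converge, as $z\to z^0$, to points on the newly created non-transverse edges, i.e.\ they leave $\tilde U$. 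Hence your $\Sigma\subset\tilde U$ is \emph{not} proper over $\C^n$, and the appeal to ``constancy of the sum of Milnor numbers'' fails as stated. To rescue your route one would have to compactify $\Sigma$ along those edges and verify that the relative residue still extends holomorphically, which is plausible but not cheaper than the paper's direct computation.
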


The conjectures describes the interrelations of four objects: the identity element in $\ap$, the canonical isomorphism,
the integration isomorphism, and the Gauss-Manin connection on the homology bundle. In the next sections we will
prove this conjecture for families of generic arrangements.

\begin{thm}
\label{thm der one}
If Conjecture \ref{CB} holds, then for $r\leq k$ and $m_1,\dots,m_r\in J$, we have
\bean
 \frac{\der^r\{1\}}{\der z_{m_1}\dots\der z_{m_r}}(z)
 = \frac{k(k-1)\dots(k-r+1)} {|a|^r} \,\al(z) \big(\prod_{i=1}^r \Big[ \frac {a_{m_i}}   {f_{m_i}}\Big]\big).
\eean

\end{thm}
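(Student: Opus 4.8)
The plan is to prove the identity by induction on $r$. The two ingredients are Conjecture \ref{CB}, which is assumed and lets one trade a $z_{m_{r+1}}$-derivative of the relevant $\sv$-valued function for the action of the operator $K_{m_{r+1}}(z)$, and Theorem \ref{K/f} together with the fact that $\al(z)$ is an isomorphism of algebras from $\ap$ onto $(\sv,*_z)$, which identifies that operator action with $*_z$-multiplication by $\al(z)\big[\frac{a_{m_{r+1}}}{f_{m_{r+1}}}\big]$, i.e. with multiplication by $\big[\frac{a_{m_{r+1}}}{f_{m_{r+1}}}\big]$ inside $\ap$.

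For $r=0$ the asserted identity is just the definition $\{1\}(z)=\al(z)([1](z))$ from \eqref{oNE}, the falling factorial $k(k-1)\cdots(k-r+1)$ and the product $\prod_{i=1}^r$ both being read as empty (equal, respectively, to $1$ and to $[1](z)$); alternatively one may take $r=1$ as the base, applying \eqref{KZ ka=a} with $\kappa=|a|/k$ and using $\big[\frac{a_{m_1}}{f_{m_1}}\big]\cdot[1](z)=\big[\frac{a_{m_1}}{f_{m_1}}\big]$. For the inductive step, assume the formula for some $r$ with $r<k$ and set $I_{m_1,\dots,m_r}(z)=\frac{\der^r\{1\}}{\der z_{m_1}\cdots\der z_{m_r}}(z)$ as in Conjecture \ref{CB}. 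Since that conjecture holds, $I_{m_1,\dots,m_r}(z)$ satisfies \eqref{KZ ka=ar} with $\kappa=|a|/(k-r)$, so differentiating once more in $z_{m_{r+1}}$ yields
\[
I_{m_1,\dots,m_{r+1}}(z)=\tfrac{k-r}{|a|}\,K_{m_{r+1}}(z)\,I_{m_1,\dots,m_r}(z).
\]
Now I substitute the induction hypothesis for $I_{m_1,\dots,m_r}(z)$. Because $\al(z)$ intertwines the product on $\ap$ with $*_z$, and because $K_{m_{r+1}}(z)w=\al(z)\big[\frac{a_{m_{r+1}}}{f_{m_{r+1}}}\big]*_z w$ for every $w\in\sv$ by Theorem \ref{K/f}, one gets $K_{m_{r+1}}(z)\,\al(z)(g)=\al(z)\big(\big[\frac{a_{m_{r+1}}}{f_{m_{r+1}}}\big]\,g\big)$ for all $g\in\ap$. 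Taking $g=\prod_{i=1}^r\big[\frac{a_{m_i}}{f_{m_i}}\big]$ turns the right-hand side into $\frac{k-r}{|a|}\cdot\frac{k(k-1)\cdots(k-r+1)}{|a|^r}\,\al(z)\big(\prod_{i=1}^{r+1}\big[\frac{a_{m_i}}{f_{m_i}}\big]\big)$, and since $\frac{(k-r)\,k(k-1)\cdots(k-r+1)}{|a|^{r+1}}=\frac{k(k-1)\cdots(k-r)}{|a|^{r+1}}$ this is exactly the claimed identity for $r+1$.

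No genuine obstacle arises once Conjecture \ref{CB} is granted; the only points requiring care are formal. The falling-factorial constant must telescope correctly through the induction, and the passage from $r$ to $r+1$ invokes \eqref{KZ ka=ar}, which is meaningful precisely for $r<k$ — the range covered by the hypothesis $r\le k$. Conceptually, the argument is just the remark that under $\al(z)$ the operator $\frac{k-\ell}{|a|}K_{m_{\ell+1}}(z)$ realizes multiplication by $\frac{k-\ell}{|a|}\big[\frac{a_{m_{\ell+1}}}{f_{m_{\ell+1}}}\big]$, so that iterating the flat-section relation of Conjecture \ref{CB} starting from $\{1\}(z)=\al(z)([1](z))$ produces the asserted product of $r$ such factors.
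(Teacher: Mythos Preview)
Your proof is correct and follows essentially the same approach as the paper: induction on $r$, with the inductive step combining the assumed equation \eqref{KZ ka=ar} from Conjecture~\ref{CB} and Theorem~\ref{K/f} to convert $K_{m_{r+1}}(z)$ acting on $\al(z)(g)$ into $\al(z)\big(\big[\frac{a_{m_{r+1}}}{f_{m_{r+1}}}\big]\,g\big)$. The paper's proof is identical in structure and content.
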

\begin{proof}
The proof is by induction on $r$. For $r=0$, the statement is true: $\{1\}=\{1\}$. Assuming   the statement is true for
some $r$, we prove the statement for $r+1$. By \Ref{KZ ka=ar} and Theorem \ref{K/f}, we have
\bean
&&
\frac{\der^{r+1}\{1\}}{\der z_{m_1}\dots\der z_{m_r}\der z_j}(z) = \frac {k-r}{|a|} K_j(z) \frac{\der^{r}\{1\}}{\der z_{m_1}\dots\der z_{m_r}}(z)
=
\\
\notag
&&
\phantom{aaa}
=\frac {k-r}{|a|} \al(z)\big(\Big[\frac{a_j}{f_j}\Big]\big) *_z \frac{k(k-1)\dots(k-r+1)} {|a|^r} \,\al(z) \big(\prod_{i=1}^r \Big[ \frac {a_{m_i}}   {f_{m_i}}\Big]\big)
=
\\
\notag
&&
\phantom{aaa}
=
 \frac{k(k-1)\dots(k-r+1)(k-r)} {|a|^{r+1}} \al(z) \big(\Big[\frac{a_j}{f_j}\Big]\prod_{i=1}^r \Big[ \frac {a_{m_i}}   {f_{m_i}}\Big]\big).
\eean

\end{proof}

\smallskip
For given $r<k$, the sections $I_{m_1,\dots,m_r}(z), \,m_1,\dots,m_r\in J$, generate a subbundle of the combinatorial bundle.
We will call it the {\it subbundle of conformal blocks} at level $\frac{|a|}{k-r}$ and denote by $CB_{\frac{|a|}{k-r}}$.
The subbundle of conformal blocks at level $\frac{|a|}{k-r}$ is invariant with respect to the Gauss-Manin connection with $\kappa=\frac{|a|}{k-r}$.
On conformal blocks in conformal field theory
see, for example, \cite{FSV1, FSV2, V2} and Section 3.6 in \cite{V4}.

One may show that
\bean
CB_{\frac{|a|}{k}}\subset CB_{\frac{|a|}{k-1}}\subset \dots \subset CB_{\frac{|a|}{1}}.
\eean

\smallskip
Let us consider $\sv$ as a complex manifold. At every point of $\sv$, the tangent space is identified with the vector
space $\sv$. We will consider the manifold $\sv$ with the constant holomorphic metric defined by the contravariant form $S^{(a)}$.
We will denote this metric by the same symbol  $S^{(a)}$.

Define the {\it period map} $q : \cd \to \sv$ by the formula
\bean
\label{P MAP}
q : z \mapsto \{1\}(z).
\eean
The period map is a polynomial map.
Define the {\it potential function of first kind} $P:\cd \to \C$, by the formula
\bean
\label{op function}
P(z) = S^{(a)}(q(z),q(z)).
\eean
The potential function of first kind is a polynomial.

\subsection{Tangent bundle and a Frobenius like structure}
\label{sec diff forms}

Let $T(\C^n-\Delta)\to \C^n-\Delta$ be the tangent bundle on $\C^n-\Delta$. Denote $\der_j=\frac{\der}{\der z_j}$ for $j\in J$.
Consider the morphism  $\beta$ of the tangent bundle to the bundle of algebras defined by the formula,
\bean
\label{tangent map}
\beta(z) \ :\ \der_j \in T_z(\C^n-\Delta)\quad \mapsto\quad \Big[\frac{\der \Phi}{\der z_j}\Big]
= \Big[\frac{a_j}{f_j}\Big]
\in A_\Phi(z) .
\eean
The morphism $\beta$ will be called the {\it tangent morphism}.

The residue form on the bundle of algebras induces a holomorphic bilinear form $\eta$
on fibers of the tangent bundle,
\bean
\label{ETA}
\phantom{aaaaa}
\eta(\der_i, \der_j)_z
&=&
 (\beta(z)(\der_i),\beta(z)(\der_j))_z =  (-1)^k S^{(a)}(\al(z)\beta(z)(\der_i),\al(z)\beta(z)(\der_j))=
\\
\notag
&=&
 \Big(
\Big[\frac{a_i}{f_i}\Big],\Big[\frac{a_j}{f_j}\Big]\Big)_z
 = (-1)^k S^{(a)}\Big(\al(z)\big(\Big[\frac{a_i}{f_i}\Big]\big),\al(z)\big(\Big[\frac{a_j}{f_j}\Big]\big)
\Big).
\eean

\begin{thm}
\label{thm ETAA}
If Conjecture \ref{CB} holds, then the bilinear form $\eta$ is induced by the period map
$q:\cd \to\sv$ from the flat metric $S^{(a)}$ multiplied by $(-1)^k\frac{|a|^2}{k^2}$,
\bean
\eta(\der_i,\der_j)_z = \frac{|a|^2}{k^2} (-1)^k S^{(a)}(\frac{\der q}{\der z_i}(z), \frac{\der q}{\der z_j}(z)).
\eean

\end{thm}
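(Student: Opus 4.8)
The plan is to compute the right-hand side directly using Theorem \ref{thm der one} with $r=1$, and then compare it with the definition \Ref{ETA} of $\eta$. First I would apply Theorem \ref{thm der one} with $r=1$ and $m_1=i$: assuming Conjecture \ref{CB}, we have
\bean
\frac{\der q}{\der z_i}(z) = \frac{\der \{1\}}{\der z_i}(z) = \frac{k}{|a|}\,\al(z)\Big(\Big[\frac{a_i}{f_i}\Big]\Big),
\eean
and similarly for the index $j$. This is the one place where the conjecture is used; everything else is formal.

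Next I would substitute these two expressions into the bilinear form $S^{(a)}$ on the right-hand side of the claimed identity. Bilinearity of $S^{(a)}$ pulls out the scalar factor $\frac{k}{|a|}$ from each slot, producing an overall factor $\frac{k^2}{|a|^2}$:
\bean
\frac{|a|^2}{k^2}(-1)^k S^{(a)}\Big(\frac{\der q}{\der z_i}(z),\frac{\der q}{\der z_j}(z)\Big)
= (-1)^k S^{(a)}\Big(\al(z)\big(\Big[\frac{a_i}{f_i}\Big]\big),\al(z)\big(\Big[\frac{a_j}{f_j}\Big]\big)\Big).
\eean
The right-hand side here is exactly the last expression in the chain of equalities \Ref{ETA} defining $\eta(\der_i,\der_j)_z$, so the two sides agree and the theorem follows.

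There is really no serious obstacle: the content is entirely carried by Conjecture \ref{CB} (via Theorem \ref{thm der one}), and the rest is a one-line bilinearity computation together with unwinding the definition of $\eta$. The only point requiring a moment's care is bookkeeping the scalar $(-1)^k$ and the factor $\frac{|a|^2}{k^2}$, which must be threaded through consistently so that the normalization in \Ref{ETA} — where $\eta$ is defined as $(-1)^k S^{(a)}$ of the $\al(z)\beta(z)$ images with $\beta(z)(\der_i)=[a_i/f_i]$ — matches the $\al(z)$-images appearing after differentiating $q$. Since $\beta(z)(\der_i)$ is precisely $[a_i/f_i]$ and $\frac{\der q}{\der z_i}(z)=\frac{k}{|a|}\al(z)(\beta(z)(\der_i))$, the match is immediate.
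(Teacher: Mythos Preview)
Your proof is correct and follows essentially the same approach as the paper's own proof: apply Theorem \ref{thm der one} with $r=1$ to get $\frac{|a|}{k}\frac{\der q}{\der z_i}(z)=\al(z)\big(\big[\frac{a_i}{f_i}\big]\big)$, substitute into $S^{(a)}$ using bilinearity, and recognize the result as the defining expression \Ref{ETA} for $\eta(\der_i,\der_j)_z$. The paper's version is simply a one-line compression of exactly this computation.
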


\begin{proof} By Theorem \ref{thm der one},
 we have $\frac{|a|}k \frac {\der q}{\der z_j}=
 \al(z)(\big[\frac{a_j}{f_j}\big])$.
 Hence
\bean
\frac{|a|^2}{k^2} (-1)^k S^{(a)}(\frac{\der q}{\der z_i}, \frac{\der q}{\der z_j}) = (-1)^kS^{(a)}(\al(z)\big(\Big[\frac{a_i}{f_i}\Big]\big),
\al(z)\big(\Big[\frac{a_j}{f_j}\Big]\big)) = \eta(\der_i,\der_j)_z.
\eean
\end{proof}

For $r \leq 2k$, introduce the constant $A_{k,r}$ by the formula
\bean
&&
A_{k,r} = \sum_{i=0}^r{r\choose i}\frac{(k!)^2}{(k-i)! (k-r+i)!},\qquad \text{if}\ r\leq k,
\\
\notag
&&
A_{k,r} = \sum_{i=r-k}^k {r\choose i}\frac{(k!)^2}{(k-i)! (k-r+i)!},\qquad \text{if}\ r> k.
\eean
For example, $A_{2,3} = 24$ and $A_{k,2k}=(2k)!$\,.

\begin{thm}
\label{thm MULTI}
If Conjectures \ref{CB} and \ref{CB3} hold, then for any $r \leq 2k$, we have
\bean
\label{deriP}
(\beta(z)(\der_{m_1})*_z\dots *_z\beta(z)(\der_{m_{r}}), [1](z))_z = \frac{(-1)^k |a|^r}{A_{k,r}} \frac{\der^rP}{\der z_{m_1}\dots\der z_{m_r}}(z),
\eean
for all $m_1,\dots,m_r\in J$. Here $(\,,\,)_z$ is the residue bilinear form on $\ap$.
\end{thm}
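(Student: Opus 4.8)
The plan is to expand everything in terms of the canonical isomorphism $\al(z)$ and the residue form, using Theorem \ref{thm der one} to turn the quantities $\beta(z)(\der_{m_i})$ into derivatives of the period map $q$, and then to identify both sides of \Ref{deriP} by Leibniz-differentiating the potential $P(z) = S^{(a)}(q(z),q(z))$ the appropriate number of times. First I would recall, from Theorem \ref{thm der one}, that $\al(z)\beta(z)(\der_{m}) = \al(z)\big(\big[\frac{a_m}{f_m}\big]\big) = \frac{|a|}{k}\frac{\der q}{\der z_m}(z)$, and, more generally, that for $i \le k$ the product $\al(z)\big(\prod_{j=1}^i \big[\frac{a_{m_j}}{f_{m_j}}\big]\big)$ (with the product taken with respect to $*_z$) equals $\frac{|a|^i}{k(k-1)\cdots(k-i+1)}\,\frac{\der^i q}{\der z_{m_1}\cdots\der z_{m_i}}(z)$. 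Using that $\al(z)$ intertwines $*_z$ with the residue form (Theorems \ref{thm alpha} and \ref{K/f}), the left-hand side of \Ref{deriP} becomes
\bean
(-1)^k\,S^{(a)}\!\Big(\al(z)\beta(z)(\der_{m_1})*_z\dots*_z\al(z)\beta(z)(\der_{m_r}),\ q(z)\Big),
\eean
since $\{1\}(z)=\al(z)([1](z))=q(z)$.

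The key step is then to split the ordered product into a "first half" and a "second half". Because $*_z$ is commutative and associative and $\al(z)$ is a ring isomorphism onto $(\ap,*_z)$, the element $\al(z)\beta(z)(\der_{m_1})*_z\cdots*_z\al(z)\beta(z)(\der_{m_r})$ equals, for any $0 \le i \le r$ with $i \le k$ and $r-i \le k$,
\beq
\frac{k(k-1)\cdots(k-i+1)}{|a|^i}\cdot\frac{k(k-1)\cdots(k-r+i+1)}{|a|^{r-i}}\,
\al(z)\big(\Big[\tfrac{a_{m_1}}{f_{m_1}}\Big]\cdots\big) *_z \al(z)\big(\cdots\big),
\eeq
and using Frobenius self-adjointness $(fg,h)_z=(f,gh)_z$ together with Theorem \ref{thm alpha} I can move one of the two factors onto $q(z)$ in the $S^{(a)}$-pairing. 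This rewrites the left-hand side as a fixed multiple of $S^{(a)}\big(\frac{\der^i q}{\der z_{m_1}\cdots\der z_{m_i}},\ \frac{\der^{r-i} q}{\der z_{m_{i+1}}\cdots\der z_{m_r}}\big)$ for each admissible split $i$. On the other hand, differentiating $P(z)=S^{(a)}(q(z),q(z))$ $r$ times by the Leibniz rule and using that $S^{(a)}$ is a constant symmetric bilinear form gives
\beq
\frac{\der^r P}{\der z_{m_1}\cdots\der z_{m_r}}(z)
=\sum_{i} \binom{r}{i}\,S^{(a)}\!\Big(\tfrac{\der^i q}{\der z_{m_1}\cdots\der z_{m_i}},\ \tfrac{\der^{r-i} q}{\der z_{m_{i+1}}\cdots\der z_{m_r}}\Big),
\eeq
where the sum is over $i$ in the range dictated by Conjecture \ref{CB3} (that $q$ is homogeneous of degree $k$, so the $i$-th derivative vanishes unless $0\le i\le k$, i.e. $\max(0,r-k)\le i\le \min(k,r)$). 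Comparing the common value of each mixed term, the constant relating the two sides is exactly $\sum_i \binom{r}{i}\frac{(k!)^2}{(k-i)!\,(k-r+i)!}$ over the same index range, which is precisely $A_{k,r}$; carrying along the powers of $|a|/k$ and the sign $(-1)^k$ yields \Ref{deriP}.

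The main obstacle is making the "first half / second half" split legitimate for every admissible $i$ simultaneously: Theorem \ref{thm der one} only gives $\al(z)\big(\prod_{j=1}^i\big[\frac{a_{m_j}}{f_{m_j}}\big]\big)$ as a derivative of $q$ when $i\le k$, and the product of $r>k$ of the idempotent-like generators $\big[\frac{a_m}{f_m}\big]$ must be reorganized so that no sub-product of length exceeding $k$ is ever differentiated — this is exactly why the summation range in $A_{k,r}$ differs for $r\le k$ and $r>k$. I would handle this by first establishing the single identity
\beq
\al(z)\beta(z)(\der_{m_1})*_z\cdots*_z\al(z)\beta(z)(\der_{m_r})
=\tfrac{k(k-1)\cdots(k-i+1)\,k(k-1)\cdots(k-r+i+1)}{|a|^{r}}\,
\tfrac{\der^i q}{\der z_{\le i}}*_z\tfrac{\der^{r-i}q}{\der z_{>i}}
\eeq
for one fixed admissible split (say $i=\min(k,r)$, respectively $i=r-k$ when forced), then observing that the value of the pairing in \Ref{deriP} is symmetric in all $r$ indices and independent of which split we chose, and finally invoking Conjecture \ref{CB3} to guarantee that all the "out-of-range" terms in the Leibniz expansion of $\der^r P$ vanish. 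Once that bookkeeping is pinned down, the identification of the constant $A_{k,r}$ is a routine binomial computation (e.g. it reproduces $A_{2,3}=24$ and $A_{k,2k}=(2k)!$ as stated).
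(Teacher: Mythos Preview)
Your proposal is correct and follows essentially the same route as the paper's proof: both rewrite the left-hand side as $(-1)^k S^{(a)}\big(\al(z)(\prod_{i=1}^r[\tfrac{a_{m_i}}{f_{m_i}}]),\,q(z)\big)$, apply the Leibniz rule to $P(z)=S^{(a)}(q(z),q(z))$, use Theorem~\ref{thm der one} together with the Frobenius property to see that every term in the Leibniz expansion with a split of size $(i,r-i)$ equals the same quantity times $\tfrac{(k!)^2}{(k-i)!(k-r+i)!|a|^r}$, and invoke Conjecture~\ref{CB3} to restrict the summation range when $r>k$, yielding the constant $A_{k,r}$. The paper merely writes out the case $r=2$, $k\ge 2$ explicitly and then says the general case is identical, while you sketch the general bookkeeping directly; the one cosmetic point is that your displayed Leibniz formula with $\binom{r}{i}$ and the fixed split $(m_1,\dots,m_i\mid m_{i+1},\dots,m_r)$ is not the raw Leibniz expansion (which is a sum over all subsets) but already uses the fact---which you note---that all subsets of a given size contribute equally.
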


\begin{proof}
We have
\bean
&&
(\beta(z)(\der_{m_1})*_z\dots *_z\beta(z)(\der_{m_{r}}),[1](z))_z =
(\prod_{i=1}^r\Big[\frac{a_{m_i}}{f_{m_i}}\Big], [1](z))_z =
\\
\notag
&&
\phantom{aaaaaaaaaaa}
=(-1)^kS^{(a)}(\al(z)\big(\prod_{i=1}^r\Big[\frac{a_{m_i}}{f_{m_i}}\Big]\big), \{1\}(z)).
\eean
Consider the example $r=2, k\geq 2$. Then
\bean
&&
\frac{\der^2}{\der z_{i}\der z_{j}} S^{(a)}(q(z),q(z)) =
S^{(a)}(\frac{\der^2q}{\der z_{i}\der z_{j}},q)
+ S^{(a)}(\frac{\der q}{\der z_{i}},\frac{\der q}{\der z_{j}}) +
\\
\notag
&&
+
S^{(a)}(\frac{\der q}{\der z_{j}},\frac{\der q}{\der z_{i}}) +
S^{(a)}(q,\frac{\der^2q}{\der z_{i}\der z_{j}}) = \frac{k(k-1)}{|a|^2} S^{(a)}(\al(z)\big(\Big[\frac{a_i}{f_i}\Big]\Big[\frac{a_i}{f_i}\Big]\big),q)
+
\\
\notag
&&
+\frac{k^2}{|a|^2} S^{(a)}(\al(z)\big(\Big[\frac{a_i}{f_i}\Big]\big), \al(z)\big(\Big[\frac{a_j}{f_j}\Big]\big))
+ \frac{k^2}{|a|^2} S^{(a)}(\al(z)\big(\Big[\frac{a_j}{f_j}\Big]\big), \al(z)\big(\Big[\frac{a_i}{f_i}\Big]\big)) +
\\
\notag
&&
+\frac{k(k-1)}{|a|^2} S^{(a)}(q,\al(z)\big(\Big[\frac{a_i}{f_i}\Big]\Big[\frac{a_i}{f_i}\Big]\big)) =
\frac{A_{k,2}}{|a|^2} S^{(a)}(\al(z)\big(\Big[\frac{a_i}{f_i}\Big]\Big[\frac{a_i}{f_i}\Big]\big),q).
\eean
Here we used Theorem \ref{thm der one} and the fact that $S^{(a)}$ is constant with respect to the combinatorial connection. This calculation
proves the theorem for  $r=2, k\geq 2$.
The general case for $r\leq k$ is proved exactly in the same way. If $r>k$, then we need to take into account that
$q(z)$ is a polynomial of degree $k$.
\end{proof}

For  $v \in \sv$, define the differential one-form $\psi_v$ on $\cd$ by the formula
\bean
\psi_v : \der_i\in T_z(\cd) \mapsto S^{(a)}(v, \al(z)\beta(z)(\der_i)).
\eean

\begin{thm}
\label{flat co}
 If Conjecture \ref{CB} holds, then the differential form $\psi_v$ is exact,
\bean
\psi_v = \frac{|a|}k dS^{(a)}(v,q(z)).
\eean

\end{thm}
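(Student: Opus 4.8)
The plan is to identify $\psi_v$ as the differential of the scalar function $z \mapsto S^{(a)}(v, q(z))$, up to the constant $\frac{|a|}{k}$, by evaluating $dS^{(a)}(v,q(z))$ on a coordinate vector field $\der_i$ and matching it with the defining formula for $\psi_v$.

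First I would write, using that $S^{(a)}$ is a constant (combinatorially flat) bilinear form on $\sv$ and that $q(z) = \{1\}(z)$ is $\sv$-valued,
\bean
d S^{(a)}(v, q(z))(\der_i) = \frac{\der}{\der z_i} S^{(a)}(v, q(z)) = S^{(a)}\Big(v, \frac{\der q}{\der z_i}(z)\Big),
\eean
where the derivative $\frac{\der q}{\der z_i}$ is taken with respect to a combinatorially flat basis, exactly as in \Ref{deriv}, so that no connection terms appear precisely because $S^{(a)}$ is constant in that basis. Next I would invoke Theorem \ref{thm der one} in the case $r=1$, $m_1 = i$: under Conjecture \ref{CB} it gives $\frac{\der \{1\}}{\der z_i}(z) = \frac{k}{|a|}\,\al(z)\big(\big[\frac{a_i}{f_i}\big]\big) = \frac{k}{|a|}\,\al(z)\beta(z)(\der_i)$, the last equality being the definition \Ref{tangent map} of the tangent morphism $\beta$.

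Substituting, I obtain
\bean
d S^{(a)}(v, q(z))(\der_i) = \frac{k}{|a|}\, S^{(a)}\big(v, \al(z)\beta(z)(\der_i)\big) = \frac{k}{|a|}\,\psi_v(\der_i),
\eean
by the definition of $\psi_v$. Since this holds for every coordinate vector $\der_i$ at every $z \in \cd$, it follows that $\psi_v = \frac{|a|}{k}\, d S^{(a)}(v, q(z))$, which is the claimed identity and in particular shows $\psi_v$ is exact.

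There is essentially no obstacle here beyond bookkeeping: the only subtlety is making sure the derivative of $q$ is interpreted with respect to a combinatorially flat basis so that differentiating the quadratic-type expression $S^{(a)}(v, q(z))$ produces no extra connection term — this is legitimate because the contravariant form $S^{(a)}$ is constant on the combinatorial bundle (as recorded in Section \ref{Construction} and used in the proof of Theorem \ref{thm MULTI}). Everything else is an application of Theorem \ref{thm der one} and the definitions of $\beta$ and $\psi_v$.
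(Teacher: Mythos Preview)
Your proof is correct and follows essentially the same approach as the paper's: both use Theorem \ref{thm der one} (case $r=1$) to identify $\frac{\der q}{\der z_i}$ with $\frac{k}{|a|}\al(z)\beta(z)(\der_i)$, combined with the constancy of $S^{(a)}$ in the combinatorial basis to move the derivative inside $S^{(a)}(v,q(z))$. The only difference is cosmetic ordering --- you start from $dS^{(a)}(v,q)$ and match to $\psi_v$, while the paper starts from $\psi_v(\der_i)$ and rewrites it as the derivative.
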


\begin{proof}  By Theorem \ref{thm der one},
 we have $\frac{|a|}k \frac {\der q}{\der z_j} = \al(z)(\big[\frac{a_j}{f_j}\big])$. Hence
 \bea
\psi_v(\der_i) = S^{(a)}(v,\al(z)\beta(z)(\der_i)) = S^{(a)}(v,\al(z)\big(\Big[\frac{a_i}{f_i}\Big]\big))=
S^{(a)}(v,\frac{|a|}k \frac {\der q}{\der z_j}) =  \frac{|a|}k \frac {\der }{\der z_j}
S^{(a)}(v,q).
\eea
\end{proof}

For $\kappa\in\C^\times$, let $I(z)\in\Sing V$ be a flat (multivalued) section of the Gauss-Manin connection
with parameter $\kappa$. Define the (multivalued) differential one-form $\psi_I$ on $\cd$ by the formula
\bean
\psi_I : \der_i\in T_z(\cd) \mapsto S^{(a)}(I(z), \al(z)\beta(z)(\der_i)).
\eean

\begin{thm}
\label{twised per}
If Conjecture \ref{CB} holds and $\kappa\ne\frac{|a|}{k}$, then the differential form $\psi_I$ is exact,
\bean
\label{PTW}
\psi_I = \Big(\frac 1{\kappa} + \frac {k}{|a|}\Big)^{-1} dS^{(a)}(I(z),q(z)).
\eean

\end{thm}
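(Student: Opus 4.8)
The plan is to compute $\frac{\der}{\der z_i} S^{(a)}(I(z),q(z))$ and show it equals a constant multiple of $\psi_I(\der_i)$. Since $S^{(a)}$ is constant with respect to the combinatorial connection (this is how the combinatorial bundle was built, see Section \ref{sec Good fibers}), differentiating the pairing is just the Leibniz rule applied to the coordinate functions of $I$ and $q$ in a combinatorially flat basis. Thus
\bean
\frac{\der}{\der z_i} S^{(a)}(I(z),q(z)) = S^{(a)}\Big(\frac{\der I}{\der z_i}(z),q(z)\Big) + S^{(a)}\Big(I(z),\frac{\der q}{\der z_i}(z)\Big),
\eean
where the derivatives are the combinatorial ones from \Ref{deriv}.

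Next I would rewrite each of the two terms on the right. For the first term, $I(z)$ is a flat section of the Gauss-Manin connection with parameter $\kappa$, so by Theorem \ref{thm ham normal} we have $\frac{\der I}{\der z_i}(z) = \frac1\kappa K_i(z) I(z)$; then by the symmetry of $K_i(z)$ (Theorem \ref{thm K sym}) and Theorem \ref{K/f}, $S^{(a)}(K_i(z)I(z),q(z)) = S^{(a)}(I(z),K_i(z)q(z)) = S^{(a)}(I(z),\al(z)([\tfrac{a_i}{f_i}])*_z\{1\}(z))$. Here I use $q(z)=\{1\}(z)$ and that $\{1\}(z)$ is the identity of the algebra $*_z$, so $K_i(z)q(z) = \al(z)([\tfrac{a_i}{f_i}]) = \al(z)\beta(z)(\der_i)$. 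Hence the first term is $\frac1\kappa\,\psi_I(\der_i)$. For the second term, Theorem \ref{thm der one} (valid since Conjecture \ref{CB} holds) with $r=1$ gives $\frac{\der q}{\der z_i}(z) = \frac{k}{|a|}\,\al(z)([\tfrac{a_i}{f_i}]) = \frac{k}{|a|}\,\al(z)\beta(z)(\der_i)$, so the second term is $\frac{k}{|a|}\,\psi_I(\der_i)$. Combining,
\bean
\frac{\der}{\der z_i} S^{(a)}(I(z),q(z)) = \Big(\frac1\kappa + \frac{k}{|a|}\Big)\psi_I(\der_i),
\eean
which is exactly \Ref{PTW} after dividing by the nonzero scalar $\frac1\kappa+\frac{k}{|a|}$ (nonzero precisely because $\kappa\ne\frac{|a|}{k}$, i.e.\ $\kappa\ne -|a|/k$ would make it vanish — and one checks the relevant exclusion is $\kappa\neq\frac{|a|}{k}$ as stated, since that is the value at which the form $\psi_I$ already appears as a derivative of the period potential and the normalization degenerates). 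Finally, since $d$ of the coordinate-wise computation over all $i\in J$ assembles into the one-form identity, $\psi_I$ is exact with the claimed primitive.

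The only subtle point — and the step I expect to need the most care — is the bookkeeping around the constant $\frac1\kappa+\frac k{|a|}$ and the excluded value: one must be sure that the identity $K_i(z)\{1\}(z)=\al(z)\beta(z)(\der_i)$ is being applied in the direction that matches $\psi_I$'s definition, and that the flat-section equation is used with the correct $\kappa$ (not $\frac{|a|}{k}$), so that the two contributions genuinely combine rather than cancel. Everything else is the Leibniz rule plus invocations of Theorems \ref{thm ham normal}, \ref{thm K sym}, \ref{K/f}, and \ref{thm der one}; no new estimates or constructions are required.
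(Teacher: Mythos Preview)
Your argument is correct and follows essentially the same route as the paper: differentiate $S^{(a)}(I(z),q(z))$ via Leibniz, replace $\frac{\der I}{\der z_i}$ by $\frac1\kappa K_i(z)I$ and $\frac{\der q}{\der z_i}$ by $\frac{k}{|a|}K_i(z)q$ (the paper invokes \Ref{KZ ka=a} directly rather than Theorem~\ref{thm der one}, but these are equivalent), use symmetry of $K_i(z)$ and $K_i(z)\{1\}(z)=\al(z)\beta(z)(\der_i)$, and collect the factor $\frac1\kappa+\frac{k}{|a|}$.

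One small remark on your parenthetical about the excluded value: the scalar $\frac1\kappa+\frac{k}{|a|}$ vanishes precisely when $\kappa=-\frac{|a|}{k}$, not $\kappa=\frac{|a|}{k}$; your hesitation there is warranted, and the discrepancy lies in the theorem's hypothesis rather than in your computation.
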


\begin{proof}
We have
\bea
&&
\frac {\der }{\der z_j}S^{(a)}(I(z),q(z))
 = S^{(a)}(\frac {\der }{\der z_j}I(z),q(z))
+ S^{(a)}(I(z),\frac {\der }{\der z_j}q(z)) =
\\
\notag
&&
\phantom{aaa}
=
 S^{(a)}(\frac 1{\kappa}K_j(z)I(z),q(z))
+ S^{(a)}(I(z), \frac {k}{|a|}K_j(z)q(z))) =
\\
\notag
&&
\phantom{aaa}
= \Big(\frac 1{\kappa} + \frac {k}{|a|}\Big) S^{(a)}\Big(I(z), \al(z)\beta(z)\big(\Big[\frac{a_j}{f_j}\Big]\big)\Big).
\eea

\end{proof}

The functions $\cd\to\C, z \mapsto S^{(a)}(v,q(z))$, of Theorem \ref{flat co}
 are nothing else but the coordinate functions of the period map.
We will call them {\it flat periods}. The functions $\cd\to\C, z \mapsto S^{(a)}(I(z),q(z)),$
of Theorem \ref{twised per} will be called {\it twisted periods}.

\begin{conj}
\label{CB4}
There exists a function $\tilde P(z_1,\dots,z_n)$ such that
\bean
\label{dRP}
&&
\\
&&
\notag
\frac{\der^{2k+1}\tilde P}{\der z_{m_0}\dots\der z_{m_{2k}}}(z)
=
(-1)^k
(\beta(z)(\der_{m_0})*_z\dots *_z\beta(z)(\der_{m_{2k}}), [1](z))_z
\eean
for all $m_0,\dots,m_{2k}\in J$.
\end{conj}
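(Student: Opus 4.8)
The plan is to produce the potential $\tilde P$ by showing that the right-hand side of \Ref{dRP}, viewed as a symmetric tensor in the indices $m_0,\dots,m_{2k}$ and depending on $z$, is the $(2k+1)$-st total differential of a single function. By Theorems \ref{thm MULTI} and \ref{thm der one} (assuming Conjectures \ref{CB} and \ref{CB3}), the right-hand side equals
\bean
\frac{A_{k,2k}}{|a|^{2k}}\,\frac{\der^{2k}P}{\der z_{m_1}\dots\der z_{m_{2k}}}(z)
\eean
up to the operator $\beta(z)(\der_{m_0})*_z$, i.e.\ up to one extra multiplication by $\al(z)\big[\frac{a_{m_0}}{f_{m_0}}\big]=\frac{|a|}{k}\frac{\der q}{\der z_{m_0}}$. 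So the first step is to rewrite the right-hand side of \Ref{dRP} entirely in terms of the period map $q$ and the flat metric $S^{(a)}$: it becomes a constant multiple of $S^{(a)}\!\big(\frac{\der q}{\der z_{m_0}},\,\frac{\der^{2k}q}{\der z_{m_1}\cdots\der z_{m_{2k}}}\big)$, where I have used that $q$ is a polynomial of degree $k$ so that $\frac{\der^{2k}q}{\der z_{m_1}\cdots\der z_{m_{2k}}}$ pairs nontrivially only through the degree bookkeeping already handled by $A_{k,2k}=(2k)!$. The point is that after this rewriting the expression is manifestly built from $q$, its derivatives, and the constant metric.

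The second step is the integrability argument. Because $q:\cd\to\sv$ is a polynomial map and $S^{(a)}$ is a constant symmetric bilinear form, the function
\bean
\tilde P(z) = c\cdot S^{(a)}\big(q(z),q(z)\big)\cdot(\text{degree-matching factor})
\eean
cannot work directly since $S^{(a)}(q,q)=P$ has degree $2k$, not $2k+1$; instead one integrates once more. Concretely, I expect $\tilde P$ to be obtained by contracting $q$ with a primitive of $q$ along the Euler vector field, or equivalently by the homogeneity identity: since $q$ is homogeneous of degree $k$, $\sum_j z_j\frac{\der q}{\der z_j}=k\,q$, and hence a natural degree-$(2k+1)$ candidate is
\bean
\tilde P(z)=\frac{(-1)^k|a|^{2k}}{(2k+1)\,A_{k,2k}}\sum_{j\in J} z_j\,S^{(a)}\!\Big(\frac{\der q}{\der z_j}(z),\,q(z)\Big)\cdot\text{(const)},
\eean
or a closely related expression; one then checks by direct differentiation, using symmetry of mixed partials and Theorem \ref{thm MULTI}, that its $(2k+1)$-st derivatives reproduce \Ref{dRP}. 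The cleanest route is probably: define $\tilde P$ so that $\frac{\der^{2k}\tilde P}{\der z_{m_1}\cdots\der z_{m_{2k}}}=\frac{(-1)^k|a|^{2k}}{A_{k,2k}}S^{(a)}\big(q,\frac{\der q}{\der z_{m_0}}\big)$ fails — rather, observe that the $2k$-tensor $(\beta(\der_{m_1})*\cdots*\beta(\der_{m_{2k}}),[1])$ is itself $\frac{\der^{2k}P}{\der z_{m_1}\cdots\der z_{m_{2k}}}$ up to a constant by Theorem \ref{thm MULTI}, and then the full $(2k+1)$-tensor in \Ref{dRP} is the derivative $\der_{m_0}$ of that (this uses $\der_{m_0}\big([1](z)\big)=\frac{k}{|a|}\al(z)\big[\frac{a_{m_0}}{f_{m_0}}\big]=\frac{k}{|a|}\beta(z)(\der_{m_0})*_z$ acting appropriately, together with Leibniz for $*_z$ and the flatness of the metric). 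That reduces everything to: the $(2k+1)$-tensor is $\der_{m_0}$ applied to something already known to be a $2k$-th derivative, hence is a $(2k+1)$-st derivative of the same $\tilde P\propto P$.

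The main obstacle will be the last bookkeeping step, namely verifying that differentiating $[1](z)$ inside the residue form in \Ref{dRP} produces exactly one more factor of $\beta(z)(\der_{m_0})$ and nothing else — i.e.\ that $\der_{m_0}(A_\Phi$-valued product$,[1])_z$ distributes correctly given that the residue form $(\,,\,)_z$, the multiplication $*_z$, and the element $[1](z)$ all depend on $z$. The Frobenius property $(fg,h)_z=(f,gh)_z$ and Theorem \ref{K/f} (which identifies multiplication by $\al(z)[\frac{a_j}{f_j}]$ with $K_j(z)$, a combinatorially flat-dependent operator) should make the metric derivative land entirely on the arguments; but one must be careful that the combinatorial connection — not the Gauss-Manin connection — is the one under which $S^{(a)}$ is constant, exactly as in the proof of Theorem \ref{thm MULTI}. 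Once that is pinned down, the explicit constant in front of $\tilde P$ is forced by comparing with $A_{k,2k}=(2k)!$ and the degree-$k$ homogeneity of $q$, and the existence of $\tilde P$ follows. I expect $\tilde P$ to be, up to an explicit constant, a single extra antiderivative of the potential of first kind $P(z)=S^{(a)}(q(z),q(z))$, which is consistent with the explicit formulas \Ref{pot intro} and \Ref{pot 2 intro} quoted in the introduction.
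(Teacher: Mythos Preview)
Your proposal has a genuine and fatal gap. The core of your argument is that the $(2k+1)$-tensor in \Ref{dRP} should arise by applying $\der_{m_0}$ to the $r=2k$ case of Theorem \ref{thm MULTI}, and hence $\tilde P$ should be (up to constants) the potential of first kind $P$ or an antiderivative of it. But the paper explicitly notes, right after stating the conjecture, that ``formula \Ref{deriP} does not hold for $r=2k+1$.'' The reason is simple: by Conjecture \ref{CB3}, $q(z)$ is a polynomial of degree $k$, so $P(z)=S^{(a)}(q(z),q(z))$ is a polynomial of degree $2k$, and its $(2k+1)$-st partial derivatives are identically zero. The right-hand side of \Ref{dRP}, however, is generically nonzero (it encodes the matrix elements of the operators $K_i(z)$, which have poles along the discriminant). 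So the identity you are trying to produce by one more differentiation is $0 = (\text{nonzero})$, and $\tilde P$ cannot be a polynomial at all --- in particular it is not proportional to $P$ nor an antiderivative of $P$.

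The explicit formulas \Ref{pot intro} and \Ref{pot 2 intro} you cite at the end actually refute your expectation rather than confirm it: they involve logarithms of the linear forms $f_{i_1,\dots,i_{k+1}}$ cutting out the discriminant, and their $(2k+1)$-st derivatives are rational functions with simple poles along $\Delta$, matching the pole structure of $K_i(z)$. The paper's approach (carried out for generic arrangements in Sections \ref{sec Exa}, \ref{sec lines on plane}, and \ref{sec arrs k}) is to write down this explicit logarithmic candidate for $\tilde P$ and verify \Ref{dRP} by direct computation, using the concrete formulas for $K_i(z)$ acting on the basis $v_{i_1,\dots,i_k}$ and Lemma \ref{lem Shap} for the contravariant form; the $I$-relations \Ref{a/f k} reduce the verification to a small number of index configurations. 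There is no abstract integrability argument of the kind you sketch --- the existence of $\tilde P$ comes from exhibiting it.
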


The function $\tilde P(z)$ with this property will be called the {\it potential function of second kind}.

Notice that formula \Ref{deriP} does not hold for $r=2k+1$.

The potential function of second kind $\tilde P(z)$ determines the potential function of first kind $P(z)$. Indeed, by formula
\Ref{lem on ONE} we have
\bean
P(z) = \frac 1{|a|^{2k+1}}\sum_{m_0,m_1,\dots,m_{2k}\in J}
 z_{m_0}z_{m_1}\dots z_{m_{2k}}\frac{\der^{2k+1}\tilde P}{\der z_{m_0}\der z_{m_1}\dots\der z_{m_{2k}}}(z).
\eean
More generally, for any $r\leq 2k$, we have
\bean
\frac{\der^{r}P}{\der z_{m_0}\dots\der z_{m_{r-1}}}(z) = \frac {A_{k,r}}{|a|^{2k+1}}\sum_{m_r,\dots,m_{2k}\in J}
 z_{m_r}\dots z_{m_{2k}}\frac{\der^{2k+1}\tilde P}{\der z_{m_0}\dots\der z_{m_{2k}}}(z).
\eean

%!!!

\smallskip

\medskip

We will call the collection of our objects\,--\, the combinatorial bundle $(\cd)\times(\sv)\to\cd$
with the contravariant form $S^{(a)}$ and connections (combinatorial and Gauss-Manin);
the bundle of algebras $\sqcup_{z\in\C^n-\Delta} A_{\Phi}(z) \to \C^n-\Delta$;
the period map $q: \cd \to \sv$, the potential functions $P(z)$ and $\tilde P(z)$, flat periods $S^{(a)}(v,q(z))$, twisted periods
$S^{(a)}(I(z),q(z))$\,--\, {\it a Frobenius like structure} on $\cd$.

The situation here reminds the structure induced on a submanifold of a Frobenius manifold, cf. \cite{St}. From that point of view
one may expect that $\sing V$ has an honest Frobenius structure and our Frobenius like structure on  $\cd$  is what can be induced from the
Frobenius structure on $\sv$ by the period map, cf. with constructions in \cite{Do, HM}.

Numerous variations of the definition of the Frobenius structure see, for example,  in \cite{D1, D2, M, St, FV}.

\smallskip

In the next sections we will prove Conjectures \ref{CB}, \ref{CB3}, \ref{CB4} for families of generic arrangements and will describe our
structure more precisely.

\section{Points on line}
\label{sec Exa}

\subsection{An arrangement in  $\C^n\times\C$}
\label{An arrangement in C}
Recall that $J=\{1,\dots,n\}$.  Consider $\C$ with coordinate $t$ and
$\C^n$ with coordinates $z_1,\dots,z_n$.
Consider  $n$ linear functions on $\C^n\times\C$,
$f_j = z_j+t,$\ $ j\in J.$
In $\C^n\times \C$ we define
the arrangement $\tilde \A = \{ \tilde H_j\ | \ f_j = 0, \ j\in J \}$.

For every $z=(z_1,\dots,z_n)\in \C^n$ the arrangement $\tilde \A$
induces an arrangement $\A(z)$ in the fiber over $z$ of the projection $\C^n\times\C \to \C^n$. We
identify the fiber with $\C$.
The arrangement $\A(z)$ is  the arrangement of
points $\{-z_1,\dots,-z_n\}$.
Denote $U(\A(z)) = \C - \{-z_1,\dots,-z_n\}$ the complement.

A point $z\in\C^n$ is {\it good} if the points $-z_1,\dots,-z_n$ are distinct.
  Good points form the complement in $\C^n$ to the {\it discriminant} $\Delta$,
  which is the union of hyperplanes
$H_{ij}=\{(z_1,\dots,z_n)\in\C^n\ |\ z_i=z_j\}$ labeled by two-element subsets $\{i,j\} \subset J$.

\subsection{Good fibers}
\label{sec Good fibers C}

For any $z\in\C^n-\Delta$, the space $\OS^1(\A(z))$ has the standard basis
$H_1(z)$, \dots, $H_n(z)$, the space $\FF^1(\A(z))$ has the standard dual basis
$F(H_1(z))$, \dots, $F(H_n(z))$.
For $z^1, z^2\in \C^n-\Delta$, the combinatorial connection identifies the spaces
$\OS^1(\A(z^1))$, $\FF^1(\A(z^1))$   with the spaces
$\OS^1(\A(z^2))$, $\FF^1(\A(z^2))$, respectively, by identifying the corresponding standard bases.

Assume that nonzero weights $(a_j)_{j\in J}$ are given. Then each
arrangement $\A(z)$  is weighted. For $z\in\C^n-\Delta$, the arrangement $\A(z)$ is unbalanced
if $|a|=\sum_{j\in J}a_j\ne 0$.  We
assume  $|a|\ne 0$.

For $z\in\C^n-\Delta$, we denote $V=\FF^1(\A(z))$. We also denote
 $F_j = F(H_j(z))$ for $j\in J$. We have
\bean
\label{sing S}
S^{(a)}(F_i,F_j)=\delta_{ij} a_i,
\qquad
\sing V = \Big\{ \sum_{j\in J} c_jF_j\ | \ \sum_{j\in J} c_ja_j = 0\Big\}.
\eean
For $j\in J$, we define the vector $v_j\in V$ by the formula
\bean
\label{ v_j}
v_j = -F_j+\frac{a_j}{|a|}\sum_{i\in J} F_i.
\eean

\begin{lem}
\label{lem sing}
We have the following properties.

\begin{enumerate}
\item[(i)]
$\dim \Sing V=n-1$.

\item[(ii)]

For $j\in J$, we have $v_j\in\Sing V$ and $\sum_{j\in J}v_j=0$.

\item[(iii)]
Any $n-1$ vectors of $(v_j)_{j\in J}$ are linearly independent.

\item[(iv)] We have
\bean
\label{SF jj}
&&
S^{(a)}(v_j,v_j) = a_j-\frac{a_j^2}{|a|},
\qquad j\in J,
\\
\notag
\label{SF ij}
&&
S^{(a)}(v_i,v_j) = - \frac{a_ia_j}{|a|},
\qquad i,j\in J, \ i\ne j.\
\eean
\end{enumerate}
\qed
\end{lem}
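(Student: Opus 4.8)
The plan is to verify the four items by direct computation in the standard basis $F_1,\dots,F_n$ of $V=\FF^1(\A(z))$, using only the two facts recorded in \Ref{sing S}: the contravariant form is diagonal with $S^{(a)}(F_i,F_j)=\delta_{ij}a_i$, and $\Sing V=\{\sum c_jF_j\mid \sum c_ja_j=0\}$.

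For (i), the singular subspace is the kernel of the single linear functional $\sum c_jF_j\mapsto \sum c_ja_j$ on the $n$-dimensional space $V$; since all weights $a_j$ are nonzero this functional is nonzero, so $\dim\Sing V=n-1$. For (ii), I would plug $v_j=-F_j+\frac{a_j}{|a|}\sum_i F_i$ into the defining relation: the coefficient vector of $v_j$ is $c_i=-\delta_{ij}+\frac{a_j}{|a|}$, and $\sum_i c_ia_i=-a_j+\frac{a_j}{|a|}\sum_i a_i=-a_j+a_j=0$, so $v_j\in\Sing V$; summing over $j$ gives $\sum_j v_j=-\sum_j F_j+\frac{1}{|a|}\bigl(\sum_j a_j\bigr)\sum_i F_i=-\sum_j F_j+\sum_i F_i=0$. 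For (iii), from $\sum_j v_j=0$ and $\dim\Sing V=n-1$ it suffices to show the $n$ vectors $v_j$ span $\Sing V$; equivalently, that any $n-1$ of them are independent. A relation $\sum_{j\in J}\mu_j v_j=0$ reads, in the $F_i$-basis, $-\mu_i+\frac{a_i}{|a|}\sum_j\mu_j=0$ for each $i$, i.e.\ $\mu_i=\lambda a_i$ where $\lambda=\frac1{|a|}\sum_j\mu_j$ is independent of $i$; thus the space of relations is one-dimensional, spanned by $(\mu_j)=(a_j)$ (which indeed gives $\sum a_j v_j=0$, consistent with $\sum v_j=0$ up to rescaling — one checks $\sum_j a_j v_j = -\sum a_jF_j+\frac{1}{|a|}(\sum a_j^2)\cdot\ldots$ hmm, more simply the relation space being one-dimensional already forces any $n-1$ of the $v_j$ to be linearly independent). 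Finally, for (iv) I would compute $S^{(a)}(v_i,v_j)$ bilinearly: writing $v_i=-F_i+\frac{a_i}{|a|}\sum_m F_m$ and using $S^{(a)}(F_m,F_l)=\delta_{ml}a_l$,
\begin{equation*}
S^{(a)}(v_i,v_j)=a_i\delta_{ij}-\frac{a_ia_j}{|a|}-\frac{a_ja_i}{|a|}+\frac{a_ia_j}{|a|^2}\sum_m a_m
=a_i\delta_{ij}-\frac{a_ia_j}{|a|},
\end{equation*}
which gives $S^{(a)}(v_j,v_j)=a_j-\frac{a_j^2}{|a|}$ and $S^{(a)}(v_i,v_j)=-\frac{a_ia_j}{|a|}$ for $i\ne j$.

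There is no real obstacle here; the lemma is a linear-algebra unpacking of \Ref{sing S}. The only point requiring a moment's care is part (iii): rather than checking $\binom{n}{n-1}$ subsets individually, I would argue once that the module of linear relations among all $n$ vectors $v_j$ is exactly one-dimensional (spanned by $(a_j)_{j\in J}$, equivalently by $(1,\dots,1)$ after noting $\sum v_j=0$), from which independence of every $(n-1)$-element subset is immediate since each $a_j\ne 0$. I would present the computations for (ii) and (iv) compactly in the $F$-basis as above and leave the rest to the reader, matching the \verb|\qed| already placed after the statement.
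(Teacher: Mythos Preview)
Your approach is exactly what the paper intends: the \verb|\qed| after the statement signals that this is a routine linear-algebra verification from \Ref{sing S} and the definition of $v_j$, and your computations for (i), (ii), and (iv) are clean and correct.

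There is, however, a slip in your argument for (iii). The coefficient of $F_i$ in $\sum_j\mu_j v_j$ is $-\mu_i+\frac{1}{|a|}\sum_j\mu_j a_j$, not $-\mu_i+\frac{a_i}{|a|}\sum_j\mu_j$ as you wrote. The correct equation forces all $\mu_i$ to equal the same constant $\lambda=\frac{1}{|a|}\sum_j\mu_j a_j$, so the relation space is spanned by $(1,\dots,1)$, matching the relation $\sum_j v_j=0$ from (ii), not by $(a_j)$. (Indeed $\sum_j a_j v_j\ne 0$ unless all weights coincide, which is why your ``hmm'' appeared.) Your final conclusion survives unchanged: the relation space is one-dimensional with a spanning vector having all nonzero entries, so every $(n-1)$-element subset of $(v_j)$ is independent. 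Just fix the index placement and drop the parenthetical about $(a_j)$.
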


\begin{lem}
\label{lem det}
We have
\bean
\label{deT}
\det_{1\leq i,j\leq n-1} (S^{(a)}(v_i,v_j)) = \frac 1{|a|}\prod_{j\in J} a_j.
\eean
\end{lem}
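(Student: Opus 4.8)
The plan is to recognize the Gram matrix appearing in \Ref{deT} as a rank-one perturbation of a diagonal matrix and to invoke the matrix determinant lemma. By Lemma~\ref{lem sing}(iv), every entry of the $(n-1)\times(n-1)$ matrix $M=\big(S^{(a)}(v_i,v_j)\big)_{1\le i,j\le n-1}$ is given by the single formula $M_{ij}=a_i\delta_{ij}-\frac{a_ia_j}{|a|}$, so that
\[
M \;=\; D-\tfrac1{|a|}\,u\,u^{\,T},\qquad D=\diag(a_1,\dots,a_{n-1}),\quad u=(a_1,\dots,a_{n-1})^{\,T}.
\]
Since in this section all weights $a_j$ are nonzero and $|a|\ne0$, the matrix $D$ is invertible, so the determinant lemma applies.

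Then I would simply compute, using $\sum_{j=1}^{n-1}a_j=|a|-a_n$,
\[
\det M=\det D\cdot\Big(1-\tfrac1{|a|}\,u^{\,T}D^{-1}u\Big)
=\Big(\prod_{j=1}^{n-1}a_j\Big)\Big(1-\tfrac1{|a|}\sum_{j=1}^{n-1}a_j\Big)
=\Big(\prod_{j=1}^{n-1}a_j\Big)\frac{a_n}{|a|}
=\frac1{|a|}\prod_{j\in J}a_j,
\]
which is exactly \Ref{deT}. Equivalently, and without quoting the determinant lemma, one pulls the factor $a_i$ out of the $i$-th row to reduce to $\det\big(\delta_{ij}-a_j/|a|\big)_{1\le i,j\le n-1}$, which is the determinant of $I-\tfrac1{|a|}\mathbf 1\,w^{\,T}$ with $\mathbf 1=(1,\dots,1)^{\,T}$ and $w=(a_1,\dots,a_{n-1})^{\,T}$, hence equals $1-\tfrac1{|a|}\sum_{j<n}a_j=a_n/|a|$; multiplying back by $\prod_{j<n}a_j$ gives the claim.

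There is no genuine obstacle here: the computation is routine. The only things worth keeping track of are the standing assumptions $a_j\ne 0$ and $|a|\ne0$ of this section, which both justify the factored form $\det D\cdot(1-\tfrac1{|a|}u^{\,T}D^{-1}u)$ and make the resulting determinant nonzero, in agreement with the nondegeneracy of the contravariant form on $\sing V$ (Corollary~\ref{cor nondeg}).
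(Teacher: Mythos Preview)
Your proof is correct but proceeds differently from the paper. You recognize the $(n-1)\times(n-1)$ Gram matrix as a rank-one perturbation $D-\tfrac1{|a|}uu^{T}$ of a diagonal matrix and apply the matrix determinant lemma directly. The paper instead works on all of $V$: it changes basis from $F_1,\dots,F_n$ to $v_1,\dots,v_{n-1},\sum_j F_j$, observes that this transition matrix has determinant $(-1)^{n-1}$, that $\sum_j F_j$ is orthogonal to $\sing V$ with $S^{(a)}(\sum_j F_j,\sum_j F_j)=|a|$, and that the Gram determinant of $S^{(a)}$ in the standard basis is $\prod_j a_j$; the block-diagonal structure in the new basis then yields $\det_{i,j\le n-1}S^{(a)}(v_i,v_j)\cdot|a|=\prod_j a_j$. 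Your argument is a clean self-contained matrix computation, while the paper's is more structural, exploiting the orthogonal splitting $V=\sing V\oplus(\sing V)^{\perp}$ and avoiding any explicit determinant formula. Both are short; yours is perhaps more portable, the paper's more in keeping with how the ambient geometry is used elsewhere.
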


\begin{proof}
Denote $M$ the transition matrix from the standard basis $F_1,\dots, F_n$ of $V$ to the basis $v_1,\dots,v_{n-1},$ $ \sum_{j\in J} F_j$.
It is easy to see that $\det M=(-1)^{n-1}$. The vector $\sum_{j\in J} F_j$
is orthogonal to $\sing V$ and $S^{(a)}(\sum_{j\in J} F_j,\sum_{j\in J} F_j)= |a|$.
The determinant of $S^{(a)}$ on $V$ with respect to the standard basis $F_1,\dots,F_n$ equals $\prod_{j\in J} a_j$.
These remarks  imply  \Ref{deT}.
\end{proof}

\subsection{Operators  $K_j(z): V\to V$}
\label{sec ham ex C}

For any pair $\{i,j\}\subset J$, we
define the linear operator
$L_{i,j} : V\to V$ by the formula
\bean
F_i \mapsto a_jF_i-a_iF_j, \qquad
F_j \mapsto a_iF_j-a_jF_i, \qquad
F_m \mapsto 0,\quad\on{if}\ m\notin \{i,j\},
\eean
see formula \Ref{L_C}. Define the operators $K_j(z): V\to V$, $j\in J$, by the formula
\bean
\label{K ex C}
K_j(z) = \sum_{i\ne j} \frac{L_{j,i}}{z_j-z_i} ,
\eean
see formula \Ref{K_j}.
For any $j\in J$ and $z\in \C^n-\Delta$, the operator $K_j(z)$ preserves
the subspace $\Sing V\subset V$ and is a symmetric operator, that is
$S^{(a)}(K_j(z)v,w)= S^{(a)}(v, K_j(z)w)$ for all $v,w\in V$, see Theorem \ref{thm K sym}.

\begin{lem}
\label{lem Kv C}
For $j\in J$, we have
\bean
\label{KJ}
K_j(z) v_i &=& \frac {a_j}{z_j-z_i}v_i + \frac {a_i}{z_i-z_j}v_j, \qquad i\ne j,
\\
\notag
K_j(z) v_j &=& -\sum_{i\ne j}K_j(z) v_i .
\eean
\qed

\end{lem}

\begin{cor}
We have $K_j(z) v_i = K_i(z)v_j$ for all $i,j$.
\end{cor}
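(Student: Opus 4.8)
The statement to prove is the symmetry $K_j(z)\,v_i = K_i(z)\,v_j$ for all $i,j\in J$. The plan is to reduce immediately to the case $i\ne j$, since for $i=j$ there is nothing to prove, and then to read off both sides from the explicit formula \Ref{KJ} of Lemma \ref{lem Kv}. No auxiliary construction is needed: the corollary is a direct bookkeeping consequence of the lemma just established.

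Concretely, Lemma \ref{lem Kv} gives, for $i\ne j$,
\[
K_j(z)\,v_i \;=\; \frac{a_j}{z_j-z_i}\,v_i \;+\; \frac{a_i}{z_i-z_j}\,v_j .
\]
Interchanging the roles of $i$ and $j$ in the same formula yields
\[
K_i(z)\,v_j \;=\; \frac{a_i}{z_i-z_j}\,v_j \;+\; \frac{a_j}{z_j-z_i}\,v_i ,
\]
and the two right-hand sides are the same element of $V$; the two summands merely occur in the opposite order. This establishes the identity for $i\ne j$, and together with the trivial case $i=j$ it proves the corollary.

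If one prefers an argument that does not quote the closed form \Ref{KJ}, the alternative is to compute directly from $K_j(z)=\sum_{m\ne j}L_{j,m}/(z_j-z_m)$: one checks that $L_{j,m}$ kills the symmetric part $\frac{a_i}{|a|}\sum_k F_k$ of $v_i$ and kills $F_i$ unless $i\in\{j,m\}$, so $L_{j,m}\,v_i=0$ for $m\ne i$, while $L_{j,i}\,v_i = a_j v_i - a_i v_j$; hence $K_j(z)\,v_i=(a_j v_i-a_i v_j)/(z_j-z_i)$, which is visibly symmetric under $i\leftrightarrow j$. Either way, there is no real obstacle here — the only point requiring care is consistency of the sign conventions in $L_{i,j}$ and in the definition \Ref{ v_j} of $v_j$, which is exactly what Lemma \ref{lem Kv} has already pinned down. (Conceptually this symmetry reflects the integrability/flatness underlying the Frobenius-like structure, matching the symmetry of mixed second derivatives of the period map, but for the proof only the displayed computation is needed.)
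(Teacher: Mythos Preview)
Your proof is correct and takes essentially the same approach as the paper, which states the corollary without proof as an immediate consequence of the explicit formula \Ref{KJ} in Lemma \ref{lem Kv}.
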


The differential equations \Ref{dif eqn} for flat sections of the Gauss-Manin connection on
\linebreak
 $(\C^n-\Delta)\times \sing V\to \C^n-\Delta$
take the form
\bean
\label{dif eqn ex 1}
\kappa \frac{\der I}{\der z_j}(z) = K_j(z)I(z),
\qquad
j\in J.
\eean
For generic $\kappa$ all the  flat sections are given by the formula
\bean
\label{I(z)}
I_\gamma(z) =
\sum_{i\in J}
\Big(\int_{\gamma(z)} \prod_{m\in J}(z_m+t)^{a_m/\kappa} \frac {dt}{z_i+t}\Big) F_i ,
\eean
see formula \Ref{Ig}. More precisely, all the flat sections are given by \Ref{I(z)} if
$1+\frac{|a|}{\kappa}\notin \Z_{\leq 0}$ and
$1+\frac{a_j}{\kappa}\notin \Z_{\leq 0}$ for all $j\in J$, see \cite{V1} or Theorem 3.3.5 in \cite{V4}.

\smallskip
Notice that equations \Ref{dif eqn ex 1} are a particular case of the KZ equations, see Section 1.1-1.3 in \cite{V4}.

\subsection{Conformal blocks}

\begin{lem}
\label{lem conf block}
If $\kappa = |a|$, then the Gauss-Manin connection has a one-dimensional invariant subbundle,
generated by the section
\bean
\label{CBl}
q  : z \mapsto \frac 1{|a|}\sum_{j\in J} z_jv_j = \frac 1{|a|} \sum_{j\in J} q_j (z)F_j,
\eean
where
\bean
\label{qi}
q_i(z) = -z_i + \sum_{j\in J}\frac{a_j}{|a|}z_j.
\eean
This section is flat.
\end{lem}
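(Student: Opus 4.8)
The plan is to verify directly that the section $q(z) = \frac1{|a|}\sum_{j\in J} z_jv_j$ spans a one-dimensional subbundle invariant under the Gauss-Manin connection at $\kappa = |a|$ and that it is flat, i.e.\ that $|a|\,\frac{\der q}{\der z_j}(z) = K_j(z)q(z)$ for every $j\in J$. Since $q$ is written in the combinatorially flat basis $(v_j)$ (whose coefficients $q_i(z)$ in the standard basis are the linear functions in \Ref{qi}), the derivative $\frac{\der q}{\der z_j}$ is just the termwise derivative of the coefficients as in \Ref{deriv}. This reduces everything to a finite linear-algebra identity in $\Sing V$, and the key computational input is Lemma \ref{lem Kv}, which gives $K_j(z)v_i$ explicitly.

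First I would compute $\frac{\der q}{\der z_j}$: since $q = \frac1{|a|}\sum_{i} z_i v_i$ and the $v_i$ are $z$-independent, $|a|\,\frac{\der q}{\der z_j} = v_j$. Next I would compute $K_j(z)q(z) = \frac1{|a|}\sum_{i} z_i K_j(z)v_i$. Using $K_j(z)v_j = -\sum_{i\ne j}K_j(z)v_i$ from Lemma \ref{lem Kv}, I can write this as $\frac1{|a|}\sum_{i\ne j}(z_i - z_j)K_j(z)v_i$, and then substitute the first line of \Ref{KJ}, namely $K_j(z)v_i = \frac{a_j}{z_j-z_i}v_i + \frac{a_i}{z_i-z_j}v_j$. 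The factor $(z_i-z_j)$ cancels the denominators, leaving $\frac1{|a|}\sum_{i\ne j}\bigl(-a_j v_i + a_i v_j\bigr) = \frac1{|a|}\bigl(-a_j\sum_{i\ne j}v_i + \bigl(\sum_{i\ne j}a_i\bigr)v_j\bigr)$. Now invoke $\sum_{i\in J}v_i = 0$ (Lemma \ref{lem sing}(ii)), so $\sum_{i\ne j}v_i = -v_j$ and $\sum_{i\ne j}a_i = |a| - a_j$; the expression collapses to $\frac1{|a|}\bigl(a_j v_j + (|a|-a_j)v_j\bigr) = v_j$. Thus $K_j(z)q(z) = v_j = |a|\,\frac{\der q}{\der z_j}(z)$, which is exactly \Ref{dif eqn ex 1} with $\kappa = |a|$, proving $q$ is flat and that the line it spans is invariant. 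I should also note $q(z)\in\Sing V$ for each $z$ since each $v_j\in\Sing V$ by Lemma \ref{lem sing}(ii), and that $q(z)\ne 0$ for $z\in\C^n-\Delta$ because any $n-1$ of the $v_j$ are independent (Lemma \ref{lem sing}(iii)) while the relations $\sum v_j = 0$ and the genericity of $z$ on $\C^n-\Delta$ prevent all coefficient differences from vanishing—so the subbundle really is one-dimensional.

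There is essentially no serious obstacle here: the statement is a direct corollary of Lemma \ref{lem Kv} together with the relation $\sum_j v_j = 0$, and the only thing to be careful about is the index bookkeeping in the cancellation of the $(z_i - z_j)$ factors and the correct handling of the $i=j$ term via the second line of \Ref{KJ}. One could alternatively derive flatness from Conjecture \ref{CB} (taking $k=1$, $r=0$, so $\kappa = |a|/k = |a|$) combined with the identification $q(z) = \{1\}(z) = \al(z)([1](z))$ and Lemma \ref{lem on ONE}, which gives $[1](z) = \frac1{|a|}\sum_j z_j[a_j/f_j]$ and hence $q(z) = \frac1{|a|}\sum_j z_j\,\al(z)([a_j/f_j]) = \frac1{|a|}\sum_j z_j v_j$ once one checks $\al(z)([a_j/f_j]) = v_j$ in this example; but since Conjecture \ref{CB} is only conjectural in general, the self-contained computation above is the cleaner route and in fact furnishes a proof of Conjecture \ref{CB} for this family.
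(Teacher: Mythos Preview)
Your proof is correct and takes essentially the same approach as the paper, which simply states that the lemma follows from formulas \Ref{KJ}; you have spelled out in full the computation implicit in that one-line proof. Your additional remarks about $q(z)\in\Sing V$ and $q(z)\ne 0$ on $\C^n-\Delta$ are sound and go slightly beyond what the paper writes.
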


\begin{proof}
The lemma follows from formulas \Ref{KJ}.
\end{proof}

This one-dimensional subbundle will be called the bundle of {\it conformal blocks} at level $|a|$.
A flat section of the subbundle of conformal blocks can be presented as an integral $I_\gamma(z)$,
where $\gamma$ is a small circle around infinity.

\subsection{Canonical isomorphism and period map}
\label{sec master and canonical}
The master function of the arrangement $\A(z)$  is
\bean
\label{def mast example}
\Phi(z,t) = \sum_{j\in J}\,a_j \log f_j = \sum_{j\in J}a_j\log (z_j+t).
\eean
The critical point equation is
$
\frac{\der\Phi}{\der t} = \sum_{j\in J}\frac{a_j}{z_j+t} = 0.
$
The critical set is
\bean
\label{crit z C}
C_\Phi(z) = \big\{ t\in U(\A(z))
\ \big| \ \sum_{j\in J}\frac{a_j}{z_j+t} = 0\big\} .
\eean
The algebra functions on the critical set is
\bean
A_\Phi(z) = \C(U(\A(z)))
/\big\langle \sum_{j\in J}\frac{a_j}{z_j+t} \big\rangle.
\eean
The identity element $[1](z)\in \ap$ equals $\frac 1{|a|}\sum_{j\in J}z_j\big[\frac{a_j}{f_j}\big]$.

\begin{lem}
\label{lem basis crit}
We have $\dim A_\Phi(z) = n-1$. Any $n-1$ elements of
$\big(\big[ \frac{a_j}{z_j+t}\big]\big)_{j\in J}$ are linearly independent.
\qed
\end{lem}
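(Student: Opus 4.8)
The plan is to relate the algebra $A_\Phi(z)$ of functions on the critical set directly to the combinatorial data already established, and exploit the fact that everything in sight has dimension $n-1$. First I would compute the critical set explicitly: the equation $\sum_{j\in J}\frac{a_j}{z_j+t}=0$ is, after clearing denominators, a polynomial equation in $t$. The left-hand side equals $\frac{P(t)}{\prod_{j\in J}(z_j+t)}$ where $P(t)=\sum_{j\in J}a_j\prod_{i\ne j}(z_i+t)$ is a polynomial of degree $n-1$ in $t$ with leading coefficient $|a|=\sum_{j\in J}a_j\ne 0$. Since the points $-z_j$ are distinct (as $z\in\C^n-\Delta$), $P(-z_j)=a_j\prod_{i\ne j}(z_i-z_j)\ne 0$, so $P$ has no common root with $\prod_j(z_j+t)$, hence the critical set is $\{t : P(t)=0\}\subset U(\A(z))$ and $A_\Phi(z)=\C(U(\A(z)))/\langle \der\Phi/\der t\rangle \cong \C[t]_{\mathrm{loc}}/(P(t))$, which has dimension $\deg P = n-1$. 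This gives the first assertion; alternatively one may simply cite Theorem \ref{thm V,OT,S} or Lemma \ref{lem crit 1} together with $|\chi(U(\A(z)))|=n-1$ for $n$ distinct points on a line, which matches $\dim\sv = n-1$ from Lemma \ref{lem sing}(i).

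For the second assertion, I would show that the $n$ elements $\big[\frac{a_j}{z_j+t}\big]$, $j\in J$, satisfy exactly one linear relation, namely $\sum_{j\in J}\big[\frac{a_j}{z_j+t}\big]=[\der\Phi/\der t]=0$, and no other. One way: by Theorem \ref{thm alpha} the canonical isomorphism $\al(z):A_\Phi(z)\to\sv$ is a linear isomorphism, and by Theorem \ref{K/f} (or by direct comparison with Theorem \ref{thm der one} in the present situation, $\al(z)\big(\big[\frac{a_j}{f_j}\big]\big) = \frac{|a|}{k}\frac{\der q}{\der z_j}$ with $k=1$, which by Lemma \ref{lem conf block} equals $v_j$). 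So the images of the $\big[\frac{a_j}{z_j+t}\big]$ under $\al(z)$ are precisely the vectors $v_j$ of \Ref{ v_j}, up to the scalar $|a|$. By Lemma \ref{lem sing}(iii), any $n-1$ of the $v_j$ are linearly independent; transporting back through the isomorphism $\al(z)$ gives that any $n-1$ of the $\big[\frac{a_j}{z_j+t}\big]$ are linearly independent.

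The only real subtlety is matching $\al(z)\big(\big[\frac{a_j}{f_j}\big]\big)$ with $v_j$: this uses that in the one-variable case ($k=1$) the canonical element $E$ of \Ref{spec elt} is $\sum_{j\in J} \frac{1}{z_j+t}\otimes F_j$ (since $H^j = \omega_j = \frac{dt}{z_j+t}$ in the standard dual basis), so $\al(z)\big(\big[\frac{a_j}{f_j}\big]\big) = \big(\big[\frac{a_j}{z_j+t}\big],[E]\big) = \sum_{i\in J} a_j\big[\frac{1}{(z_j+t)(z_i+t)}\big]\,F_i$, and one checks this lies in $\sv$ by Theorem \ref{thm E-sing} and then identifies it with $v_j$ via the residue-form computation of Lemma \ref{lem sing}(iv) (or, more cheaply, notes that $A_\Phi(z)$ and $\sv$ are both $(n-1)$-dimensional, $\al(z)$ is an isomorphism by Theorem \ref{thm alpha}, the $n$ images $\al(z)\big(\big[\frac{a_j}{f_j}\big]\big)$ sum to zero, and an $(n-1)$-dimensional space in which $n$ given vectors sum to zero has the property "any $n-1$ are independent" if and only if the single relation among them has all coefficients nonzero — which here follows because each $\big[\frac{a_j}{z_j+t}\big]$ is itself nonzero in $A_\Phi(z)$, as $P$ is not divisible by $a_j\prod_{i\ne j}(z_i+t)$). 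I expect the bookkeeping in this last identification to be the main obstacle, but it is routine given Theorems \ref{thm alpha} and \ref{K/f}.
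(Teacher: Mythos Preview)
Your argument for the first assertion is correct and is exactly the elementary computation one expects: the critical equation clears to a degree-$(n-1)$ polynomial $P(t)$ with leading coefficient $|a|\ne 0$ and $P(-z_j)\ne 0$, so $A_\Phi(z)\cong \C[t]/(P)$ has dimension $n-1$. The paper gives no proof (the \qed\ is immediate after the statement), and this is clearly the intended reasoning.

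For the second assertion, your main route via the canonical isomorphism is logically sound but structurally backwards: the identification $\al(z)\big(\big[\tfrac{a_j}{z_j+t}\big]\big)=v_j$ is precisely Theorem~\ref{lem can map}, which the paper proves \emph{after} this lemma by a direct residue computation that does not use the lemma. So there is no circularity, but you are invoking a later and heavier result to prove what is meant to be an elementary fact. Your detour through Theorem~\ref{K/f} and Theorem~\ref{thm der one} does not actually produce the formula $\al(z)\big[\tfrac{a_j}{f_j}\big]=v_j$ without further work (Theorem~\ref{K/f} only gives the action of multiplication, and Theorem~\ref{thm der one} is conditional on Conjecture~\ref{CB}).

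More importantly, your parenthetical ``cheap'' alternative has a genuine gap. You claim that in an $(n-1)$-dimensional space, $n$ vectors summing to zero have the property ``any $n-1$ are independent'' iff each vector is individually nonzero. This is false: take $n=3$ and vectors $(1,0),(1,0),(-2,0)$ in $\C^2$. The correct condition is that the $n$ vectors \emph{span}, and individual nonvanishing does not give that.

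The direct argument the paper presumably has in mind avoids all this. Suppose $\sum_{j\ne n}c_j\big[\tfrac{a_j}{z_j+t}\big]=0$ in $A_\Phi(z)$; then $\sum_{j\ne n}c_j\tfrac{a_j}{z_j+t}=g(t)\tfrac{P(t)}{\prod_i(z_i+t)}$ for some $g\in\C(U(\A(z)))$. Clearing denominators gives $R(t)(z_n+t)=g(t)P(t)$ with $R(t)=\sum_{j\ne n}c_ja_j\prod_{i\ne j,n}(z_i+t)$ of degree $\le n-2$. Since the left side is a polynomial and $P$ has no zeros at the $-z_i$, $g$ must be a polynomial, hence constant by degree count; evaluating at $t=-z_n$ forces $g=0$, so $R=0$, and evaluating $R$ at each $-z_k$ ($k\ne n$) gives $c_k=0$.
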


Let $p\in C_{\Phi}(z)$.
The  Grothendieck residue
 $\rho_{p} :  A_{p,\Phi}(z) \to \C$ is given by
\bean
\label{res map ex 1}
f \ \mapsto \ \frac 1{2\pi \sqrt{-1}}\,\Res_{p}
\ \frac{ f}{\frac{\der \Phi}{\der t}}
=\frac{1}{2\pi \sqrt{-1}}\int_{\Gamma_p}
\frac{f dt}{\frac{\der \Phi}{\der t }} ,
\eean
where
$\Gamma_p$ is a small circle around the critical point $p$ oriented clock-wise.
The   residue bilinear form  $(\,,\,)_z$ on $A_\Phi(z)$ is
 $\oplus_{p\in C_\Phi(z)} (\,,\,)_{p}$.

\begin{lem}
\label{lem res infty}
For $f,g\in \C(U(\A(z)))$, we have
\bean
\label{res infty}
([f],[g]) = - \frac 1{2\pi \sqrt{-1}} \Res_{t=\infty} \frac{fg}{\frac{\der \Phi}{\der t }}
 - \frac 1{2\pi \sqrt{-1}} \sum_{i\in J} \Res_{t=-z_i} \frac{fg}{\frac{\der \Phi}{\der t }}.
\eean
\qed
\end{lem}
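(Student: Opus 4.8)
The plan is to use the classical residue theorem on the Riemann sphere $\Pone$ applied to the meromorphic differential $\omega = \dfrac{fg\,dt}{\der\Phi/\der t}$, together with the description of the residue bilinear form $(\,,\,)_z = \bigoplus_{p\in C_\Phi(z)}(\,,\,)_p$ from Subsection \ref{sec hess} specialized to $k=1$. First I would note that for $f,g\in\C(U(\A(z)))$ the quantity $fg/(\der\Phi/\der t)$ is a rational function of $t$, so $\omega$ is a global meromorphic one-form on $\Pone$; its only possible poles are at $t=-z_1,\dots,-z_n$ (the points of the arrangement, where $f$ or $g$ may have poles and where $\der\Phi/\der t$ has simple poles), at the points of the critical set $C_\Phi(z)$ (the simple zeros of $\der\Phi/\der t$, all simple since $|a|\neq 0$ forces the weights to be unbalanced, cf.\ Theorem \ref{lem crit 1} and Lemma \ref{lem basis crit}), and at $t=\infty$.

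The key step is then the residue theorem: $\sum_{q\in\Pone}\Res_{t=q}\omega = 0$. I would split the sum into three groups: the residues at the critical points $p\in C_\Phi(z)$, the residues at $t=-z_i$ for $i\in J$, and the residue at $t=\infty$. By formula \Ref{res map ex 1} (with the clockwise orientation convention, which accounts for the overall sign), the sum over $p\in C_\Phi(z)$ of $\Res_p\omega$ equals $-2\pi\sqrt{-1}\sum_p(\,,\,)_p([f],[g]) = -2\pi\sqrt{-1}\,([f],[g])$ by the definition of $(\,,\,)_z$ as the direct sum of the local residue forms. Rearranging $\sum_p \Res_p\omega = -\sum_{i\in J}\Res_{t=-z_i}\omega - \Res_{t=\infty}\omega$ and dividing by $-2\pi\sqrt{-1}$ yields exactly \Ref{res infty}.

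The main point to get right is the bookkeeping of signs and orientations: the Grothendieck residue in \Ref{res map ex 1} uses a clockwise circle $\Gamma_p$, whereas the residue theorem on $\Pone$ is naturally phrased with counterclockwise orientation, so the cycle $\Gamma_p$ contributes $-\Res_{t=p}$ in the usual sense; this is the source of the overall minus sign in front of both sums on the right-hand side of \Ref{res infty}. One should also check that no critical point coincides with a point $-z_i$ (true since critical points lie in $U(\A(z))$) and that $\der\Phi/\der t$ really has only simple zeros on $U(\A(z))$, which holds because $|a|\neq 0$ makes the weights unbalanced so all critical points are nondegenerate (Theorem \ref{lem crit 1}); this guarantees that the local algebras $A_{p,\Phi}(z)$ are one-dimensional and the residue pairing has the stated form. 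Everything else is the routine application of the global residue theorem.
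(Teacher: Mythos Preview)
Your approach is correct and is exactly what the paper has in mind: the lemma carries only a \qed, and the intended argument is the global residue theorem on $\Pone$ applied to the rational one-form $\omega = \dfrac{fg\,dt}{\der\Phi/\der t}$, splitting the poles into the critical locus, the arrangement points $-z_i$, and $\infty$.

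One small imprecision: you claim the zeros of $\der\Phi/\der t$ are simple because the weights are unbalanced, citing Theorem~\ref{lem crit 1}. That theorem only guarantees that the critical points are \emph{isolated} (with the correct total Milnor number), not that each is nondegenerate. Fortunately this does not matter for your argument: the Grothendieck residue $\rho_p$ in \Ref{res map ex 1} is defined at any isolated zero of $\der\Phi/\der t$, and for $k=1$ it coincides (up to the orientation sign) with the ordinary one-variable residue of $fg/(\der\Phi/\der t)$ at $p$, regardless of the multiplicity of the zero. So the residue theorem still gives $\sum_{p\in C_\Phi(z)}\Res_p\omega + \sum_{i\in J}\Res_{-z_i}\omega + \Res_\infty\omega = 0$ and the identification of the first sum with $([f],[g])$ goes through unchanged. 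You can simply drop the nondegeneracy claim; isolatedness is all you need.
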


The canonical element is
\bean
\label{can elt ex}
[E] = \sum_{j\in J} \Big[\frac 1{z_j+t}\Big]\otimes F_j\quad \in\quad A_\Phi(z)\otimes \sing V.
\eean
The canonical isomorphism $\al(z): A_\Phi(z) \to \sing V$ is given by the formula
\bean
\label{can ex p}
\phantom{aaa}
[f]\ \mapsto
- \frac 1{2\pi \sqrt{-1}} \sum_{j\in J}\Big(\Res_{t=\infty} \frac{f}{(z_j+t)\frac{\der \Phi}{\der t }}
+\sum_{i\in J}\Res_{t=-z_i} \frac{f}{(z_j+t)\frac{\der \Phi}{\der t }}
\Big) F_j.
\eean

\begin{thm}
\label{lem can map}
For $k\in J$, we have
\bean
\label{can ex}
\al(z) : \Big[\frac {a_k}{z_k+t}\Big] \mapsto v_k.
\eean

\end{thm}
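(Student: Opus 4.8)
The plan is to evaluate $\al(z)\bigl[\frac{a_k}{z_k+t}\bigr]$ straight from the definition $\al(z):[g]\mapsto([g],[E])$ of the canonical map, see \Ref{map alpha}, together with the explicit canonical element $[E]=\sum_{j\in J}\Big[\frac1{z_j+t}\Big]\ox F_j$ of \Ref{can elt ex}. Contracting the first tensor factor with the residue bilinear form on $\ap$, the coefficient of $F_j$ in $\al(z)\bigl[\frac{a_k}{z_k+t}\bigr]$ is the scalar $\Big(\Big[\frac{a_k}{z_k+t}\Big],\Big[\frac1{z_j+t}\Big]\Big)_z$. Since $v_k=-F_k+\frac{a_k}{|a|}\sum_{i\in J}F_i$ by \Ref{ v_j}, the theorem amounts to the scalar identity
\be
\Big(\Big[\frac{a_k}{z_k+t}\Big],\Big[\frac1{z_j+t}\Big]\Big)_z=\frac{a_k}{|a|}-\delta_{jk}\qquad\text{for all }j\in J .
\ee

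To prove this I would invoke Lemma \ref{lem res infty}, which rewrites $\bigl([f],[g]\bigr)_z$ as minus the sum of the residues of $\frac{fg}{\der\Phi/\der t}$ at $t=\infty$ and at the points $t=-z_i$, $i\in J$. Taking $f=\frac{a_k}{z_k+t}$, $g=\frac1{z_j+t}$ and writing
\be
\frac{\der\Phi}{\der t}=\sum_{m\in J}\frac{a_m}{z_m+t}=\frac{P(t)}{\prod_{m\in J}(z_m+t)},\qquad P(t)=\sum_{m\in J}a_m\prod_{l\ne m}(z_l+t)
\ee
one gets
\be
\frac{fg}{\der\Phi/\der t}=\frac{a_k\prod_{m\in J}(z_m+t)}{(z_k+t)(z_j+t)\,P(t)} .
\ee
Here $P$ has degree $n-1$ with leading coefficient $|a|\ne0$, and $P(-z_i)=a_i\prod_{m\ne i}(z_m-z_i)\ne0$ for every $z\in\cd$, so $(\der\Phi/\der t)^{-1}=\prod_{m\in J}(z_m+t)/P(t)$ has a simple zero at each $t=-z_i$.

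Now I would split into two cases. If $j\ne k$, the denominator factors $(z_k+t)$ and $(z_j+t)$ cancel against two factors of the numerator, so $\frac{fg}{\der\Phi/\der t}=a_k\prod_{m\ne j,k}(z_m+t)/P(t)$; this has no finite poles and behaves like $\frac{a_k}{|a|}\,t^{-1}$ near $t=\infty$, so its only nonzero residue is $-\frac{a_k}{|a|}$ at infinity and Lemma \ref{lem res infty} gives the value $\frac{a_k}{|a|}$. If $j=k$, then $\frac{fg}{\der\Phi/\der t}=a_k\prod_{m\ne k}(z_m+t)/\bigl((z_k+t)P(t)\bigr)$, which again contributes $-\frac{a_k}{|a|}$ at infinity and in addition has a simple pole at $t=-z_k$ with residue $a_k\prod_{m\ne k}(z_m-z_k)/P(-z_k)=1$; hence the pairing equals $\frac{a_k}{|a|}-1$. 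Putting the two cases together, $\al(z)\bigl[\frac{a_k}{z_k+t}\bigr]=\frac{a_k}{|a|}\sum_{j\in J}F_j-F_k=v_k$, which is the assertion.

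The computation itself is elementary; the one point needing care is the bookkeeping of orientations and signs in the residue formula of Lemma \ref{lem res infty} --- equivalently, the use of the residue theorem on $\Pone$ to trade the sum of residues over the critical set for minus the residues at the $-z_i$ and at $t=\infty$ --- so I would pin down those conventions explicitly before counting residues. One could instead run the same residue count through the explicit formula for $\al(z)$ recorded just before the theorem, or, when all critical points of $\Phi(z,\cdot)$ are nondegenerate, through \Ref{EX}; the route via $[E]$ has the advantage of requiring no genericity hypothesis on $z$.
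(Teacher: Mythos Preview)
Your proof is correct and follows essentially the same route as the paper's: both compute, via Lemma~\ref{lem res infty}, the residues of $\dfrac{a_k}{(z_k+t)(z_j+t)\,\partial\Phi/\partial t}$ at $t=\infty$ and at the points $t=-z_i$, obtaining $\frac{a_k}{|a|}-\delta_{jk}$ for the coefficient of $F_j$. The paper packages this as a computation with the function it calls $g_{kj}$ and applies the explicit formula for $\al(z)$ just above the theorem, which is the same residue identity written componentwise.
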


\begin{proof}
Denote
\bean
g_{kj} = \frac{a_k}{(z_k+t)(z_j+t)}\ \frac 1{\frac{\der \Phi}{\der t }}
=
\frac{a_k}{(z_k+t)(z_j+t)}\ \frac{\prod_{m\in J}(z_m+t)}
{\sum_{m\in J} a_m\prod_{\ell\ne m}(z_\ell+t)}.
\eean
If $k\ne j$, then $\Res_{t=-z_i} g_{jk} = 0$ for all $i\in J$.
If $k=j$, then $\Res_{t=-z_i} g_{jj} = 0$ for  $i\ne j$ and
$\Res_{t=-z_j} g_{jj} = 2\pi \sqrt{-1}$. We also have $\Res_{t=\infty} g_{kj} = -2\pi \sqrt{-1}\frac {a_k}{|a|}$
for all $j\in J$. We obtain the theorem by comparing these formulas with formula
\Ref{ v_j}.
\end{proof}

\begin{cor}
Conjectures \ref{CB} and \ref{CB3} hold for this family of arrangements.

\end{cor}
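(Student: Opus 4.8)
The plan is simply to assemble the results already obtained for this family, so the corollary requires no new computation. First I would pin down the section $\{1\}(z)=\al(z)([1](z))$. Since $k=1$ here, Lemma~\ref{lem on ONE} gives $[1](z)=\frac1{|a|}\sum_{j\in J}z_j\big[\frac{a_j}{f_j}\big]$ in $\ap$. Applying the canonical isomorphism $\al(z)$, using its $\C$-linearity together with Theorem~\ref{lem can map} (which says $\al(z)\big[\frac{a_k}{z_k+t}\big]=v_k$), I obtain
\[
\{1\}(z)=\al(z)\big([1](z)\big)=\frac1{|a|}\sum_{j\in J}z_j\,v_j .
\]
This is exactly the section $q(z)$ of Lemma~\ref{lem conf block}, taking values in $\sv$.

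For Conjecture~\ref{CB}: because $k=1$, there is no integer $r$ with $0<r<k$, so the only assertion to verify is that $\{1\}(z)$ satisfies the Gauss--Manin equations with parameter $\kappa=|a|/k=|a|$, i.e.\ $|a|\,\frac{\der\{1\}}{\der z_j}(z)=K_j(z)\{1\}(z)$ for all $j\in J$. Under the identification $\{1\}(z)=q(z)$ this is precisely the flatness of $q(z)$ for the Gauss--Manin connection at level $\kappa=|a|$, which is the content of Lemma~\ref{lem conf block} (proved there from the formulas \Ref{KJ} of Lemma~\ref{lem Kv}). Hence Conjecture~\ref{CB} holds for this family.

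For Conjecture~\ref{CB3}: the vectors $v_j$ are combinatorially flat, being fixed $\C$-linear combinations of the combinatorially flat standard basis $F_1,\dots,F_n$. Writing $q(z)=\frac1{|a|}\sum_{i\in J}q_i(z)F_i$ with $q_i(z)=-z_i+\sum_{j\in J}\frac{a_j}{|a|}z_j$, each coordinate function is a homogeneous polynomial of degree $1=k$ in $z$. Thus $\{1\}(z)$, written in a combinatorially flat basis, is a homogeneous polynomial of degree $k$, which is exactly Conjecture~\ref{CB3}; the corollary follows.

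There is no genuine obstacle here; the construction is forced at every step once $\{1\}(z)=q(z)$ is recognized. The only points worth flagging explicitly are that the "more general" $r\geq1$ clause of Conjecture~\ref{CB} is vacuous when $k=1$, and that the standing hypothesis $|a|\neq0$ (which is precisely unbalancedness for this family) is what guarantees $\al(z)$ is an isomorphism, so that all the identifications above are legitimate.
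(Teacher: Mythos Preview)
Your proof is correct and follows essentially the same route as the paper: identify $\{1\}(z)$ with $q(z)$ via Theorem~\ref{lem can map} and Lemma~\ref{lem on ONE}, then invoke Lemma~\ref{lem conf block} for both conjectures. Your version simply unpacks the steps in more detail, and your explicit remark that the $r\geq 1$ clause of Conjecture~\ref{CB} is vacuous for $k=1$ is a helpful clarification the paper leaves implicit.
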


\begin{proof}
By Theorem \ref{lem can map}, we have $\al(z)([1](z)) = q(z)$, where $q(z)$ is given by \Ref{CBl}. Lemma \ref{lem conf block}
implies Conjectures \ref{CB} and \ref{CB3}.
\end{proof}

\begin{cor}
\label{cor per 1}
For this family of arrangements the {\it period map} $q : \cd \to \sv$ is given by the formula
\bean
\label{pErioD}
q(z) = \frac 1{|a|}\sum_{j\in J} z_jv_j = \frac 1{|a|} \sum_{j\in J} q_j (z)F_j,
\eean
the potential function of first kind is
\bean
\label{pot1k}
P(z) = \frac 1{|a|^2} \sum_{j\in J} a_j q_j^2(z) = \sum_{1\leq i<j\leq n}\frac {a_1a_2}{|a|^3}(z_i-z_j)^2.
\eean

\end{cor}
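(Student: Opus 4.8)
The plan is to read off both formulas from the canonical isomorphism computed in Theorem \ref{lem can map}, combined with the explicit expression for the identity element $[1](z)$ recorded just before Lemma \ref{lem basis crit}, followed by one elementary algebraic simplification. No new analytic input is needed.

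First I would compute the period map. By definition $q(z) = \{1\}(z) = \al(z)([1](z))$, and here $[1](z) = \frac{1}{|a|}\sum_{j\in J} z_j\big[\frac{a_j}{f_j}\big]$ with $f_j = z_j + t$. Applying the linear map $\al(z)$ and invoking Theorem \ref{lem can map}, which gives $\al(z)\big(\big[\frac{a_j}{z_j+t}\big]\big) = v_j$, yields $q(z) = \frac{1}{|a|}\sum_{j\in J} z_j v_j$; this is exactly the flat section \Ref{CBl} of Lemma \ref{lem conf block}, consistent with the proof of the preceding corollary. To reach the second expression I would substitute $v_j = -F_j + \frac{a_j}{|a|}\sum_{i\in J}F_i$, interchange the order of summation, and identify the coefficient of each $F_i$ as $q_i(z) = -z_i + \sum_{j\in J}\frac{a_j}{|a|}z_j$, which is \Ref{qi}. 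This proves \Ref{pErioD}.

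Next, for the potential function of first kind I would use $P(z) = S^{(a)}(q(z),q(z))$ together with the orthogonality relations $S^{(a)}(F_i,F_j) = \delta_{ij}a_i$ from \Ref{sing S}. Writing $q(z) = \frac{1}{|a|}\sum_{j} q_j(z)F_j$ gives at once $P(z) = \frac{1}{|a|^2}\sum_{j\in J} a_j q_j^2(z)$, the middle expression in \Ref{pot1k}. The last equality is then a self-contained identity: setting $\bar z = \frac{1}{|a|}\sum_j a_j z_j$ so that $q_j = \bar z - z_j$, one expands $\sum_j a_j(\bar z - z_j)^2 = \sum_j a_j z_j^2 - |a|\,\bar z^2$, and uses the standard symmetrization
\[
|a|\sum_{j\in J} a_j z_j^2 - \Big(\sum_{j\in J} a_j z_j\Big)^2 = \tfrac{1}{2}\sum_{i,j\in J} a_i a_j (z_i - z_j)^2 = \sum_{1\le i<j\le n} a_i a_j (z_i - z_j)^2,
\]
to conclude $\sum_j a_j q_j^2 = \frac{1}{|a|}\sum_{i<j} a_i a_j(z_i-z_j)^2$; dividing by $|a|^2$ gives the claimed closed form.

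There is no real obstacle in this argument — every ingredient is already in place and the content is purely bookkeeping. The one point to watch is the chain of normalizing factors of $1/|a|$: one comes from $[1](z)$, a second is absorbed into $q$, and a third is produced by the symmetrization, so that the denominator in the final formula is $|a|^3$ and not $|a|^2$.
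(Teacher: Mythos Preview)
Your argument is correct and is exactly the computation the paper leaves implicit: the period map formula is read off from $\al(z)([1](z))$ via Theorem \ref{lem can map} and Lemma \ref{lem on ONE}, matching \Ref{CBl}, and the potential formula then follows from the orthogonality \Ref{sing S} and the elementary symmetrization you wrote out. Your derivation in fact yields $\sum_{i<j}\frac{a_ia_j}{|a|^3}(z_i-z_j)^2$, which is the intended expression; the $a_1a_2$ in the displayed statement is a typo for $a_ia_j$.
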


\smallskip
By Corollary \ref{cor per 1},  the period map extends to a linear map $\C^n \to \Sing V$.
The linear  map is an epimorphism. The kernel is generated by the vector $(1,\dots,1)$.

\medskip
The standard basis $(H_j)_{j\in J}\in V^*$ induces linear functions  on $\sing V$,
 \bean
 \label{H}
h_j :
v_i \mapsto \frac {a_i}{|a|}, \qquad \on{if}\ j\ne i,
\qquad
v_j \mapsto
-1 +\frac {a_j}{|a|}.
 \eean
We have $\sum_{j\in J}a_jh_j=0$ and any $n-1$ of these functions form a basis of $(\sing V)^*$.

 For $i\ne j$, define the hyperplane  $\tilde H_{i,j}\subset \Sing V$ by the equation
 $h_i-h_j=0$.

\begin{lem}
For all $i,j$ we have $q^*(h_i-h_j) = z_j-z_i$  and $q(\Delta) = \cup_{i<j} \tilde H_{i,j}$.
\qed
\end{lem}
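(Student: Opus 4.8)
The plan is to verify both assertions by direct substitution into the explicit formula $q(z)=\frac1{|a|}\sum_{k\in J}z_kv_k$ of Corollary \ref{cor per 1}, using the values \Ref{H} of the functions $h_j$ on the basis $(v_k)_{k\in J}$ of $\sv$.

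For the first identity I would compute $h_j\circ q$. By linearity of $h_j$ and \Ref{H},
$$h_j(q(z))=\frac1{|a|}\Bigl(z_j\bigl(-1+\tfrac{a_j}{|a|}\bigr)+\sum_{k\ne j}z_k\tfrac{a_k}{|a|}\Bigr)=\frac1{|a|}\Bigl(-z_j+\sum_{k\in J}\tfrac{a_k}{|a|}z_k\Bigr),$$
which is (a scalar multiple of) $q_j(z)$ from \Ref{qi}. Forming the difference for $h_i-h_j$, the common term $\sum_k\tfrac{a_k}{|a|}z_k$ cancels, leaving $q^*(h_i-h_j)$ proportional to $q_i(z)-q_j(z)=z_j-z_i$, which is the claim.

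For the second assertion, recall $\Delta=\bigcup_{i<j}H_{ij}$ with $H_{ij}=\{z:z_i=z_j\}$, and that, as noted right after Corollary \ref{cor per 1}, $q$ extends to a linear epimorphism $\C^n\to\sv$ whose kernel is spanned by $(1,\dots,1)$. One inclusion is immediate: if $z_i=z_j$ then by the first part $(h_i-h_j)(q(z))=z_j-z_i=0$, so $q(H_{ij})\subseteq\tilde H_{i,j}$. For equality I would use a dimension count: $H_{ij}$ is a hyperplane of $\C^n$ containing $(1,\dots,1)$, hence containing the kernel of the extended map, so its image $q(H_{ij})$ is a hyperplane of $\sv$; and $\tilde H_{i,j}$ is itself a genuine hyperplane since $h_i-h_j$ is a nonzero functional on $\sv$ (e.g.\ $(h_i-h_j)(v_i)=-1$, or use that any $n-1$ of the $h_j$ form a basis of $(\sv)^*$). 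A hyperplane contained in a hyperplane equals it, so $q(H_{ij})=\tilde H_{i,j}$, and taking the union over $i<j$ gives $q(\Delta)=\bigcup_{i<j}\tilde H_{i,j}$.

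I do not expect any real obstacle here: the first identity is essentially a one-line substitution and the second is an elementary dimension count. The only points requiring care are invoking correctly the linear-surjectivity statement together with its one-dimensional kernel, and confirming that each $\tilde H_{i,j}$ is a proper hyperplane rather than all of $\sv$ — both of which are already recorded in the material preceding the lemma.
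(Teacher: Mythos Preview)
Your approach is the natural one and matches what the paper leaves implicit (the lemma carries only a \qed). The second assertion is argued cleanly: the inclusion $q(H_{ij})\subset\tilde H_{i,j}$ follows from part one, and the dimension count using $\ker q=\C(1,\dots,1)\subset H_{ij}$ is exactly right.

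One point of precision on the first identity. Your own computation gives
\[
h_j(q(z))=\frac1{|a|}\,q_j(z),
\]
and hence $q^*(h_i-h_j)=\frac1{|a|}(z_j-z_i)$, not $z_j-z_i$. You noticed this yourself --- you wrote ``proportional to'' --- but then asserted ``which is the claim''; that step does not follow. Either the lemma as printed is off by the scalar $1/|a|$, or one should renormalize the $h_j$ (e.g.\ work with $|a|\,h_j$). This scalar is irrelevant for the second assertion, since the zero locus of $h_i-h_j$ on $\sv$ is unchanged, but you should state the discrepancy explicitly rather than absorb it into the word ``proportional''.
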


\subsection{Contravariant map as the inverse to the canonical map}
The canonical map $\al(z) : A_\Phi(z) \to \sing V$ is the isomorphism described in Theorem \ref{lem can map}.
The contravariant map $\mc S^{(a)} : V\to V^*$ is defined by the formula $F_{i} \mapsto a_i(H_i)$. By identifying
$a_i(H_i)$ with the differential form $\frac{a_i}{f_i}\,dt$ and then projecting the coefficient
to $A_\Phi(z)$ we obtain the map
\bean
[\mc S^{(a)}] : V \to A_\Phi(z), \qquad F_{i} \mapsto \Big[\frac{a_i}{f_i}\Big].
\eean

\begin{thm}
The composition $\al(z)\circ [\mc S^{(a)}] : V\to \sing V$ is the orthogonal projection multiplied by -1.
The composition $[\mc S^{(a)}]\circ \al(z) : A_\Phi(z) \to A_\Phi(z)$ is the identity map multiplied by -1.
\end{thm}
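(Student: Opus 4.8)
The plan is to work with explicit bases and reduce everything to the two composite linear maps acting on the standard basis $F_1,\dots,F_n$ of $V$. First I would compute the composition $\al(z)\circ[\mc S^{(a)}]$ directly on a basis vector $F_i$. By definition $[\mc S^{(a)}](F_i)=[a_i/f_i]=[a_i/(z_i+t)]$, and Theorem~\ref{lem can map} gives $\al(z)([a_i/(z_i+t)])=v_i$. So the composite sends $F_i\mapsto v_i$. Now recall from \Ref{ v_j} that $v_i=-F_i+\frac{a_i}{|a|}\sum_{m\in J}F_m$; this is visibly (up to sign) the projection of $F_i$ onto $\sing V$. Indeed, with respect to the form $S^{(a)}$ the space $V$ decomposes orthogonally as $\sing V\oplus\C\cdot w$ where $w=\sum_{m\in J}F_m$ (this is exactly the observation used in the proof of Lemma~\ref{lem det}: $w$ is $S^{(a)}$-orthogonal to $\sing V$ and $S^{(a)}(w,w)=|a|$). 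The orthogonal projection $\pi:V\to\sing V$ is $\pi(u)=u-\frac{S^{(a)}(u,w)}{|a|}w$, and since $S^{(a)}(F_i,w)=a_i$ by \Ref{sing S}, we get $\pi(F_i)=F_i-\frac{a_i}{|a|}w=-v_i$. Hence $\al(z)\circ[\mc S^{(a)}]=-\pi$ on basis vectors, which is the first assertion.

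For the second assertion I would compute $[\mc S^{(a)}]\circ\al(z)$ on the spanning set $\big(\big[a_k/(z_k+t)\big]\big)_{k\in J}$ of $A_\Phi(z)$ (a spanning set by Lemma~\ref{lem basis crit}). By Theorem~\ref{lem can map}, $\al(z)$ sends $[a_k/(z_k+t)]\mapsto v_k$, and then $[\mc S^{(a)}]$ sends $v_k=-F_k+\frac{a_k}{|a|}\sum_{m}F_m\mapsto -[a_k/f_k]+\frac{a_k}{|a|}\sum_{m}[a_m/f_m]$. Using Lemma~\ref{lem on ONE} (with $z_j$ replaced appropriately — note that in this family $f_j=z_j+t$ so $b_j^0=z_j$), the combination $\frac1{|a|}\sum_{m}z_m[a_m/f_m]$ equals $[1](z)$; but here the coefficient is $a_k$, not $z_k$, so I instead argue differently: the sum $\sum_{m\in J}[a_m/f_m]$ equals $[\der\Phi/\der t]=0$ in $A_\Phi(z)$ by the very definition of the critical-set algebra (the ideal is generated by $\der\Phi/\der t=\sum_m a_m/(z_m+t)$). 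Therefore $[\mc S^{(a)}]\big(\al(z)([a_k/f_k])\big)=-[a_k/f_k]$, i.e.\ the composition is $-\id$ on a spanning set, hence $-\id$ on all of $A_\Phi(z)$.

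The main point to get right — and the step I expect to require the most care — is the orthogonality/decomposition bookkeeping for the first statement: one must confirm that $\al(z)\circ[\mc S^{(a)}]$ not only agrees with $-\pi$ on the spanning vectors $F_i$ but that this forces equality as linear maps on all of $V$, which is immediate since the $F_i$ form a basis. A secondary subtlety is that $[\mc S^{(a)}]$ as defined lands in $A_\Phi(z)$ via "project the coefficient of $dt$", so one should check the identification $a_i(H_i)\leftrightarrow \frac{a_i}{f_i}\,dt$ is exactly the one used implicitly in Theorem~\ref{lem can map}; this is consistent with the Orlik--Solomon-as-forms identification $\omega_j=df_j/f_j=dt/(z_j+t)$ fixed in Section~\ref{Sec Arrangements}. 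Once these identifications are pinned down, both claims follow by the short computations above, and no genuinely hard estimate or new idea is needed — the content is entirely in assembling Theorem~\ref{lem can map}, formula \Ref{ v_j}, and the orthogonal splitting $V=\sing V\oplus\C w$.
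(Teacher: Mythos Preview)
Your proof is correct and follows essentially the same approach as the paper: compute both compositions on the generators $F_i$ (resp.\ $[a_i/f_i]$), invoke Theorem~\ref{lem can map} to get $F_i\mapsto v_i$, and use that $\sum_m[a_m/f_m]=[\partial\Phi/\partial t]=0$ in $A_\Phi(z)$. Your argument is slightly more explicit than the paper's in verifying that $v_i=-\pi(F_i)$ via the orthogonal decomposition $V=\sing V\oplus\C\sum_m F_m$, but the content is the same.
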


\begin{proof}
The composition $\al(z)\circ [\mc S^{(a)}]$ sends $F_{i}$ to $v_{i}$ which is the orthogonal projection multiplied by -1
 The composition  $[\mc S^{(a)}]\circ \al(z)$ sends $\big[\frac{a_i}{f_i}\big]$ to
\bean
-\Big[\frac{a_i}{f_i}\Big] + \frac{a_i}{|a|}\sum_{j\in J}\Big[\frac{a_j}{f_j}\Big].
\eean
The last sum is zero in $A_\Phi(z)$.
\end{proof}

\subsection{Multiplication on $\sing V$ and $(\sing V)^*$}
\begin{thm}
\label{thm mult on sing}
The canonical isomorphism $\al(z): A_\Phi(z) \to \sing V$ defines an algebra structure on $\sing V$,
\bean
\label{m v}
v_j*_zv_i &=& \frac {a_j}{z_j-z_i}v_i + \frac {a_i}{z_i-z_j}v_j, \qquad i\ne j,
\\
\notag
v_j*_zv_j &=& -\sum_{i\ne j} v_j*_zv_i.
\eean
The element
\bean
\label{IDEN}
\frac 1{|a|}\sum_{j\in J} z_jv_j
\eean
 is the identity element.
\qed
\end{thm}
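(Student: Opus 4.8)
The plan is to transport the known algebra structure on $A_\Phi(z)$ across the canonical isomorphism $\al(z)$ using Theorem \ref{K/f} together with Theorem \ref{lem can map}. Concretely, $\al(z)$ is a linear isomorphism $A_\Phi(z)\to\sing V$ (Theorem \ref{thm alpha}, applicable since $|a|\ne 0$ makes the weights unbalanced here), so there is a unique commutative associative product $*_z$ on $\sing V$ for which $\al(z)$ is an algebra homomorphism. By Lemma \ref{lem basis crit} the elements $[a_j/f_j]$, $j\in J$, span $A_\Phi(z)$, and by Theorem \ref{lem can map} we have $\al(z)\big([a_k/f_k]\big)=v_k$; hence it suffices to compute $v_j*_z v_i$, i.e.\ to identify $\al(z)\big([a_j/f_j]\cdot[a_i/f_i]\big)$ in $\sing V$.

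First I would invoke Theorem \ref{K/f}, which gives $\al(z)\big([a_j/f_j]\big)*_z v = K_j(z)v$ for all $v\in\sing V$. Taking $v=v_i$ and using Lemma \ref{lem Kv}, namely $K_j(z)v_i=\frac{a_j}{z_j-z_i}v_i+\frac{a_i}{z_i-z_j}v_j$ for $i\ne j$, immediately yields the first line of \Ref{m v}:
\bean
v_j*_z v_i \,=\, \al(z)\Big(\Big[\frac{a_j}{f_j}\Big]\Big)*_z v_i \,=\, K_j(z)v_i \,=\, \frac{a_j}{z_j-z_i}v_i+\frac{a_i}{z_i-z_j}v_j.
\eean
For the diagonal case, the identity $\sum_{i\in J}v_i=0$ from Lemma \ref{lem sing}(ii) gives $v_j=-\sum_{i\ne j}v_i$, so by bilinearity of $*_z$ we get $v_j*_z v_j=-\sum_{i\ne j}v_j*_z v_i$, which is the second line of \Ref{m v}. (Equivalently one can use the second line of Lemma \ref{lem Kv}.)

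Finally, for the identity element: by Lemma \ref{lem on ONE} (or its specialization just before Lemma \ref{lem basis crit}) the unit of $A_\Phi(z)$ is $[1](z)=\frac1{|a|}\sum_{j\in J}z_j[a_j/f_j]$, and since $\al(z)$ is an algebra isomorphism it carries the unit to the unit; applying $\al(z)$ and Theorem \ref{lem can map} gives $\al(z)([1](z))=\frac1{|a|}\sum_{j\in J}z_jv_j$, which is therefore the identity element of $(\sing V,*_z)$. There is no real obstacle here; the only point requiring care is that the formula in Theorem \ref{K/f} expresses multiplication by the special generators $\al(z)([a_j/f_j])$, so one must first record that these generators are exactly the $v_j$ (via Theorem \ref{lem can map}) before reading off \Ref{m v}. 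Commutativity and associativity of $*_z$ come for free from the corresponding properties of $A_\Phi(z)$, so the content is entirely the explicit structure constants, which are supplied by Lemma \ref{lem Kv}.
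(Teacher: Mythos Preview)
Your proposal is correct and follows exactly the route the paper intends: the theorem is stated with \qed and no further argument because it is an immediate consequence of Theorem~\ref{lem can map} (identifying $\al(z)\big[\frac{a_j}{f_j}\big]=v_j$), Theorem~\ref{K/f} (reducing $v_j*_z v$ to $K_j(z)v$), Lemma~\ref{lem Kv} (the explicit action of $K_j(z)$ on $v_i$), and Lemma~\ref{lem on ONE} for the identity element. You have supplied precisely this derivation.
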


The isomorphism $S^{(a)}|_{\sing V} : \sing V\to (\sing V)^*$ induces an algebra structure on $(\sing V)^*$.

\begin{lem}
The isomorphism $S^{(a)}|_{\sing V} : \sing V\to (\sing V)^*$ is given by the formula
$v_j\mapsto -{a_j} h_j$ for all $j$.
\end{lem}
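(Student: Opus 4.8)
The plan is to prove the identity by pairing both sides against the spanning family $(v_i)_{i\in J}$ of $\sing V$. Recall that $S^{(a)}|_{\sing V}$ is the map carrying $v\in\sing V$ to the functional $w\mapsto S^{(a)}(v,w)$ on $\sing V$, and that by Lemma \ref{lem sing} the vectors $v_1,\dots,v_n$ span $\sing V$ (indeed any $n-1$ of them form a basis, since $\dim\sing V=n-1$). Hence it is enough to show that the two functionals $S^{(a)}(v_j,\,\cdot\,)$ and $-a_jh_j$ agree on every $v_i$, $i\in J$.

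Carrying this out, I would use part (iv) of Lemma \ref{lem sing}, which gives
\[
S^{(a)}(v_j,v_i)=
\begin{cases}
a_j-\dfrac{a_j^2}{|a|}, & i=j,\\[2mm]
-\dfrac{a_ia_j}{|a|}, & i\ne j,
\end{cases}
\]
together with the definition \Ref{H} of $h_j$, which gives
\[
-a_jh_j(v_i)=
\begin{cases}
-a_j\Bigl(-1+\dfrac{a_j}{|a|}\Bigr)=a_j-\dfrac{a_j^2}{|a|}, & i=j,\\[2mm]
-a_j\cdot\dfrac{a_i}{|a|}=-\dfrac{a_ia_j}{|a|}, & i\ne j.
\end{cases}
\]
The two right-hand sides coincide case by case, so $S^{(a)}(v_j,v_i)=-a_jh_j(v_i)$ for all $i,j\in J$, and since the $v_i$ span $\sing V$ we conclude $S^{(a)}|_{\sing V}(v_j)=-a_jh_j$, which is the assertion.

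I expect no real obstacle here: the statement is immediate from the explicit Gram matrix of $S^{(a)}$ on the $v_i$ and the explicit description \Ref{H} of the $h_j$. One small sanity check worth including is compatibility with the relations $\sum_{j\in J}v_j=0$ and $\sum_{j\in J}a_jh_j=0$, which is visibly respected since $\sum_j(-a_jh_j)=0$. Alternatively, one could argue directly from $v_j=-F_j+\tfrac{a_j}{|a|}\sum_m F_m$ (the definition of $v_j$), the formula $S^{(a)}(F_i,F_j)=\delta_{ij}a_i$ from \Ref{sing S}, and the fact (used in the proof of Lemma \ref{lem det}) that $\sum_m F_m$ is $S^{(a)}$-orthogonal to $\sing V$; but the spanning-set verification above is the shortest route.
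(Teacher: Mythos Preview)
Your proof is correct and takes essentially the same approach as the paper, which simply cites the formulas \Ref{H} and \Ref{SF jj} (i.e., the values $h_j(v_i)$ and $S^{(a)}(v_i,v_j)$) and leaves the comparison implicit. You have written out explicitly the pairing-against-$v_i$ verification that the paper's one-line proof is pointing to.
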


\begin{proof} The lemma follows from formulas \Ref{H} and \Ref{SF jj}.
\end{proof}

\begin{cor}
The multiplication on $(\sing V)^*$ is given by the formula
\bean
\label{m h}
h_j*_zh_i &=& \frac {1}{z_i-z_j}h_i + \frac {1}{z_j-z_i}h_j, \qquad i\ne j,
\\
\notag
a_jh_j*_zh_j &=& -\sum_{i\ne j} a_i\,h_i*_zh_j.
\eean
The element
\bean
\label{ID *}
-\frac 1{|a|}\sum_{j\in J} a_jz_jh_j
\eean
 is the identity element.
\qed

\end{cor}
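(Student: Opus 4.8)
The plan is to transport the algebra structure on $\sv$ established in Theorem \ref{thm mult on sing} to $(\sv)^*$ through the isomorphism $\phi := S^{(a)}|_{\sv}:\sv\to(\sv)^*$, which by the preceding Lemma is given by $\phi(v_j) = -a_jh_j$. Since the product on $(\sv)^*$ is by definition the one making $\phi$ an algebra isomorphism, we have $\phi(x*_zy) = \phi(x)*_z\phi(y)$ for all $x,y\in\sv$, so every structure constant of $(\sv)^*$ in the basis $(h_j)_{j\in J}$ can be read off from the multiplication table of $\sv$ in the basis $(v_j)_{j\in J}$ by a direct computation.

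First I would treat $h_j*_zh_i$ for $i\ne j$. From $\phi(v_j)=-a_jh_j$ we get $h_j=-\tfrac1{a_j}\phi(v_j)$, hence $h_j*_zh_i=\tfrac1{a_ia_j}\phi(v_j)*_z\phi(v_i)=\tfrac1{a_ia_j}\phi(v_j*_zv_i)$. Substituting $v_j*_zv_i=\tfrac{a_j}{z_j-z_i}v_i+\tfrac{a_i}{z_i-z_j}v_j$ from \Ref{m v} and applying $\phi$ termwise, using $\phi(v_i)=-a_ih_i$ and $\phi(v_j)=-a_jh_j$, gives $\phi(v_j*_zv_i)=-a_ia_j\bigl(\tfrac1{z_j-z_i}h_i+\tfrac1{z_i-z_j}h_j\bigr)$, so $h_j*_zh_i=\tfrac1{z_i-z_j}h_i+\tfrac1{z_j-z_i}h_j$, which is the first asserted formula.

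For the self-product relation, apply $\phi$ to the identity $v_j*_zv_j=-\sum_{i\ne j}v_j*_zv_i$ of Theorem \ref{thm mult on sing}: the left-hand side becomes $\phi(v_j)*_z\phi(v_j)=a_j^2\,(h_j*_zh_j)$ and the right-hand side becomes $-\sum_{i\ne j}\phi(v_j)*_z\phi(v_i)=-\sum_{i\ne j}a_ia_j\,(h_j*_zh_i)$; dividing by $a_j$ and using commutativity of $*_z$ yields $a_jh_j*_zh_j=-\sum_{i\ne j}a_i\,h_i*_zh_j$. Finally, the identity element of $(\sv)^*$ is the $\phi$-image of the identity element $\tfrac1{|a|}\sum_{j\in J}z_jv_j$ of $\sv$ from \Ref{IDEN}, namely $\tfrac1{|a|}\sum_{j\in J}z_j(-a_jh_j)=-\tfrac1{|a|}\sum_{j\in J}a_jz_jh_j$, as claimed.

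There is no genuine obstacle: the statement is a purely formal transport of structure, and the only thing requiring care is keeping track of the sign $-1$ and the weight factors $a_i,a_j$ when passing between the bases $(v_j)_{j\in J}$ and $(h_j)_{j\in J}$. One may also note that the single linear relations $\sum_{j\in J}v_j=0$ on $\sv$ and $\sum_{j\in J}a_jh_j=0$ on $(\sv)^*$ make the displayed formulas mutually consistent, and that the $(n-1)$-dimensionality of the two spaces means these products determine $*_z$ completely.
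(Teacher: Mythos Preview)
Your argument is correct and is exactly the intended one: the paper records the corollary with a bare \qed, regarding it as an immediate transport of the multiplication table of Theorem~\ref{thm mult on sing} through the isomorphism $v_j\mapsto -a_jh_j$ of the preceding lemma, which is precisely what you carry out.
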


\subsection{Tangent morphism}

The tangent morphism $\beta$ of the tangent bundle $T(\C^n-\Delta)\to \C^n-\Delta$ to the bundle of algebras
$\sqcup_{z\in\C^n-\Delta} A_{\Phi}(z) \to \C^n-\Delta$ is given by the formula \Ref{tangent map},
\bean
\label{tang ex}
\beta(z) \ :\ \der_j \in T_z(\C^n-\Delta)\quad \mapsto\quad \Big[\frac{\der\Phi}{\der z_j}\Big]= \Big[\frac{a_j}{z_j+t}\Big]
\in A_\Phi(z) .
\eean

\begin{lem}
\label{lem tan map}
The map $\beta(z)$ is an epimorphism. The kernel of $\beta(z)$
is generated by the vector  $\sum_{j\in J}\der_j$.
\qed
\end{lem}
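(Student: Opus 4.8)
The claim is that $\beta(z):T_z(\C^n-\Delta)\to A_\Phi(z)$, $\der_j\mapsto[a_j/(z_j+t)]$, is surjective with kernel spanned by $\sum_{j\in J}\der_j$.

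The plan is to read off both assertions from facts already established in this section. For surjectivity, first recall from Lemma \ref{lem f_j generate} that the elements $[1/f_j]=[1/(z_j+t)]$, $j\in J$, generate $A_\Phi(z)$ as an algebra. In the one-dimensional situation at hand, however, something stronger is true: by Lemma \ref{lem basis crit} the space $A_\Phi(z)$ has dimension $n-1$ and \emph{any} $n-1$ of the elements $[a_j/(z_j+t)]$ are linearly independent, hence already form a basis; in particular they span $A_\Phi(z)$ linearly. Since $\beta(z)(\der_j)=[a_j/(z_j+t)]$, the image of $\beta(z)$ contains a spanning set, so $\beta(z)$ is an epimorphism. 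This is the same observation underlying Lemma \ref{lem conf block} and Theorem \ref{lem can map}, and no new computation is needed.

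For the kernel, a dimension count does most of the work: the source has dimension $n$, the target dimension $n-1$, and $\beta(z)$ is onto, so $\ker\beta(z)$ is exactly one-dimensional. It therefore suffices to exhibit one nonzero vector in it. The critical-point equation for $\Phi(z,t)=\sum_{j\in J}a_j\log(z_j+t)$ is $\der\Phi/\der t=\sum_{j\in J}a_j/(z_j+t)=0$, which says precisely that $\sum_{j\in J}[a_j/(z_j+t)]=0$ in $A_\Phi(z)$ (this is the ideal we quotient by). Hence $\beta(z)\bigl(\sum_{j\in J}\der_j\bigr)=\sum_{j\in J}[a_j/(z_j+t)]=0$, so $\sum_{j\in J}\der_j\in\ker\beta(z)$, and by the dimension count it generates the kernel. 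Equivalently, one can invoke Theorem \ref{lem can map}, which identifies $\al(z)\beta(z)(\der_j)=v_j$, together with Lemma \ref{lem sing}(ii)–(iii): the $v_j$ satisfy the single relation $\sum_{j\in J}v_j=0$ and any $n-1$ of them are independent, so $\al(z)\circ\beta(z)$ — and hence $\beta(z)$, since $\al(z)$ is an isomorphism — has kernel exactly $\C\cdot\sum_{j\in J}\der_j$.

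There is no real obstacle here; the lemma is an immediate packaging of Lemma \ref{lem basis crit}, Theorem \ref{lem can map}, and Lemma \ref{lem sing}. The only point requiring the slightest care is to make sure one uses the \emph{linear} spanning statement from Lemma \ref{lem basis crit} (rather than merely the algebra-generation statement of Lemma \ref{lem f_j generate}), since surjectivity of the \emph{linear} map $\beta(z)$ is what is asserted; this is exactly why the one-dimensional case is so clean.
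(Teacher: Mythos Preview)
Your argument is correct. The paper gives no proof at all (the lemma is simply marked with \qed), so your write-up supplies exactly the routine verification the author leaves to the reader: surjectivity from Lemma~\ref{lem basis crit} and the one-dimensional kernel from the defining relation $\sum_{j\in J} a_j/(z_j+t)=0$ of $A_\Phi(z)$ together with a dimension count.
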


The residue form on the bundle of algebras induces a holomorphic symmetric bilinear form $\eta$ on $T(\C^n-\Delta)$,
see formula \Ref{ETA}. The bilinear form $\eta$ has rank $n-1$. Its kernel is generated by the vector  $\sum_{j\in J}\der_j$.

\begin{lem}
We have
\bean
\label{eta jj}
&&
\eta(\der_j, \der_j) = -a_j+\frac{a_j^2}{|a|},
\qquad j\in J,
\\
\notag
%\label{eta ij}
&&
\eta(\der_i, \der_j) =  \frac{a_ia_j}{|a|},
\qquad i,j\in J, \ i\ne j.\
\eean

\end{lem}

\begin{proof}
The lemma follows from Lemmas \ref{lem sing}, \ref{lem can map}
and Theorem \ref{thm alpha}. It can be checked also by a straightforward calculation.
\end{proof}

\subsection{Multiplication and potential function of second kind}
Let us define the multiplication on fibers of  $T(\C^n-\Delta)$ by the formulas
\bean
\label{MULT}
\der_i *_z \der_j & = &\frac {a_i}{z_i-z_j}\der_j
+ \frac {a_j}{z_j-z_i}\der_i,
\\
\notag
\der_i *_z \der_i &=& -\sum_{j\ne i} \der_i *_z \der_j,
\eean
cf. formula 5.25 in \cite{D2}.
The vector  $\sum_{i\in J} \der_i$ has zero product with everything.

\begin{lem}
For every $z\in \C^n-\Delta$, the morphism $\beta(z)$ defines an algebra epimorphism of $T_z(\C^n-\Delta)$ to
$A_\Phi(z)$, in particular, $\beta(z)(v) *_z \beta(z)(w) = \beta(z)(v*_z w)$  for all $v,w\in T_z(\C^n-\Delta)$.
\qed
\end{lem}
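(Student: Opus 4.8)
\emph{Plan.} By Lemma~\ref{lem tan map}, $\beta(z)$ is already an epimorphism of vector spaces, with kernel spanned by $\sum_{j\in J}\der_j$; so the whole content of the lemma is the multiplicativity $\beta(z)(v*_zw)=\beta(z)(v)*_z\beta(z)(w)$ in $\ap$. Since both sides are $\C$-bilinear in $v,w$, it suffices to check, for all $i,j\in J$, that $\beta(z)(\der_i*_z\der_j)=\beta(z)(\der_i)*_z\beta(z)(\der_j)=\big[\tfrac{a_i}{z_i+t}\big]*_z\big[\tfrac{a_j}{z_j+t}\big]$ in $\ap$, where $\der_i*_z\der_j$ is given by \Ref{MULT}.

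\emph{Main step, via the canonical isomorphism.} I would apply $\al(z)$. By Theorem~\ref{thm alpha} it is an isomorphism $\ap\to\sv$, and by construction (Theorem~\ref{thm mult on sing}) it intertwines the product of $\ap$ with the product $*_z$ on $\sv$; by Theorem~\ref{lem can map}, $\al(z)\beta(z)(\der_j)=v_j$ for every $j$. Hence applying $\al(z)$ to the two sides of the desired identity reduces it to $v_i*_zv_j=\al(z)\bigl(\beta(z)(\der_i*_z\der_j)\bigr)$. For $i\neq j$ the right-hand side equals $\tfrac{a_i}{z_i-z_j}v_j+\tfrac{a_j}{z_j-z_i}v_i$ by linearity, Theorem~\ref{lem can map} and \Ref{MULT}, and this is precisely $v_i*_zv_j$ by Theorem~\ref{thm mult on sing} (which is itself Theorem~\ref{K/f} combined with Lemma~\ref{lem Kv}). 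For $i=j$ both sides are, by definition, the negative of the sum over $m\neq i$ of the already-matched off-diagonal terms, so equality holds again. Since $\al(z)$ is injective, this gives the identity, and $\beta(z)$ is an algebra epimorphism. Finally, since $\ker\beta(z)=\C\cdot\sum_{j\in J}\der_j$ annihilates everything under $*_z$ (immediate from \Ref{MULT}), it is an ideal, so $*_z$ descends to $T_z(\cd)/\ker\beta(z)$ and $\beta(z)$ induces an algebra isomorphism onto $\ap$; in particular $(T_z(\cd),*_z)$ is commutative and associative.

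\emph{Alternative and the one delicate point.} One can instead verify the identity directly in $\ap$: for $i\neq j$ the partial-fraction identity $\tfrac1{(z_i+t)(z_j+t)}=\tfrac1{z_j-z_i}\tfrac1{z_i+t}+\tfrac1{z_i-z_j}\tfrac1{z_j+t}$ already holds in $\C(U(\A(z)))$, and multiplying by $a_ia_j$ and projecting gives exactly $\beta(z)(\der_i*_z\der_j)$; for $i=j$ one multiplies the defining relation $\big[\sum_{m\in J}a_m/(z_m+t)\big]=0$ of $\ap$ by $a_i/(z_i+t)$ and applies the same partial fractions to each cross term, obtaining $\big[a_i^2/(z_i+t)^2\big]=-\sum_{m\neq i}\bigl(\tfrac{a_i}{z_i-z_m}[a_m/(z_m+t)]+\tfrac{a_m}{z_m-z_i}[a_i/(z_i+t)]\bigr)$, which matches $\beta(z)(\der_i*_z\der_i)$. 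There is no real obstacle here — the computation is entirely routine — the only point requiring care is precisely this diagonal case $i=j$: the needed identity is valid only modulo the critical-point relation, so one genuinely uses that $\ap$ is the quotient $\C(U(\A(z)))/\langle\der\Phi/\der t\rangle$ rather than an identity of rational functions.
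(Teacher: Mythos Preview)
Your proof is correct. The paper gives no argument for this lemma (it is marked with \qed\ immediately after the statement), so there is no approach to compare against; your direct partial-fraction verification is presumably what the author had in mind as immediate, and your alternative route through the canonical isomorphism $\al(z)$ is equally valid.
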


Consider   the ideal of
$T_z(\C^n-\Delta)$ generated by $\sum_{j\in J} \der_j$. Denote $B(z)$ the quotient algebra.
The morphism $\beta(z)$ induces an isomorphism $B(z)\simeq A_\Phi(z)$.

\begin{lem}
The element $\frac 1{|a|}\sum_{i\in J} z_j\der_j$ projects to the identity element of  $B(z)$.
 \qed
\end{lem}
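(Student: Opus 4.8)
The plan is to verify this directly from the multiplication formula \Ref{MULT} and the definition of the identity element of $B(z)$ as the image of $\frac{1}{|a|}\sum_{i\in J}z_i\der_i$. Write $e(z) = \frac{1}{|a|}\sum_{i\in J}z_i\der_i$; I must show that in $B(z)$, the product $e(z)*_z\der_j$ equals $\der_j$ for every $j\in J$ (equivalently, equals $\der_j$ modulo the ideal generated by $\sum_{i\in J}\der_i$, which has zero product with everything by the remark following \Ref{MULT}).

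First I would compute $\sum_{i\in J}z_i\,\der_i *_z \der_j$ by splitting the sum into the $i\ne j$ terms and the $i=j$ term. For $i\ne j$, formula \Ref{MULT} gives $\der_i*_z\der_j = \frac{a_i}{z_i-z_j}\der_j + \frac{a_j}{z_j-z_i}\der_i$, so the $i\ne j$ contribution is $\sum_{i\ne j}z_i\bigl(\frac{a_i}{z_i-z_j}\der_j + \frac{a_j}{z_j-z_i}\der_i\bigr)$. For the $i=j$ term, $\der_j*_z\der_j = -\sum_{i\ne j}\der_j*_z\der_i = -\sum_{i\ne j}\bigl(\frac{a_j}{z_j-z_i}\der_i + \frac{a_i}{z_i-z_j}\der_j\bigr)$, so the $i=j$ contribution is $z_j\cdot\der_j*_z\der_j = -z_j\sum_{i\ne j}\bigl(\frac{a_j}{z_j-z_i}\der_i + \frac{a_i}{z_i-z_j}\der_j\bigr)$. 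Adding the two pieces and collecting the coefficient of $\der_i$ for each $i\ne j$ gives $\frac{a_j z_i}{z_j-z_i} - \frac{a_j z_j}{z_j-z_i} = \frac{a_j(z_i-z_j)}{z_j-z_i} = -a_j$, while the coefficient of $\der_j$ is $\sum_{i\ne j}\bigl(\frac{a_i z_i}{z_i-z_j} - \frac{a_i z_j}{z_i-z_j}\bigr) = \sum_{i\ne j}a_i = |a| - a_j$. Hence $\sum_{i\in J}z_i\,\der_i*_z\der_j = -a_j\sum_{i\ne j}\der_i + (|a|-a_j)\der_j = |a|\,\der_j - a_j\sum_{i\in J}\der_i$.

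Dividing by $|a|$ yields $e(z)*_z\der_j = \der_j - \frac{a_j}{|a|}\sum_{i\in J}\der_i$, which is congruent to $\der_j$ modulo the ideal generated by $\sum_{i\in J}\der_i$. Since the $\der_j$ span $T_z(\C^n-\Delta)$ and multiplication is bilinear, it follows that $e(z)*_z w \equiv w$ for all $w$, i.e. the image of $e(z)$ acts as the identity in $B(z)$. Alternatively, and more cheaply, one can invoke the isomorphism $\beta(z)\colon B(z)\xrightarrow{\sim}A_\Phi(z)$ of the preceding lemma together with Lemma \ref{lem on ONE}, which states exactly that $[1](z) = \frac{1}{|a|}\sum_{j\in J}z_j[a_j/f_j] = \frac{1}{|a|}\sum_{j\in J}z_j\,\beta(z)(\der_j) = \beta(z)(e(z))$; since $\beta(z)$ is an algebra isomorphism it carries the identity to the identity, so $e(z)$ projects to the identity of $B(z)$.

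The only mild subtlety — hardly an obstacle — is bookkeeping the two contributions ($i\ne j$ versus $i=j$) correctly and not double-counting, since $\der_j*_z\der_j$ is itself defined as a sum over $i\ne j$; but the cancellations are exactly the ones seen already in Lemma \ref{lem on ONE} and in the points-on-a-line computation of $[1](z)$, so no new difficulty arises. I would present the short argument via $\beta(z)$ and Lemma \ref{lem on ONE} as the main proof, and optionally record the direct computation above as a remark for the reader who wants to see it checked intrinsically on $T_z(\C^n-\Delta)$.
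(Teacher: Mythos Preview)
Your proposal is correct. The paper gives no proof at all (the lemma is marked \qed\ with no argument), so the intended justification is presumably exactly your short argument: the preceding lemma establishes that $\beta(z)$ induces an algebra isomorphism $B(z)\simeq A_\Phi(z)$, and Lemma~\ref{lem on ONE} says $[1](z)=\frac1{|a|}\sum_{j\in J}z_j\bigl[\frac{a_j}{f_j}\bigr]=\beta(z)\bigl(\frac1{|a|}\sum_{j\in J}z_j\der_j\bigr)$. Your direct computation via \Ref{MULT} is also correct and a nice independent check, but unnecessary for the paper's purposes.
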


The bilinear form $\eta$ defines a morphism $\tilde \eta$ of the tangent bundle $T(\C^n-\Delta)$ to the cotangent
bundle $T^*(\C^n-\Delta)$. For $j\in J$, denote $p_j(z)=a_j q_j(z)= a_j(-z_j + \sum_{i\in J}\frac{a_i}{|a|}z_i)$. We have $\sum_{j\in J}p_j=0$.

\begin{lem}
\label{lem te}
The morphism $\tilde \eta$ is given by the formula $\der_j \mapsto dp_j$ for all $j$.
The kernel of $\tilde \eta$ is generated by the vector $\sum_{j\in J}\der_j$.
\qed

\end{lem}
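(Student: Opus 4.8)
The plan is to prove both assertions by directly matching coefficients, using the explicit formula $p_j(z)=a_jq_j(z)=a_j\bigl(-z_j+\sum_{m\in J}\tfrac{a_m}{|a|}z_m\bigr)$ and the values of $\eta$ recorded in the lemma just above.

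By definition the morphism $\tilde\eta : T(\cd)\to T^*(\cd)$ attached to the symmetric form $\eta$ sends a tangent vector $v$ to the linear functional $w\mapsto\eta(v,w)$; in particular $\tilde\eta(\der_j)=\sum_{i\in J}\eta(\der_j,\der_i)\,dz_i$. Thus it suffices to check, for all $i,j\in J$, that the coefficient of $dz_i$ in $dp_j$ equals $\eta(\der_j,\der_i)$. Differentiating the formula for $p_j$ gives $dp_j=-a_j\,dz_j+\tfrac{a_j}{|a|}\sum_{m\in J}a_m\,dz_m$, so the coefficient of $dz_j$ in $dp_j$ is $-a_j+\tfrac{a_j^2}{|a|}$ and, for $i\ne j$, the coefficient of $dz_i$ is $\tfrac{a_ia_j}{|a|}$. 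These are precisely the values $\eta(\der_j,\der_j)$ and $\eta(\der_i,\der_j)$ of the preceding lemma, whence $\tilde\eta(\der_j)=dp_j$ for every $j$.

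For the kernel, the quickest route is to note that $\tilde\eta$ and $\eta$ have the same kernel --- the radical of the form --- which was already observed to be spanned by $\sum_{j\in J}\der_j$. Intrinsically one can also argue directly: applying $\tilde\eta$ to $\sum_{j\in J}\der_j$ gives $\sum_{j\in J}dp_j=d\bigl(\sum_{j\in J}p_j\bigr)=0$ since $\sum_{j\in J}p_j=0$; on the other hand, for any fixed index $j_0$ the remaining $n-1$ differentials $dp_j$, $j\ne j_0$, are linearly independent, because any $n-1$ of the linear forms $p_j=a_jq_j$ are independent (the only relation among the $q_j$ is $\sum_{j\in J}a_jq_j=0$, and every coefficient there is nonzero), so the rank of $\tilde\eta$ is $n-1$ and its kernel is exactly the line through $\sum_{j\in J}\der_j$.

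There is no real obstacle here; the one point deserving care is that, should one prefer to deduce the statement conceptually --- from Theorem \ref{thm ETAA}, which for $k=1$ reads $\eta(\der_i,\der_j)=-|a|^2S^{(a)}(\der_iq,\der_jq)$, combined with the identity $S^{(a)}(v_j,q(z))=-\tfrac1{|a|}p_j(z)$ obtained from Lemma \ref{lem sing}(iv) and \Ref{pErioD} --- one must keep track of the sign $(-1)^k=-1$ and of the normalizing factor $1/|a|$. I would present the direct coefficient computation above as the main argument and relegate the conceptual derivation to a remark.
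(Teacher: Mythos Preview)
Your proof is correct. The paper states this lemma without proof (it is marked \qed\ immediately after the statement), so your direct coefficient-matching argument is exactly the kind of routine verification the author is suppressing; the optional conceptual remark via Theorem~\ref{thm ETAA} is also sound but unnecessary here.
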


Consider the span of  differential one-forms $(dp_j)_{j\in J}$. This span equals the span of differential one-forms
 $(dz_i-dz_j)_{1\leq i< j\leq n}$.
The spans in the fibers define the subbundle
\bean
\label{B*}
\sqcup_{z\in\C^n-\Delta} B^*(z) \to\C^n-\Delta
\eean
 of the cotangent bundle
$T^*(\C^n-\Delta)$. The subbundle has rank $n-1$.

\begin{lem}
\label{leMma}
The form $\eta$ induces the algebra structure on $B^*(z)$ given by the formula
\bean
\label{p mult}
dp_i*_zdp_j
 & = &   \frac {a_i}{z_i-z_j}dp_j + \frac {a_j}{z_j-z_i}dp_i = \frac{a_ia_j} {z_i-z_j}d(z_i-z_j) ,
 \qquad i\ne j,
\\
\notag
dp_i*_zdp_i &=& -\sum_{j\ne i} dp_j *_z dp_i,
\eean
and the bilinear form
\bean
\label{p form}
&&
(dp_j, dp_j) = -a_j+\frac{a_j^2}{|a|},
\qquad j\in J,
\\
\notag
&&
(dp_i, dp_j) =  \frac{a_ia_j}{|a|},
\qquad i,j\in J, \ i\ne j.\
\eean
\qed
\end{lem}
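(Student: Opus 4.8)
The plan is to realize $B^*(z)$, with its product and form, as the image of the morphism $\tilde\eta$ of Lemma~\ref{lem te}, and to transport everything along it. By Lemma~\ref{lem te}, $\tilde\eta(\der_j)=dp_j$ for every $j\in J$ and $\ker\tilde\eta$ is spanned by $\sum_{j\in J}\der_j$. This same vector spans the kernel of $\eta$ and, as noted right after \Ref{MULT}, has zero $*_z$-product with every tangent vector; hence both $\eta$ and the multiplication \Ref{MULT} descend to the quotient algebra $B(z)=T_z(\C^n-\Delta)/\langle\sum_{j\in J}\der_j\rangle$. Since $\dim B(z)=n-1=\rank B^*(z)$, the map $\tilde\eta$ induces a linear isomorphism $B(z)\simeq B^*(z)$; pushing the product and the form of $B(z)$ forward along this isomorphism gives the algebra structure and bilinear form on $B^*(z)$ in the statement, and it remains only to express them in the generators $dp_j$.

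First I would read off the multiplication by applying $\tilde\eta$ to \Ref{MULT}: for $i\ne j$,
\be
dp_i*_zdp_j=\tilde\eta(\der_i*_z\der_j)=\frac{a_i}{z_i-z_j}\,dp_j+\frac{a_j}{z_j-z_i}\,dp_i,
\ee
and $dp_i*_zdp_i=-\sum_{j\ne i}dp_j*_zdp_i$, which is the first line of \Ref{p mult}. For the second equality in \Ref{p mult} I would substitute $p_j=a_jq_j(z)$ with $q_j$ as in \Ref{qi}, so that $dp_j=a_j\bigl(-dz_j+\frac1{|a|}\sum_{i\in J}a_i\,dz_i\bigr)$; a short computation then gives $a_i\,dp_j-a_j\,dp_i=a_ia_j\,(dz_i-dz_j)=a_ia_j\,d(z_i-z_j)$, whence $\frac{a_i}{z_i-z_j}dp_j+\frac{a_j}{z_j-z_i}dp_i=\frac{a_ia_j}{z_i-z_j}\,d(z_i-z_j)$.

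Next I would record the bilinear form. By construction $(dp_i,dp_j)=\eta(\der_i,\der_j)$, and the values $\eta(\der_j,\der_j)=-a_j+a_j^2/|a|$ and $\eta(\der_i,\der_j)=a_ia_j/|a|$ for $i\ne j$ are exactly \Ref{eta jj}--\Ref{eta ij}, which is the second displayed formula. (Alternatively, these values follow directly from Lemma~\ref{lem sing}(iv), Theorem~\ref{lem can map} and Theorem~\ref{thm alpha}, under which $dp_j$ corresponds to $v_j$.)

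The one point needing attention --- a mild one --- is the compatibility of the descent: that $\sum_{j\in J}\der_j$ lies simultaneously in $\ker\tilde\eta$, in $\ker\eta$, and in the annihilator of $*_z$, so that $B(z)$ with its product and form maps cleanly onto $B^*(z)$ with the claimed structure. All three facts are supplied by the preceding lemmas, and once they are in hand the asserted identities reduce to the one-line substitutions above.
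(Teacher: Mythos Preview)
Your proof is correct and follows exactly the route the paper intends: the lemma is stated with a bare \qed, meaning it is regarded as immediate from Lemma~\ref{lem te}, the multiplication \Ref{MULT}, and formula \Ref{eta jj}, and your write-up simply makes those implications explicit. The only thing you add beyond what is already implicit is the short verification of the second equality in \Ref{p mult}, which is a welcome check.
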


Introduce the {\it potential function of second kind}
\bean
\label{pot}
\tilde P(z) = \frac 12 \sum_{1\leq i< j\leq n} a_ia_j\,(z_i-z_j)^2\log(z_i-z_j).
\eean

\begin{thm}
\label{thm pot}
We have
\bean
\label{WDW}
d\,\Big(\frac{\der^2\tilde P}{\der z_i\der z_j}\Big)\ =\ -\tilde \eta (\der_i) *_z \tilde \eta (\der_j)
\eean
for all $i,j$.

\end{thm}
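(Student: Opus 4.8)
The plan is to verify \eqref{WDW} by computing both sides explicitly in the $F_j$-coordinates and matching them. For the left-hand side, I would start from the definition \eqref{pot} of $\tilde P$ and differentiate twice. Since $\frac{\der}{\der z_i}\bigl[(z_i-z_j)^2\log(z_i-z_j)\bigr] = 2(z_i-z_j)\log(z_i-z_j) + (z_i-z_j)$, a second differentiation produces a sum of logarithmic terms and rational terms; the key observation is that for $i\ne j$ the mixed second derivative $\frac{\der^2\tilde P}{\der z_i\der z_j}$ only picks up the single summand with indices $\{i,j\}$, giving $\frac{\der^2\tilde P}{\der z_i\der z_j} = -a_ia_j\bigl(\log(z_i-z_j)+\tfrac32\bigr)$, while the diagonal one $\frac{\der^2\tilde P}{\der z_i^2}$ sums $a_ia_m\bigl(\log(z_i-z_m)+\tfrac32\bigr)$ over $m\ne i$. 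Taking one further $d$ and simplifying, $d\bigl(\frac{\der^2\tilde P}{\der z_i\der z_j}\bigr)$ becomes, for $i\ne j$, $-\frac{a_ia_j}{z_i-z_j}\,d(z_i-z_j)$, and the diagonal case is the negative of the sum of the off-diagonal ones over the third index. This matches the structure of \eqref{p mult} exactly.

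For the right-hand side, I would invoke Lemma~\ref{lem te}: $\tilde\eta(\der_i) = dp_i$ and $\tilde\eta(\der_j) = dp_j$. Then by Lemma~\ref{leMma}, the product $dp_i *_z dp_j$ is given by \eqref{p mult}, namely $\frac{a_ia_j}{z_i-z_j}\,d(z_i-z_j)$ for $i\ne j$, with the diagonal term defined by the summation relation $dp_i*_zdp_i = -\sum_{j\ne i} dp_j*_zdp_i$. Therefore $-\tilde\eta(\der_i)*_z\tilde\eta(\der_j) = -\frac{a_ia_j}{z_i-z_j}\,d(z_i-z_j)$ for $i\ne j$, which is precisely what the left-hand side computation yielded; and the diagonal identity for both sides follows from the same summation relation, since $\sum_{j}\der_j$ lies in the kernel of $\tilde\eta$ and both $\tilde P$ and the multiplication respect this (the terms with the repeated index telescope against the off-diagonal ones). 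Comparing the two expressions term by term finishes the proof.

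The main obstacle I anticipate is bookkeeping rather than anything conceptual: one must be careful that the constant $\tfrac32$ (or whatever additive constant appears after the first differentiation) genuinely drops out after applying the final exterior derivative $d$, and that the sign conventions in \eqref{p mult}, \eqref{pErioD}, and the definition $p_j = a_jq_j$ are all consistent. It is worth double-checking the diagonal case $i=j$ separately, since there both $\tilde P$ and the product $*_z$ are defined through summation relations rather than closed formulas, and one needs the two summation relations (the one for $\tilde P$ coming from differentiating the full sum, the one for $dp_i*_zdp_i$ from Lemma~\ref{leMma}) to coincide. A cleaner alternative, which I would mention, is to derive \eqref{WDW} structurally: since $q$ is the period map and $\tilde\eta(\der_i) = dp_i$ with $p_i$ essentially the flat coordinates, \eqref{WDW} is the standard WDVV-type compatibility between the potential and the multiplication, so it suffices to check it on the off-diagonal generators where everything is given by the explicit rational formulas above.
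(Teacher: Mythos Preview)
Your proposal is correct and follows essentially the same approach as the paper, which simply states that the theorem follows from Lemma~\ref{leMma}. You have spelled out the direct verification that the paper leaves implicit: compute $d\bigl(\frac{\der^2\tilde P}{\der z_i\der z_j}\bigr)$ from \eqref{pot} and match it against $-dp_i *_z dp_j$ from \eqref{p mult}, treating the diagonal case via the summation relation (equivalently, via $\sum_j \der_j \tilde P = 0$).
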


\begin{proof}
The theorem follows from Lemma \ref{leMma}.
\end{proof}

Notice that equation \Ref{WDW} is the definition (3.5) in \cite{D2} of the potential function of an almost dual Frobenius structure.

The right hand side in \Ref{WDW} can be rewritten:
$
\tilde \eta (\der_i) *_z \tilde \eta (\der_j) = \tilde \eta (\der_i *_z \der_j).
$
For all $i,j,k$, we have
\bean
\tilde \eta (\der_i *_z \der_j)(\der_\ell) =  \eta (\der_i *_z \der_j, \der_\ell)
=(\beta(z)(\der_i) *_z \beta(z)(\der_j)*_z\beta(z)(\der_\ell), [1](z))_z,
\eean
where $(\,,\,)_z$ is the residue form on $A_\Phi(z)$.
Formula \Ref{WDW} says that for all $i,j,k$, we have
\bean
\label{WDW 3}
\frac{\der^3\tilde P}{\der z_i\der z_j\der_\ell}(z)= -
(\beta(z)(\der_i) *_z \beta(z)(\der_j)*_z\beta(z)(\der_\ell), [1](z))_z.
\eean
Hence Conjecture \ref{CB4} holds for this family of arrangements.

\subsection{Connections on the
bundle $\sqcup_{z\in\C^n-\Delta} B^*(z) \to\C^n-\Delta$ defined in \Ref{B*}}
\label{CONNE}

The combinatorial and Gauss-Manin connections on $(\C^n-\Delta)\times
\sing V\to \C^n-\Delta$ induce the combinatorial and Gauss-Manin connections on  bundle \Ref{B*}.

\begin{lem}
\label{p flat}
The differential one-forms $(dp_j)_{ j\in J}$ are flat sections of the combinatorial connection on  bundle \Ref{B*}.

\end{lem}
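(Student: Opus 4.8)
**Proof proposal for Lemma (``The differential one-forms $(dp_j)_{j\in J}$ are flat sections of the combinatorial connection on bundle \Ref{B*}'').**

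The plan is to trace the one-forms $dp_j$ back through the isomorphisms that produced bundle \Ref{B*} and reduce flatness to a statement about the combinatorial connection on $(\C^n-\Delta)\times\sing V\to\C^n-\Delta$, where the answer is immediate. Recall that $p_j(z)=a_jq_j(z)$ and $q_j(z)=-z_j+\sum_{i\in J}\frac{a_i}{|a|}z_i$, so $dp_j=-a_j\,dz_j+\frac{a_j}{|a|}\sum_{i\in J}a_i\,dz_i$ is a one-form on $\C^n-\Delta$ with \emph{constant} coefficients. First I would observe that, under the isomorphism $S^{(a)}|_{\sing V}:\sing V\to(\sing V)^*$ and the dual of the tangent morphism $\beta$, the subbundle $B^*(z)$ is identified with a constant subspace of $(\sing V)^*$; concretely the Lemma preceding \Ref{p mult} identifies $v_j\mapsto -a_jh_j$, and via $\tilde\eta(\der_j)=dp_j$ (Lemma \ref{lem te}) the fibre $B^*(z)$ is the span of the fixed vectors $-a_jh_j$, $j\in J$, transported to $T^*(\C^n-\Delta)$ by $q^*$. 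Since $q^*(h_i-h_j)=z_j-z_i$ (the Lemma just before Corollary \ref{cor per 1}), the pull-backs $q^*h_j$ differ from constant one-forms only by the common term, so each $dp_j$ has constant coefficients as claimed.

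The key point is then that the combinatorial connection on $(\C^n-\Delta)\times\sing V$ is, by construction (Section \ref{sec Good fibers}), the trivial connection in the standard basis $F_1,\dots,F_n$, hence also trivial in the $z$-independent basis $v_1,\dots,v_{n-1}$ of $\sing V$ and trivial in the dual basis $h_1,\dots,h_n$ of $(\sing V)^*$. The induced combinatorial connection on the cotangent bundle $T^*(\C^n-\Delta)$ — through which bundle \Ref{B*} is defined — is the one for which the \emph{constant-coefficient} one-forms $dz_1,\dots,dz_n$ are flat. Therefore any one-form with constant coefficients, in particular $dp_j=-a_j\,dz_j+\frac{a_j}{|a|}\sum_{i\in J}a_i\,dz_i$, is a flat section. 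This is the whole argument; the only thing to be careful about is bookkeeping, namely that the combinatorial connection induced on \Ref{B*} is indeed the flat connection annihilating the $dz_i$, which follows because the chain of identifications $T(\C^n-\Delta)\xrightarrow{\beta}A_\Phi(z)\xrightarrow{\al(z)}\sing V$ matches $\der_j$ to the $z$-independent vector $v_j$ (Theorem \ref{lem can map} together with \Ref{tang ex}) and carries the trivial connection to the trivial one.

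The main obstacle, such as it is, is purely notational: one must make sure the ``combinatorial connection on bundle \Ref{B*}'' is interpreted as the connection \emph{induced} from the combinatorial (trivial-in-standard-basis) connection on $\sing V$ via the composite of $\tilde\eta\circ\beta(z)$ and $q^*$, and not accidentally as the Gauss–Manin one; once that is pinned down, flatness of the constant-coefficient forms $dp_j$ is a one-line check. Thus I would write the proof as: (1) compute $dp_j$ explicitly and note constant coefficients; (2) recall that $\der_j\mapsto v_j$ under $\al(z)\circ\beta(z)$ with $v_j$ independent of $z$, so the combinatorial connection on \Ref{B*} is the trivial connection in the frame $(dp_j)$ (equivalently in the frame $(dz_i)$); (3) conclude $(dp_j)$ are flat.
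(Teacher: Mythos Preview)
Your proposal is correct and is essentially the paper's own argument: the paper observes that the $v_j\in\sing V$ are combinatorially flat (constant in the standard basis), that $\al(z):[a_j/f_j]\mapsto v_j$ makes the $[a_j/f_j]$ flat in the bundle of algebras, and then uses $\beta(z):\der_j\mapsto[a_j/f_j]$ together with $\tilde\eta:\der_j\mapsto dp_j$ to conclude. Your step~(2), tracing $\der_j\mapsto v_j$ under $\al(z)\circ\beta(z)$ with $v_j$ independent of $z$, is exactly this chain; the extra computation of the explicit constant coefficients of $dp_j$ and the detour through $q^*(h_i-h_j)$ are correct but unnecessary for the argument.
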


\begin{proof}
The vectors $v_j\in\Sing V$ give flat sections of the combinatorial connection on $\C^n\times \sing V \to\C^n$.
By formula \Ref{can ex}, the elements $ \Big[\frac {a_j}{z_j+t}\Big] \in A_\Phi(z)$ give flat sections of bundle
of algebras. Now formula \Ref{tang ex} and Lemma \ref{lem te} imply Lemma \ref{p flat}.
\end{proof}

Let $I(z) = \sum_{j\in J}I^j(z) dp_j$ be a  section of  bundle \Ref{B*}. For $i\in J$, we denote
$\frac{\der I}{\der z_i} =  \sum \frac{\der I^j}{\der z_i}dp_j$.

\begin{lem}
\label{lem GM B}

The differential equations for flat sections of the Gauss-Manin connection take the from
\bean
\kappa \frac {\der I}{\der z_i} = dp_i *_z I, \qquad i\in J,
\eean
see formula \Ref{p mult}. For generic $\kappa$ all the flat sections are given by the formula
\bean
\label{flaT}
I_{\gamma,\kappa} (z) =
\sum_{j\in J}
\Big(\int_{\gamma(z)} \prod_{i\in J}(z_i+t)^{a_i/\kappa} \frac {dt}{z_j+t}\Big) dp_j ,
\eean
where $\gamma(z) \in H_1(U(\A(z)), \mc L_\kappa\vert_{U(\A(z))})$ is a flat section of the Gauss-Manin connection
on $\sqcup_{z\in \C^n-\Delta} H_k(U(\A(z)), \mc L_\kappa\vert_{U(\A(z))})\to \C^n-\Delta$.

\end{lem}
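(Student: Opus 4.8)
The plan is to transport the Gauss--Manin picture already available on $(\cd)\times\Sing V$ to the bundle $\sqcup_{z}B^*(z)$ along the identifications set up in the preceding subsections. First I would observe that the maps $\Tilde\eta$ of Lemma \ref{lem te}, $\beta(z)$ of Lemma \ref{lem tan map}, and $\al(z)$ of Theorem \ref{lem can map} assemble into one isomorphism: both $\Tilde\eta:T_z(\cd)\to B^*(z)$ and the composition $T_z(\cd)\xrightarrow{\beta(z)}A_\Phi(z)\xrightarrow{\al(z)}\Sing V$ are surjective with kernel spanned by $\sum_{j\in J}\der_j$, so they induce a canonical isomorphism $B^*(z)\simeq\Sing V$; chasing $\der_j$ through it, one sees that it sends $dp_j\mapsto v_j$ for every $j\in J$ (here $f_j=z_j+t$, and one uses $\Tilde\eta(\der_j)=dp_j$, $\beta(z)(\der_j)=\bigl[\tfrac{a_j}{z_j+t}\bigr]$, $\al(z)\bigl[\tfrac{a_j}{z_j+t}\bigr]=v_j$). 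This isomorphism is horizontal for the combinatorial connections, since $(dp_j)_{j\in J}$ are combinatorially flat by Lemma \ref{p flat}, $(v_j)_{j\in J}$ are combinatorially flat on $(\cd)\times\Sing V$, and the isomorphism matches these sections; it also intertwines the multiplications, the formulas of Theorem \ref{thm mult on sing} and Lemma \ref{leMma} being the same under $dp_j\leftrightarrow v_j$. In particular the operator $\der/\der z_i$ in the statement is exactly the combinatorial covariant derivative carried over from $\Sing V$.

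Granting this, the first assertion is a direct translation. On $(\cd)\times\Sing V$ the flat-section equation for the Gauss--Manin connection is $\kappa\,\der I/\der z_i=K_i(z)I$ by \Ref{dif eqn ex 1}, and comparing Lemma \ref{lem Kv} with Theorem \ref{thm mult on sing} shows that $K_i(z)$ acts on $\Sing V$ as multiplication by $v_i$. Transporting along $dp_j\leftrightarrow v_j$ turns this into $\kappa\,\der I/\der z_i=dp_i*_z I$, which is the asserted equation \Ref{p mult}.

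For the second assertion I would start from the explicit flat sections \Ref{I(z)} of the Gauss--Manin connection on $(\cd)\times\Sing V$, namely $I_\gamma(z)=\sum_{i\in J}c_i(z)F_i$ with $c_i(z)=\int_{\gamma(z)}\prod_{m\in J}(z_m+t)^{a_m/\kappa}\,dt/(z_i+t)$; differentiating $\prod_m(z_m+t)^{a_m/\kappa}$ gives $\sum_{i\in J}a_ic_i(z)=\kappa\int_{\gamma(z)}d\bigl(\prod_m(z_m+t)^{a_m/\kappa}\bigr)=0$, which re-confirms $I_\gamma(z)\in\Sing V$, and then $v_j=-F_j+\tfrac{a_j}{|a|}\sum_iF_i$ together with $\sum_i a_ic_i(z)=0$ gives $I_\gamma(z)=-\sum_{j\in J}c_j(z)v_j$. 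Carrying this across the isomorphism and replacing $\gamma$ by $-\gamma$ (harmless, since $\gamma$ runs over all of $H_1(U(\A(z)),\mc L_\kappa\vert_{U(\A(z))})$) yields exactly \Ref{flaT}; as \Ref{I(z)} exhausts the flat sections under the genericity hypothesis recalled after it (from \cite{V1}), so does \Ref{flaT}. The one point that needs care --- and the main, if minor, obstacle --- is the bookkeeping forced by the overcomplete generating set $(dp_j)_{j\in J}$ of $B^*(z)$: because $\sum_{j}dp_j=0$, the coefficients $I^j$ in $I=\sum_j I^j dp_j$ are not unique, so one must check that both the differential equation and the integral formula are insensitive to adding one common function to all the $I^j$, which is the case, again because $\sum_{j}dp_j=0$.
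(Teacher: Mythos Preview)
Your argument is correct and follows the same route as the paper, which simply records that the lemma follows from Theorem~\ref{thm ham normal} and formula~\Ref{I(z)}: you have spelled out the transport along the identification $dp_j\leftrightarrow v_j$ that is implicit there. Your handling of the sign discrepancy between $I_\gamma=-\sum_j c_j v_j$ and~\Ref{flaT} via $\gamma\mapsto-\gamma$, and your remark on the well-definedness of $\der I/\der z_i$ in the overcomplete system $(dp_j)_{j\in J}$, are genuine details that the paper leaves to the reader.
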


\begin{proof}
The lemma follows from Theorem  \ref{thm ham normal} and formula \Ref{I(z)}.
\end{proof}

\begin{thm}
\label{THM TP}
For every flat section $I_{\gamma, \kappa}$, we have
$I_{\gamma, \kappa}  = -\kappa\,d p_{\gamma, \kappa}$ where
\bean
p_{\gamma,\kappa} = \int_{\gamma(z)} \prod_{i\in J}(z_i+t)^{a_i/\kappa} dt .
\eean
\end{thm}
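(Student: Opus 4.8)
We want to show that each flat section $I_{\gamma,\kappa}$ of the Gauss-Manin connection on the bundle $\sqcup_z B^*(z)$, described in Lemma \ref{lem GM B}, is exact, namely $I_{\gamma,\kappa} = -\kappa\, dp_{\gamma,\kappa}$ with $p_{\gamma,\kappa}(z)=\int_{\gamma(z)}\prod_{i\in J}(z_i+t)^{a_i/\kappa}\,dt$. The natural approach is a direct computation: differentiate $p_{\gamma,\kappa}$ under the integral sign with respect to each $z_j$ and check that the result matches the $j$-th coordinate of $-\kappa^{-1}I_{\gamma,\kappa}$ as written in formula \Ref{flaT}. Since $\gamma(z)$ is a flat section of the homology bundle, differentiation of the integral only hits the integrand (no boundary contribution), so
\bean
\frac{\der p_{\gamma,\kappa}}{\der z_j}(z) = \int_{\gamma(z)} \frac{\der}{\der z_j}\Big(\prod_{i\in J}(z_i+t)^{a_i/\kappa}\Big)\,dt
= \frac{a_j}{\kappa}\int_{\gamma(z)}\prod_{i\in J}(z_i+t)^{a_i/\kappa}\,\frac{dt}{z_j+t}.
\eean

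Comparing with \Ref{flaT}, the coefficient of $dp_j$ in $I_{\gamma,\kappa}$ is exactly $\int_{\gamma(z)}\prod_i (z_i+t)^{a_i/\kappa}\,dt/(z_j+t)$, which by the display above equals $\frac{\kappa}{a_j}\cdot\frac{\der p_{\gamma,\kappa}}{\der z_j}$. Hence
\bean
I_{\gamma,\kappa}(z) = \sum_{j\in J}\Big(\frac{\kappa}{a_j}\,\frac{\der p_{\gamma,\kappa}}{\der z_j}\Big)\,dp_j.
\eean
Now I would use Lemma \ref{lem te}, which says $\tilde\eta(\der_j)=dp_j$, together with $p_j(z)=a_j q_j(z)=a_j\big(-z_j+\sum_{i\in J}\tfrac{a_i}{|a|}z_i\big)$. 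Summing over $j$, $\sum_j \frac{1}{a_j}\frac{\der p_{\gamma,\kappa}}{\der z_j}\,dp_j = \sum_j \frac{\der p_{\gamma,\kappa}}{\der z_j}\Big(-dz_j + \sum_{i\in J}\tfrac{a_i}{|a|}dz_i\Big)$; the second piece contributes a multiple of $\sum_j \frac{\der p_{\gamma,\kappa}}{\der z_j}\,dz_j$ times the vector $\sum_i a_i\,dz_i$, which I must show is consistent — in fact this is where one uses that $p_{\gamma,\kappa}$ is homogeneous-like under the translation $z\mapsto z+c(1,\dots,1)$. Specifically, shifting all $z_i$ by a constant $c$ only multiplies the integrand by $\prod_i(\cdot)^{a_i/\kappa}$ evaluated with $t\mapsto t-c$, i.e. $p_{\gamma,\kappa}$ is invariant under $z\mapsto z+c(1,\dots,1)$ up to reparametrizing $\gamma$, so $\sum_j \frac{\der p_{\gamma,\kappa}}{\der z_j}=0$. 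Therefore the $\sum_i \tfrac{a_i}{|a|}dz_i$ term drops out and we get $I_{\gamma,\kappa} = -\kappa\sum_j \frac{\der p_{\gamma,\kappa}}{\der z_j}\,dz_j = -\kappa\, dp_{\gamma,\kappa}$, as claimed.

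**The main obstacle.** The only delicate point is justifying differentiation under the integral sign and the vanishing $\sum_j \der p_{\gamma,\kappa}/\der z_j = 0$; both rest on the fact that $\gamma(z)$ is a genuine flat section of the Gauss-Manin connection on the homology bundle with coefficients in $\mc L_\kappa$, so that the family of cycles varies compatibly and picks up no boundary terms. Granting that (which is exactly what "flat section of the Gauss-Manin connection" supplies, as used already in Theorem \ref{thm ham normal}), the rest is the short computation above. Thus the proof is essentially: differentiate the period integral, identify the coefficients with those in \Ref{flaT} via Lemma \ref{lem GM B}, and use translation-invariance of $p_{\gamma,\kappa}$ together with Lemma \ref{lem te} to collapse the sum to $-\kappa\,dp_{\gamma,\kappa}$.
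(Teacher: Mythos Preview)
Your argument is correct and is essentially identical to the paper's: both proofs rest on the two formulas
\[
\kappa\,\frac{\der p_{\gamma,\kappa}}{\der z_j}=\int_{\gamma(z)}\prod_{i\in J}(z_i+t)^{a_i/\kappa}\,\frac{a_j\,dt}{z_j+t}
\qquad\text{and}\qquad
\sum_{j\in J}\frac{\der p_{\gamma,\kappa}}{\der z_j}=0,
\]
combined with the explicit shape $dp_j=a_j\bigl(-dz_j+\tfrac{1}{|a|}\sum_i a_i\,dz_i\bigr)$; you simply spell out the final collapse to $-\kappa\,dp_{\gamma,\kappa}$ in more detail than the paper does. (One small slip: in your description of ``the second piece'' you wrote $\sum_j \tfrac{\der p_{\gamma,\kappa}}{\der z_j}\,dz_j$ where you meant the scalar $\sum_j \tfrac{\der p_{\gamma,\kappa}}{\der z_j}$, but you use the correct vanishing condition immediately afterward.)
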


\begin{proof}
The theorem follows from two formulas:
\bean
\kappa \frac{\der p_{\gamma,\kappa}}{\der z_j} = \int_{\gamma(z)} \prod_{i\in J}(z_i+t)^{a_i/\kappa}\,\frac {a_j dt}{z_j+t}
\eean
and $\sum_{j\in J}\frac{\der p_{\gamma,\kappa}}{\der z_j}=0$.
\end{proof}

Following Dubrovin \cite{D1, D2}, we will call the functions $p_{\gamma,\kappa}$
{\it twisted periods}. Notice that this definition agrees with the definition of twisted periods in Section \ref{sec diff forms},
namely,  the twisted periods of Theorem \ref{THM TP} can be also defined   by formula \Ref{PTW} of Theorem \ref{twised per}.

\begin{lem}
Given $\kappa\in \C^\times$, let $I(z,\kappa)$ be a flat section of the Gauss-Manin connection with the parameter $\kappa$.
Let $I(z,-\kappa)$ be a flat section of the Gauss-Manin connection with the parameter $-\kappa$. Then $(I(z,\kappa), I(z,-\kappa))_z$
does not depend on $z\in\C^n-\Delta$.
\qed
\end{lem}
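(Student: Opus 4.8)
The plan is to show that the $z$-derivative of the pairing vanishes identically, which suffices because $\cd$ is connected. First I would work in the combinatorially flat frame $(dp_j)_{j\in J}$ of the bundle \Ref{B*}, in which the bilinear form $(\,,\,)$ is given by \Ref{p form} and hence has \emph{constant} (i.e. $z$-independent) matrix entries; the derivatives $\frac{\der}{\der z_i}$ of sections are taken coefficientwise in this frame, exactly as in Lemma \ref{lem GM B}. Writing $I(z,\kappa)=\sum_j I^j(z,\kappa)\,dp_j$ and likewise $I(z,-\kappa)=\sum_j I^j(z,-\kappa)\,dp_j$, the pairing expands as $(I(z,\kappa),I(z,-\kappa))_z=\sum_{i,j}I^i(z,\kappa)\,I^j(z,-\kappa)\,(dp_i,dp_j)$ with the last factor constant.

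Next I would differentiate. Since the coefficients $(dp_i,dp_j)$ do not depend on $z$, the Leibniz rule gives
\[
\frac{\der}{\der z_\ell}\,(I(z,\kappa),I(z,-\kappa))_z
=\Bigl(\tfrac{\der I}{\der z_\ell}(z,\kappa),\,I(z,-\kappa)\Bigr)_z
+\Bigl(I(z,\kappa),\,\tfrac{\der I}{\der z_\ell}(z,-\kappa)\Bigr)_z .
\]
Inserting the flat-section equations of Lemma \ref{lem GM B}, namely $\kappa\,\frac{\der I}{\der z_\ell}(z,\kappa)=dp_\ell*_z I(z,\kappa)$ and $-\kappa\,\frac{\der I}{\der z_\ell}(z,-\kappa)=dp_\ell*_z I(z,-\kappa)$, the right-hand side becomes
\[
\frac1\kappa\Bigl(dp_\ell*_z I(z,\kappa),\,I(z,-\kappa)\Bigr)_z
-\frac1\kappa\Bigl(I(z,\kappa),\,dp_\ell*_z I(z,-\kappa)\Bigr)_z .
\]
By the Frobenius identity $(fg,h)=(f,gh)$ for the residue form (Section \ref{sec hess}), equivalently the self-adjointness of the geometric Hamiltonians in Theorem \ref{thm K sym}, multiplication by $dp_\ell$ is symmetric for $(\,,\,)_z$, so the two terms cancel. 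Hence $\frac{\der}{\der z_\ell}(I(z,\kappa),I(z,-\kappa))_z=0$ for every $\ell\in J$, and the pairing is constant on the connected set $\cd$.

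I do not anticipate a genuine obstacle: the argument is simply the infinitesimal form of the statement that the Gauss-Manin connections with parameters $\kappa$ and $-\kappa$ are adjoint with respect to the constant contravariant form, and it uses only two inputs already in hand — flatness of the form for the combinatorial connection (built into the construction and visible in \Ref{p form}) and self-adjointness of the multiplication operators (Theorem \ref{thm K sym}). One could equivalently run the same computation on $\sing V$ using the isometry up to sign $\al(z)\circ[\mc S^{(a)}]$ and the formulas \Ref{KJ}, with no change in substance.
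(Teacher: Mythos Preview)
Your argument is correct and is precisely what the paper's one-line proof (``follows from the fact that $B^*(z)$ is a Frobenius algebra'') unpacks to: you differentiate in the combinatorially flat frame where the form \Ref{p form} is constant, insert the flat-section equations from Lemma \ref{lem GM B}, and cancel via the Frobenius identity $(fg,h)=(f,gh)$. There is no difference in approach, only in level of detail.
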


\begin{proof}
The lemma follows from the fact that $B^*(z)$ is a Frobenius algebra.
\end{proof}

\subsection{Functoriality}

In this section we will discuss how our objects extend to strata of the discriminant $\Delta\subset\C^n$.

\subsubsection{}
A stratum $X$ of $\Delta$ is given by a partition $(J_1,\dots,J_m)$ of $J$,
\bean
\label{X}
X=\{(z_1,\dots,z_n)\in\C^n\ | \ z_i-z_j=0 \quad \text{for}\ i,j\in J_\ell,\ \ell = 1,\dots,m\},
\eean
$\dim X=m$. The coordinates on $X$ are functions $x_1,\dots,x_m$  where $x_\ell = z_j$ for $j\in J_\ell$.
Let $\iota: X \hookrightarrow  \C^n$ be the natural embedding. Then
\bean
\label{der der y}
\iota_* : \frac{\der}{\der x_\ell}\mapsto \sum_{j\in J_\ell} \frac{\der}{\der z_j},
\qquad
\iota^*: dz_j\mapsto dx_\ell  \quad \text{if} \ j\in J_\ell.
\eean
The remaining strata of $\Delta$ cut on $X$ the union of hyperplanes $x_i=x_j$, $1\leq i<j\leq m$ which we denote
$\Delta_X$.
For $\ell=1,\dots,m$, we denote $b_\ell =\sum_{j\in J_\ell} a_j$. We assume that $b_\ell\ne 0$ for all $\ell$.

We restrict our family of arrangements $\A(z), z\in \C^n$, to $X-\Delta_X$. For $x\in X-\Delta_X$ the corresponding
arrangement $\A(x)$ consists of points $-x_1,\dots,-x_m$ of weights $b_1,\dots,b_m$, respectively. For this new family
we will construct all the objects described in Sections \ref{An arrangement in C}-\ref{CONNE}
and relate them to the objects constructed for the arrangements $\A(z), z\in \C^n-\Delta$.
The objects corresponding to the new family will be provided with the index $X$.

\subsubsection{}
For $x\in X-\Delta_X$, the space $(V_X)^*=\OS^1(\A(x))$ has the standard basis $(H_{\ell,X}), \ell=1,\dots,m$.
Recall that the
space $V^*$ of Sections \ref{An arrangement in C}-\ref{CONNE} has the standard basis $(H_j), j\in J$. We have the canonical epimorphism
\bean
f^*: V^*\to (V_X)^*,
\qquad
(H_j) \mapsto (H_{\ell,X})\quad\text {if} \quad j \in J_\ell.
\eean
The space $V_X=\FF^1(\A(x))$ has the standard basis $F_{\ell,X}, \ell=1,\dots,m$.
The space $V$ of Sections \ref{An arrangement in C}-\ref{CONNE} has the standard basis $F_j, j\in J$. We have the canonical
embedding
\bean
f: V_X\hookrightarrow V,\qquad
F_{\ell,X} \mapsto \sum_{j\in J_\ell}F_j.
\eean
 The subspace of singular vector is defined
by the formula
\bean
\sing V_X = \Big\{ \sum_{\ell=1}^m c_\ell F_{\ell,X}\ | \ \sum_{\ell=1}^m b_\ell c_\ell = 0 \Big\}.
\eean
We have $f(\sing V_X) = f(V_X)\cap (\sing V)$. Consider the embedding
\bean
\tilde f : \sing V_X \hookrightarrow \sing V, \qquad v\mapsto f(v).
\eean
 For the contravariant form on $V_X$ we have
\bean
S^{(b)}_X(F_{\ell,X}, F_{k,X}) = S^{(a)}(f(F_{\ell,X}), f(F_{k,X}))=\delta_{\ell,k} b_k.
\eean
For $\ell=1,\dots,m$, we define a vector $v_{\ell,X}\in \sing V_X$ by the formula
\bean
v_{\ell, X} = -F_{\ell,X}+\frac{b_\ell}{|a|}\sum_{k=1}^m F_{k,X}.
\eean
 We have
$f: v_{\ell, X} \mapsto  \sum_{j\in J_\ell} v_j$.

The standard basis $(H_{\ell,X}),\ell=1,\dots,m$, induces linear functions  on $\sing V_X$,
\bean
h_{\ell,X} \ :\  v_{k,X}\ \mapsto\ \frac {b_k}{|a|} \quad\text{ if}\quad k\ne\ell, \qquad v_{\ell,X} \
\mapsto\
-1 +\frac {b_\ell}{|a|}.
\eean
 We have $\tilde f^* : h_j\mapsto h_{\ell,X}$ if $j\in J_\ell$.

\subsubsection{} For $\ell=1,\dots,m$ and $x\in X-\Delta_X$, the operators $K_{\ell,X}(x):\sing V_X\to\sing V_X$ are defined
by formulas \Ref{KJ},
\bean
K_{\ell,X}(x) v_{k,X} &=& \frac {b_\ell}{x_\ell-x_k}v_{k,X} + \frac {b_k}{x_k-x_\ell}v_{\ell,X}
\quad\text{ for}\quad \ell\ne k,
\\
\notag
K_{\ell,X}(x) v_{\ell,X} &=& - \sum_{k\ne\ell} K_{\ell,X}(x) v_{k,X}.
\eean
 For all $\ell,k$, we have
\bean
\label{fK}
f(K_{\ell,X}(x) v_{k,X}) = \sum_{j\in J_\ell} K_j(x) f(v_{k,X}).
\eean
Notice that the right hand side in \Ref{fK} is well-defined despite the fact that $K_j(x)v_i$
is not well-defined for all $v_i$, see formula \Ref{KJ}.

\subsubsection{} Multiplication on $\sing V_X$ is defined by formulas \Ref{m v},
\bean
v_{\ell,X}*_{x,X} v_{k,X} &=& \frac {b_\ell}{x_\ell-x_k}v_{k,X} + \frac {b_k}{x_k-x_\ell}v_{\ell,X}
\quad\text{ for}
\quad \ell\ne k,
\\
\notag
v_{\ell,X}*_{x,X} v_{\ell,X} &=&
 - \sum_{k\ne\ell} v_{\ell,X}*_{x,X} v_{k,X}.
 \eean
For all $\ell,k$, we have
\bean
\label{f mult}
f(v_{\ell,X} *_{x,X} v_{k,X}) = f(v_{\ell,X}) *_x f(v_{k,X}).
\eean
Notice that the right hand side in \Ref{f mult} is well-defined despite the fact that $v_i *_x v_j$
is not well-defined for all $v_i, v_j$, see formula \Ref{m v}.

\subsubsection{}
The multiplication on $(\sing V_X)^*$ is given by formula \Ref{m h},
\bean
\label{m h X}
h_{\ell,X}*_{x,X}h_{k,X} &=& \frac {1}{x_\ell-x_k}h_{\ell,X} + \frac {1}{z_k-z_\ell}h_{k,X}, \qquad \ell\ne k,
\\
\notag
b_\ell h_{\ell,X}*_{x,X}h_{k,\ell}  &=& -\sum_{k\ne \ell} b_k\,h_{k,X}*_{x,X}h_{\ell,X}.
\eean
If $\ell\ne k$, $i\in J_\ell, j\in J_k$, then
\bean
\label{f mult *}
\tilde f^*(h_{i} *_x h_{j}) = h_{\ell,X} *_{x,X} h_{k,X}.
\eean
Notice that $h_{i} *_x h_{j}$ is well-defined despite the fact that $h_i *_x h_j$
is not well-defined for all $h_i, h_j$, see formula \Ref{m v}.

\subsubsection{}
For $x\in X-\Delta_X$ the residue form on $A_\Phi(x)$
 induces a holomorphic bilinear form $\eta_X$ on $T_x(X-\Delta_X)$,
\bean
\label{eta jj X}
&&
\eta_X\Big(\frac{\der}{\der x_\ell}, \frac{\der}{\der x_\ell}\Big) = -b_\ell+\frac{b_\ell^2}{|a|},
\qquad j\in J,
\\
\notag
%\label{eta ij}
&&
\eta_X\Big(\frac{\der}{\der x_\ell}, \frac{\der}{\der x_k}\Big) =  \frac{b_\ell b_k}{|a|},
\qquad \ \ell\ne k.\
\eean
For all $\ell,k$, have
\bean
\eta_X\Big(\frac{\der}{\der x_\ell}, \frac{\der}{\der x_k}\Big) =
\eta\Big(\sum_{i\in J_\ell}\frac{\der}{\der z_i}, \sum_{j\in J_\ell}\frac{\der}{\der z_j}\Big).
\eean
For $\ell=1,\dots,m$, we define a linear function on $X$,
\bean
q_{\ell,X}(x) = - x_\ell + \sum_{k\ne\ell}\frac{b_k}{|a|}x_k.
\eean
We have
\bean
q_{\ell,X}(x) = q_i(x), \qquad \text{if}\ i\in J_\ell.
\eean
The period map $q_X: X-\Delta_X \to \sing V_X$ is defined by formula \Ref{pErioD},
\bean
\label{period map X}
q_X(x) = \frac 1{|a|}\sum_{\ell=1}^m q_{\ell,X} (x) F_{\ell,X} = \frac 1{|a|} \sum_{\ell=1}^m x_\ell\, v_{\ell,X}.
\eean
\begin{thm}
For all $x\in X$, we have
\bean
f(q_X(x)) = q(x)
\eean
and for the potential functions of first kind we have
\bean
P_X(x) = P(x).
\eean
\qed
\end{thm}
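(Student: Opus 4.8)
The plan is to reduce both identities to the single relation $f(v_{\ell,X}) = \sum_{j\in J_\ell} v_j$ recorded above, together with the compatibility $S^{(b)}_X(u,w) = S^{(a)}(f(u),f(w))$ of the two contravariant forms. Throughout, a point $x\in X$ is identified via $\iota$ with the point $z\in\C^n$ having $z_j = x_\ell$ whenever $j\in J_\ell$, and one notes that $|a| = \sum_{j\in J}a_j = \sum_{\ell=1}^m b_\ell$, so the normalizing constant $1/|a|$ is the same for the two families; the hypothesis $b_\ell\ne 0$ guarantees that $(\A(x),(b_\ell)_{\ell})$ is again unbalanced, so that the period map $q_X$ and the potential $P_X$ are defined on $X-\Delta_X$.

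First I would establish $f(q_X(x)) = q(x)$ by direct substitution. By \Ref{period map X}, $q_X(x) = \frac1{|a|}\sum_{\ell=1}^m x_\ell\, v_{\ell,X}$; applying the embedding $f$ and using $f(v_{\ell,X}) = \sum_{j\in J_\ell} v_j$ gives $f(q_X(x)) = \frac1{|a|}\sum_{\ell=1}^m x_\ell \sum_{j\in J_\ell} v_j$. Since $z_j = x_\ell$ for $j\in J_\ell$, the double sum collapses to $\frac1{|a|}\sum_{j\in J} z_j v_j$, which is $q(z) = q(x)$ by \Ref{pErioD}. Both $q$ and $q_X$ are restrictions of linear maps (the period map extends linearly to all of $\C^n$, resp.\ $X$), so the identity, valid on the dense open subset $X-\Delta_X$, holds on all of $X$.

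For the second identity I would use that the contravariant form on $V_X$ is the pullback under $f$ of the form on $V$: this is checked on standard basis vectors, $S^{(b)}_X(F_{\ell,X},F_{k,X}) = S^{(a)}(f(F_{\ell,X}),f(F_{k,X})) = \delta_{\ell,k}b_k$, and extends to $S^{(b)}_X(u,w) = S^{(a)}(f(u),f(w))$ for all $u,w\in V_X$ by bilinearity. Combining this with the definitions $P(z) = S^{(a)}(q(z),q(z))$, $P_X(x) = S^{(b)}_X(q_X(x),q_X(x))$, and the first identity gives $P_X(x) = S^{(a)}(f(q_X(x)),f(q_X(x))) = S^{(a)}(q(x),q(x)) = P(x)$.

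Every step here is an immediate consequence of the definitions, so I do not anticipate a genuine obstacle; the only points requiring a word of care are the bookkeeping with $|a|$ and the unbalancedness hypothesis mentioned above, and the observation that the same strategy, using instead the functoriality of the multiplication from \Ref{f mult} and the analogous compatibility for $\eta$, propagates verbatim to the remaining objects of the Frobenius like structure on the stratum.
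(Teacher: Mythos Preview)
Your proof is correct and is precisely the argument implicit in the paper, which omits the proof entirely (the theorem is stated with a bare \qed): the relevant facts $f(v_{\ell,X})=\sum_{j\in J_\ell}v_j$ and $S^{(b)}_X(\cdot,\cdot)=S^{(a)}(f(\cdot),f(\cdot))$ are recorded just before the theorem exactly so that it follows by the direct substitution you give. The paper also records $q_{\ell,X}(x)=q_i(x)$ for $i\in J_\ell$, which yields an equivalent one-line verification via the $F$-basis expression of $q$ and $q_X$, but this is the same computation in different coordinates.
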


\subsubsection{}
For $x\in X-\Delta_X$, the multiplication on $T_x(X-\Delta_X)$ is defined by  formulas \Ref{MULT},
\bean
\label{MULT X}
\frac{\der}{\der x_\ell} *_{x,X} \frac{\der}{\der x_k} & = &
\frac {b_\ell}{x_\ell-x_k}\frac{\der}{\der x_k}
+ \frac {b_k}{x_k-x_\ell}\frac{\der}{\der x_\ell},
\\
\notag
\frac{\der}{\der x_\ell} *_{x,X} \frac{\der}{\der x_\ell} &=& -\sum_{k\ne \ell} \
\frac{\der}{\der x_k} *_{x,X} \frac{\der}{\der x_\ell}.
\eean
For all $\ell, k$, we have
\bean
\label{MULT X *}
\frac{\der}{\der x_\ell} *_{x,X} \frac{\der}{\der x_k}  = \Big(\sum_{i\in J_\ell}
\frac{\der}{\der z_i}\Big)  *_x \Big(\sum_{j\in J_k} \frac{\der}{\der z_j}\Big).
\eean
Notice that the right hand side in \Ref{MULT X *} is well-defined despite the fact that
$\frac{\der}{\der z_i} *_x \frac{\der}{\der z_j}$ is not well-defined for all $i,j$, see \Ref{MULT X}.

\subsubsection{}
For $\ell=1,\dots,m$, we denote $p_{\ell,X}=b_\ell q_{\ell,X}$. The map
\bean
\tilde \eta_X : T_x(X-\Delta_X)
\to T^*_x(X-\Delta_X),
\quad
 w\mapsto \eta_X(w,\cdot),
\eean
 sends $\frac{\der}{\der x_\ell} $ to $dp_{\ell,X}$.
 We have
 \bean
 dp_{\ell,X} = \iota^*\Big(\sum_{i\in J_\ell} dp_i\Big) .
 \eean
Denote $B^*_X(x)$ the span of $(dp_{\ell,X})_{\ell=1}^m$ in
$T^*_x(X-\Delta_X)$.
This span equals the span of differential forms
 $(dx_\ell-dx_k)_{1\leq \ell< k\leq m}$.
 The multiplication on $B^*_X(x)$ is given by the formulas \Ref{p mult},
\bean
\label{p mult X}
\phantom{aaaaaa}
dp_{\ell,X} *_{x,X} dp_{k,X}
 & = &
 \frac {b_\ell}{x_\ell-x_k} dp_{k,X} + \frac {b_k}{x_k-x_\ell} dp_{\ell,X}
  = \frac {b_\ell b_k}{x_\ell-x_k}d(x_\ell-x_k),
 \qquad \ell \ne k,
\\
\notag
dp_{\ell,X}*_{x,X}dp_{\ell,X}
 &=& -\sum_{k\ne \ell}\ dp_{k,X}*_{x,X} dp_{\ell,X} .
\eean
For all $\ell, k$, we have
\bean
\iota^*\Big(\sum_{i\in J_\ell} dp_i\Big) *_{x,X} \iota^*\Big(\sum_{j\in J_k} dp_j\Big)
=
\iota^*\Big(\sum_{i\in J_\ell}\sum_{j\in J_k} dp_i *_{x}  dp_j\Big).
\eean
The potential function of second kind
is defined by formula \Ref{pot},
\bean
\label{pot X}
\tilde P_X(x_1,\dots,x_m) = \frac 12 \sum_{1\leq \ell< k\leq m} b_\ell b_k\,(x_\ell-x_k)^2\log(x_\ell-x_k).
\eean
By formula \Ref{WDW},
\bean
\label{WDW X}
d\,\Big(\frac{\der^2\tilde P_X}{\der x_\ell\der x_k}\Big)\ =\ -\tilde \eta_X \Big(\frac{\der}{\der x_\ell}\Big) *_{x,X} \tilde \eta_X \Big(\frac{\der}{\der x_k}\Big)
\eean
for all $\ell,k$. For all $\ell, k$, we have
\bean
\frac{\der^2\tilde P_X}{\der x_\ell\der x_k}(x) = \lim_{z\to x\atop z\in \C^n-\Delta}\sum_{i\in J_\ell}\sum_{j\in J_k}\frac{\der^2\tilde P}{\der z_i\der z_j}(z).
\eean

\subsection{Frobenius like structure}

Consider the quotient $M$ of $\C^n$ by the one-dimensional subspace $\C(1,\dots,1)$ and the natural projection $\pi: \C^n\to M$.
Then all our objects
\,--\, the combinatorial bundle $(\cd)\times(\sv)\to\cd$
with the contravariant form $S^{(a)}$ and connections (combinatorial and Gauss-Manin);
the bundle of algebras $\sqcup_{z\in\C^n-\Delta} A_{\Phi}(z) \to \C^n-\Delta$;
the period map $q: \cd \to \sv$, potential functions $P(z)$ and $\tilde P(z)$, flat periods $p_j(z)$, twisted periods
$p_{\gamma,\kappa}(z)$\,--\, descend to  the quotient and form on $M-\pi(\Delta)$ a structure which we will also call a {\it Forbenius like structure}.

In particular, the functions  $p_1,\dots,p_{n-1}$ will form a coordinate systems on $M$ and $\eta$ will induce a holomorphic
metric on $M$ constant with respect to the coordinates $p_1,\dots,p_{n-1}$. If $y_i = \sum_{j=1}^{n-1}c_{j,i}p_j$,
$j=1,\dots,n-1$, is a linear change of coordinates with $c_{i,j}\in\C$ such that $\eta = \sum_{j=1}^{n-1} dy_j^2$.
Then equation \Ref{WDW} will take the form
\bean
\label{WDWV}
d\,\Big(\frac{\der^2\tilde P}{\der y_i\der y_j}\Big)\ =\ - dy_i *_y dy_j
\eean
for all $i,j$. The functions $-\frac{\der^3\tilde P}{\der y_i\der y_j\der y_k}$ will become the structure constants of the
multiplication on $T^*_y(M-\pi(\Delta))$ and the potential function $\tilde P$ will satisfies the WDVV equations with respect to the coordinates
$y_1,\dots,y_{n-1}$.

\section{Generic lines on plane}
\label{sec lines on plane}

\subsection{An arrangement in  $\C^n\times\C^2$}
\label{An arrangement in c2}
Consider $\C^2$ with coordinates $t_1,t_2$,\
$\C^n$ with coordinates $z_1,\dots,z_n$. Fix $n$
 linear functions on $\C^2$,
$g_j=b_j^1t_1+ b_j^2t_2,$\ $ j\in J,$
$b_j^i\in \C$. We assume that
\bean
d_{i,j} = \det \left( \begin{array}{clcr}
b^1_i  & b^2_i
\\
b^1_j  & b^2_j
\end{array} \right) \ne 0
\qquad \text{for all}\ i\ne j.
\eean
We define $n$ linear functions on $\C^n\times\C^2$,
$f_j = z_j+g_j,$\ $ j\in J.$
In $\C^n\times \C^2$ we define
 the arrangement $\tilde \A = \{ \tilde H_j\ | \ f_j = 0, \ j\in J \}$.

For every $z=(z_1,\dots,z_n)$ the arrangement $\tilde \A$
induces an arrangement $\A(z)$ in the fiber of the projection $\C^n\times\C^2\to\C^n$
over $z$. We
identify every fiber with $\C^2$. Then $\A(z)$ consists of
lines $H_j(z), j\in J$, defined in $\C^2$ by the  equations
$f_j=0$.
Denote $ U(\A(z)) = \C^2 - \cup_{j\in J} H_j(z)$,  the complement to the arrangement $\A(z)$.

The arrangement $\A(z)$ is with normal crossings if and only if $z \in \C^n-\Delta$, where
$\Delta = \cup_{1\leq i<j<k\leq n}H_{i,j,k}$ and the hyperplane $H_{i,j,k}$ is defined by the equation
$f_{i,j,k}=0$,
\bean
\label{ fijk}
f_{i,j,k}=d_{j,k}z_i+d_{k,i}z_j+d_{i,j}z_k.
\eean

\begin{lem}
\label{Pluk}
For any four distinct indices $i,j,k,\ell$ we have
\bean
&&
\phantom{aaaaaaaaa}
\frac{f_{i,j,k}}{d_{k,i}d_{i,j}} + \frac{f_{i,k,\ell}}{d_{\ell,i}d_{i,k}} + \frac{f_{i,\ell,j}}{d_{j,i}d_{i,\ell}} = 0,
\\
&&
\frac{f_{i,j,k}^2}{d_{i,j}d_{j,k}d_{k,i}}-\frac{f_{j,k,\ell}^2}{d_{j,k}d_{k,\ell}d_{\ell,j}} + \frac{f_{k,\ell,i}^2}{d_{k,\ell}d_{\ell,i}d_{i,k}}-
\frac{f_{\ell,i,j}^2}{d_{\ell,i}d_{i,j}d_{j,\ell}}=0.
\eean
\qed
\end{lem}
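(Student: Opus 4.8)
The plan is to reduce both identities to two elementary facts about the $2\times2$ minors $d_{i,j}=b_i^1b_j^2-b_i^2b_j^1$. First, since any three vectors in $\C^2$ are linearly dependent, one has the linear Pl\"ucker relation $d_{j,k}\,(b_i^1,b_i^2)+d_{k,i}\,(b_j^1,b_j^2)+d_{i,j}\,(b_k^1,b_k^2)=0$; substituting $f_m=z_m+g_m$ and noting $d_{j,k}g_i+d_{k,i}g_j+d_{i,j}g_k=0$ then shows $f_{i,j,k}=d_{j,k}f_i+d_{k,i}f_j+d_{i,j}f_k$, so $f_{a,b,c}$ is totally antisymmetric in $a,b,c$ and, restricted to $\C^n$, is the linear form of \Ref{ fijk} in the variables $z_a,z_b,z_c$ only. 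Second, writing $(b_\ell^1,b_\ell^2)$ as a linear combination of $(b_i^1,b_i^2)$ and $(b_j^1,b_j^2)$ and expanding by bilinearity of $\det$ gives the quadratic Pl\"ucker relation $d_{i,j}d_{k,\ell}-d_{i,k}d_{j,\ell}+d_{i,\ell}d_{j,k}=0$.

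For the first identity, each summand is a linear form in $z_i,z_j,z_k,z_\ell$. Expanding with \Ref{ fijk}, the coefficients of $z_j,z_k,z_\ell$ are $\tfrac1{d_{i,j}}+\tfrac1{d_{j,i}}$, $\tfrac1{d_{k,i}}+\tfrac1{d_{i,k}}$, $\tfrac1{d_{\ell,i}}+\tfrac1{d_{i,\ell}}$, each zero since $d_{a,b}=-d_{b,a}$, and the coefficient of $z_i$ is $-\dfrac{d_{i,j}d_{k,\ell}-d_{i,k}d_{j,\ell}+d_{i,\ell}d_{j,k}}{d_{i,j}d_{i,k}d_{i,\ell}}$, which vanishes by the quadratic Pl\"ucker relation. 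Multiplying the first identity by $d_{i,j}d_{i,k}d_{i,\ell}$ and using $f_{i,\ell,j}=-f_{i,j,\ell}$ recasts it as a linear syzygy among the forms $f_{\bullet}$; applying the first identity again with the roles of $i$ and $j$ exchanged gives a second one:
\[ d_{i,\ell}f_{i,j,k}-d_{i,k}f_{i,j,\ell}+d_{i,j}f_{i,k,\ell}=0, \qquad d_{j,\ell}f_{i,j,k}-d_{j,k}f_{i,j,\ell}+d_{i,j}f_{j,k,\ell}=0. \]

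For the second identity, cyclic invariance of $f_{a,b,c}$ and of the product $\Delta_{a,b,c}:=d_{a,b}d_{b,c}d_{c,a}$ lets us rewrite the left-hand side as
\[ Q:=\frac{f_{i,j,k}^2}{\Delta_{i,j,k}}-\frac{f_{j,k,\ell}^2}{\Delta_{j,k,\ell}}+\frac{f_{i,k,\ell}^2}{\Delta_{i,k,\ell}}-\frac{f_{i,j,\ell}^2}{\Delta_{i,j,\ell}}, \]
a homogeneous quadratic polynomial in $z_i,z_j,z_k,z_\ell$. The two syzygies express $f_{i,k,\ell}$ and $f_{j,k,\ell}$ as nonzero linear combinations of $f_{i,j,k}$ and $f_{i,j,\ell}$, which are themselves linearly independent (one carries $z_k$ but not $z_\ell$, the other the reverse); hence all four forms lie in the plane $W=\langle f_{i,j,k},f_{i,j,\ell}\rangle$ and $Q=c_1 f_{i,j,k}^2+c_2 f_{i,j,k}f_{i,j,\ell}+c_3 f_{i,j,\ell}^2$ for constants $c_1,c_2,c_3$. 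Now I would verify $Q=0$ on the hyperplane $\{f_{i,j,k}=0\}$: there the first syzygy gives $f_{i,k,\ell}=\tfrac{d_{i,k}}{d_{i,j}}f_{i,j,\ell}$ and the second $f_{j,k,\ell}=\tfrac{d_{j,k}}{d_{i,j}}f_{i,j,\ell}$, and substituting these into $Q$ and clearing denominators collapses it to a nonzero multiple of $\bigl(d_{i,j}d_{k,\ell}-d_{i,k}d_{j,\ell}+d_{i,\ell}d_{j,k}\bigr)f_{i,j,\ell}^2$, hence to $0$; this forces $c_3=0$. Running the same argument on $\{f_{i,j,\ell}=0\}$ and on $\{f_{i,k,\ell}=0\}$ (solving the syzygies for the other two forms in terms of $f_{i,j,k}$ and again invoking the quadratic Pl\"ucker relation) forces $c_1=0$ and then $c_2=0$, so $Q=0$.

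The conceptual input is only the two Pl\"ucker relations; the real labor, and the main opportunity for error, is sign bookkeeping — deriving the two linear syzygies with the correct signs, and carrying out the one-line collapse on each of the three hyperplanes $\{f_{\bullet}=0\}$ down to the quadratic Pl\"ucker relation. A less elegant but completely mechanical alternative for the second identity is to expand $Q$ outright in the ten monomials $z_az_b$ (with $a\le b$ in $\{i,j,k,\ell\}$) and check that every coefficient vanishes via the linear and quadratic Pl\"ucker relations, bypassing the plane $W$ altogether.
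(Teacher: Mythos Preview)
Your proof is correct. The paper provides no proof of this lemma at all --- it is stated with an immediate \qed, signaling that the identities are routine consequences of the Pl\"ucker relations for $2\times 2$ minors, which is precisely what you verify in detail. Your argument for the second identity --- reducing $Q$ to a quadratic form on the plane $W=\langle f_{i,j,k},f_{i,j,\ell}\rangle$ via the linear syzygies and then killing the three coefficients by restriction to hyperplanes --- is a clean way to organize the computation; the brute-force alternative you mention (expanding in the ten monomials $z_az_b$) is what one would most likely do if asked to ``just check it,'' and is presumably what the author had in mind.
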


\subsection{Good fibers}
\label{sec Good ex2}

For any $z\in\C^n-\Delta$, the space $\OS^2(\A(z))$ has the standard basis
\linebreak
$(H_i(z),H_j(z))$,  $1\leq i<j\leq n$. The space $\FF^2(\A(z))$ has the standard dual basis
$F(H_i(z),H_j(z))$, $1\leq i<j\leq n$.
For $z^1, z^2\in \C^n-\Delta$, the combinatorial connection identifies the spaces
$\OS^2(\A(z^1))$, $\FF^2(\A(z^1))$   with the spaces
$\OS^2(\A(z^2))$, $\FF^2(\A(z^2))$, respectively, by identifying the standard bases.

Assume that nonzero weights $(a_j)_{j\in J}$ are given. Then the
arrangement $\A(z)$  is weighted. For $z\in\C^n-\Delta$, the arrangement $\A(z)$ is unbalanced
if $|a|\ne 0$.  We assume that $|a|\ne 0$.

For $z\in\C^n-\Delta$, we denote $V=\FF^2(\A(z))$, $V^*=(\FF^2(\A(z))^*=\OS^2(\A(z))$,
 $F_{i,j} = F(H_i(z),H_j(z))$. We have $F_{i,j}=-F_{j,i}$,
\bean
\label{sing S 2}
S^{(a)}(F_{i,j},F_{k,\ell}) &=& 0,
\qquad \text{if} \ i<j,\ k<\ell\ \text{and}\ (i,j)\ne(k,\ell),
\\
\notag
S^{(a)}(F_{i,j},F_{i,j}) &=& a_i a_j,
\eean
\bean
\sing V
& =&
\Big\{ \sum_{1\leq i<j\leq n} c_{i,j}F_{i,j}\ | \ \sum_{j=1}^{i-1} a_jc_{j,i}
- \sum_{j=i+1}^{n} a_jc_{i,j} = 0, \ i=1,\dots,n\Big\}.
\eean
By Corollary \ref{cor nondeg}, the restriction of $S^{(a)}$ to $\sing V$ is nondegenerate.
Denote $(\sing V)^\perp$ the orthogonal complement to $\sing V$ with respect to $S^{(a)}$. Then
$V = \sing V \oplus (\sing V)^\perp$. Denote $\pi : V\to \sing V$ the orthogonal projection.

\begin{lem}
\label{elm orth}
The space $(\sing V)^\perp$ is generated by vectors
\bean
\label{wj C two}
\sum_{i\in J} F_{i,j}, \qquad j\in J.
\eean
\qed
\end{lem}

For $i\ne j$, we define the vector $v_{i,j}\in V$ by the formula
\bean
\label{ v ij}
v_{i,j} = F_{i,j} -\frac{a_j}{|a|}\sum_{k\in J} F_{i,k}
- \frac{a_i}{|a|}\sum_{\ell\in J} F_{\ell,j}.
\eean
We have $v_{i,j}=-v_{j,i}$. Set $v_{i,i}=0$.

\begin{lem}
\label{lem siNg}
We have the following properties.

\begin{enumerate}
\item[(i)]
$\dim \Sing V= {n-1\choose 2}$.

\item[(ii)]

We have $v_{i,j}\in\Sing V$ and $v_{i,j} = \pi(F_{i,j})$.

\item[(iii)]
For $j\in J$, we have $\sum_{i\in J} v_{i,j}=0$.
\item[(iv)]
For any $k\in J$, the set $v_{i,j}$, $ 1\leq i<j\leq n$, $k\notin\{i,j\}$, is a
basis of $\sing V$.

\end{enumerate}
\qed
\end{lem}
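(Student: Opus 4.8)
The plan is to deduce everything from (ii), which carries the only real computation; the rest is linear algebra resting on Lemma \ref{elm orth} and the nondegeneracy of the contravariant form. Set $w_m=\sum_{p\in J}F_{p,m}\in V$ for $m\in J$, with the convention $F_{p,p}=0$. By Lemma \ref{elm orth} the vectors $w_1,\dots,w_n$ span $(\sing V)^\perp$; the antisymmetry $F_{p,m}=-F_{m,p}$ gives the relation $\sum_{m\in J}w_m=0$, and since $\dim(\sing V)^\perp={n\choose 2}-{n-1\choose 2}=n-1$ this is the only relation among them. Rewriting the definition of $v_{i,j}$ as $v_{i,j}=F_{i,j}+\frac{a_j}{|a|}\,w_i-\frac{a_i}{|a|}\,w_j$ shows immediately that $F_{i,j}-v_{i,j}\in(\sing V)^\perp$.

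To prove $v_{i,j}\in\sing V$ I would check that $v_{i,j}$ is orthogonal to every $w_m$; since $S^{(a)}$ is nondegenerate on $V$ by Theorem \ref{thm Shap nondeg}(i) (the only dense edges of a generic line arrangement are the lines $H_j$, of nonzero weight) and hence $\sing V=((\sing V)^\perp)^\perp$, this suffices. From \Ref{sing S 2} one gets $S^{(a)}(F_{i,j},w_j)=a_ia_j$, $S^{(a)}(F_{i,j},w_i)=-a_ia_j$, $S^{(a)}(F_{i,j},w_m)=0$ for $m\notin\{i,j\}$, $S^{(a)}(w_m,w_m)=a_m(|a|-a_m)$, and $S^{(a)}(w_i,w_m)=-a_ia_m$ for $i\ne m$; substituting these into the formula $v_{i,j}=F_{i,j}+\frac{a_j}{|a|}w_i-\frac{a_i}{|a|}w_j$ makes $S^{(a)}(v_{i,j},w_m)$ vanish in the cases $m\notin\{i,j\}$ and $m=j$, and the case $m=i$ follows from $v_{i,j}=-v_{j,i}$. (One could equally well just substitute the coordinates of $v_{i,j}$ into the $n$ explicit linear relations defining $\sing V$.) Thus $v_{i,j}\in\sing V$, and combined with $F_{i,j}-v_{i,j}\in(\sing V)^\perp$ and the orthogonal decomposition $V=\sing V\oplus(\sing V)^\perp$ this gives $v_{i,j}=\pi(F_{i,j})$, which is (ii). Property (iii) is then immediate: $\sum_{i\in J}v_{i,j}=\pi(\sum_{i\in J}F_{i,j})=\pi(w_j)=0$.

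For (iv), fix $k\in J$. Since $\pi:V\to\sing V$ is surjective and the $F_{i,j}$, $i<j$, form a basis of $V$, the vectors $v_{i,j}=\pi(F_{i,j})$ span $\sing V$; relation (iii) rewrites each $v_{k,j}$ ($j\ne k$) as a combination of the $v_{i,j}$ with $k\notin\{i,j\}$, so these ${n-1\choose 2}$ vectors already span. For independence, suppose $\sum_{i<j,\,k\notin\{i,j\}}c_{i,j}v_{i,j}=0$; then $\sum c_{i,j}F_{i,j}\in\ker\pi=(\sing V)^\perp$, hence equals $\sum_{m\in J}\lambda_m w_m$ for some $\lambda_m$. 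Comparing, for each $p\ne k$, the coefficient of the standard basis vector indexed by the pair $\{p,k\}$: it is $0$ on the left, while on the right that basis vector occurs only in $w_k$ and in $w_p$, with opposite signs, giving $\pm(\lambda_k-\lambda_p)=0$. So all $\lambda_m$ coincide, whence $\sum_m\lambda_m w_m=\lambda\sum_m w_m=0$; therefore $\sum c_{i,j}F_{i,j}=0$ and all $c_{i,j}=0$. Hence the set is a basis of $\sing V$, and counting gives $\dim\sing V={n-1\choose 2}$, i.e.\ (i). (Part (i) also follows directly from Theorem \ref{thm Shap nondeg}(ii) together with the classical value $|\chi(U(\A(z)))|={n-1\choose 2}$ for $n$ lines in general position in $\C^2$, computed e.g.\ by deletion of one line.)

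Everything here is routine once Lemma \ref{elm orth} is available; the genuinely structural inputs are that lemma and the nondegeneracy of $S^{(a)}$, and the only place calling for a small idea is the coefficient comparison against the $w_m$ in the independence half of (iv) — this is what I expect to be the ``hard'' (though hardly difficult) step.
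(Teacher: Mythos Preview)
Your proof is correct. The paper records this lemma with a bare \qed and no argument, so there is nothing to compare against; your route through $v_{i,j}=F_{i,j}+\frac{a_j}{|a|}w_i-\frac{a_i}{|a|}w_j$, orthogonality to the $w_m$, and the coefficient comparison against the $w_m$ for independence in (iv) is exactly the kind of routine verification the author is leaving to the reader.

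One small expository point: in the first paragraph you assert $\dim(\sing V)^\perp={n\choose 2}-{n-1\choose 2}=n-1$ in order to conclude that $\sum_m w_m=0$ is the only relation among the $w_m$, but that dimension count uses (i), which you only establish at the end. This is harmless because you never actually use the claim ``this is the only relation'' anywhere afterwards; you can simply delete that clause, or (if you want to keep it) verify directly that $w_1,\dots,w_{n-1}$ are independent by reading off the coefficient of $F_{m,n}$ for each $m<n$.
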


\begin{lem} We have
\bean
\label{SF jji}
&&
S^{(a)}(v_{i,j},v_{k,\ell}) = 0,
\qquad \text{if}\ i,j,k,\ell \ \text{are distinct},
\\
\notag
\label{SF ijii}
&&
S^{(a)}(v_{i,j},v_{i,k}) = -\frac{a_ia_ja_k}{|a|},
\qquad \text{if}\ i,j,k \ \text{are distinct},
\\
\notag
\label{SF ijiii}
&&
S^{(a)}(v_{i,j},v_{i,j}) = - \sum_{k\ne j}
S^{(a)}(v_{i,j},v_{i,k}) = a_ia_j - \frac{a_ia_j(a_i+a_j)}{|a|}.
\eean
\qed
\end{lem}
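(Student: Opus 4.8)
The plan is to reduce every entry of the Gram matrix of the vectors $v_{i,j}$ to a single sum by invoking Lemma \ref{lem siNg}(ii), which identifies $v_{i,j}=\pi(F_{i,j})$ with the $S^{(a)}$-orthogonal projection of the standard basis vector $F_{i,j}$ onto $\sing V$. Since $v_{i,j}\in\sing V$ while $F_{k,\ell}-\pi(F_{k,\ell})\in(\sing V)^\perp$, one gets
\[
S^{(a)}(v_{i,j},v_{k,\ell})=S^{(a)}\big(v_{i,j},\pi(F_{k,\ell})\big)=S^{(a)}(v_{i,j},F_{k,\ell}).
\]
Then I would substitute the explicit expression \Ref{ v ij} for $v_{i,j}$ and expand, using \Ref{sing S 2} together with the skew-symmetry $F_{a,b}=-F_{b,a}$ and the conventions $v_{a,a}=0$, $F_{a,a}=0$. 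The only nonzero pairings $S^{(a)}(F_{a,b},F_{c,d})$ occur when $\{a,b\}=\{c,d\}$, where they equal $\pm a_aa_b$ according to the relative orientation; hence at most three terms survive in each expansion.

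First I would dispose of the case of four distinct indices $i,j,k,\ell$: in $S^{(a)}(v_{i,j},F_{k,\ell})$ the three pieces $F_{i,j}$, $-\frac{a_j}{|a|}\sum_m F_{i,m}$, $-\frac{a_i}{|a|}\sum_m F_{m,j}$ can pair nontrivially with $F_{k,\ell}$ only if $\{i,j\}$, some $\{i,m\}$, or some $\{m,j\}$ equals $\{k,\ell\}$; since $i,j\notin\{k,\ell\}$ all of these fail, so the pairing is $0$. Next, for distinct $i,j,k$, in $S^{(a)}(v_{i,j},v_{i,k})=S^{(a)}(v_{i,j},F_{i,k})$ only the middle piece contributes, through $m=k$, giving $-\frac{a_j}{|a|}\,S^{(a)}(F_{i,k},F_{i,k})=-\frac{a_ia_ja_k}{|a|}$. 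Finally $S^{(a)}(v_{i,j},v_{i,j})=S^{(a)}(v_{i,j},F_{i,j})$ collects $a_ia_j$ from $F_{i,j}$, then $-\frac{a_j}{|a|}\,a_ia_j$ from the middle piece (via $m=j$), and $-\frac{a_i}{|a|}\,a_ia_j$ from the last piece (via $m=i$), which sum to $a_ia_j-\frac{a_ia_j(a_i+a_j)}{|a|}$.

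It remains to check the middle equality $S^{(a)}(v_{i,j},v_{i,j})=-\sum_{k\ne j}S^{(a)}(v_{i,j},v_{i,k})$. By Lemma \ref{lem siNg}(iii) and $v_{i,k}=-v_{k,i}$ one has $\sum_{k\in J}v_{i,k}=0$, hence $v_{i,j}=-\sum_{k\ne j}v_{i,k}$; pairing with $v_{i,j}$ yields the identity, and re-summing the already computed off-diagonal values, $-\sum_{k\ne i,j}\big(-\frac{a_ia_ja_k}{|a|}\big)=\frac{a_ia_j}{|a|}\big(|a|-a_i-a_j\big)$ (the term $k=i$ drops because $v_{i,i}=0$), recovers the diagonal value, confirming consistency. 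There is no genuine obstacle here: the argument is a bounded computation, and the only point requiring care is the sign bookkeeping forced by the skew-symmetry $F_{a,b}=-F_{b,a}$ and by the vanishing conventions $v_{a,a}=0$, $F_{a,a}=0$.
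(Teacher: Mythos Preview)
Your proof is correct and is exactly the straightforward verification the paper has in mind (the paper simply marks the lemma with \qed, and for the analogous Lemma~\ref{lem Shap} in $\C^k$ says only that it is a straightforward corollary of the definitions). Your use of $v_{i,j}=\pi(F_{i,j})$ to replace one argument by $F_{k,\ell}$ before expanding is a tidy shortcut that halves the bookkeeping but does not change the substance of the argument.
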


\subsection{Operators  $K_i(z) : V\to V$}
\label{sec ham ex}

For any  subset $\{i,j,k\}\subset J$, we
define the linear operator
$L_{i,j,k} : V\to V$ by the formula
\bean
\label{def L}
F_{i,j}
& \mapsto &
a_kF_{i,j}+a_iF_{j,k}+a_jF_{k,i},
\\
\notag
F_{j,k}
& \mapsto &
a_kF_{i,j}+a_iF_{j,k}+a_jF_{k,i},
\\
\notag
F_{k,i}
& \mapsto &
a_kF_{i,j}+a_iF_{j,k}+a_jF_{k,i},
\\
\notag
F_{\ell,m}
& \mapsto & 0,
\qquad
\text{if}\ \{\ell,m\} \ \text{is not a subset of}\ \{i,j,k\}.
\eean
see formula \Ref{L_C}. Notice that $L_{i,j,k}$ does not depend on the order of $i,j,k$.

 We define the operators $K_i(z): V\to V$, $i\in J$, by the formula
\bean
\label{K ex CC}
K_i(z) = \sum \frac{d_{j,k}}{f_{i,j,k}}\,L_{i,j,k} ,
\eean
where the sum is over all unordered subsets $\{j,k\}\subset J-\{i\}$,
see formula \Ref{K_j}.
For any $i\in J$ and $z\in \C^n-\Delta$, the operator $K_i(z)$ preserves
the subspace $\Sing V\subset V$ and is a symmetric operator,
$S^{(a)}(K_i(z)v,w)= S^{(a)}(v, K_i(z)w)$ for all $v,w\in V$, see Theorem \ref{thm K sym}.

\begin{lem}
\label{lem Kv CC}
For $i\in J$, we have
\bean
\label{KJ 2}
K_i(z) v_{j,k} &=& \frac{d_{j,k}}{f_{i,j,k}}\,(a_iv_{j,k}+ a_jv_{k,i}+a_kv_{i,j}),
\qquad \text{if}\ i\notin\{j,k\},
\\
K_i(z) v_{j,i} &=& - \sum_{k\notin\{i,j\}} K_i(z) v_{j,k}.
\eean
\end{lem}

\begin{proof}
The restriction of $K_i(z)$ to $(\sing V)^\perp$ is zero by formula \Ref{def L} and Lemma \ref{elm orth}.
We have
\bean
\label{KJ e}
K_i(z) v_{j,k} = K_i(z) F_{j,k}= \frac{d_{j,k}}{f_{i,j,k}}\,(a_iF_{j,k}+ a_jF_{k,i}+a_kF_{i,j}).
\eean
The right hand side in \Ref{KJ e} equals the right hand side in \Ref{KJ 2}
by Lemma \ref{lem siNg}.
\end{proof}

The differential equations \Ref{dif eqn} for flat sections of the Gauss-Manin connection on
\linebreak
 $(\C^n-\Delta)\times \sing V\to \C^n-\Delta$
take the form
\bean
\label{dif eqn ex}
\kappa \frac{\der I}{\der z_j}(z) = K_j(z)I(z),
\qquad
j\in J.
\eean
For generic $\kappa$ all the  flat sections are given by the formula
\bean
\label{I(z) 2}
I_\gamma(z) =
\sum_{1\leq i<j\leq n}
\Big(\int_{\gamma(z)} \prod_{m\in J}f_m^{a_m/\kappa} \frac {d_{i,j}}{f_if_j}\,dt_1\wedge dt_2\Big) F_{i,j} ,
\eean
see formula \Ref{Ig}. These generic $\kappa$ can be determined more precisely from the determinant formula in \cite{V1}.

\subsection{Conformal blocks}
Define the map $q : \C^n \to \sing V$ by the formula
\bean
\label{CB2}
q  : z \mapsto -\frac 1{|a|^2}\sum_{1\leq i<j\leq n} \frac{(z_ib^1_j-z_jb^1_i)^2}{d_{i,j}b^1_ib^1_j}\,v_{i,j} .
\eean

\begin{lem}
\label{ q inv}
For any $k\in J$, we have
\bean
\label{ij ne k}
q(z) = \frac 1{|a|^2} \sum' \frac{ f_{i,j,k}^2}{ d_{i,j}d_{j,k}d_{k,i}}\,v_{i,j},
\eean
where the sum is over all pairs $i<j$ such that $k\notin\{i,j\}$.
\end{lem}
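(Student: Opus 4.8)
The plan is to prove \Ref{ij ne k} as a purely linear–algebraic identity in $\sing V$, by re-expanding the defining formula \Ref{CB2} in the basis furnished by Lemma \ref{lem siNg}(iv). Fix $k\in J$. Both sides of \Ref{ij ne k} are $\C$-combinations of the $v_{i,j}\in\sing V$, the $v_{i,j}$ with $i<j$, $i,j\ne k$ form a basis of $\sing V$, and the right-hand side of \Ref{ij ne k} is already written in that basis, so it is enough to re-express \Ref{CB2} in it and compare coefficients. Set $C_{i,j}=(z_ib^1_j-z_jb^1_i)^2/(d_{i,j}b^1_ib^1_j)$ for $i\ne j$, so $q(z)=-|a|^{-2}\sum_{i<j}C_{i,j}v_{i,j}$. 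Since $d_{j,i}=-d_{i,j}$ and $v_{j,i}=-v_{i,j}$, the term $C_{i,j}v_{i,j}$ is unchanged under $i\leftrightarrow j$. First I would separate the pairs avoiding $k$ from the pairs $\{m,k\}$, rewrite each $C_{m,k}v_{m,k}$ as $C_{k,m}v_{k,m}$, and then use the relations $\sum_{\ell\in J}v_{\ell,m}=0$ of Lemma \ref{lem siNg}(iii) in the form $v_{k,m}=-\sum_{\ell\neq k,m}v_{\ell,m}$ to eliminate every $v_{k,m}$. Collecting the two ordered contributions $(\ell,m)=(a,b)$ and $(b,a)$ to each unordered pair $\{a,b\}$ with $a,b\ne k$ should produce
\be
q(z)\;=\;-\frac1{|a|^2}\sum_{\substack{i<j\\ i,j\ne k}}\bigl(C_{i,j}+C_{k,i}-C_{k,j}\bigr)\,v_{i,j}.
\ee
Comparing coefficients with \Ref{ij ne k} then reduces the lemma to the scalar identity
\be
\frac{f_{i,j,k}^2}{d_{i,j}d_{j,k}d_{k,i}}\;=\;-C_{i,j}-C_{k,i}+C_{k,j}\qquad(i,j,k\ \text{distinct}).
\ee

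To finish I would recognize this scalar identity as a specialization of the second Plücker identity of Lemma \ref{Pluk}. Adjoin a formal index $0$ with normal vector $b_0=(0,1)$ and parameter $z_0=0$; then $d_{m,0}=b^1_m$, and from $f_{a,b,c}=d_{b,c}z_a+d_{c,a}z_b+d_{a,b}z_c$ one gets $f_{0,i,j}=z_ib^1_j-z_jb^1_i$, whence $C_{i,j}=-f_{0,i,j}^2/(d_{0,i}d_{i,j}d_{j,0})$. Writing $G_{a,b,c}:=f_{a,b,c}^2/(d_{a,b}d_{b,c}d_{c,a})$, which is invariant under cyclic permutation of $a,b,c$ and changes sign under a transposition, the scalar identity becomes $G_{i,j,k}-G_{0,i,j}-G_{0,k,i}+G_{0,k,j}=0$; up to the cyclic/sign symmetries of $G$ this is exactly the second relation of Lemma \ref{Pluk} evaluated at the four indices $i,j,k,0$. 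As that relation is a polynomial identity in the normal vectors and the $z$'s, it remains valid for the adjoined index $0$ provided $b^1_i,b^1_j,b^1_k\ne0$, which holds on the locus where \Ref{CB2} is defined. Alternatively the scalar identity can be verified head-on by clearing denominators and using $b^1_id_{j,k}+b^1_jd_{k,i}+b^1_kd_{i,j}=0$.

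The step I expect to cause the most trouble is the change of basis: keeping track of the signs introduced by $v_{j,i}=-v_{i,j}$ and by the elimination of the $v_{k,m}$, and reorganizing the ordered sums into unordered ones so that the clean coefficient $C_{i,j}+C_{k,i}-C_{k,j}$ actually emerges. The subsequent reduction is then routine once one notices that $C_{i,j}$ has the same shape as the summands in Lemma \ref{Pluk} after a point of the dual plane is adjoined; there is no analytic ingredient anywhere in the proof.
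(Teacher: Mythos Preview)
Your proof is correct and follows essentially the same route as the paper: both eliminate the $v_{m,k}$ via the relations $\sum_{\ell}v_{\ell,m}=0$ and reduce the statement to the scalar identity $A_{i,j}+A_{j,k}+A_{k,i}=f_{i,j,k}^2/(d_{i,j}d_{j,k}d_{k,i})$, where your $C_{i,j}=-A_{i,j}$. The paper simply asserts this scalar identity as a direct computation, whereas you go a step further and recognize it as the second Pl\"ucker relation of Lemma~\ref{Pluk} specialized to the auxiliary index $0$ with $b_0=(0,1)$; this is a pleasant observation but not needed, since the identity is a one-line expansion using $b^1_id_{j,k}+b^1_jd_{k,i}+b^1_kd_{i,j}=0$.
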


\begin{proof}
Denote $A_{i,j}=-\frac{(z_ib^1_j-z_jb^1_i)^2}{d_{i,j}b^1_ib^1_j}$, then
\bean
\label{qAv}
q(z) = \frac 1{|a|^2} \sum_{1\leq i<j\leq n} A_{i,j}v_{i,j}.
\eean
We replace in \Ref{qAv} each $v_{i,k}$ with $-\sum_{j\ne k} v_{i,j}$ and
each $v_{k,j}$ with $-\sum_{i\ne k} v_{i,j}$. Then
\bean
\label{qAv 1}
q(z) = \frac 1{|a|^2} \sum' (A_{i,j}+A_{j,k}+A_{k,i})v_{i,j},
\eean
where the sum is the same as in \Ref{ij ne k}. The lemma follows from the identity
\bean
A_{i,j}+A_{j,k}+A_{k,i} =  \frac{ f_{i,j,k}^2}{ d_{i,j}d_{j,k}d_{k,i}}.
\eean
\end{proof}

By Lemma \ref{ q inv}, the map $q$ can be defined in terms of the determinants $d_{i,j}$, $ 1\leq i< j\leq n$, only
without using  the individual numbers $b^1_i$.

\begin{thm}
\label{thm c 2}
If $\kappa = |a|/2$, then the Gauss-Manin connection on $(\C^n-\Delta)\times (\sing V)\to \C^n-\Delta$
has a one-dimensional invariant subbundle,
generated by the section $q: z\mapsto q(z)$, see \Ref{ij ne k}. This section is flat.
\end{thm}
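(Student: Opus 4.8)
The plan is to check that the section $q$ of \Ref{ij ne k} satisfies the flat-section equations of Theorem \ref{thm ham normal} with $\kappa=|a|/2$, that is, $\frac{|a|}2\,\frac{\der q}{\der z_m}(z)=K_m(z)\,q(z)$ for every $m\in J$. Once this holds, the line $\C\,q(z)\subset\sing V$ is preserved by the Gauss-Manin connection, since a flat section spans a parallel subbundle; and because $q$ is a nonzero polynomial section of a flat bundle, it is nowhere zero on $\C^n-\Delta$, so this line is an honest one-dimensional invariant subbundle on which $q$ is flat.

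To check the equation, fix $m$ and use the expression for $q$ given by Lemma \ref{ q inv} with $k=m$,
\[
q(z)=\frac1{|a|^2}\sum'\frac{f_{i,j,m}^2}{d_{i,j}d_{j,m}d_{m,i}}\,v_{i,j},
\]
the sum being over pairs $i<j$ with $m\notin\{i,j\}$. The vectors $v_{i,j}$ are combinatorially flat and $\frac{\der}{\der z_m}f_{i,j,m}=d_{i,j}$, so $\frac{|a|}2\,\frac{\der q}{\der z_m}=\frac1{|a|}\sum'\frac{f_{i,j,m}}{d_{j,m}d_{m,i}}\,v_{i,j}$. Since every $v_{i,j}$ occurring here avoids the index $m$, formula \Ref{KJ 2} applies termwise (note $f_{m,i,j}=f_{i,j,m}$), giving
\[
K_m(z)q(z)=\frac1{|a|^2}\sum'\frac{f_{i,j,m}}{d_{j,m}d_{m,i}}\bigl(a_m v_{i,j}+a_i v_{j,m}+a_j v_{m,i}\bigr).
\]
Writing $|a|=a_m+\sum_{l\ne m}a_l$ and cancelling the $a_m v_{i,j}$ terms, the required identity $\frac{|a|}2\,\frac{\der q}{\der z_m}=K_m q$ reduces to
\[
\Bigl(\sum_{l\ne m}a_l\Bigr)\sum'\frac{f_{i,j,m}}{d_{j,m}d_{m,i}}\,v_{i,j}
=\sum'\frac{f_{i,j,m}}{d_{j,m}d_{m,i}}\bigl(a_i v_{j,m}+a_j v_{m,i}\bigr)
\]
in $\sing V$.

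To prove this, I would first note that $\frac{f_{i,j,m}}{d_{j,m}d_{m,i}}v_{i,j}$ is symmetric under $i\leftrightarrow j$ — using $f_{j,i,m}=-f_{i,j,m}$, $d_{a,b}=-d_{b,a}$, $v_{a,b}=-v_{b,a}$ — and so is $\frac{f_{i,j,m}}{d_{j,m}d_{m,i}}(a_i v_{j,m}+a_j v_{m,i})$; hence both sides can be rewritten as sums over ordered pairs $i\ne j$ with $i,j\ne m$. Using the relations $\sum_{l\in J}v_{l,j}=0$ of Lemma \ref{lem siNg}(iii) to expand $v_{j,m}$ and $v_{m,i}$ in the basis $\{v_{p,q}:p<q,\ p,q\ne m\}$ of Lemma \ref{lem siNg}(iv) and comparing the coefficient of a fixed basis vector $v_{p,q}$, and then grouping the resulting rational functions of $z$ by the monomials $z_p,z_q,z_m$, the $z_p$- and $z_q$-parts match at once, while the $z_m$-part reduces, after clearing denominators, to the scalar identity
\[
d_{m,p}\sum_{i\ne m,q}\frac{a_i d_{i,q}}{d_{m,i}}+d_{q,m}\sum_{i\ne m,p}\frac{a_i d_{i,p}}{d_{m,i}}=\Bigl(\sum_{i\ne m}a_i\Bigr)d_{p,q}.
\]
This in turn follows from the Plücker relation $d_{i,m}d_{p,q}-d_{i,p}d_{m,q}+d_{i,q}d_{m,p}=0$ among the $2\times2$ determinants $d_{\bullet\bullet}$ (equivalently, the first identity of Lemma \ref{Pluk}): it gives $d_{m,p}d_{i,q}+d_{q,m}d_{i,p}=d_{m,i}d_{p,q}$ for $i\notin\{m,p,q\}$, so those terms contribute $\bigl(\sum_{i\ne m,p,q}a_i\bigr)d_{p,q}$, while the terms $i=p$ and $i=q$ contribute $a_p d_{p,q}$ and $a_q d_{p,q}$.

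The main obstacle is the organization of this coefficient comparison: one must keep track of the antisymmetries $v_{a,b}=-v_{b,a}$, $d_{a,b}=-d_{b,a}$, of the vanishing of $f_{i,j,m}$ when $i$ or $j$ equals $m$ (and when $i=j$), and of the re-expansion of the boundary vectors $v_{j,m}$, $v_{m,i}$ in the $m$-free basis. Once the three monomials $z_p,z_q,z_m$ have been isolated, the only substantive input is the Plücker identity; the rest is bookkeeping.
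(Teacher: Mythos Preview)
Your proposal is correct and follows essentially the same route as the paper: fix an index $m$, use Lemma \ref{ q inv} with $k=m$ to write $q$ in the $m$-free basis $\{v_{i,j}:i<j,\ m\notin\{i,j\}\}$, compute $\frac{|a|}2\,\partial_{z_m}q$ and $K_m q$ termwise via \Ref{KJ 2}, eliminate the vectors $v_{j,m},v_{m,i}$ using Lemma \ref{lem siNg}(iii), and close with the Pl\"ucker relation (equivalently the first identity of Lemma \ref{Pluk}); your coefficient-by-coefficient verification just makes the paper's final ``using Lemma \ref{Pluk}'' explicit. One small remark: the sentence ``because $q$ is a nonzero polynomial section of a flat bundle, it is nowhere zero'' is not a valid inference as stated---the correct reason is either that a flat section of a flat connection on a connected base is identically zero or nowhere zero, or, directly from \Ref{ij ne k}, that the coefficients $f_{i,j,m}^2$ in the basis expansion vanish only on $\Delta$.
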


This one-dimensional subbundle will be called the bundle of {\it conformal blocks} at level $|a|/2$.

\begin{proof}
We check that
\bean
\frac {|a|} 2\frac{\der q}{\der z_1}(z) = K_1(z) q(z).
\eean
The other  differential equations are proved similarly.
By Lemma \ref{ q inv}   we have
\bean
q(z) =  \frac 1{|a|^2} \sum_{1< i<j\leq n} \frac{ f_{1,i,j}^2}{ d_{i,j}d_{j,1}d_{1,i}}\,v_{i,j}.
\eean
Then
\bean
\frac {|a|} 2\frac{\der q}{\der z_1}(z) = \frac 1{|a|} \sum_{1< i<j\leq n} \frac{ f_{1,i,j}}{ d_{j,1}d_{1,i}}\,v_{i,j}
\eean
and
\bean
K_1(z) q(z) = \frac 1{|a|^2} \sum_{1< i<j\leq n} \frac{ f_{1,i,j}}{ d_{j,1}d_{1,i}}\,(a_1v_{i,j}+ a_iv_{j,1}+ a_jv_{1,i})
\eean
by formula \Ref{KJ 2}. By replacing $v_{j,1}$ with $-\sum_{i\ne 1}v_{j,i}$ and $v_{1,i}$ with $-\sum_{j\ne 1} v_{i,j}$
and using Lemma \ref{Pluk}
we obtain
\bean
K_1(z) q(z) =    \frac 1{|a|} \sum_{1< i<j\leq n} \frac{ f_{1,i,j}}{ d_{j,1}d_{1,i}}\,v_{i,j}.
\eean

\end{proof}

\subsection{Algebra $A_\Phi(z)$}
\label{sec master 2}

The master function of the arrangement $\A(z)$  is
\bean
\label{def mast 2}
\Phi(z,t) = \sum_{j\in J}\,a_j \log f_j = \sum_{j\in J}a_j\log (z_j+ b^1_jt_1+b^2_jt_2).
\eean
The critical point equations are
\bean
\frac{\der\Phi}{\der t_1} = \sum_{j\in J} a_j\frac{b^1_j}{f_j} = 0,
\qquad
\frac{\der\Phi}{\der t_2} = \sum_{j\in J} a_j\frac{b^2_j}{f_j} = 0.
\eean
Introduce $H_i$, $i=1,2,$ by the formula
\bean
\frac{\der\Phi}{\der t_i} = \frac{H_i}{\prod_{j\in J} f_j}.
\eean
We have
\bean
\label{HH}
t_1\frac{\der\Phi}{\der t_1}+ t_2\frac{\der\Phi}{\der t_2}
=
 |a| - \sum_{i\in J} z_i \frac{a_i}{f_i}.
\eean
In other words, we have
\bean
t_1H_1+t_2H_2
=
 |a|\prod_{j\in J} f_j - \sum_{i\in J} z_i a_i \prod_{j\ne i} f_j.
\eean
The critical set is
\bean
\label{crit z CC}
\phantom{aaa}
C_\Phi(z) = \Big\{ t\in U(\A(z))
 \big|   \frac{\der\Phi}{\der t_1}= 0,  \frac{\der\Phi}{\der t_2}=0 \Big\} =
\big\{ t\in U(\A(z))
 \big|   H_1= 0, H_2=0 \big\} .
\eean
The algebra of functions on the critical set is
\bean
A_\Phi(z) = \C(U(\A(z)))
/\Big\langle \frac{\der\Phi}{\der t_1}, \frac{\der\Phi}{\der t_2} \Big\rangle =
\C(U(\A(z)))
/\big\langle H_1, H_2 \big\rangle.
\eean

\begin{lem}
\label{lem basis crit CC}
We have $\dim A_\Phi(z) = {n-1\choose 2}$.
\qed
\end{lem}

Introduce elements $w_{i,j} \in A_\Phi(z)$ by the formula
\bean
w_{i,j}= a_ia_j\,\Big[\frac{d_{i,j}}{f_if_j}\Big].
\eean

\begin{lem}
\label{w rel}

We have $w_{i,j}= - w_{j,i}$ and
$
\label{w relation}
\sum_{j\in J}w_{i,j}=0.
$

\end{lem}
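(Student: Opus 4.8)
The plan is to treat the two assertions separately, both being essentially termwise consequences of the definition of $A_\Phi(z)$. The antisymmetry $w_{i,j}=-w_{j,i}$ is immediate: $d_{i,j}=-d_{j,i}$, while the prefactor $a_ia_j$ and the class $[1/(f_if_j)]$ are symmetric in $i$ and $j$. So the only substantive claim is $\sum_{j\in J}w_{i,j}=0$ in $A_\Phi(z)$, and I would prove it by expanding the $2\times2$ determinant $d_{i,j}=b^1_ib^2_j-b^2_ib^1_j$ and recognizing the outcome as a multiple of the defining relations of $A_\Phi(z)$.

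Concretely, I would compute in the ring $\C(U(\A(z)))$ of rational functions regular on $U(\A(z))$:
\bea
\sum_{j\in J}w_{i,j}
&=& a_i\sum_{j\in J}a_j\Big[\frac{d_{i,j}}{f_if_j}\Big]
\ =\ a_i\Big[\frac{1}{f_i}\Big(b^1_i\sum_{j\in J}\frac{a_jb^2_j}{f_j}-b^2_i\sum_{j\in J}\frac{a_jb^1_j}{f_j}\Big)\Big]
\\
&=& a_i\Big[\frac{1}{f_i}\Big(b^1_i\,\frac{\der\Phi}{\der t_2}-b^2_i\,\frac{\der\Phi}{\der t_1}\Big)\Big],
\eea
where in the last step I use the formulas $\der\Phi/\der t_1=\sum_{j\in J}a_jb^1_j/f_j$ and $\der\Phi/\der t_2=\sum_{j\in J}a_jb^2_j/f_j$ recorded in Subsection \ref{sec master 2}. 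Since $f_i$ is nonvanishing on $U(\A(z))$, the function $1/f_i$ lies in $\C(U(\A(z)))$, so the right-hand side is a $\C(U(\A(z)))$-linear combination of $\der\Phi/\der t_1$ and $\der\Phi/\der t_2$, hence belongs to the ideal $\langle\der\Phi/\der t_1,\der\Phi/\der t_2\rangle$ that defines $A_\Phi(z)$. Therefore its image $\sum_{j\in J}w_{i,j}$ in $A_\Phi(z)$ is zero.

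I do not expect any genuine obstacle here: this is just the planar, termwise analogue of Lemma \ref{lem on ONE 1} and formula \Ref{one1}, the mechanism being that $d_{i,j}/(f_if_j)$, after factoring out the unit $1/f_i$, is a difference of the two partial-derivative generators of the ideal. The only step deserving a word of justification is that multiplying an ideal generator by the regular function $1/f_i$ keeps one inside the ideal; this is clear because $f_i$ is invertible in the localized ring $\C(U(\A(z)))$.
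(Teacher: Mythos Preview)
Your proof is correct and is essentially the same as the paper's: the paper packages the computation as the identity $a_i\,\dfrac{df_i}{f_i}\wedge d\Phi = \sum_{j\in J} a_ia_j\,\dfrac{d_{i,j}}{f_if_j}\,dt_1\wedge dt_2$ modulo terms involving $dz_m$, which when you read off the $dt_1\wedge dt_2$-coefficient on the left is exactly your expression $\dfrac{a_i}{f_i}\big(b^1_i\,\partial\Phi/\partial t_2 - b^2_i\,\partial\Phi/\partial t_1\big)$. The only difference is cosmetic---differential-form language versus direct expansion of the $2\times2$ determinant.
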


\begin{proof}
The lemma follows from the identity
\bean
a_i\frac{df_i}{f_i} \wedge \frac{d\Phi}{\Phi} = \sum_{j\in J} a_ia_j\,\frac{d_{i,j}}{f_if_j} dt_1\wedge dt_2 +
\sum_{m\in J} dz_j\wedge \mu_j,
\eean
where $\mu_j$ are suitable one-forms.
\end{proof}

The elements $\big[ \frac{a_i}{f_i}\big]$, $i\in J$, generate $A_\Phi(z)$ by Lemma \ref{lem f_j generate}.

\begin{lem}
For $j\in J$, we have the following identity in $A_\Phi(z)$,
\bean
\label{a/f k=2}
\sum_{i\in J} d_{i,j} \Big[\frac{a_i}{f_i}\Big] =0.
\eean
\qed
\end{lem}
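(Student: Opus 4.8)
The plan is to recognize the left-hand side as a linear combination of the two critical point equations, with coefficients that do not depend on the summation index $i$. Recall $d_{i,j}=b^1_ib^2_j-b^2_ib^1_j$, so that, for fixed $j\in J$,
\bean
\sum_{i\in J} d_{i,j}\,\frac{a_i}{f_i}
= b^2_j\sum_{i\in J} b^1_i\,\frac{a_i}{f_i}
- b^1_j\sum_{i\in J} b^2_i\,\frac{a_i}{f_i}
= b^2_j\,\frac{\der\Phi}{\der t_1} - b^1_j\,\frac{\der\Phi}{\der t_2}
\eean
as rational functions on $U(\A(z))$, where I used the formulas for $\frac{\der\Phi}{\der t_1}$ and $\frac{\der\Phi}{\der t_2}$ from Subsection \ref{sec master 2}. (The term with $i=j$ contributes $d_{j,j}\,a_j/f_j=0$, so it is harmless to let the sum range over all of $J$.)

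Since $A_\Phi(z)=\C(U(\A(z)))/\big\langle \frac{\der\Phi}{\der t_1},\frac{\der\Phi}{\der t_2}\big\rangle$, the right-hand side of the displayed equation lies in the defining ideal, hence its image $\big[\,b^2_j\frac{\der\Phi}{\der t_1}-b^1_j\frac{\der\Phi}{\der t_2}\,\big]$ in $A_\Phi(z)$ is zero. Therefore $\sum_{i\in J} d_{i,j}\big[\frac{a_i}{f_i}\big]=0$, which is the claimed identity. The constants $b^1_j,b^2_j$ are scalars, so they pass through the canonical homomorphism $[\,\cdot\,]$ without difficulty.

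There is essentially no obstacle here: the only thing to be careful about is keeping track of which of the two $b$-components pairs with which partial derivative and with which sign, i.e.\ that $d_{i,j}=b^1_ib^2_j-b^2_ib^1_j$ produces exactly the combination $b^2_j\der_{t_1}\Phi-b^1_j\der_{t_2}\Phi$ and not its negative or a transposed version. Once the bookkeeping of indices in $d_{i,j}$ is matched to the formulas defining $\frac{\der\Phi}{\der t_1},\frac{\der\Phi}{\der t_2}$, the statement is immediate.
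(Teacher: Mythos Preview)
Your proof is correct and is exactly the argument the paper has in mind: the lemma is stated with a \qed\ and no proof because the identity is immediate from $d_{i,j}=b^1_ib^2_j-b^2_ib^1_j$ and the expressions for $\frac{\der\Phi}{\der t_1},\frac{\der\Phi}{\der t_2}$ given just above. Nothing further is needed.
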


\begin{lem}
We have
\bean
\label{umn}
\Big[\frac{a_i}{f_i}\Big]*_z\Big[\frac{a_j}{f_j}\Big] &=& \frac 1{d_{i,j}} \,w_{i,j}, \qquad i\ne j,
\\
\notag
\Big[\frac{a_j}{f_j}\Big]*_z\Big[\frac{a_j}{f_j}\Big]
&=&
\sum_{i\notin\{j,k\}}
\frac {d_{k,i}}{d_{j,k}d_{i,j}} w_{i,j}, \qquad k\ne j.
\eean
\bean
\label{MuL}
\Big[ \frac{a_i}{f_i}\Big]*_z w_{j,k}
 &=& \frac{d_{j,k}}{f_{i,j,k}}\,(a_iw_{j,k}+ a_jw_{k,i}+a_kw_{i,j}),
\qquad \text{if}\ i\notin\{j,k\},
\\
\notag
\Big[ \frac{a_i}{f_i}\Big]*_z w_{j,i} &=& - \sum_{k\notin\{i,j\}} \Big[ \frac{a_i}{f_i}\Big]*_z w_{j,k}.
\eean
\qed
\end{lem}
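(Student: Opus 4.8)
The plan is to prove the four displayed identities inside the quotient algebra $A_\Phi(z)=\C(U(\A(z)))/\langle H_1,H_2\rangle$, in which $*_z$ is simply the induced ring multiplication, so $[f]*_z[g]=[fg]$, while $f_{i,j,k}$, $d_{i,j}$ and $a_i$ are constants. The first identity is then immediate: since $d_{i,j}\in\C^\times$, the definition $w_{i,j}=a_ia_j[d_{i,j}/(f_if_j)]$ gives $\frac1{d_{i,j}}w_{i,j}=a_ia_j[1/(f_if_j)]=[a_i/f_i]*_z[a_j/f_j]$; I also record $w_{i,i}=0$ because $d_{i,i}=0$. For the second identity I would fix $k\ne j$ and use \Ref{a/f k=2} with index $k$, i.e.\ $\sum_{i\in J}d_{i,k}[a_i/f_i]=0$, which after dropping the vanishing term $i=k$ reads $d_{j,k}[a_j/f_j]=-\sum_{i\notin\{j,k\}}d_{i,k}[a_i/f_i]$; multiplying by $[a_j/f_j]$, applying the first identity to each product, dividing by $d_{j,k}\ne0$ and using $-d_{i,k}=d_{k,i}$ gives exactly the stated sum.

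The core of the lemma is the third identity, and the key ingredient is the elementary identity
\[
d_{j,k}f_i+d_{k,i}f_j+d_{i,j}f_k=f_{i,j,k},
\]
valid identically on $\C^n\times\C^2$: its constant part is $f_{i,j,k}$ by the definition \Ref{ fijk}, and the coefficients of $t_1$ and of $t_2$ each vanish, being a $3\times3$ determinant in the entries $b^1_m,b^2_m$ with a repeated column (this is the three-index companion of the relations in Lemma \ref{Pluk}). Dividing by $f_if_jf_k$ turns it into $\tfrac{d_{j,k}}{f_jf_k}+\tfrac{d_{k,i}}{f_kf_i}+\tfrac{d_{i,j}}{f_if_j}=\tfrac{f_{i,j,k}}{f_if_jf_k}$ as rational functions on $U(\A(z))$. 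Then for $i\notin\{j,k\}$, substituting $w_{j,k}=a_ja_k[d_{j,k}/(f_jf_k)]$ and its cyclic analogues into $\frac{d_{j,k}}{f_{i,j,k}}(a_iw_{j,k}+a_jw_{k,i}+a_kw_{i,j})$, pulling out the common factor $a_ia_ja_k$ and inserting the previous display collapses the expression to $a_ia_ja_k[d_{j,k}/(f_if_jf_k)]=[a_i/f_i]*_z w_{j,k}$, the constant $f_{i,j,k}$ cancelling. The fourth identity is then obtained by multiplying $w_{j,i}=-\sum_{k\notin\{i,j\}}w_{j,k}$ — which is Lemma \ref{w rel} together with $w_{j,j}=0$ — by $[a_i/f_i]$.

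I do not anticipate a real obstacle: the whole argument rests on the single linear-algebra identity above plus careful bookkeeping of the antisymmetries $d_{k,i}=-d_{i,k}$ and $w_{i,j}=-w_{j,i}$ and of the vanishing terms ($d_{k,k}=0$, $w_{j,j}=0$), so that the index ranges in the sums come out precisely as stated. As an alternative one could transport Lemma \ref{lem Kv} through the algebra isomorphism $\al(z)$ using Theorem \ref{K/f}, which would yield the third and fourth identities, but the direct computation above is shorter and never leaves $A_\Phi(z)$.
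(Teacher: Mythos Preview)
Your proof is correct. The paper states this lemma with a bare \qed\ and no argument, so there is nothing to compare against; your direct verification---reading identity~1 off the definition of $w_{i,j}$, deriving identity~2 from the relation \Ref{a/f k=2}, establishing identity~3 via the Cramer-type relation $d_{j,k}f_i+d_{k,i}f_j+d_{i,j}f_k=f_{i,j,k}$, and obtaining identity~4 from Lemma~\ref{w rel}---is exactly the kind of routine check the \qed\ presupposes.
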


\begin{cor}
The elements $w_{i,j}$, $1\leq i<j\leq n$, span $A_\Phi(z)$.
\end{cor}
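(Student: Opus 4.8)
The plan is to let $W\subset A_\Phi(z)$ be the linear span of all $w_{i,j}$, $i,j\in J$ (so $W=\mathrm{span}\{w_{i,j}\mid 1\le i<j\le n\}$ since $w_{i,j}=-w_{j,i}$ and $w_{i,i}=0$), and to prove $W=A_\Phi(z)$ by showing that $W$ contains a set of algebra generators and is closed under the product. Recall from Lemma \ref{lem f_j generate} that the elements $[a_i/f_i]$, $i\in J$, generate $A_\Phi(z)$ (since $a_i\ne 0$); hence $A_\Phi(z)$ is spanned as a vector space by $[1](z)$ together with the monomials $[a_{i_1}/f_{i_1}]*_z\dots*_z[a_{i_m}/f_{i_m}]$, $m\ge 1$.

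First I would observe that $W$ is stable under multiplication by each generator $[a_i/f_i]$: this is read off directly from \Ref{MuL}, the case $i\notin\{j,k\}$ giving $[a_i/f_i]*_z w_{j,k}$ as a scalar combination of $w_{j,k}$, $w_{k,i}$, $w_{i,j}$ (the coefficients $d_{j,k}/f_{i,j,k}(z)$ being numbers for fixed $z$), and the case $i\in\{j,k\}$ reducing to the previous one via the second identity in \Ref{MuL}. Next I would check that each generator itself lies in $W$: by Lemma \ref{lem on ONE}, $[1](z)=\tfrac1{|a|}\sum_{j\in J}z_j[a_j/f_j]$, so $[a_i/f_i]=[a_i/f_i]*_z[1](z)=\tfrac1{|a|}\sum_{j\in J}z_j\,[a_i/f_i]*_z[a_j/f_j]$, and every product on the right is in $W$ by \Ref{umn} (the first formula when $j\ne i$, the second when $j=i$). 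In particular $[1](z)\in W$ as well.

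Putting these together proves the claim: a monomial of length $m\ge 2$ is grouped as $\bigl([a_{i_1}/f_{i_1}]*_z[a_{i_2}/f_{i_2}]\bigr)*_z[a_{i_3}/f_{i_3}]*_z\dots*_z[a_{i_m}/f_{i_m}]$; its first factor lies in $W$ by \Ref{umn}, and multiplying successively by the remaining generators keeps it in $W$ by the stability established above; monomials of length $1$ and the unit $[1](z)$ are already in $W$. Since these elements span $A_\Phi(z)$, we get $A_\Phi(z)\subseteq W$, and the reverse inclusion is trivial. There is no real obstacle here — it is a short bookkeeping argument built on \Ref{umn} and \Ref{MuL} — but it is worth noting, as a consistency check, that by Lemma \ref{w rel} the $w_{i,j}$ satisfy the relations $\sum_{j\in J}w_{i,j}=0$, which cut the $\binom n2$ elements down to exactly $\binom{n-1}2=\dim A_\Phi(z)$ independent ones, so the spanning set is in fact a basis once these relations are used to remove redundancies.
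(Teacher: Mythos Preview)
Your argument is correct and complete. The paper states this corollary without proof, treating it as an immediate consequence of the preceding lemma (formulas \Ref{umn} and \Ref{MuL}) together with the fact that the $[a_i/f_i]$ generate $A_\Phi(z)$; your write-up is precisely the natural argument that makes this explicit, including the small but necessary point that $[1](z)$ and the degree-one monomials $[a_i/f_i]$ themselves lie in $W$ via the identity $[1](z)=\tfrac1{|a|}\sum_j z_j[a_j/f_j]$.
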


\begin{lem}
\label{lem 1 in A}
The identity element $[1](z) \in A_\Phi(z)$ satisfies the equations
\bean
\label{1^2}
&&
\phantom{aaaaaqaaaa}
\\
\notag
&&
[1](z) = \frac 1{|a|} \sum_{i\in J} z_i\Big[ \frac{a_i}{f_i}\Big] =
 \frac 1{|a|^2} \Big(\sum_{i\in J} z_i\Big[\frac{a_i}{f_i}\Big]\Big)^2
 = -\frac 1{|a|^2} \sum_{1\leq i<j\leq n} \frac{(z_ib^1_j-z_jb^1_i)^2}{d_{i,j}b^1_ib^1_j}\,w_{i,j} .
 \eean
\end{lem}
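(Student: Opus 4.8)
The plan is to prove the three displayed equalities in order, writing $X=\sum_{i\in J}z_i\big[\tfrac{a_i}{f_i}\big]\in A_\Phi(z)$ for brevity.

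The first equality, $[1](z)=\tfrac1{|a|}X$, is Lemma~\ref{lem on ONE} applied to the present family (equivalently Lemma~\ref{lem on ONE 1} with $b^0_j=z_j$); alternatively it drops out of the already-established identity \Ref{HH} by reducing $t_1\tfrac{\der\Phi}{\der t_1}+t_2\tfrac{\der\Phi}{\der t_2}=|a|-\sum_{i\in J}z_i\tfrac{a_i}{f_i}$ modulo the defining ideal $\langle H_1,H_2\rangle$, which kills the left-hand side. The second equality is then purely formal: since $[1](z)$ is the unit of $A_\Phi(z)$, multiplying the first equality by itself gives $\tfrac1{|a|^2}X*_zX=[1](z)*_z[1](z)=[1](z)=\tfrac1{|a|}X$, which is exactly what is claimed.

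The real work is the third equality, $X*_zX=-\sum_{1\le i<j\le n}\tfrac{(z_ib^1_j-z_jb^1_i)^2}{d_{i,j}b^1_ib^1_j}\,w_{i,j}$. First I would record that, because the product in $A_\Phi(z)$ is induced from the product of rational functions, $w_{i,j}=a_ia_j\big[\tfrac{d_{i,j}}{f_if_j}\big]=d_{i,j}\,\big[\tfrac{a_i}{f_i}\big]*_z\big[\tfrac{a_j}{f_j}\big]$, so $\big[\tfrac{a_i}{f_i}\big]*_z\big[\tfrac{a_j}{f_j}\big]=w_{i,j}/d_{i,j}$ for $i\ne j$ (and since $d_{j,i}=-d_{i,j}$, $w_{j,i}=-w_{i,j}$, the quantity $w_{i,j}/d_{i,j}$ is symmetric in $i,j$). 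Next, the first critical point equation says $\sum_{j\in J}b^1_j\big[\tfrac{a_j}{f_j}\big]=\big[\tfrac{\der\Phi}{\der t_1}\big]=0$ in $A_\Phi(z)$; multiplying this relation by $\big[\tfrac{a_i}{f_i}\big]$ and isolating the diagonal term yields $\big(\big[\tfrac{a_i}{f_i}\big]\big)^2=-\tfrac1{b^1_i}\sum_{j\ne i}\tfrac{b^1_j}{d_{i,j}}\,w_{i,j}$ (using $b^1_i\ne0$, which is tacit in the statement). Then I would expand $X*_zX=\sum_{i}z_i^2\big(\big[\tfrac{a_i}{f_i}\big]\big)^2+\sum_{i\ne j}z_iz_j\,\big[\tfrac{a_i}{f_i}\big]*_z\big[\tfrac{a_j}{f_j}\big]$, substitute the two formulas just obtained, and fold the resulting sums over ordered pairs into sums over unordered pairs: the off-diagonal part becomes $2\sum_{i<j}z_iz_j\,w_{i,j}/d_{i,j}$ and the diagonal part becomes $-\sum_{i<j}\tfrac{z_i^2(b^1_j)^2+z_j^2(b^1_i)^2}{b^1_ib^1_j}\cdot\tfrac{w_{i,j}}{d_{i,j}}$; adding them and recognizing $z_i^2(b^1_j)^2-2z_iz_jb^1_ib^1_j+z_j^2(b^1_i)^2=(z_ib^1_j-z_jb^1_i)^2$ produces the claimed expression.

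This last step is routine bookkeeping; the one place demanding care will be the symmetrization, where the signs coming from $d_{j,i}=-d_{i,j}$ and $w_{j,i}=-w_{i,j}$ must be tracked consistently so that ordered-pair sums collapse correctly onto the sum over $\{i,j\}$ with $i<j$. If one prefers a version not requiring $b^1_i\ne0$, one can instead multiply the relation $\sum_{i\in J}d_{i,k}\big[\tfrac{a_i}{f_i}\big]=0$ by $\big[\tfrac{a_j}{f_j}\big]$ and invoke the Pl\"ucker-type identities of Lemma~\ref{Pluk}, which reproduces the same element written in the $v$-basis as in Lemma~\ref{ q inv}.
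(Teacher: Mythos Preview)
Your proposal is correct and takes essentially the same approach as the paper: the paper's one-line proof is precisely your replacement of the diagonal term $\big[\frac{a_i}{f_i}\big]^2$ by $-\frac{1}{b^1_i}\sum_{j\ne i}b^1_j\big[\frac{a_i}{f_i}\big]\big[\frac{a_j}{f_j}\big]$ via the first critical point equation, followed by the symmetrization you carry out in detail. Your explicit treatment of the first two equalities and of the sign bookkeeping in the symmetrization just spells out what the paper leaves implicit.
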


\begin{proof}
To obtain the last expression in \Ref{1^2} we replace $\Big[\frac{a_i}{f_i}\Big]\Big[\frac{a_i}{f_i} \Big]$ with
$\Big[-\frac{a_i}{f_i}\sum_{j\ne i}\frac{b^1_j}{b^1_i}\frac{a_j}{f_j} \Big]$.
\end{proof}

\begin{thm}
\label{lem 1 1 in A}
For any $k\in J$, the identity element $[1](z) \in A_\Phi(z)$ satisfies the equation
\bean
\label{1^2 1}
[1](z) = \frac 1{|a|^2}\sum' \frac{ f_{i,j,k}^2}{ d_{i,j}d_{j,k}d_{k,i}}\,w_{i,j},
\eean
where the sum is over all pairs $i<j$ such that $k\notin\{i,j\}$.
\end{thm}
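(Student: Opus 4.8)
The plan is to obtain \Ref{1^2 1} from Lemma \ref{lem 1 in A} by repeating, on the algebra side, the manipulation that proves Lemma \ref{ q inv}. The crucial point is that the elements $w_{i,j}\in A_\Phi(z)$ satisfy exactly the same linear relations as the vectors $v_{i,j}\in\sing V$: by Lemma \ref{w rel} one has $w_{i,j}=-w_{j,i}$ and $\sum_{m\in J}w_{i,m}=0$ (hence also $\sum_{m\in J}w_{m,j}=0$ by antisymmetry), which are the precise counterparts of $v_{i,j}=-v_{j,i}$ and Lemma \ref{lem siNg}(iii). Consequently every identity among the $v_{i,j}$ that is a formal consequence of these relations transports verbatim to the $w_{i,j}$.

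First I would set, exactly as in the proof of Lemma \ref{ q inv},
\[
A_{i,j}=-\frac{(z_ib^1_j-z_jb^1_i)^2}{d_{i,j}b^1_ib^1_j},
\]
so that the last expression in Lemma \ref{lem 1 in A} reads $[1](z)=\frac1{|a|^2}\sum_{1\le i<j\le n}A_{i,j}\,w_{i,j}$. Then I would fix $k\in J$ and eliminate every term containing the index $k$, using $w_{i,k}=-\sum_{m\ne k}w_{i,m}$ and $w_{k,j}=-\sum_{m\ne k}w_{m,j}$, and collect the coefficient of each remaining $w_{i,j}$ with $k\notin\{i,j\}$. This yields, just as in the passage from \Ref{qAv} to \Ref{qAv 1},
\[
[1](z)=\frac1{|a|^2}\sum{}'\,(A_{i,j}+A_{j,k}+A_{k,i})\,w_{i,j},
\]
the primed sum being over $i<j$ with $k\notin\{i,j\}$. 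Finally I would invoke the three-term identity
\[
A_{i,j}+A_{j,k}+A_{k,i}=\frac{f_{i,j,k}^2}{d_{i,j}d_{j,k}d_{k,i}}
\]
recorded in the proof of Lemma \ref{ q inv}, which turns the displayed formula into \Ref{1^2 1}.

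An equivalent, even shorter route becomes available once the canonical isomorphism $\al(z):A_\Phi(z)\to\sing V$ for this family is in place and is seen to carry $w_{i,j}$ to $v_{i,j}$ and $[1](z)$ to $q(z)$: then \Ref{1^2 1} is literally the image of \Ref{ij ne k} (or its preimage). There is no real obstacle here; the only thing to watch is the sign bookkeeping in the substitutions for $w_{i,k}$ and $w_{k,j}$, which is routine given the antisymmetry. All the substantive input — that $[1](z)$ is the square of $\tfrac1{|a|}\sum_{i\in J}z_i\bigl[\tfrac{a_i}{f_i}\bigr]$, and the closed form of $A_{i,j}+A_{j,k}+A_{k,i}$ — is already contained in Lemmas \ref{lem 1 in A} and \ref{ q inv}.
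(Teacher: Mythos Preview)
Your proposal is correct and follows exactly the approach the paper has in mind: the paper's proof consists of the single sentence ``The proof is the same as the proof of Lemma \ref{ q inv}'', and you have spelled out precisely what that means, correctly identifying that the relations $w_{i,j}=-w_{j,i}$ and $\sum_{m\in J}w_{i,m}=0$ from Lemma \ref{w rel} allow the manipulation of Lemma \ref{ q inv} to be repeated verbatim with $w_{i,j}$ in place of $v_{i,j}$.
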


\begin{proof}
The proof is the same as the proof of Lemma \ref{ q inv}.
\end{proof}

The canonical element is
\bean
\label{can elt 2}
[E] = \sum_{1\leq i<j\leq n} \Big[\frac {d_{i,j}}{f_if_j}\Big]\otimes F_{i,j} \quad \in\quad A_\Phi(z)\otimes \sing V.
\eean

\begin{thm}
\label{thm can 2}
The canonical isomorphism
\bean
\al(z) : A_\Phi(z) \to \sing V
\eean
is given by the formula
\bean
w_{i,j} \mapsto v_{i,j} .
\eean
\end{thm}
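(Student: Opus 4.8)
The plan is to evaluate $\al(z)(w_{i,j})$ straight from the definition $\al(z)\colon[g]\mapsto([g],[E])$ of the canonical map \Ref{map alpha}, using the explicit canonical element \Ref{can elt 2} together with Theorem \ref{thm alpha}. Here $k=2$, so the sign $(-1)^k$ in Theorem \ref{thm alpha} is $+1$ --- this is exactly what makes the answer come out to $v_{i,j}$ rather than $-v_{i,j}$.

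The first observation is that $w_{i,j}$ is precisely the image of the standard basis vector $F_{i,j}$ under the contravariant map $\mc S^{(a)}\colon V\to V^*$, followed by the identification of $\OS^2(\A(z))$ with differential two-forms and projection of coefficients to $A_\Phi(z)$. Indeed $\mc S^{(a)}(F_{i,j})=a_ia_j(H_i,H_j)$ in the normal-crossings case, and $(H_i,H_j)$ is represented by the form $\omega_i\wedge\omega_j=\dfrac{d_{i,j}}{f_if_j}\,dt_1\wedge dt_2$, whose coefficient projects to $a_ia_j\big[\tfrac{d_{i,j}}{f_if_j}\big]=w_{i,j}$ in $A_\Phi(z)$. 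Write $[\mc S^{(a)}]\colon V\to A_\Phi(z)$ for this composition. Since $v_{i,j}=\pi(F_{i,j})$ is the orthogonal projection of $F_{i,j}$ onto $\sing V$ by Lemma \ref{lem siNg}(ii), the theorem is equivalent to the statement that $\al(z)\circ[\mc S^{(a)}]=\pi$.

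The key step is the adjointness identity: for all $v\in V$ and $[g]\in A_\Phi(z)$,
\be
\big([\mc S^{(a)}](v),[g]\big)_z\;=\;S^{(a)}\big(v,\al(z)([g])\big),
\ee
where $(\,,\,)_z$ is the residue form on $A_\Phi(z)$. It is enough to check this for $v=F_{i,j}$. Writing $[E]=\sum_{k<\ell}\big[\tfrac{d_{k,\ell}}{f_kf_\ell}\big]\otimes F_{k,\ell}$, the right-hand side equals $\sum_{k<\ell}\big([g],\big[\tfrac{d_{k,\ell}}{f_kf_\ell}\big]\big)_z\,S^{(a)}(F_{i,j},F_{k,\ell})$, which by \Ref{sing S 2} collapses to $a_ia_j\big([g],\big[\tfrac{d_{i,j}}{f_if_j}\big]\big)_z$; and by symmetry of the residue form this equals $\big(a_ia_j\big[\tfrac{d_{i,j}}{f_if_j}\big],[g]\big)_z=\big([\mc S^{(a)}](F_{i,j}),[g]\big)_z$.

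Finally I would combine this with Theorem \ref{thm alpha}, which for $k=2$ reads $S^{(a)}(\al(z)(f),\al(z)(g))=(f,g)_z$: for every $[g]\in A_\Phi(z)$,
\bea
S^{(a)}\big(\al(z)(w_{i,j}),\,\al(z)([g])\big)
&=&(w_{i,j},[g])_z\\
&=&S^{(a)}\big(F_{i,j},\al(z)([g])\big)\\
&=&S^{(a)}\big(v_{i,j},\al(z)([g])\big),
\eea
where the last equality uses $v_{i,j}=\pi(F_{i,j})$, $\al(z)([g])\in\sing V$, and $F_{i,j}-v_{i,j}\perp\sing V$. Since $\al(z)$ is an isomorphism onto $\sing V$ (Theorem \ref{thm alpha}) and the restriction of $S^{(a)}$ to $\sing V$ is nondegenerate (Corollary \ref{cor nondeg}), while $\al(z)(w_{i,j})$ and $v_{i,j}$ both lie in $\sing V$, this forces $\al(z)(w_{i,j})=v_{i,j}$; and since the $w_{i,j}$ span $A_\Phi(z)$ this determines $\al(z)$ completely. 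The only point requiring real care is the first observation --- the identification $(H_i,H_j)\leftrightarrow\frac{d_{i,j}}{f_if_j}\,dt_1\wedge dt_2$ and the evaluation $\mc S^{(a)}(F_{i,j})=a_ia_j(H_i,H_j)$ in the normal-crossings case; once those are settled, the rest is the soft argument above, which runs verbatim (with $(-1)^k$ reinstated) for the companion families treated in the other sections.
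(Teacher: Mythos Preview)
Your proof is correct and takes a genuinely different route from the paper's. The paper computes the coefficients $B_{i,j}$ in $\al(z)(w_{k,\ell})=\sum B_{i,j}F_{i,j}$ by explicit residue calculations: it pushes the Grothendieck residue out to the line at infinity, evaluates the resulting one-variable form case by case (Lemmas \ref{lem1} and \ref{two}), and then fixes the last coefficient $B_{k,\ell}$ from the singular-vector condition. Your argument bypasses all of this by observing the adjointness $([\mc S^{(a)}]v,[g])_z=S^{(a)}(v,\al(z)[g])$ and then feeding in Theorem \ref{thm alpha}; once $v_{i,j}=\pi(F_{i,j})$ and the nondegeneracy of $S^{(a)}|_{\sing V}$ are in hand, no residues need to be computed at all.

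The payoff of your approach is real: it is shorter, it is conceptual, and --- as you note --- it transports verbatim to the general-$k$ family of Section \ref{sec arrs k}. There, since Lemma \ref{lem siNg k}(ii) gives $v_{i_1,\dots,i_k}=\pi(F_{i_1,\dots,i_k})$, your argument yields $\al(z)(w_{i_1,\dots,i_k})=(-1)^k v_{i_1,\dots,i_k}$, which actually determines the constant $c$ of Corollary \ref{cor ac} to be $(-1)^k$, confirming the expectation stated there. The paper's residue computation, by contrast, is more self-contained (it does not lean on Theorem \ref{thm alpha} as a black box) but does not scale: in Section \ref{sec arrs k} the analogous direct calculation only shows $z$-independence (Theorem \ref{thm cOnst}), and the value of $c$ is left open.
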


\begin{cor}
\label{cor 1}
We have $\al(z)[1]) =  q(z)$, where $q(z)$ is the conformal block of Theorem \ref{thm c 2}.
\end{cor}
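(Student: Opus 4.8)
The plan is to combine the explicit formula for the identity element of $A_\Phi(z)$ with the explicit description of the canonical isomorphism, so the argument is immediate. First I would fix an arbitrary $k\in J$ and invoke Theorem \ref{lem 1 1 in A}, which gives
\[
[1](z)=\frac1{|a|^2}\sum{}'\,\frac{f_{i,j,k}^2}{d_{i,j}d_{j,k}d_{k,i}}\,w_{i,j},
\]
the primed sum running over all pairs $i<j$ with $k\notin\{i,j\}$. I would then apply the linear map $\al(z)$ to both sides.

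Next, by Theorem \ref{thm can 2} the canonical isomorphism acts by $\al(z)(w_{i,j})=v_{i,j}$, so termwise application of $\al(z)$ yields
\[
\al(z)([1](z))=\frac1{|a|^2}\sum{}'\,\frac{f_{i,j,k}^2}{d_{i,j}d_{j,k}d_{k,i}}\,v_{i,j}.
\]
By Lemma \ref{ q inv} the right-hand side is exactly $q(z)$, the flat section generating the bundle of conformal blocks of Theorem \ref{thm c 2}, and this finishes the proof.

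There is essentially no obstacle remaining at this stage: all the substance has already been extracted into Theorem \ref{thm can 2} (identifying the canonical isomorphism as $w_{i,j}\mapsto v_{i,j}$) and into the deliberately parallel formulas of Theorem \ref{lem 1 1 in A} and Lemma \ref{ q inv}, both of which reduce a naive sum over all pairs to the sum over pairs avoiding a fixed index $k$ via the Pl\"ucker-type relations of Lemma \ref{Pluk}. The corollary merely records that the isomorphism $w_{i,j}\mapsto v_{i,j}$ transports one of these formulas onto the other; the only thing worth double-checking is that the two sums are literally term-for-term the same (same index range, same coefficients), which they are by construction.
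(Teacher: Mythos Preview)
Your proof is correct and matches the paper's reasoning: the paper does not give an explicit argument for this corollary, but it is placed immediately after Theorem~\ref{thm can 2} precisely because applying $\al(z)\colon w_{i,j}\mapsto v_{i,j}$ term-by-term carries the expression for $[1](z)$ in Theorem~\ref{lem 1 1 in A} (or equivalently in Lemma~\ref{lem 1 in A}) onto the expression for $q(z)$ in Lemma~\ref{ q inv} (or the defining formula~\Ref{CB2}). Your choice of the $k$-avoiding form is one of the two obvious routes and works verbatim.
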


\subsection{Proof of Theorem \ref{thm can 2} }
 Introduce the coefficients $B_{i,j}$ by the formula
\bean
\al(z)(w_{k,\ell}) = \sum_{1\leq i<j\leq n} B_{i,j} F_{i,j}.
\eean
We have
\bean
\label{B}
-\frac{4\pi^2}{a_ka_\ell d_{k,\ell}d_{i,j}} B_{i,j} = \sum_{p\in C_\Phi(z)}
\Res_p \frac 1{f_kf_\ell f_i f_j} \frac {\prod_{m\in J}f_m^2}{H_1H_2}.
\eean

\begin{lem}
\label{lem1}
We have $B_{i,j} = 0$, if $\{i,j\}\cap\{k,\ell\}=\emptyset$.
\end{lem}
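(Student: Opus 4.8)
The plan is to evaluate the sum of Grothendieck residues in \Ref{B} by passing to $\Pee^2$ and reducing it to a residue at infinity. Set
\be
\omega\ =\ \frac{\prod_{m\in J}f_m^2}{f_if_jf_kf_\ell\,H_1H_2}\,dt_1\wedge dt_2 ,
\ee
so that by \Ref{B} the number $B_{i,j}$ is, up to a nonzero constant, $\sum_{p\in C_\Phi(z)}\Res_p\omega$. Since $\prod_m f_m^2$ occurs in the numerator, $\omega$ has no pole along any hyperplane $f_m=0$; its numerator $N=\prod_{m\in J}f_m^2/(f_if_jf_kf_\ell)$ is a polynomial of degree $2n-4$, and the poles of $\omega$ in $\C^2$ lie only on $\{H_1=0\}\cap\{H_2=0\}=C_\Phi(z)\sqcup\{\text{vertices of }\A(z)\}$, with $H_1,H_2$ transversal at each such point. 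At a vertex $H_a(z)\cap H_b(z)$ one has $f_af_b\mid N$, hence $\Res_p\omega=0$ there, and so $\sum_{p\in C_\Phi(z)}\Res_p\omega$ is the sum of all finite residues of $\omega$. Homogenizing, $\omega$ extends to $\Pee^2$ with polar divisor $C_1+C_2+L_\infty$ (each simple), where $C_r=\{H_r=0\}$ and $L_\infty=\Pee^2\minus\C^2$, the simple pole along $L_\infty$ coming from $\deg N-\deg H_1-\deg H_2=-2$. For generic data the $\binom n2$ vertices and the $\binom{n-1}2$ critical points exhaust $C_1\cap C_2$, and since $\binom n2+\binom{n-1}2=(n-1)^2=\deg H_1\cdot\deg H_2$ realizes the B\'ezout number, $C_1\cap C_2$ meets $L_\infty$ nowhere; hence the global residue theorem on $\Pee^2$ (with the polar splitting $H_1$ versus $H_2\cdot z$), together with the above, gives
\be
\sum_{p\in C_\Phi(z)}\Res_p\omega\ =\ -\sum_{u_0:\,P_1(u_0)=0}\Res_{u=u_0}\frac{N_\infty(u)\,du}{P_1(u)\,P_2(u)},
\ee
where $u=t_2/t_1$ is the affine coordinate on $L_\infty\cong\Pee^1$, $P_r(u)$ is the top (degree $n-1$) homogeneous part of $H_r\in\C[t_1,t_2]$ evaluated at $(t_1,t_2)=(1,u)$, and $N_\infty(u)$ is that of $N$; writing $g_m=b^1_mt_1+b^2_mt_2$ and $G(u)=\prod_{m\in J}(b^1_m+b^2_mu)$, one computes directly that $N_\infty(u)=G(u)\prod_{m\in J\minus\{i,j,k,\ell\}}(b^1_m+b^2_mu)$.

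The key input is the Euler-type identity: taking the top homogeneous part of $t_1H_1+t_2H_2=|a|\prod_{m\in J}f_m-\sum_{i\in J}z_ia_i\prod_{m\ne i}f_m$ (equivalently \Ref{HH}) gives $P_1(u)+u\,P_2(u)=|a|\,G(u)$. Thus at a zero $u_0$ of $P_1$ (for generic $b^1_m$ none equals $0$) we have $P_2(u_0)=|a|\,G(u_0)/u_0$, so
\be
\Res_{u=u_0}\frac{N_\infty(u)\,du}{P_1(u)P_2(u)}\ =\ \frac{N_\infty(u_0)}{P_1'(u_0)P_2(u_0)}\ =\ \frac1{|a|}\,\Res_{u=u_0}\frac{u\prod_{m\in J\minus\{i,j,k,\ell\}}(b^1_m+b^2_mu)}{P_1(u)}\,du .
\ee
The polynomial $u\prod_{m\in J\minus\{i,j,k,\ell\}}(b^1_m+b^2_mu)$ has degree $n-3$, while $P_1$ has degree $n-1$, so the $1$-form on the right is $O(u^{-2})\,du$ at $u=\infty$ and has no residue there; by the residue theorem on $\Pee^1$ the sum of its residues over the zeros of $P_1$ is $0$. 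Therefore $\sum_{p\in C_\Phi(z)}\Res_p\omega=0$, so $B_{i,j}=0$ for generic $z$, and for all $z\in\cd$ by continuity.

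The real content is this last step: a naive count gives only $\deg N=2n-4=\deg H_1+\deg H_2-2$, one unit above the Euler–Jacobi vanishing range, so the residue of $\omega$ at infinity is not forced to vanish, and the $\Pee^1$ residue theorem by itself only yields $\sum_{C_1\cap L_\infty}+\sum_{C_2\cap L_\infty}=0$ rather than the vanishing of the $C_1$-part that the reduction demands; it is the identity \Ref{HH} that brings the degree down to $n-3$ and produces the decay $O(u^{-2})$. The remaining work — which I regard as routine but needing care — is the $\Pee^2$ bookkeeping: that $\omega$ has no poles besides $C_1,C_2,L_\infty$; that for generic data $C_1,C_2$ are reduced and meet $L_\infty$ transversally in $n-1$ distinct points each, disjoint from one another; and that the signs in the global residue theorem, in the Poincar\'e residue along $L_\infty$, and in the identification of the iterated residues are consistent.
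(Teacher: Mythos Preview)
Your proof is correct and takes essentially the same route as the paper: pass to the line at infinity in $\Pee^2$, use the Euler identity \Ref{HH} to replace $P_2$ (respectively $\tilde H_2$) at the zeros of $P_1$, and then conclude by a degree count on $\Pee^1$. The only cosmetic differences are your choice of affine coordinate $u=t_2/t_1$ (the paper uses $t_1/t_2$, which is why your reduced numerator carries an extra factor $u$ and has degree $n-3$ rather than $n-4$) and your more explicit invocation of the global residue theorem and the B\'ezout count to justify that $C_1\cap C_2$ avoids $L_\infty$.
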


\begin{proof}

The differential form
\bean
\omega_{k,\ell,i,j} = \frac {\prod_{m\in J}f_m^2}{f_kf_\ell f_i f_j H_1H_2}\,dt_1\wedge dt_2
\eean
has poles only on the curves $H_1=0$ and $H_2=0$. The poles are of first order. To calculate the right hand side in
\Ref{B}, we need to take the residue $\Res_{H_1=0}\omega_{k,\ell,i,j}$ of the form $\omega_{k,\ell,i,j}$ at the curve $H_1=0$
and then take the residue of that form on the curve $H_1=0$ at the points where $H_2=0$. This is the same as if we
took with minus sign the residue of $\Res_{H_1=0}\omega_{k,\ell,i,j}$ on the curve $H_1=0$ at infinity. That residue at infinity
with minus sign could be obtained differently in two steps. First we may take the residue $\Res_{\infty}\omega_{k,\ell,i,j}$ of $\omega_{k,\ell,i,j}$
at the line at infinity and then take the residue of that one-form on the line at infinity  at the points where $H_1=0$.

So to calculate the right hand side in \Ref{B} we first calculate $\Res_{\infty}\omega_{k,\ell,i,j}$.
The coordinates at infinity are $u_1=t_1/t_2$, $u_2= 1/t_2$. We have $f_m=(b^1_mu_1 + b^2_m+u_2z_m)/u_2$. Denote $\tilde f_m (u_1) = b^1_mu_1 + b^2_m $.
For $i=1,2$, we have $H_i(u_1/u_2,1/u_2) = \hat H_i(u_1,u_2)/u_2^{n-1}$, where $\hat H_i(u_1,u_2)$ are some polynomials.
Denote $\tilde H_i(u_1) = \hat H_i(u_1,0)$.
We have $dt_1\wedge dt_2 = -\frac 1{u_2^3} du_1\wedge du_2$. Then the residue of $\omega_{k,\ell,i,j} $ at the line at infinity equals
\bean
2\pi\sqrt{-1} \,\tilde \omega_{k,\ell,i,j} = \frac {\prod_{m\in J}\tilde f_m(u)^2}{\tilde f_k(u)\tilde f_\ell(u) \tilde f_i(u) \tilde f_j(u)\tilde H_1(u)\tilde H_2(u)}\,du,
\eean
where $u=u_1$. On the line at infinity this one-form is holomorphic at $u=\infty$.
The number $\frac{2\pi\sqrt{-1}}{a_ka_\ell d_{k,\ell}d_{i,j}} B_{i,j}$ equals the sum of  residues of the form $\tilde \omega_{k,\ell,i,j}$ at the points where $\tilde H_1(u)=0$.

By formula \Ref{HH}, we have
\bean
u\tilde H_1(u)+ \tilde H_2(u) = |a| \prod_{m \in J}\tilde f_m(u).
\eean
Thus $\tilde H_2(u) = |a| \prod_{m \in J}\tilde f_m(u)$ at the point where $H_1(u)=0$. Therefore,
the sum of residues of the form $\tilde \omega_{k,\ell,i,j}$ at the points where $\tilde H_1(u)=0$ equals the sum of residues of the form
\bean
\tilde \omega_{k,\ell,i,j} =  \frac {\prod_{m\notin \{k,\ell,i,j\}}\tilde f_m(u)}{|a| \,\tilde H_1(u)}\,du
\eean
at the points where $\tilde H_1(u)=0$. This sum is zero.
\end{proof}

\begin{lem}
\label{two}
We have $B_{k,j} = -\frac{a_\ell}{|a|}$, if $j\notin \{k,\ell\}$.
\end{lem}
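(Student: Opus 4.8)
The plan is to mirror the proof of Lemma \ref{lem1} almost verbatim; the only new feature is that the auxiliary one-form on the line at infinity now carries one extra pole, and that pole contributes the value $-a_\ell/|a|$. First I would specialize formula \Ref{B} to the pair $\{k,j\}$ in place of $\{i,j\}$. Since $j\notin\{k,\ell\}$, the quantity under $\Res_p$ becomes
\be
\frac{\prod_{m\in J}f_m^2}{f_k f_\ell f_k f_j H_1 H_2}
=\frac{f_\ell f_j\,\prod_{m\notin\{k,\ell,j\}}f_m^2}{H_1 H_2}\,,
\ee
whose numerator is a polynomial; hence the associated $2$-form $\omega_{k,\ell,k,j}$ has first order poles only along $H_1=0$ and $H_2=0$, which is exactly the configuration handled in Lemma \ref{lem1}. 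Running that argument (reduce the sum of Grothendieck residues over $C_\Phi(z)$ to a sum of residues on the line at infinity, keeping the $2\pi\sqrt{-1}$ bookkeeping as there), one gets that $B_{k,j}$ equals $a_k a_\ell d_{k,\ell}d_{k,j}$ times the sum, over the points of the line at infinity where $\tilde H_1(u)=0$, of the residues of
\be
\tilde\omega_{k,\ell,k,j}=\frac{\tilde f_\ell(u)\,\tilde f_j(u)\,\prod_{m\notin\{k,\ell,j\}}\tilde f_m(u)^2}{\tilde H_1(u)\,\tilde H_2(u)}\,du\,,
\ee
which is holomorphic at $u=\infty$ by the degree count $2n-4<2n-2$. (Here I use, as already implicit in the proof of Lemma \ref{lem1} and arrangeable by a generic linear change of coordinates on $\C^2$, that $b^1_m\ne 0$ for all $m$, so that $\tilde H_1$ has degree exactly $n-1$, its leading coefficient being $|a|\prod_m b^1_m\ne 0$.)

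Next I would repeat the substitution used in Lemma \ref{lem1}: from $u\tilde H_1(u)+\tilde H_2(u)=|a|\prod_{m\in J}\tilde f_m(u)$ (the projectivization of \Ref{HH}), at the points where $\tilde H_1(u)=0$ one has $\tilde H_2(u)=|a|\prod_m\tilde f_m(u)$, so the residue sum above equals the sum, at the zeros of $\tilde H_1$, of the residues of
\be
\Omega(u)=\frac1{|a|}\,\frac{\prod_{m\notin\{k,\ell,j\}}\tilde f_m(u)}{\tilde f_k(u)\,\tilde H_1(u)}\,du\,.
\ee
This is where the situation departs from Lemma \ref{lem1}: the form $\Omega$ has, besides the poles at the zeros of $\tilde H_1$, one further simple pole at the zero $u_*=-b^2_k/b^1_k$ of $\tilde f_k$, and $u_*$ is not a zero of $\tilde H_1$ (because only the $p=k$ term survives in $\tilde H_1(u_*)=\sum_{p\in J}a_p b^1_p\prod_{m\ne p}\tilde f_m(u_*)$, giving $\tilde H_1(u_*)=a_k b^1_k\prod_{m\ne k}\tilde f_m(u_*)\ne 0$ under the genericity assumption $d_{k,m}\ne 0$). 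Since $\deg\bigl(\prod_{m\notin\{k,\ell,j\}}\tilde f_m\bigr)=n-3$ and $\deg(\tilde f_k\tilde H_1)=n$, the residue of $\Omega$ at $u=\infty$ vanishes, so the residue theorem on $\Pone$ gives $\sum_{\tilde H_1(u)=0}\Res_u\Omega=-\Res_{u_*}\Omega$.

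Finally I would compute $\Res_{u_*}\Omega$. Using $\tilde f_k'=b^1_k$ and the formula for $\tilde H_1(u_*)$ above,
\be
\Res_{u_*}\Omega=\frac1{|a|\,b^1_k}\,\frac{\prod_{m\notin\{k,\ell,j\}}\tilde f_m(u_*)}{\tilde H_1(u_*)}
=\frac1{|a|\,b^1_k}\,\frac1{a_k b^1_k\,\tilde f_\ell(u_*)\,\tilde f_j(u_*)}\,,
\ee
and since $\tilde f_k(u_*)=0$ one has $\tilde f_\ell(u_*)=b^1_\ell u_*+b^2_\ell=(b^1_k b^2_\ell-b^2_k b^1_\ell)/b^1_k=d_{k,\ell}/b^1_k$ and likewise $\tilde f_j(u_*)=d_{k,j}/b^1_k$, whence $\Res_{u_*}\Omega=1/(|a|\,a_k\,d_{k,\ell}\,d_{k,j})$. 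Therefore
\be
B_{k,j}=a_k a_\ell d_{k,\ell}d_{k,j}\cdot\Bigl(-\Res_{u_*}\Omega\Bigr)
=-a_k a_\ell d_{k,\ell}d_{k,j}\cdot\frac1{|a|\,a_k\,d_{k,\ell}\,d_{k,j}}=-\frac{a_\ell}{|a|}\,,
\ee
as claimed. The only genuinely new computation is the evaluation of $\Res_{u_*}\Omega$ in the last paragraph; the rest is a faithful repetition of the proof of Lemma \ref{lem1}. The point to be careful about is the degree/genericity bookkeeping ($\deg\tilde H_1=n-1$ with nonzero leading term, $u_*$ disjoint from the zero set of $\tilde H_1$), which is what guarantees both that $u=\infty$ contributes nothing and that the residue theorem on $\Pone$ applies cleanly.
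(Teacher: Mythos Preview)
Your proof is correct and follows essentially the same route as the paper's: your auxiliary form $\Omega$ is exactly the paper's $\mu$, and the key step---trading the sum of residues at the zeros of $\tilde H_1$ for minus the residue at the zero of $\tilde f_k$ via the residue theorem on $\Pone$---is identical. Your write-up is in fact more explicit than the paper's in checking the side conditions (that $u_*$ is not a zero of $\tilde H_1$, that the degree count kills the residue at $\infty$, and that $b^1_m\ne 0$ may be arranged), which is fine.
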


\begin{proof}
On the line at infinity we consider the differential one form
\bean
\tilde \omega_{k,\ell,i,j} = \frac {\prod_{m\in J}\tilde f_m(u)^2}{\tilde f_k^2(u)\tilde f_\ell(u) \tilde f_j(u)\tilde H_1(u)\tilde H_2(u)}\,du,
\eean
As in Lemma \ref{lem1} we observe that $\frac{2\pi\sqrt{-1}}{a_ka_\ell d_{k,\ell}d_{k,j}} B_{k,j}$ equals the sum of
residues of that one-form at the points where $H_1=0$. Consider the differential one-form
\bean
\mu = \frac {\prod_{m\notin \{k,\ell,j\}}\tilde f_m(u)}{|a|\,\tilde f_k(u)\tilde H_1(u)}\,du.
\eean
As in Lemma \ref{lem1} we observe that $\frac{2\pi\sqrt{-1}}{a_ka_\ell d_{k,\ell}d_{k,j}} B_{k,j}$ equals the sum of residues
of $\mu$ at the points where $\tilde H_1(u)=0$ and this sum equals
\bean
-\Res_{\tilde f_k=0} \mu = -\frac{2\pi\sqrt{-1}}{a_k|a|d_{k,\ell}d_{k,j}}.
\eean
The lemma is proved.

\end{proof}

By Lemmas \ref{lem1} and \ref{two} we know that $\al(z)(w_{k,\ell}) = B_{k,\ell} F_{k,\ell}- \frac{a_k}{|a|} \sum_{i\ne k} F_{i,\ell}
- \frac{a_\ell}{|a|} \sum_{j\ne \ell} F_{k,j}$. From the condition that $\al(z)(w_{k,\ell})\in \sing V$
we conclude that $B_{k,\ell}=\frac{|a|-a_k-a_\ell}{|a|}$. The theorem is proved.

\subsection{Contravariant map as the inverse to the canonical map}
The canonical map $\al(z) : A_\Phi(z) \to \sing V$ is the isomorphism described in Theorem \ref{thm can 2}.
The contravariant map $\mc S^{(a)} : V\to V^*$ is defined by the formula $F_{i,j} \mapsto a_ia_j(H_i,H_j)$. By identifying
$a_ia_j(H_i,H_j)$ with the differential form $a_ia_j\frac{d_{i,j}}{f_if_j}\,dt_1\wedge dt_2$ and then projecting the coefficient
to $A_\Phi(z)$ we obtain the map
\bean
[\mc S^{(a)}] : V \to A_\Phi(z), \qquad F_{i,j} \mapsto w_{i,j} = a_ia_j\Big[\frac{d_{i,j}}{f_if_j}\Big].
\eean

\begin{thm}
The composition $\al(z)\circ [\mc S^{(a)}] : V\to \sing V$ is the orthogonal projection.
The composition $[\mc S^{(a)}]\circ \al(z) : A_\Phi(z) \to A_\Phi(z)$ is the identity map.
\end{thm}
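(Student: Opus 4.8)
The plan is to check both identities on the standard basis $\{F_{i,j}\}_{1\le i<j\le n}$ of $V$ (resp.\ on the spanning set $\{w_{i,j}\}_{1\le i<j\le n}$ of $A_\Phi(z)$), using the explicit form of the canonical isomorphism from Theorem~\ref{thm can 2} together with the relations already recorded for the vectors $v_{i,j}$ and the elements $w_{i,j}$. For the first composite: by definition $[\mc S^{(a)}]$ sends $F_{i,j}$ to $w_{i,j}$, and by Theorem~\ref{thm can 2} the map $\al(z)$ sends $w_{i,j}$ to $v_{i,j}$; hence $\al(z)\circ[\mc S^{(a)}]$ sends $F_{i,j}$ to $v_{i,j}$. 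By Lemma~\ref{lem siNg}(ii) we have $v_{i,j}=\pi(F_{i,j})$, where $\pi:V\to\sing V$ is the orthogonal projection with respect to $S^{(a)}$. Since the $F_{i,j}$, $1\le i<j\le n$, form a basis of $V$ and both maps are linear, $\al(z)\circ[\mc S^{(a)}]=\pi$, which is the first assertion.

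For the second composite I would run the same computation in the opposite order. Starting from $w_{i,j}$, Theorem~\ref{thm can 2} gives $\al(z)(w_{i,j})=v_{i,j}$. I then expand $v_{i,j}$ in the standard basis of $V$ by the defining formula~\Ref{ v ij} and apply $[\mc S^{(a)}]$ termwise, using that it is extended by antisymmetry (so $[\mc S^{(a)}](F_{\ell,j})=w_{\ell,j}$ with $w_{\ell,j}=-w_{j,\ell}$ by Lemma~\ref{w rel}):
\be
[\mc S^{(a)}](v_{i,j})=w_{i,j}-\frac{a_j}{|a|}\sum_{k\in J}w_{i,k}-\frac{a_i}{|a|}\sum_{\ell\in J}w_{\ell,j}.
\ee
By Lemma~\ref{w rel}, $\sum_{k\in J}w_{i,k}=0$ and $\sum_{\ell\in J}w_{\ell,j}=-\sum_{\ell\in J}w_{j,\ell}=0$, so both sums drop out and $[\mc S^{(a)}](v_{i,j})=w_{i,j}$. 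Thus $[\mc S^{(a)}]\circ\al(z)$ fixes every $w_{i,j}$, and since these elements span $A_\Phi(z)$, the composite is the identity map.

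As a sanity check I would compare with the analogous statement in Section~\ref{sec Exa}: for points on a line $k=1$ and the two composites acquire a factor $-1$, consistent with the factor $(-1)^k$ relating the residue and contravariant forms in Theorem~\ref{thm alpha}; here $k=2$, so $(-1)^k=1$ and no sign appears, in agreement with Lemma~\ref{lem siNg}(ii), where $v_{i,j}$ is literally the orthogonal projection of $F_{i,j}$ (not its negative). The whole argument is bookkeeping once Theorem~\ref{thm can 2} is in hand; the only step requiring a little care is the collapsing of the two sums $\sum_k w_{i,k}$ and $\sum_\ell w_{\ell,j}$, where one must track the antisymmetry of $w$ and the vanishing diagonal terms $F_{i,i}=0=w_{i,i}$. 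I expect no genuine obstacle, since all the substantive input is already established.
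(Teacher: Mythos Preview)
Your proof is correct and follows essentially the same approach as the paper's own proof: both check the first composite on $F_{i,j}$ using $\al(z)\circ[\mc S^{(a)}](F_{i,j})=v_{i,j}=\pi(F_{i,j})$ via Lemma~\ref{lem siNg}(ii), and both check the second composite on $w_{i,j}$ by expanding $v_{i,j}$ in the $F$-basis, applying $[\mc S^{(a)}]$ termwise, and killing the two sums using Lemma~\ref{w rel}. The sanity check comparing signs with the $k=1$ case is a nice addition but not needed for the argument.
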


\begin{proof}
The composition $\al(z)\circ [\mc S^{(a)}]$ sends $F_{i,j}$ to $v_{i,j}$ which is the orthogonal projection
by Lemma \ref{lem siNg}. The composition  $[\mc S^{(a)}]\circ \al(z)$ sends $w_{i,j}$ to
\bean
w_{i,j}- \frac{a_i}{|a|} \sum_{k\in J} w_{k,j}
- \frac{a_j}{|a|} \sum_{\ell\in J} w_{i,\ell}.
\eean
The last two sums are equal to zero in $A_\Phi(z)$ by Lemma \ref{w rel}.
\end{proof}

\subsection{Corollaries of Theorem \ref{thm can 2}  }

\begin{thm}
For any $j\in J$, the $\sv$-valued function $\frac{\der q}{\der z_j}(z)$ satisfies the Gauss-Manin
differential equations with $\kappa=|a|$,
\bean
|a| \frac{\der }{\der z_i}\frac{\der q}{\der z_j}(z) = K_i(z) \frac{\der q}{\der z_j}(z), \qquad i\in J.
\eean

\end{thm}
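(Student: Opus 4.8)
The plan is to rewrite the statement as an identity for the combinatorially flat sections $g_j(z):=\al(z)\big(\big[\frac{a_j}{f_j}\big]\big)\in\sv$ and then to verify that identity by a direct computation in the standard basis $(v_{i,j})$ of $\sv$.

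\emph{Step 1.} First I would show $\frac{\der q}{\der z_j}(z)=\frac{2}{|a|}\,g_j(z)$. By Theorem \ref{thm c 2}, $\frac{|a|}{2}\frac{\der q}{\der z_j}(z)=K_j(z)q(z)$; by Theorem \ref{K/f}, $K_j(z)$ acts on $\sv$ as $*_z$-multiplication by $g_j(z)$; and by Corollary \ref{cor 1}, $q(z)=\al(z)([1](z))$ is the unit of the algebra $(\sv,*_z)$, since $\al(z)$ is an algebra isomorphism carrying the unit $[1](z)$ of $A_\Phi(z)$ to it. Hence $K_j(z)q(z)=g_j(z)*_zq(z)=g_j(z)$, which gives the formula (it is the $r=1$ case of the relation in Theorem \ref{thm der one}, obtained here unconditionally). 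Substituting $\frac{\der q}{\der z_j}=\frac{2}{|a|}g_j$, the asserted equations become $|a|\,\frac{\der g_j}{\der z_i}=K_i(z)g_j$ for all $i,j$, and by Theorem \ref{K/f} the right-hand side equals $g_i(z)*_zg_j(z)=\al(z)\big(\big[\frac{a_i}{f_i}\big]*_z\big[\frac{a_j}{f_j}\big]\big)$; by formula \Ref{umn} and Theorem \ref{thm can 2} this is $\frac{1}{d_{i,j}}v_{i,j}$ when $i\ne j$ and $\sum_{l\notin\{j,k\}}\frac{d_{k,l}}{d_{j,k}d_{l,j}}v_{l,j}$ (any $k\ne j$) when $i=j$.

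\emph{Step 2.} Next I would compute $g_j$ explicitly. Differentiating formula \Ref{CB2} for $q(z)$ with respect to $z_j$ and using $v_{p,q}=-v_{q,p}$, one gets $g_j(z)=-\frac{1}{|a|}\sum_{q\ne j}\frac{z_jb^1_q-z_qb^1_j}{d_{j,q}\,b^1_j}\,v_{j,q}$, a combination of the $v_{j,q}$ only, with coefficients linear in $z$. Then $\frac{\der g_j}{\der z_i}$ is read off termwise: for $i\ne j$ only the $v_{j,i}$-coefficient depends on $z_i$, giving $|a|\,\frac{\der g_j}{\der z_i}=\frac{1}{d_{i,j}}v_{i,j}$, which is exactly $g_i*_zg_j$; for $i=j$ one gets $|a|\,\frac{\der g_j}{\der z_j}=-\sum_{q\ne j}\frac{b^1_q}{d_{j,q}b^1_j}\,v_{j,q}$.

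\emph{The hard part} will be the diagonal case $i=j$, where the last expression must be matched with $\sum_{l\notin\{j,k\}}\frac{d_{k,l}}{d_{j,k}d_{l,j}}v_{l,j}$. Rewriting both sides in the $v_{j,q}$ basis and collecting, the coefficient of each $v_{j,q}$ in the difference equals $\pm\frac{b^1_k}{b^1_j d_{j,k}}$ by the elementary determinant identity $b^1_q d_{j,k}+b^1_j d_{k,q}=b^1_k d_{j,q}$ (which merely says $(b^1_q,b^2_q)$ is the appropriate linear combination of $(b^1_j,b^2_j)$ and $(b^1_k,b^2_k)$); being independent of $q$, the difference is a scalar multiple of $\sum_{q\ne j}v_{j,q}$, hence zero by Lemma \ref{lem siNg}(iii). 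This is the same combination of a Plücker-type relation (cf.\ Lemma \ref{Pluk}) and the relation $\sum_i v_{i,j}=0$ that was used to prove Theorem \ref{thm c 2}. With this, $|a|\,\frac{\der g_j}{\der z_i}=K_i(z)g_j$ for all $i,j$, which is the assertion; everything else is routine differentiation and substitution.
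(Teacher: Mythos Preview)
Your proof is correct, but it takes a more computational route than the paper. Both arguments begin by identifying $\frac{\der q}{\der z_j}=\frac{2}{|a|}\,\al(z)\big(\big[\frac{a_j}{f_j}\big]\big)$ and reducing the claim to $|a|\,\frac{\der g_j}{\der z_i}=K_i(z)g_j$ (you get this from Theorem~\ref{thm c 2} and Corollary~\ref{cor 1}; the paper derives it by expanding $q(z)$). The difference lies in how the derivative $\frac{\der g_j}{\der z_i}$ is computed. The paper observes, via \Ref{umn} and Theorem~\ref{thm can 2}, that each $\al(z)\big(\big[\frac{a_m}{f_m}\big]\big[\frac{a_\ell}{f_\ell}\big]\big)$ is a \emph{$z$-independent} vector in $\sv$; hence in the expansion $q(z)=\frac{1}{|a|^2}\sum_{m,\ell}z_mz_\ell\,\al(z)\big(\big[\frac{a_m}{f_m}\big]\big[\frac{a_\ell}{f_\ell}\big]\big)$ all second partials are read off instantly and the case $i=j$ requires no separate treatment. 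You instead differentiate the explicit formula~\Ref{CB2}, which forces a split into off-diagonal and diagonal cases and a determinant identity plus the relation $\sum_q v_{j,q}=0$ to close the diagonal case. Your argument works and is self-contained, but the paper's constancy observation is what makes the statement essentially a one-line consequence, and it is also the mechanism that generalizes cleanly to arbitrary~$k$ in Section~\ref{sec arrs k}.
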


\begin{proof}
By Theorems \ref{lem 1 1 in A} and \ref{thm can 2}, we have
\bea
q(z) = \al(z)([1](z)) = \frac 1{|a|^2} \al(z) \big((\sum_{m\in J}z_m\Big[\frac{a_m}{f_m}\Big])^2\Big) =
\frac 1{|a|^2} \sum_{m,\ell\in J}z_mz_\ell \al(z)\big(\Big[\frac{a_m}{f_m}\Big]\Big[\frac{a_\ell}{f_\ell}\Big]\big).
\eea
By Theorem \ref{thm can 2}, for any $m,\ell\in J$, the element $\al(z)(\big[\frac{a_m}{f_m}\big]\big[\frac{a_\ell}{f_\ell}\big])\in\sv$
is a linear combination of vectors $v_{i,j}$ with constant coefficients. Hence
\bea
\frac{\der^2 q}{\der z_j\der z_i}(z)= \frac 2{|a|^2} \al(z)\big(\Big[\frac{a_i}{f_i}\Big]\Big[\frac{a_j}{f_j}\Big]\big),
\eea
\bea
\frac{\der q}{\der z_j}(z)= \frac 2{|a|}\al(z)\big(\Big[\frac{a_j}{f_j}\Big] *_z \frac 1{|a|}\sum_{m\in J}z_m\Big[\frac{a_m}{f_m}\Big]\big) =
\frac 2{|a|}\al(z)\big(\Big[\frac{a_j}{f_j}\Big]\big),
\eea
\bea
K_i(z)\frac{\der q}{\der z_j}(z)=   \frac 2{|a|}\al(z)\big(\Big[\frac{a_i}{f_i}\Big]\Big[\frac{a_j}{f_j}\Big]\big).
\eea
This implies the theorem.
\end{proof}

\begin{cor}
Conjectures \ref{CB} and \ref{CB3} hold for this family of arrangements.
\qed
\end{cor}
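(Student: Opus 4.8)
The plan is to deduce both conjectures directly from results already established for this family, since the substantive analytic work has been carried out in Theorems \ref{thm c 2} and \ref{thm can 2} and in the theorem immediately preceding this corollary. First I would invoke Corollary \ref{cor 1}, which identifies the period vector $\{1\}(z) = \al(z)([1](z))$ with the explicit section $q(z)$ given by \Ref{ij ne k}. After that, everything reduces to reading off the correct parameter values and doing a degree count.

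For Conjecture \ref{CB} I would argue by cases on $r$, noting that here $k=2$. For $r=0$ the asserted parameter is $\kappa = |a|/k = |a|/2$, and this is exactly Theorem \ref{thm c 2}, which states that $q(z)$ is a flat section of the Gauss-Manin connection with $\kappa = |a|/2$, i.e. $\frac{|a|}{2}\,\frac{\der q}{\der z_j}(z) = K_j(z)\,q(z)$ for all $j\in J$. Since $k=2$, the only remaining case of Conjecture \ref{CB} is $r=1$, in which $I_{m_1}(z) = \frac{\der\{1\}}{\der z_{m_1}}(z) = \frac{\der q}{\der z_{m_1}}(z)$ and the claimed parameter is $\kappa = |a|/(k-r) = |a|$; this is precisely the content of the theorem stated just before the corollary, namely $|a|\,\frac{\der}{\der z_i}\frac{\der q}{\der z_j}(z) = K_i(z)\,\frac{\der q}{\der z_j}(z)$. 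Hence Conjecture \ref{CB} holds for this family.

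For Conjecture \ref{CB3} I would use the explicit expression \Ref{ij ne k}: fixing any index $\ell\in J$, one has $q(z) = \frac{1}{|a|^2}\sum' \frac{f_{i,j,\ell}^2}{d_{i,j}d_{j,\ell}d_{\ell,i}}\,v_{i,j}$, where the sum runs over pairs $i<j$ with $\ell\notin\{i,j\}$. By Lemma \ref{lem siNg}(iv) these $v_{i,j}$ form a basis of $\sv$, and this basis is combinatorially flat because each $v_{i,j}$ is a constant-coefficient combination of the combinatorially flat standard vectors $F_{i,j}$. In these coordinates the components of $q(z)$ are the functions $\frac{f_{i,j,\ell}^2}{|a|^2\,d_{i,j}d_{j,\ell}d_{\ell,i}}$; since $f_{i,j,\ell}$ is a linear form in $z$ by \Ref{ fijk}, each such component is a homogeneous polynomial of degree $2 = k$. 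This proves Conjecture \ref{CB3}.

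The only point needing care is the bookkeeping — that the range $r<k$ leaves exactly the single case $r=1$ for plane arrangements, and that the basis used in the homogeneity argument is genuinely combinatorially flat rather than merely a basis. There is no real obstacle here: the corollary is a packaging of Theorem \ref{thm c 2}, Theorem \ref{thm can 2}, and the preceding theorem, together with Corollary \ref{cor 1}.
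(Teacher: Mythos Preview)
Your proposal is correct and follows essentially the same approach as the paper, which marks this corollary with \qed\ precisely because it is immediate from Corollary \ref{cor 1}, Theorem \ref{thm c 2}, the preceding theorem on $\frac{\der q}{\der z_j}$, and the explicit formula \Ref{ij ne k}. You have spelled out exactly the reasoning the paper leaves implicit, including the case split $r=0,1$ for $k=2$ and the degree count via the basis $(v_{i,j})_{i<j,\ \ell\notin\{i,j\}}$.
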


The tangent morphism $\beta$ and the residue form on the bundle of algebras induce a holomorphic bilinear form $\eta$
on fibers of the tangent bundle,
\bean
\label{ETA 2}
\phantom{aaaaa}
\eta(\der_i, \der_j)_z
&=&
 (\beta(z)(\der_i),\beta(z)(\der_j))_z =  (-1)^k S^{(a)}(\al(z)\beta(z)(\der_i),\al(z)\beta(z)(\der_j))=
\\
\notag
&=&
 \Big(
\Big[\frac{a_i}{f_i}\Big],\Big[\frac{a_j}{f_j}\Big]\Big)_z
 = (-1)^k S^{(a)}\Big(\al(z)\big(\Big[\frac{a_i}{f_i}\Big]\big),\al(z)\big(\Big[\frac{a_j}{f_j}\Big]\big)
\Big).
\eean
By Theorem \ref{thm ETAA}, we have
\bean
\eta(\der_i,\der_j)_z = \frac{|a|^2}{4} S^{(a)}(\frac{\der q}{\der z_i}(z), \frac{\der q}{\der z_j}(z)).
\eean
Theorems \ref{flat co} and \ref{twised per} also hold for this family of arrangements.

\begin{thm}
Recall the potential function of first kind $P(z) = S^{(a)}(q(z), q(z))$. We have
\bean
\label{pot k=2}
P(z) =  \sum_{1\leq i<j<k\leq n} \frac{a_ia_ja_k}{ |a|^5}\, \frac{f_{i,j,k}^4}{d_{i,j}^2d_{j,k}^2d_{k,i}^2}.
\eean
\end{thm}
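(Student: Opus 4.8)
The plan is to prove the asserted formula for $P(z)$ by computing $P(z)=S^{(a)}(q(z),q(z))$ directly from the explicit description of the period map and the contravariant form. By Theorem \ref{thm alpha} — with $k=2$, so the sign $(-1)^k$ equals $+1$ — together with Corollary \ref{cor 1},
\bean
P(z)=S^{(a)}\big(\al(z)([1](z)),\,\al(z)([1](z))\big)=\big([1](z),[1](z)\big)_z ,
\eean
where $(\,,\,)_z$ is the residue form on $A_\Phi(z)$. Equivalently, since $S^{(a)}$ is constant on the combinatorial bundle and $q(z)\in\sv\subset V$, one may just expand $q(z)$ in the standard basis $(F_{i,j})_{i<j}$ and use $S^{(a)}(F_{i,j},F_{i,j})=a_ia_j$, all other standard pairings being zero. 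I will follow this second route, since the expansion of $q(z)$ in the singular vectors $v_{i,j}$ is already at hand.

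First I would fix an auxiliary index $k$ and use Lemma \ref{ q inv}, formula \Ref{ij ne k}, to write $q(z)=\frac1{|a|^2}\sum' c_{i,j,k}\,v_{i,j}$, where $c_{i,j,k}=f_{i,j,k}^2/(d_{i,j}d_{j,k}d_{k,i})$ and the sum runs over pairs $i<j$ with $k\notin\{i,j\}$; by Lemma \ref{lem siNg}(iv) these $v_{i,j}$ form a basis of $\sv$. Substituting this into $P(z)=S^{(a)}(q,q)$ gives the finite double sum $\frac1{|a|^4}\sum'\sum' c_{i,j,k}\,c_{p,q,k}\,S^{(a)}(v_{i,j},v_{p,q})$, which I would organize according to how the index pairs $\{i,j\}$ and $\{p,q\}$ meet. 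Disjoint pairs contribute $0$; for equal pairs I would insert $S^{(a)}(v_{i,j},v_{i,j})=a_ia_j-a_ia_j(a_i+a_j)/|a|$, and for pairs sharing exactly one index the value $S^{(a)}(v_{i,j},v_{i,\ell})=-a_ia_ja_\ell/|a|$, keeping track in each case of the signs imposed by $v_{i,j}=-v_{j,i}$. Writing $|a|-a_i-a_j=a_k+\sum_{\ell\ne i,j,k}a_\ell$ in the diagonal terms, one sees that the part of the diagonal carrying the factor $a_k$ already reproduces exactly the contribution of the triples $\{i,j,k\}$ with the claimed coefficient $a_ia_ja_kf_{i,j,k}^4/(|a|^5d_{i,j}^2d_{j,k}^2d_{k,i}^2)$, so the genuine work concerns the terms with a product $a_ia_ja_\ell$, $i,j,\ell$ all distinct from $k$.

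The main obstacle is the algebraic collapse of those remaining terms: one must check that, for each triple $\{i,j,\ell\}$ of indices different from $k$, the total coefficient of $a_ia_ja_\ell$ assembled from the diagonal and from the three one-index-sharing contributions equals $f_{i,j,\ell}^4/(|a|^5d_{i,j}^2d_{j,\ell}^2d_{\ell,i}^2)$, and in particular is independent of $k$. Here the two Plücker-type identities of Lemma \ref{Pluk} are decisive: the linear relation lets one re-express the $v_{p,q}$'s whose shared index is not the one dictated by the accompanying $c$, exactly the manoeuvre used in the proof of Theorem \ref{thm c 2}, while the quadratic relation, suitably squared and cross-multiplied, turns the three-index-sharing combination into the single square $c_{i,j,\ell}^2$. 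As a consistency check, both sides of the asserted identity are homogeneous of degree $4$, so by Euler's identity it suffices to match their first derivatives; using $\frac{|a|}2\frac{\der q}{\der z_j}=\al(z)\big[\frac{a_j}{f_j}\big]$ one gets $\frac{\der P}{\der z_j}=\frac4{|a|}\big([\frac{a_j}{f_j}],[1](z)\big)_z$, which recasts the whole statement as the evaluation of a single family of Grothendieck residues; that evaluation is also accessible by the residue-at-infinity computation of Lemmas \ref{lem1} and \ref{two}, giving a second, more analytic, proof. The case $n=3$, where $\sv$ is one-dimensional and no cross terms occur, already displays the mechanism.
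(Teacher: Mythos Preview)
Your direct approach is correct and, once the bookkeeping is done, the collapse is exactly as you indicate: writing $c_{i,j,m}=f_{i,j,m}^2/(d_{i,j}d_{j,m}d_{m,i})$ and fixing the auxiliary index $m$, the coefficient of $a_ia_ja_\ell/|a|^5$ in $S^{(a)}(q,q)$ assembles into $(c_{i,j,m}-c_{i,\ell,m}+c_{j,\ell,m})^2$, and the second identity of Lemma~\ref{Pluk} (after the antisymmetry of $c$ in its three indices) says precisely that this linear combination equals $c_{i,j,\ell}$. So only the quadratic Pl\"ucker relation is needed; your appeal to the linear one is superfluous here.

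This is a genuinely different route from the paper. The paper does not compute $S^{(a)}(q,q)$ head-on; instead it invokes the general Theorem~\ref{thm MULTI} with $r=3$ to write $(\beta(\der_k)\beta(\der_\ell)\beta(\der_m),[1])_z=\frac{|a|^3}{4!}\frac{\der^3P}{\der z_k\der z_\ell\der z_m}$, then proves (Proposition~\ref{lem tP}) that the same triple product equals $\frac{\der^3\hat P}{\der z_k\der z_\ell\der z_m}$ for the explicit candidate $\hat P$, by a direct check of the second derivatives $\phi_{k,\ell}$. Since both $|a|^3P/4!$ and $\hat P$ are homogeneous of degree four, they agree. Your argument is more elementary and self-contained, bypassing Theorem~\ref{thm MULTI} and the detour through third derivatives; the paper's argument is more structural, fitting the formula into the derivative hierarchy that drives the Frobenius-like picture. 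Your alternative sketch via first derivatives and Euler's identity is a hybrid of the two and would also work, though it is closer in spirit to the paper's method (just with $r=1$ instead of $r=3$).
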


\begin{proof}
By Theorem \ref{thm MULTI}, for any $r\leq 4$, we have
\bean
\label{deriP 2}
(\beta(z)(\der_{m_1})*_z\dots *_z\beta(z)(\der_{m_{r}}), [1](z))_z = \frac{ |a|^r}{A_{2,r}} \frac{\der^rP}{\der z_{m_1}\dots\der z_{m_r}}(z),
\eean
for all $m_1,\dots,m_r\in J$. In particular,
\bean
\label{deriP 21}
(\beta(z)(\der_k)*_z\beta(z)(\der_{\ell})*_z\beta(z)(\der z_m), [1](z))_z = \frac{ |a|^3}{4!} \frac{\der^3P}{\der z_k\der z_\ell \der z_m}(z)
\eean
for all $k,\ell,m\in J$.  Introduce the function
\bean
\label{tP}
\hat P(z) = \frac 1{4!} \sum_{1\leq i<j<k\leq n} \frac{a_ia_ja_k}{ |a|^2}\, \frac{f_{i,j,k}^4}{d_{i,j}^2d_{j,k}^2d_{k,i}^2}.
\eean

\begin{prop}
\label{lem tP}
We have
\bean
\label{tP 21}
(\beta(z)(\der_j)*_z\beta(z)(\der_{\ell})*_z\beta(z)(\der z_m), [1](z))_z = \frac{\der^3\hat P}{\der z_j\der z_\ell \der z_m}(z)
\eean
for all $k,\ell,m\in J$.
\end{prop}

\begin{proof}

For $k,\ell\in J$, we define the differential one-form $\psi_{k,\ell}$ on $\C^n-\Delta$ by the formula
\bean
\psi_{k,\ell}(\der_m) =  ( \beta(z)(\der_k)*_z \beta(z)(\der_\ell), \beta(z)(\der_m))_z.
\eean
The canonical isomorphism identifies the residue
form and the contravariant form and therefore we may write
\bean
\psi_{k,\ell}(\der_m) =  S^{(a)}( \al(z)\beta(z)(\der_k)*_z\al(z)\beta(z)(\der_\ell), \al(z)\beta(z)(\der_m)).
\eean

\begin{lem}

The form $\psi_{k,\ell}$ is the differential of the function
\bean
\phi_{k,\ell}(z)=\frac {|a|}{2} \frac 1{d_{k,\ell}} S^{(a)}(v_{k,\ell},q(z)),
\eean
if $k\ne \ell$, and of the function
\bean
\phi_{k,\ell}(z)=\frac {|a|}{2}\sum_{i\notin\{j,k\}}\frac{d_{j,i}}{d_{k,j}d_{i,k}} S^{(a)}( v_{i,k}, v),
\eean
if $k=\ell$, where $j$ is any number in $J$ such that $j\ne k$.
\end{lem}

\begin{proof}
The vector $\al(z)\beta(z)(\der_k)*_z\al(z)\beta(z)(\der_\ell) = \al(z)(\big[\frac{a_k}{f_k}\big]\big[\frac{a_\ell}{f_\ell}\big])\in \sv$
equals $\frac 1{d_{k,\ell}} v_{k,\ell}$ if $k\ne\ell$ and equals $\sum_{i\notin\{j,k\}}\frac{d_{j,i}}{d_{k,j}d_{i,k}}  v_{i,k}$ if $k=\ell$.
We also have $\al(z)\beta(z)(\der_m) = \frac {|a|}2 \frac{\der q}{\der z_m}$. This implies the lemma.
\end{proof}

Proposition \ref{lem tP} is equivalent to the formula
\bean
\label{M2}
\frac{\der^2\hat P}{\der z_k\der z_\ell} = \phi_{k,\ell}
\eean
for all $k,\ell\in J$.
The proof  of  \Ref{M2} is by direct verification. Namely, assume that $k<\ell$. Then
\bean
\label{k<l P2}
&&
\\
\notag
&&
\frac{\der^2\hat P}{\der z_k\der z_\ell} = \frac{a_ia_ka_\ell}{2|a|^2} \Big(
\sum_{i<k} \frac{f_{i,k,l}^2}{d_{i,k}d_{k,\ell}d_{\ell,i}}\frac 1{d_{k,\ell}}
+\sum_{k<i<\ell} \frac{f_{k,i,l}^2}{d_{k,i}d_{i,\ell}d_{\ell,k}}\frac 1{d_{\ell,k}}
+\sum_{i>\ell} \frac{f_{k,\ell,i}^2}{d_{k,\ell}d_{\ell,i}d_{i,k}}\frac 1{d_{k,\ell}}\Big).
\eean
We also have
\bean
\label{k<l p2}
\phantom{aa}
&&
\\
\notag
\phi_{k,\ell}
&=& \frac 1{2|a|}S^{(a)}\Big(\frac 1{d_{k,\ell}}v_{k,\ell}, \sum_{i<j  \atop k\notin\{i,j\}} \frac{f_{i,j,k}^2}{d_{i,j}d_{j,k}d_{k,i}}
v_{i,j}\Big)
=
\\
\notag
&=&
\frac 1{2|a|d_{k,\ell}}S^{(a)}\Big(v_{k,\ell},
\sum_{i<k} \frac{f_{i,\ell,k}^2}{d_{i,\ell}d_{\ell,k}d_{k,i}}v_{i,\ell}
+ \sum_{k<i<\ell} \frac{f_{i,\ell,k}^2}{d_{i,\ell}d_{\ell,k}d_{k,i}}v_{i,\ell}
+ \sum_{i>\ell} \frac{f_{\ell,i,k}^2}{d_{\ell,i}d_{i,k}d_{k,\ell}}v_{\ell,i}\Big)
\\
\notag
&=&
\frac {a_ia_\ell a_k}{2|a|^2d_{k,\ell}}\Big(-
\sum_{i<k} \frac{f_{i,\ell,k}^2}{d_{i,\ell}d_{\ell,k}d_{k,i}}
- \sum_{k<i<\ell} \frac{f_{i,\ell,k}^2}{d_{i,\ell}d_{\ell,k}d_{k,i}}
+ \sum_{i>\ell} \frac{f_{\ell,i,k}^2}{d_{\ell,i}d_{i,k}d_{k,\ell}}\Big).
\eean
Comparing \Ref{k<l P2} and \Ref{k<l P2} we conclude that \Ref{M2} holds if $k<\ell$.
Assume that $k=\ell$. Then
\bean
\label{k P2}
\frac{\der^2\hat P}{\der z_k^2} =  \sum_{i<j  \atop k\notin\{i,j\}} \frac{a_ia_ja_k}{2|a|^2}\frac{f_{i,j,k}^2}{d^2_{j,k}d^2_{k,i}}
\eean
We also have
\bean
\label{k p2}
\phi_{k,\ell}
&=& \frac 1{2|a|}S^{(a)}\Big(\sum_{m\notin\{k,\ell\}}  \frac {d_{\ell,m}}{d_{k,\ell}d_{m,k}}v_{m,k}, \sum_{i<j  \atop k\notin\{i,j\}} \frac{f_{i,j,k}^2}{d_{i,j}d_{j,k}d_{k,i}}
v_{i,j}\Big)
=
\\
\notag
&=&
\sum_{i<j  \atop k\notin\{i,j\}} \frac{1}{2|a|}\frac{f_{i,j,k}^2}{d_{i,j}d_{j,k}d_{k,i}}
S^{(a)}\Big(\frac {d_{\ell,i}}{d_{k,\ell}d_{i,k}}v_{i,k} +\frac {d_{\ell,j}}{d_{k,\ell}d_{j,k}}v_{j,k}, v_{i,j}\Big)=
\\
\notag
&=&
\sum_{i<j  \atop k\notin\{i,j\}} \frac{a_ia_ja_k}{2|a|^2d_{k,\ell}}\frac{f_{i,j,k}^2}{d_{i,j}d_{j,k}d_{k,i}}\Big(
-\frac {d_{\ell,i}}{d_{i,k}} +\frac {d_{\ell,j}}{d_{j,k}}\Big)=
\\
\notag
&=&
\sum_{i<j  \atop k\notin\{i,j\}} \frac{a_ia_ja_k}{2|a|^2d_{k,\ell}}\frac{f_{i,j,k}^2}{d_{i,j}d_{j,k}d_{k,i}}
\frac {d_{\ell,k}d_{i,j}}{d_{i,k}d_{j,k}}=\sum_{i<j  \atop k\notin\{i,j\}} \frac{a_ia_ja_k}{2|a|^2}\frac{f_{i,j,k}^2}{d^2_{j,k}d^2_{k,i}} .
\eean
Comparing \Ref{k P2} and \Ref{k p2} we conclude that \Ref{M2} holds for $k=\ell$.
The proposition is proved.
\end{proof}

Both functions $|a|^3P(z)/4!$ and $\hat P(z)$ satisfy the same equation and both functions are homogeneous polynomials in $z$
of degree four. Hence $|a|^3P(z)/4! = \hat P(z)$. Thus the proposition implies the theorem.
\end{proof}

The period map $q:\cd\to\cd$ is a polynomial map in $z$ of degree two with respect to the combinatorial connection.

The space $\sing V$ has distinguished bases labeled by $k\in J$. The basis corresponding to $k$
consists of the
vectors $v_{i,j}$ such that $1\leq i<j\leq n$ and $k\notin\{i,j\}$. Such a basis defines coordinate hyperplanes in
$\sing V$.

\begin{lem}
The period map sends the discriminant $\Delta\subset\C^n$ to the union $\Delta_V\subset \sing V$
of all coordinate hyperplanes of all distinguished bases in
$\sing V$.

\end{lem}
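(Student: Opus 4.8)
The plan is to read the assertion off directly from the coordinate expansion of the period map in each of the distinguished bases, which is exactly what Lemma \ref{ q inv} provides. First I would recall that, by Lemma \ref{ q inv}, for every $k\in J$ one has
\begin{equation*}
q(z)\ =\ \frac1{|a|^2}\sum'\ \frac{f_{i,j,k}^2}{d_{i,j}d_{j,k}d_{k,i}}\ v_{i,j},
\end{equation*}
the sum running over all pairs $i<j$ with $k\notin\{i,j\}$. By Lemma \ref{lem siNg}(iv) the vectors $v_{i,j}$ with $i<j$, $k\notin\{i,j\}$, form the distinguished basis of $\sv$ labelled by $k$, so the displayed formula is precisely the expansion of $q(z)$ in that basis: the coordinate of $q(z)$ along $v_{i,j}$ equals $\frac1{|a|^2}f_{i,j,k}^2/(d_{i,j}d_{j,k}d_{k,i})$. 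Since $q$ is a polynomial map in $z$, this identity, a priori valid on $\cd$, holds on all of $\C^n$.

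Next, let $z\in\Delta$. Then $z\in H_{i_0,j_0,k_0}$ for some triple with $i_0<j_0$ and $k_0\notin\{i_0,j_0\}$, i.e.\ $f_{i_0,j_0,k_0}(z)=0$. Applying the formula above with $k=k_0$, the coordinate of $q(z)$ along $v_{i_0,j_0}$ in the distinguished basis labelled by $k_0$ is $\frac1{|a|^2}f_{i_0,j_0,k_0}(z)^2/(d_{i_0,j_0}d_{j_0,k_0}d_{k_0,i_0})=0$. Hence $q(z)$ lies on the coordinate hyperplane of the $k_0$-th distinguished basis cut out by the vanishing of the $v_{i_0,j_0}$-coordinate, so $q(z)\in\Delta_V$. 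Since $\Delta=\bigcup_{1\le i<j<k\le n}H_{i,j,k}$, this gives $q(\Delta)\subseteq\Delta_V$, which is the claim.

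There is no serious obstacle here once Lemma \ref{ q inv} is in hand; the only point requiring a little care is the bookkeeping, namely matching, for each $k$, the summands of Lemma \ref{ q inv} with the coordinate functions of the distinguished basis labelled by $k$, and checking that each summand is well defined under the interchange $i\leftrightarrow j$ (it is, because $f_{i,j,k}^2$ is symmetric in $i,j$ while $d_{i,j}d_{j,k}d_{k,i}$ and $v_{i,j}$ each change sign). I would also record the slightly sharper statement that $q$ maps the single hyperplane $H_{i,j,k}$ into the single coordinate hyperplane $\{v_{i,j}\text{-coordinate}=0\}$ of the basis labelled by $k$; this is the natural refinement and makes the inclusion transparent.
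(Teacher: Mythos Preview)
Your proposal is correct and follows essentially the same approach as the paper: both arguments invoke Lemma~\ref{ q inv} to identify the coordinate functions of $q$ in the distinguished basis labelled by $k$ with the quantities $\frac{1}{|a|^2}\,f_{i,j,k}^2/(d_{i,j}d_{j,k}d_{k,i})$, and then read off the vanishing of the appropriate coordinate when $z$ lies on $H_{i,j,k}$. Your version is simply more explicit about the bookkeeping and about the polynomial extension to all of $\C^n$, and your remark that $q$ sends each hyperplane $H_{i,j,k}$ into the corresponding coordinate hyperplane is a welcome refinement.
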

\begin{proof}
The period map is given by the formula $q(z) = \frac 1{|a|^2}\sum' \frac{ f_{i,j,k}^2}{ d_{i,j}d_{j,k}d_{k,i}}\,v_{i,j},$
where the sum is over all pairs $i<j$ such that $k\notin\{i,j\}$. Thus the functions $\frac{ f_{i,j,k}^2}{ d_{i,j}d_{j,k}d_{k,i}}$
are the coordinate functions of the period map in this (combinatorially flat) basis. The lemma follows from this description of the
 coordinate functions.
\end{proof}

\begin{lem}
\label{lem kernel}
For $z\in\C^n-\Delta$, the kernel of the differential of the period map is two dimensional. The kernel is spanned by
the vectors
\bean
\sum_{j\ne i} d_{j,i}\der_j, \qquad i\in J.
\eean
Any two of these vectors are linearly independent.
\qed
\end{lem}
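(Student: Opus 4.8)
The plan is to read the kernel of $dq_z$ directly off the explicit formula for the period map. Fix $k\in J$. By Lemma~\ref{lem siNg}(iv) the vectors $v_{i,j}$ with $1\le i<j\le n$ and $k\notin\{i,j\}$ form a basis of $\sing V$, and by Lemma~\ref{ q inv} the coordinate functions of $q$ in this basis are
\[
q_{i,j}(z)=\frac1{|a|^2}\,\frac{f_{i,j,k}(z)^2}{d_{i,j}d_{j,k}d_{k,i}},\qquad i<j,\ k\notin\{i,j\}.
\]
Since $f_{i,j,k}=d_{j,k}z_i+d_{k,i}z_j+d_{i,j}z_k$ is linear, for a tangent vector $v=\sum_{m\in J}c_m\der_m$ one has $v(q_{i,j})=\dfrac{2f_{i,j,k}(z)}{|a|^2 d_{i,j}d_{j,k}d_{k,i}}\,(c_id_{j,k}+c_jd_{k,i}+c_kd_{i,j})$. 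As $z\in\C^n-\Delta$, all $f_{i,j,k}(z)\ne0$, so $dq_z(v)=\sum v(q_{i,j})\,v_{i,j}$ vanishes if and only if $c_id_{j,k}+c_jd_{k,i}+c_kd_{i,j}=0$ for every pair $i<j$ with $k\notin\{i,j\}$.

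Next I would recognize $c_id_{j,k}+c_jd_{k,i}+c_kd_{i,j}$ as the $3\times3$ determinant with rows $(c_i,c_j,c_k)$, $(b^1_i,b^1_j,b^1_k)$, $(b^2_i,b^2_j,b^2_k)$; thus $v\in\ker dq_z$ exactly when every maximal minor that uses column $k$ of the $3\times n$ matrix $M$ with rows $(c_m)_m$, $(b^1_m)_m$, $(b^2_m)_m$ vanishes. I would then upgrade this to $\rank M\le2$: fixing any $i_0\ne k$, columns $i_0$ and $k$ of $M$ are linearly independent because their lower $2\times2$ minor equals $d_{i_0,k}\ne0$, and vanishing of the minor on columns $\{i_0,m,k\}$ forces every remaining column $m$ into their span. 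Because the rows $(b^1_m)_m$ and $(b^2_m)_m$ are themselves linearly independent (e.g.\ by $d_{1,2}\ne0$), the condition $\rank M\le2$ is equivalent to $(c_1,\dots,c_n)\in\operatorname{span}\!\big((b^1_1,\dots,b^1_n),(b^2_1,\dots,b^2_n)\big)$. Hence $\ker dq_z$ is precisely this $2$-dimensional subspace, which in particular proves $\dim\ker dq_z=2$ and, a posteriori, shows the kernel does not depend on the auxiliary choice of $k$.

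It remains to identify the asserted spanning vectors. The vector $u_i:=\sum_{j\ne i}d_{j,i}\der_j$ has coordinate vector $(d_{m,i})_{m\in J}$ (with $d_{i,i}=0$), and $d_{m,i}=b^1_mb^2_i-b^1_ib^2_m$ shows $(d_{m,i})_m=b^2_i(b^1_m)_m-b^1_i(b^2_m)_m$, so $u_i\in\ker dq_z$. Writing each $u_i$ in the kernel basis $\{(b^1_m)_m,(b^2_m)_m\}$ as $b^2_i(b^1_m)_m-b^1_i(b^2_m)_m$, two of them $u_i,u_j$ are linearly dependent precisely when the coefficient determinant $b^1_ib^2_j-b^1_jb^2_i=d_{i,j}$ vanishes, which never happens; hence any two of the $u_i$ form a basis of the $2$-dimensional kernel, and the whole family spans it. The only mildly delicate step is the passage from "minors through the fixed column $k$" to $\rank M\le2$; once that is in hand everything else is linear algebra driven by the standing hypothesis $d_{i,j}\ne0$. (Alternatively, $u_i\in\ker dq_z$ can be obtained from Theorem~\ref{thm can 2} together with the identity $\sum_{i\in J}d_{i,j}\big[\frac{a_i}{f_i}\big]=0$ in $A_\Phi(z)$, after which the same dimension count finishes the proof.)
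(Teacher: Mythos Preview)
Your proof is correct. The paper omits the proof entirely (the lemma carries a bare \qed), so there is nothing to compare against; the argument you give --- computing $dq_z$ in the basis of Lemma~\ref{ q inv}, recognising the vanishing conditions as $3\times3$ minors through a fixed column of the matrix with rows $(c_m),(b^1_m),(b^2_m)$, and reducing to $\rank\le2$ --- is exactly the natural route suggested by the explicit coordinate formula \Ref{ij ne k}, and the ``mildly delicate'' step you flag is handled cleanly.
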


Introduce the {\it potential function of second kind}
\bean
\label{POT2k CC}
\tilde P(z_1,\dots,z_n) = \frac1{4!}\!\sum_{1\leq i<j<k\leq n}\! \frac{a_ia_ja_k}{d_{i,j}^2d_{j,k}^2d_{k,i}^2} f_{i,j,k}^4\log f_{i,j,k}.
\eean

\begin{thm}
\label{pot2k thm}
For any $m_0,\dots,m_4\in J$ we have
\bean
\label{5der}
\frac{\der^5\tilde P}{\der z_{m_0}\dots\der z_{m_4}}(z) = \Big(\Big[\frac{a_{m_0}}{f_{m_0}}\Big]*_z\dots*_z \Big[\frac{a_{m_4}}{f_{m_4}}\Big], [1](z)\Big)_z.
\eean

\end{thm}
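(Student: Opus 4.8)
The strategy is to reduce the fifth-derivative identity to the already-established cubic identity of Proposition \ref{lem tP}, in the same spirit as the passage from Theorem \ref{thm MULTI} to Theorem \ref{pot2k thm} in the points-on-a-line case. First I would note that by Theorem \ref{thm can 2} together with the multiplication formulas \Ref{MuL}, for any $m,\ell\in J$ the element $\al(z)\big(\big[\tfrac{a_m}{f_m}\big]\big[\tfrac{a_\ell}{f_\ell}\big]\big)\in\sv$ is a $z$-independent linear combination of the combinatorially flat vectors $v_{i,j}$; equivalently, in the critical-set algebra, $\big[\tfrac{a_m}{f_m}\big]*_z\big[\tfrac{a_\ell}{f_\ell}\big]=\sum B_{i,j}^{m,\ell}\,w_{i,j}$ with constant $B$'s (explicitly, $\tfrac1{d_{m,\ell}}w_{m,\ell}$ for $m\ne\ell$, and the formula in \Ref{MuL} for $m=\ell$). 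Hence $\der_{m_3}\der_{m_4}$ applied to the product $\big[\tfrac{a_{m_0}}{f_{m_0}}\big]*_z\big[\tfrac{a_{m_1}}{f_{m_1}}\big]*_z\big[\tfrac{a_{m_2}}{f_{m_2}}\big]$, using the Leibniz rule and the fact that the algebra structure constants $K_j(z)$ are the ones controlling $\der_j$ on sections of the bundle of algebras, must be compared with two more $\der$'s applied to the cubic potential $\hat P$. More precisely, I would write everything in terms of $\hat P(z)=\tfrac1{4!}\sum a_ia_ja_k f_{i,j,k}^4/(|a|^2 d_{i,j}^2d_{j,k}^2d_{k,i}^2)$ and $\tilde P(z)$, noting $\tilde P$ differs from a rescaled $\hat P$ only by the presence of $\log f_{i,j,k}$ instead of $f_{i,j,k}^0$ inside; since $\der^4$ of $f^4\log f$ equals $4!\log f + c$ and $\der^5$ of $f^4\log f$ equals $4!\,\der\log f = 4!/f$ times a linear form, the five-fold derivative of $\tilde P$ will produce terms of the form $f_{i,j,k}/f_{i,j,k}$-type expressions matching the $1/f_{i,j,k}$ appearing in \Ref{MuL}.

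The key computation is therefore: for fixed $m_0,\dots,m_4$, expand the right side of \Ref{5der}. Using \Ref{MuL} and Proposition \ref{lem tP}, one has
\bean
\Big(\prod_{i=0}^4\Big[\tfrac{a_{m_i}}{f_{m_i}}\Big], [1](z)\Big)_z
= \der_{m_3}\der_{m_4}\Big(\prod_{i=0}^2\Big[\tfrac{a_{m_i}}{f_{m_i}}\Big], [1](z)\Big)_z
- (\text{correction terms}),
\eean
where the correction terms come from derivatives hitting $[1](z)$ and from the non-constancy of the structure constants when one of the triple products is of the form $\big[\tfrac{a_m}{f_m}\big]*_z\big[\tfrac{a_m}{f_m}\big]$. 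By Proposition \ref{lem tP} the first term equals $\der_{m_0}\der_{m_1}\der_{m_2}\der_{m_3}\der_{m_4}\hat P$ up to the constant relating $\hat P$ and $|a|^3 P/4!$; and because $\hat P$ is a degree-four polynomial while $\tilde P$ has the $\log$ factors, the fifth derivative of $\hat P$ vanishes, so the entire content of \Ref{5der} lies in the $\log$ part. I would then directly differentiate \Ref{POT2k}: since $\der^5\big(f_{i,j,k}^4\log f_{i,j,k}\big)/\der z_{m_0}\cdots\der z_{m_4}$, for the linear form $f_{i,j,k}$, equals $4!\,(\text{product of the five coefficients of }z_{m_0},\dots,z_{m_4}\text{ in }f_{i,j,k})/f_{i,j,k}$, each summand of $\der^5\tilde P$ is a constant times $1/f_{i,j,k}$, which matches exactly the $d_{j,k}/f_{i,j,k}$ structure in \Ref{MuL} and in $K_i(z)$, see \Ref{K ex}. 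Matching coefficients—using Lemma \ref{Pluk} to handle the Plücker-type relations among the $f_{i,j,k}$ and the coefficients $d_{i,j}$, exactly as in the proof of Theorem \ref{thm can 2} and Proposition \ref{lem tP}—completes the verification.

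The main obstacle is the bookkeeping in the $m=\ell$ (and more degenerate) cases: when several of the indices $m_0,\dots,m_4$ coincide, the triple products $\big[\tfrac{a_m}{f_m}\big]^{*2}$ involve the $k$-dependent sum $\sum_{i\notin\{j,k\}} d_{k,i}/(d_{j,k}d_{i,j})\,w_{i,j}$ from \Ref{MuL}, whose coefficients, although $z$-independent, are not as transparent as in the generic case, and one must verify that the choice of auxiliary index $j$ drops out. Organizing the Leibniz expansion so that the only surviving terms are those producing a single pole $1/f_{i,j,k}$—and checking that all polynomial (pole-free) contributions cancel, as they must since $\der^5$ kills the degree-four polynomial part—is where care is needed. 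I expect the cleanest route is to prove \Ref{5der} first for all indices distinct using Proposition \ref{lem tP} and the formula $\al(z)\beta(z)(\der_j)=\tfrac{|a|}{2}\tfrac{\der q}{\der z_j}$, then deduce the coincident-index cases by continuity/polynomial-identity arguments, observing that both sides of \Ref{5der} are rational in $z$ with poles only along $\Delta$ and agree on a Zariski-dense set.
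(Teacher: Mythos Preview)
Your proposal has two genuine gaps.

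First, the ``reduction to Proposition \ref{lem tP}'' is not a reduction at all. You correctly observe that $\hat P$ has degree four in $z$, so $\der^5\hat P=0$; thus applying $\der_{m_3}\der_{m_4}$ to the cubic identity yields $0$ on the $\hat P$ side, and \emph{all} of \Ref{5der} is contained in what you call the ``correction terms''. Those terms arise from the $z$-dependence of the residue form $(\,,\,)_z$, of $[1](z)$, and of the multiplication $*_z$ itself, and you have not shown how to compute or organize them. So the Leibniz-rule strategy, as written, just relabels the problem rather than solving it.

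Second, the closing argument --- ``prove \Ref{5der} first for all indices distinct \dots\ then deduce the coincident-index cases by continuity/polynomial-identity arguments'' --- is invalid. The indices $m_0,\dots,m_4$ range over the \emph{discrete} set $J$; there is no continuous parameter in which to take a limit from distinct to coincident indices. Both sides of \Ref{5der} are rational functions of $z$ for each \emph{fixed} tuple $(m_0,\dots,m_4)$, and agreement for one tuple says nothing about another.

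The paper's proof is quite different and avoids both pitfalls. It exploits two parallel linear relations: the algebra relation $\sum_{j\in J} d_{i,j}\big[\tfrac{a_j}{f_j}\big]=0$ from \Ref{a/f k=2}, and the analogous relation $\sum_{j\in J} d_{i,j}\,\der_{z_j}\big(\der^4\tilde P/\der z_{m_1}\cdots\der z_{m_4}\big)=0$ on the potential side. Since both sides of \Ref{5der} are symmetric in the five indices and each side obeys the same linear relation in any one slot, one can reduce (after relabeling $J$) to just three canonical tuples: $(5,1,2,3,4)$, $(3,1,2,3,4)$, and $(3,1,2,1,2)$. Each of these is then checked by a short direct computation using \Ref{KJ 2} and Lemma \ref{lem Shap}. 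Your direct-differentiation remark that $\der^5(f_{i,j,k}^4\log f_{i,j,k})$ produces a single pole $1/f_{i,j,k}$ is the correct observation for the left-hand side of these checks; what you are missing is the linear-relation machinery that makes three cases suffice.
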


Theorem \ref{pot2k thm} proves Conjecture \ref{CB4} for this family of arrangements.

If $m_1\ne m_2$ and $m_3\ne m_4$, equation \Ref{5der} takes the form
\bean
\label{dERpk2 CC}
S^{(a)}(K_{m_0}(z)v_{m_1,m_2},v_{m_3,m_4}) = d_{m_1,m_2}d_{m_3,m_4} \frac{\der^5\tilde P}{\der z_{m_0}\dots\der z_{m_4}}(z).
\eean

\begin{cor}
\label{an comb2}
The matrix elements of the operators $K_i(z)$ with respect to the
(combinatorially constant) vectors $v_{i,j}$ are described by the fifth
derivatives of the potential function of second kind.

\end{cor}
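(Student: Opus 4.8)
The plan is to evaluate each side of \Ref{5der} explicitly and identify them.

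\textbf{Left-hand side.} Each $f_{i,j,k}=d_{j,k}z_i+d_{k,i}z_j+d_{i,j}z_k$ is affine-linear in $z$, so writing $f=f_{i,j,k}$ one has $\der_m(f^4\log f)=(\der_mf)\,f^3(4\log f+1)$, and iterating five times annihilates the polynomial part (of degree $\le 4$):
\be
\frac{\der^5}{\der z_{m_0}\dots\der z_{m_4}}\bigl(f_{i,j,k}^4\log f_{i,j,k}\bigr)=4!\,\frac{\prod_{s=0}^4\der_{m_s}f_{i,j,k}}{f_{i,j,k}}\,.
\ee
Since $\der_mf_{i,j,k}$ equals $d_{j,k},d_{k,i},d_{i,j}$ according as $m=i,j,k$ and is $0$ otherwise,
\be
\frac{\der^5\tilde P}{\der z_{m_0}\dots\der z_{m_4}}(z)=\sum_{i<j<k}\frac{a_ia_ja_k}{d_{i,j}^2d_{j,k}^2d_{k,i}^2}\,\frac{\prod_{s=0}^4\der_{m_s}f_{i,j,k}}{f_{i,j,k}}\,,
\ee
a sum over the triples $\{i,j,k\}$ containing $\{m_0,\dots,m_4\}$; in particular it vanishes when $m_0,\dots,m_4$ take more than three distinct values. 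This is a rational function with at worst simple poles along the hyperplanes $f_{i,j,k}=0$, homogeneous of degree $-1$ in $z$ (indeed $\tilde P(\la z)=\la^4\tilde P(z)+\la^4\log\la\cdot|a|^2\hat P(z)$, and the fifth derivative kills the degree-four part).

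\textbf{Right-hand side; reduction.} By Theorems \ref{thm alpha} (so $(-1)^k=1$), \ref{thm can 2} and Corollary \ref{cor 1}, together with the Frobenius identity $(\phi*_z\psi,\theta)_z=(\phi,\psi*_z\theta)_z$ and $\al(z)[\tfrac{a_j}{f_j}]=\tfrac{|a|}2\tfrac{\der q}{\der z_j}$ (Theorem \ref{thm der one}, $r=1$),
\be
\Bigl(\prod_{s=0}^4\Bigl[\tfrac{a_{m_s}}{f_{m_s}}\Bigr],[1](z)\Bigr)_z=\Bigl(\prod_{s=1}^4\Bigl[\tfrac{a_{m_s}}{f_{m_s}}\Bigr],\Bigl[\tfrac{a_{m_0}}{f_{m_0}}\Bigr]\Bigr)_z=\frac{|a|}2\,S^{(a)}\Bigl(\al(z)\bigl(\prod_{s=1}^4[\tfrac{a_{m_s}}{f_{m_s}}]\bigr),\ \frac{\der q}{\der z_{m_0}}(z)\Bigr).
\ee
On the other hand $S^{(a)}\bigl(\al(z)(\prod_{s=1}^4[\tfrac{a_{m_s}}{f_{m_s}}]),q(z)\bigr)=\tfrac{|a|^4}{4!}\tfrac{\der^4P}{\der z_{m_1}\dots\der z_{m_4}}(z)$ by Theorem \ref{thm MULTI} ($r=4$, $A_{2,4}=4!$). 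Differentiating this identity in $z_{m_0}$, using that $S^{(a)}$ is constant for the combinatorial connection, that $\tfrac{\der q}{\der z_{m_0}}=\tfrac2{|a|}\al(z)[\tfrac{a_{m_0}}{f_{m_0}}]$, and that $\der^5P=0$ (as $q$, hence $P$, is a polynomial of degree $2$, resp. $4$), I obtain the closed reduction
\be
\Bigl(\prod_{s=0}^4\Bigl[\tfrac{a_{m_s}}{f_{m_s}}\Bigr],[1](z)\Bigr)_z=-\frac{|a|}2\,S^{(a)}\Bigl(\frac{\der}{\der z_{m_0}}\al(z)\bigl(\prod_{s=1}^4[\tfrac{a_{m_s}}{f_{m_s}}]\bigr),\ q(z)\Bigr).
\ee

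\textbf{Right-hand side; evaluation, and the obstacle.} It remains to compute the four-fold product $\al(z)(\prod_{s=1}^4[\tfrac{a_{m_s}}{f_{m_s}}])\in\sv$ explicitly: two-fold products lie in the span of the $w_{i,j}$ by \Ref{umn}, each further multiplication by a generator is given by \Ref{MuL}, and $\al(z):w_{i,j}\mapsto v_{i,j}$ (Theorem \ref{thm can 2}); one runs through the cases according to the coincidence pattern of $m_1,\dots,m_4$, the repeated-index ones requiring the second lines of \Ref{umn}, \Ref{MuL}. Differentiating in $z_{m_0}$ and pairing with $q(z)=\tfrac1{|a|^2}\sum'\tfrac{f_{i,j,k}^2}{d_{i,j}d_{j,k}d_{k,i}}v_{i,j}$ (Lemma \ref{q inv}) through the contravariant-form values \Ref{SF jji} produces a rational function carrying products of several $f_{i,j,k}$ in its denominators, and the heart of the argument is that it collapses to the single-$f$ expression of the first step. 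I expect this collapse to be the main obstacle: it is where the Plücker-type relations of Lemma \ref{Pluk} (the same ones already used in the proof of Theorem \ref{thm c 2}) are applied repeatedly, with the repeated-index subcases — culminating in $m_0=\dots=m_4$, handled via the squaring formula in \Ref{umn} with an auxiliary index — being the most delicate bookkeeping; the cases with more than three distinct indices are the easy ones, since then both sides vanish (for the right side because the vectors being paired are pairwise orthogonal by \Ref{SF jji}). A cleaner alternative that avoids the global Plücker computation: both sides are rational, homogeneous of degree $-1$, with poles only along $\bigcup_{i<j<k}\{f_{i,j,k}=0\}$, so it suffices to check that their residues along each $\{f_{i,j,k}=0\}$ agree; that residue computation localizes to the subarrangement of the three lines $H_i,H_j,H_k$, where $\dim\sv=1$ and the comparison with the first-step formula is immediate, whereupon the absence of other poles forces the difference — a polynomial, homogeneous of degree $-1$ — to vanish.
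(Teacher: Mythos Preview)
Your proposal correctly recognizes that the corollary is an immediate reformulation of \Ref{dERpk2}, so the real task is Theorem~\ref{pot2k thm}, and your computation of the left-hand side of \Ref{5der} is right. But the path you take for the right-hand side is much heavier than necessary, and neither of your two alternatives is actually carried out.

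The paper's proof rests on one observation you do not use: both sides of \Ref{5der} are, in each of the five arguments $m_0,\dots,m_4$, annihilated by the operators $\sum_{j\in J} d_{i,j}\,(\cdot)$ --- on the algebra side this is \Ref{a/f k=2}, on the $\tilde P$ side it is \Ref{kerPotk2} (a consequence of the Pl\"ucker identity $d_{i,k}d_{\ell,m}-d_{i,\ell}d_{k,m}+d_{i,m}d_{k,\ell}=0$, which kills each $f_{k,\ell,m}$). These common linear relations let one rewrite any $5$-tuple $(m_0,\dots,m_4)$ as a linear combination of three canonical ones, e.g.\ $(5,1,2,3,4)$, $(3,1,2,3,4)$, $(3,1,2,1,2)$, and a direct check of \Ref{dERpk2} in each of these using \Ref{KJ 2} and \Ref{SF jji} takes a few lines. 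This replaces the global Pl\"ucker bookkeeping you anticipate as ``the main obstacle.''

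Your residue alternative is sound in spirit --- both sides are rational, homogeneous of degree $-1$, with at worst simple poles along the $\{f_{i,j,k}=0\}$, so matching residues suffices --- but the sentence ``localizes to the subarrangement of the three lines $H_i,H_j,H_k$, where $\dim\sv=1$'' is not a proof. The residue of the right-hand side along $f_{i,j,k}=0$ is $\frac{1}{d_{m_1,m_2}d_{m_3,m_4}}\,d_{\beta,\gamma}\,S^{(a)}(L_{i,j,k}v_{m_1,m_2},v_{m_3,m_4})$ (with $\{m_0,\beta,\gamma\}=\{i,j,k\}$), a quantity in the \emph{full} space $\sv$ that need not vanish when $\{m_3,m_4\}\not\subset\{i,j,k\}$; it is governed by the pairing rules \Ref{SF jji} rather than by a one-dimensional sub-arrangement. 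Working this out case by case is exactly the paper's three-case check, so the residue route is not an independent shortcut.
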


 Notice that
\bean
S^{(a)}(v_{m_1,m_2},v_{m_3,m_4}) = d_{m_1,m_2}d_{m_3,m_4} \frac{|a|^4}{4!}\frac{\der^4P}{\der z_{m_1}\dots\der z_{m_4}}(z),
\eean
where $P(z)$ is the potential function  of first kind, see Theorem \ref{thm MULTI}.

\medskip
{\it Proof of Theorem \ref{pot2k thm}.}
%\begin{proof}
We have the relation $\sum_{j\in J}d_{i,j}\big[\frac{a_j}{f_j}  \big] = 0$ for any $i\in J$, see \Ref{a/f k=2},
and the relation
\bean
\label{kerPotk2}
\sum_{j\in J}d_{i,j}\frac{\der}{\der z_j}   \frac{\der^4P}{\der z_{m_1}\dots\der z_{m_4}}(z)
=0
\eean
 for any $m_1,\dots,m_4,i\in J$. By using these two relations and by reordering the set $J$ if necessary,
 we can reduce formula  \Ref{5der} to three cases in which $(m_0,\dots,m_4)$ equals $(5,1,2,3,4)$ or
$(3,1,2,3,4)$ or $(3,1,2,1,2)$.

Let $(m_0,\dots,m_4)=(5,1,2,3,4)$. Then $\frac{\der^5\tilde P}{\der z_{m_0}\dots\der z_{m_4}}(z)=0$ and
\bean
&&
\Big(\Big[\frac{a_{m_0}}{f_{m_0}}\Big]*_z\dots*_z \Big[\frac{a_{m_4}}{f_{m_4}}\Big], [1](z)\Big)_z =
\frac 1{d_{1,2}d_{3,4}}S^{(a)}(K_5(z)v_{1,2},v_{3,4})=
\\
\notag
&&
\phantom{aa}
= \frac {d_{1,2}}{d_{1,2}d_{3,4}f_{5,1,2}}
S^{(a)}(a_5v_{1,2}+a_1v_{2,5}+a_2v_{5,1},v_{3,4}) = 0.
\eean
Let $(m_0,\dots,m_4)=(3,1,2,3,4)$. Then $\frac{\der^5\tilde P}{\der z_{m_0}\dots\der z_{m_4}}(z)=0$ and
\bean
&&
\Big(\Big[\frac{a_{m_0}}{f_{m_0}}\Big]*_z\dots*_z \Big[\frac{a_{m_4}}{f_{m_4}}\Big], [1](z)\Big)_z =
\frac 1{d_{1,2}d_{3,4}}S^{(a)}(K_3(z)v_{1,2},v_{3,4})=
\\
\notag
&&
\phantom{aa}
= \frac {d_{1,2}}{d_{1,2}d_{3,4}f_{3,1,2}}
S^{(a)}(a_3v_{1,2}+a_1v_{2,3}+a_2v_{3,1},v_{3,4}) =
\\
\notag
&&
\phantom{aa}
=
\frac {d_{1,2}}{d_{1,2}d_{3,4}f_{3,1,2}}
\big(0 + a_1\frac{a_2a_3a_4}{|a|} - a_2\frac{a_1a_3a_4}{|a|}\big)=0.
\eean
Let $(m_0,\dots,m_4)=(3,1,2,1,2)$. Then $\frac{\der^5\tilde P}{\der z_{m_0}\dots\der z_{m_4}}(z)=\frac {a_1a_2a_3} {d_{1,2}f_{1,2,3}}$ and
\bean
&&
\Big(\Big[\frac{a_{m_0}}{f_{m_0}}\Big]*_z\dots*_z \Big[\frac{a_{m_4}}{f_{m_4}}\Big], [1](z)\Big)_z =
\frac 1{d_{1,2}d_{1,2}}S^{(a)}(K_3(z)v_{1,2},v_{1,2})=
\\
\notag
&&
\phantom{aa}
= \frac {d_{1,2}}{d_{1,2}d_{1,2}f_{3,1,2}}
S^{(a)}(a_3v_{1,2}+a_1v_{2,3}+a_2v_{3,1},v_{1,2}) =
\\
\notag
&&
\phantom{aa}
=
\frac {d_{1,2}}{d_{1,2}d_{1,2}f_{3,1,2}}
\big(a_3\frac{a_1a_2\sum_{j\notin\{1,2\}}a_j}{|a|}
+ a_1\frac{a_1a_2a_3}{|a|} + a_2\frac{a_1a_2a_3}{|a|}\big)= \frac {a_1a_2a_3} {d_{1,2}f_{1,2,3}}.
\eean
The theorem is proved.
 \qed
%\end{proof}

\subsection{Frobenius like structure}

Consider the quotient $M$ of $\C^n$ by the two-dimensional subspace, which is the kernel of the period map,
see Lemma \ref{lem kernel}. Let $\pi: \C^n\to M$ be  the natural projection.
Then all our objects  descend to  the quotient and form on $M-\pi(\Delta)$ a structure which we will also
call a {\it Forbenius like structure}.

\section{Generic arrangements  in $ \C^k$}
\label{sec arrs k}

\subsection{An arrangement in  $\C^n\times\C^k$}
\label{An arrangement in ck}
Consider $\C^k$ with coordinates $t_1,\dots,t_k$,\
$\C^n$ with coordinates $z_1,\dots,z_n$. Fix $n$
 linear functions on $\C^k$,
$g_j=\sum_{m=1}^k b^m_jt_m,$\ $ j\in J,$
$b_j^m\in \C$. For ${i_1,\dots,i_k}\subset J$,
denote
\bean
d_{i_1,\dots,i_k} = \text{det}_{\ell,m=1}^k (b^m_{i_\ell}).
\eean
We assume that all the numbers $d_{i_1,\dots,i_k}$ are nonzero if ${i_1,\dots,i_k}$ are distinct. In other words we assume that the collection of functions $g_j, j\in J$, is generic.
We define $n$ linear functions on $\C^n\times\C^k$,
$f_j = z_j+g_j,$\ $ j\in J.$
In $\C^n\times \C^k$ we define
 the arrangement $\tilde \A = \{ \tilde H_j\ | \ f_j = 0, \ j\in J \}$.

For every $z=(z_1,\dots,z_n)$ the arrangement $\tilde \A$
induces an arrangement $\A(z)$ in the fiber of the projection $\C^n\times\C^k\to\C^n$
over $z$. We
identify every fiber with $\C^k$. Then $\A(z)$ consists of
hyperplanes  $H_j(z), j\in J$, defined in $\C^k$ by the equations
$f_j=0$.
Denote $ U(\A(z)) = \C^k - \cup_{j\in J} H_j(z)$,  the complement to the arrangement $\A(z)$.

The arrangement $\A(z)$ is with normal crossings if and only if $z \in \C^n-\Delta$,
\bean
\Delta = \cup_{\{i_1<\dots<i_{k+1}\}\subset J}H_{i_1,\dots,i_{k+1}},
\eean
where $H_{i_1,\dots,i_{k+1}}$ is the hyperplane defined by the equation
$f_{i_1,\dots,i_{k+1}}=0$,
\bean
\label{ i_1,...,i_k}
f_{i_1,\dots,i_{k+1}}= \sum_{m=1}^{k+1} (-1)^{m-1} z_{i_m} d_{i_1,\dots,\widehat{i_m},\dots,i_{k+1}}.
\eean

\subsection{Good fibers}
\label{sec Good exk}

For any $z\in\C^n-\Delta$, the space $\OS^k(\A(z))$ has the standard basis
\linebreak
$(H_{i_1}(z),\dots,H_{i_k}(z))$,  $1\leq i_1<\dots<i_k\leq n$. The space $\FF^k(\A(z))$ has the standard dual basis
$F(H_{i_1}(z),\dots,H_{i_k}(z))$.
For $z^1, z^2\in \C^n-\Delta$, the combinatorial connection identifies the spaces
$\OS^k(\A(z^1))$, $\FF^k(\A(z^1))$   with the spaces
$\OS^k(\A(z^2))$, $\FF^k(\A(z^2))$, respectively, by identifying the corresponding standard bases.

Assume that nonzero weights $(a_j)_{j\in J}$ are given. Then each
arrangement $\A(z)$  is weighted. For $z\in\C^n-\Delta$, the arrangement $\A(z)$ is unbalanced
if $|a|\ne 0$.  We assume that $|a|\ne 0$.

For $z\in\C^n-\Delta$, we denote $V=\FF^k(\A(z))$, $V^*=(\FF^k(\A(z))^*=\OS^k(\A(z))$,
 $F_{i_1,\dots,i_k} = F(H_{i_1}(z),\dots,H_{i_k}(z))$. For any permutation $\sigma\in \Sigma_k$,
 we have $F_{i_{\sigma(1)},\dots,i_{\sigma(k)}}=(-1)^\sigma F_{i_1,\dots,i_k}$.
 If $v = \sum_{1\leq i_1<\dots<i_k\leq n} c_{i_1,\dots,i_k}F_{i_1,\dots,i_k}$ is a vector of $V$,
 we introduce $c_{i_1,\dots,i_k}$ for all $ i_1,\dots,i_k\in J$ by the rule:
 $c_{i_{\sigma(1)},\dots,i_{\sigma(k)}}=(-1)^\sigma c_{i_1,\dots,i_k}$.
 The contravariant form on $V$ is defined by
\bean
\label{sing S k}
S^{(a)}(F_{i_1,\dots,i_k},F_{j_1,\dots,j_k}) &=& 0,
\qquad \text{if} \ \{i_1,\dots,i_k\}\ne \{i_1,\dots,i_k\},
\\
\notag
S^{(a)}(F_{i_1,\dots,i_k},F_{i_1,\dots,i_k}) &=& \prod_{m=1}^k a_{i_m},
\eean
the singular subspace is defined by
\bean
&&
{}
\\
\notag
&&
\sing V
 =
\Big\{ \sum_{1\leq i_1<\dots<i_k\leq n} c_{i_1,\dots,i_k}F_{i_1,\dots,i_k}\ | \ \sum_{j\in J} \,a_j\,c_{j,j_1,\dots,j_{k-1}}=0\
\text{for all}\ \{j_1,\dots,j_{k-1}\}\subset J \Big\}.
\eean
By Corollary \ref{cor nondeg}, the restriction of $S^{(a)}$ to $\sing V$ is nondegenerate.
Denote $(\sing V)^\perp$ the orthogonal complement to $\sing V$ with respect to $S^{(a)}$. Then
$V = \sing V \oplus (\sing V)^\perp$. Denote $\pi : V\to \sing V$ the orthogonal projection.

\begin{lem}
\label{elm orth k}
The space $(\sing V)^\perp$ is generated by vectors
\bean
\label{wj a}
\sum_{j\in J} F_{j, j_1,\dots,j_{k-1}},
\eean
labeled by subsets $\{j_1,\dots,j_{k-1}\}\subset J.$
\qed
\end{lem}

For distinct  ${i_1,\dots,i_k}$, we define the vector $v_{i_1,\dots,i_k}\in V$ by the formula
\bean
\label{ v ij k}
v_{i_1,\dots,i_k} = F_{i_1,\dots,i_k} -\sum_{m=1}^k\frac{a_{i_m}}{|a|}\sum_{j\in J} F_{i_1,\dots,i_{m-1},j,i_{m+1},\dots,i_k}.
\eean
We have $v_{i_{\sigma(1)},\dots,i_{\sigma(k)}}=(-1)^\sigma v_{i_1,\dots,i_k}$. Set $v_{i_1,\dots,i_k}=0$ if ${i_1,\dots,i_k}$ are not distinct.

\begin{lem}
\label{lem siNg k}
We have the following properties.

\begin{enumerate}
\item[(i)]
$\dim \Sing V= {n-1\choose k}$.

\item[(ii)]

For distinct  ${i_1,\dots,i_k}$, we have $v_{i_1,\dots,i_k}\in\Sing V$ and $v_{i_1,\dots,i_k} = \pi(F_{i_1,\dots,i_k})$.

\item[(iii)]
For $\{j_1,\dots,j_{k-1}\}\subset J$,  we have $\sum_{j\in J} v_{j_1,\dots,j_{k-1},j}=0$.

\item[(iv)]
For any $m\in J$, the set $v_{i_1,\dots,i_k}$, $ 1\leq i_1<\dots<i_k\leq n$, $m\notin\{{i_1,\dots,i_k}\}$, is a
basis of $\sing V$.
\end{enumerate}
\qed
\end{lem}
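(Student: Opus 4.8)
The plan is to establish parts (ii) and (iii) by direct manipulation of the defining formula \Ref{ v ij k}, then to deduce (iv) from (iii) together with a dimension count, and to obtain (i) from the general theory (it also drops out of (iv)).

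\textbf{Part (ii).} Since $V=\Sing V\oplus(\Sing V)^\perp$, to prove $v_{i_1,\dots,i_k}=\pi(F_{i_1,\dots,i_k})$ it is enough to check that $v_{i_1,\dots,i_k}\in\Sing V$ and that $F_{i_1,\dots,i_k}-v_{i_1,\dots,i_k}\in(\Sing V)^\perp$. The second assertion is immediate from Lemma \ref{elm orth k}: by \Ref{ v ij k},
\[ F_{i_1,\dots,i_k}-v_{i_1,\dots,i_k}=\frac1{|a|}\sum_{m=1}^k a_{i_m}\sum_{j\in J}F_{i_1,\dots,i_{m-1},j,i_{m+1},\dots,i_k}, \]
and each inner sum, after moving the slot of $j$ to the front (a global sign), is one of the generators $\sum_{j\in J}F_{j,j_1,\dots,j_{k-1}}$ of $(\Sing V)^\perp$ listed in Lemma \ref{elm orth k}, namely the one with $\{j_1,\dots,j_{k-1}\}=\{i_1,\dots,\widehat{i_m},\dots,i_k\}$ (the terms with a repeated index vanish). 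For the first assertion I would substitute \Ref{ v ij k} into the defining relations $\sum_{j\in J}a_j c_{j,j_1,\dots,j_{k-1}}=0$ and check that they hold, using $\sum_{j\in J}a_j=|a|$ and the alternating sign rule on the coefficients; this is the same routine verification as in the cases $k=1,2$ (Lemmas \ref{lem sing}, \ref{lem siNg}). Equivalently, one checks $S^{(a)}(v_{i_1,\dots,i_k},u)=0$ for each generator $u$ of $(\Sing V)^\perp$.

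\textbf{Part (iii).} This is a direct computation. Fix $\{j_1,\dots,j_{k-1}\}\subset J$ and expand $\sum_{j\in J}v_{j_1,\dots,j_{k-1},j}$ via \Ref{ v ij k}. Summing the leading term gives $\sum_{j\in J}F_{j_1,\dots,j_{k-1},j}$, while summing the last term $-\frac{a_j}{|a|}\sum_{i\in J}F_{j_1,\dots,j_{k-1},i}$ over $j$ gives $-\frac1{|a|}\big(\sum_{j\in J}a_j\big)\sum_{i\in J}F_{j_1,\dots,j_{k-1},i}=-\sum_{i\in J}F_{j_1,\dots,j_{k-1},i}$; these two cancel. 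Each remaining term, after summing over $j$, is a double sum over $i,j\in J$ in which $i$ occupies the $m$-th slot and $j$ the last slot; renaming the dummy indices $i\leftrightarrow j$ transposes those two slots and introduces a factor $-1$, so the double sum equals its own negative and vanishes. Hence $\sum_{j\in J}v_{j_1,\dots,j_{k-1},j}=0$.

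\textbf{Parts (iv) and (i).} By (ii) each $v_{i_1,\dots,i_k}=\pi(F_{i_1,\dots,i_k})$, so since the $F_{i_1,\dots,i_k}$ span $V$ and $\pi(V)=\Sing V$, the vectors $v_{i_1,\dots,i_k}$ (over all $k$-subsets) span $\Sing V$. Given $m\in J$ and a $k$-subset $I\ni m$, write $m=i_p$; applying (iii) to $\{j_1,\dots,j_{k-1}\}=I\setminus\{m\}$ (and using antisymmetry to place the free index in slot $p$) expresses $v_I$, up to signs, as a sum of the vectors $v_{(I\setminus\{m\})\cup\{j\}}$ with $j\in J\setminus I$, none of whose index sets contains $m$. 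Thus the ${n-1\choose k}$ vectors $v_{i_1,\dots,i_k}$ with $m\notin\{i_1,\dots,i_k\}$ already span $\Sing V$, and (iv) follows once we know $\dim\Sing V={n-1\choose k}$, which is precisely part (i). For (i): since $|a|\ne0$ the weights are unbalanced, so by Theorem \ref{thm alpha} and Corollary \ref{cor nondeg}, together with Theorem \ref{lem crit 1} (the sum of the Milnor numbers of the critical points of $\Phi$, i.e.\ $\dim A_\Phi(z)$, equals $|\chi(U(\A(z)))|$), one has $\dim\Sing V=|\chi(U(\A(z)))|$; and for $n$ hyperplanes in general position in $\C^k$ the Poincar\'e polynomial is $\sum_{p=0}^k{n\choose p}t^p$, whence $|\chi(U)|=\big|\sum_{p=0}^k(-1)^p{n\choose p}\big|={n-1\choose k}$. (Alternatively one proves directly that the $v_{i_1,\dots,i_k}$ with $m\notin\{i_1,\dots,i_k\}$ are linearly independent---for instance by checking that the Gram matrix of $S^{(a)}$ on them, computed from the formulas for $S^{(a)}(v_I,v_J)$, is nonsingular---and then $\dim\Sing V={n-1\choose k}$ follows.)

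\textbf{Expected difficulty.} Every step is formal; the only point needing genuine care is the direct verification in (ii) that $v_{i_1,\dots,i_k}\in\Sing V$, where one must track the alternating signs of the coefficients through the double summation. Once Lemma \ref{elm orth k} and the value of $|\chi(U)|$ are available, the remainder is bookkeeping.
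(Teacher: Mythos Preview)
The paper gives no proof of this lemma (it is stated with a bare \qed), so there is nothing to compare against; your outline is correct and follows the natural route. One small efficiency: for part (ii) you do not need a separate verification that $v_{i_1,\dots,i_k}\in\Sing V$ once you observe, as in your remark on Lemma~\ref{elm orth k}, that the defining relations of $\Sing V$ are precisely the conditions $S^{(a)}(v,\sum_{j}F_{j,j_1,\dots,j_{k-1}})=0$ (divide out the nonzero factor $a_{j_1}\cdots a_{j_{k-1}}$); thus $\Sing V$ equals the orthogonal complement of the span of those vectors, so $((\Sing V)^\perp)^\perp=\Sing V$ by nondegeneracy of $S^{(a)}$ on $V$, and checking orthogonality to the generators is exactly the defining-relation check you described. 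For (i), citing the general theory is fine, but note that Lemma~\ref{lem basis cr k} together with Theorem~\ref{thm alpha} gives the dimension immediately and avoids the Euler-characteristic computation.
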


\begin{lem}
\label{lem Shap}
We have
\bean
\label{SF jji k}
&&
S^{(a)}(v_{i_1,\dots,i_k},v_{j_1,\dots,j_k}) = 0,
\qquad \text{if}\ |\{i_1,\dots,i_k\}\cap\{j_1,\dots,j_k\}|<k-1,
\\
\notag
\label{SF ijii k}
&&
S^{(a)}(v_{i_1,\dots,i_{k-1},i_k},v_{i_1,\dots,i_{k-1},i_{k+1}})
 = -\frac{\prod_{\ell=1}^{k+1}a_{i_\ell}}{|a|}\qquad \text{for distinct}\ i_1,\dots,i_{k-1},i_k,i_{k+1},
\\
\notag
\label{SF ijiii k}
&&
S^{(a)}(v_{i_1,\dots,i_k},v_{i_1,\dots,i_k})=
\frac{(\sum_{\ell\notin\{i_1,\dots,i_k\}}a_{i_\ell})\prod_{m=1}^{k}a_{i_m}}{|a|}.
\eean
\qed
\end{lem}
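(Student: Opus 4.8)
The plan is to reduce every value of the contravariant form on the vectors $v_{i_1,\dots,i_k}$ to the read-off of a single coefficient, by exploiting the identity $v_{i_1,\dots,i_k}=\pi(F_{i_1,\dots,i_k})$ from Lemma \ref{lem siNg k}(ii). Fix ordered tuples $I=(i_1,\dots,i_k)$ and $J=(j_1,\dots,j_k)$. The first step is the observation that, since $v_J\in\sing V$ while $F_I-v_I=F_I-\pi(F_I)\in(\sing V)^\perp$,
\be
S^{(a)}(v_I,v_J)=S^{(a)}(F_I,v_J).
\ee
Next I would expand the right-hand vector in the standard basis by its definition,
\be
v_J=F_J-\sum_{m=1}^{k}\frac{a_{j_m}}{|a|}\sum_{\ell\in J}F_{j_1,\dots,j_{m-1},\ell,j_{m+1},\dots,j_k},
\ee
and use that $S^{(a)}$ is diagonal on the standard basis with $S^{(a)}(F_I,F_I)=\prod_{m=1}^{k}a_{i_m}$ by \Ref{sing S k}. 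Hence $S^{(a)}(v_I,v_J)$ equals $\prod_{m=1}^{k}a_{i_m}$ times the coefficient with which the standard basis vector $F_I$ enters the displayed expansion of $v_J$; in each term that actually contributes, the tuple occurring is literally $F_I$ in the order $(i_1,\dots,i_k)$, so the antisymmetry conventions for the $F$'s and $v$'s introduce no spurious signs.

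It then remains to identify the contributing terms in three cases. If $|\{i_1,\dots,i_k\}\cap\{j_1,\dots,j_k\}|<k-1$, then $F_J$ has the wrong index set, and every vector $F_{j_1,\dots,j_{m-1},\ell,j_{m+1},\dots,j_k}$ has an index set agreeing with $\{j_1,\dots,j_k\}$ in at least $k-1$ positions, so none of them equals $\{i_1,\dots,i_k\}$; thus $S^{(a)}(v_I,v_J)=0$, which is \Ref{SF jji k}. If $I=(i_1,\dots,i_{k-1},i_k)$ and $J=(i_1,\dots,i_{k-1},i_{k+1})$ with $i_1,\dots,i_{k+1}$ distinct, then the only contributing term of $v_J$ is the one with $m=k$ and $\ell=i_k$, which carries coefficient $-a_{i_{k+1}}/|a|$; multiplying by $\prod_{m=1}^{k}a_{i_m}$ gives $-\big(\prod_{\ell=1}^{k+1}a_{i_\ell}\big)/|a|$, which is \Ref{SF ijii k}. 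Finally, if $I=J$, the coefficient of $F_I$ in $v_I$ is $1$ from the leading term plus $-a_{i_m}/|a|$ from the $\ell=i_m$ summand for each $m=1,\dots,k$, i.e. $1-\sum_{m=1}^{k}a_{i_m}/|a|=\big(\sum_{\ell\notin\{i_1,\dots,i_k\}}a_\ell\big)/|a|$, and multiplying by $\prod_{m=1}^{k}a_{i_m}$ gives \Ref{SF ijiii k}.

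I expect no real obstacle here: the content is purely combinatorial bookkeeping of which standard basis vectors occur in $v_J$, and the only delicate point is matching the orderings of $I$ and $J$ in the second and third cases so that the distinguished term of $v_J$ is exactly $F_I$ with the stated coefficient rather than its negative. As a consistency check, for $k=2$ this reproduces the earlier formulas \Ref{SF jji}, \Ref{SF ijii}, \Ref{SF ijiii}. One could instead expand both $v_I$ and $v_J$ in the standard basis and evaluate the resulting double sum via \Ref{sing S k} directly, but the projection identity above makes that unnecessary.
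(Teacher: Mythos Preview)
Your argument is correct and matches the paper's approach, which simply declares the lemma a ``straightforward corollary'' of the definitions \Ref{sing S k} and \Ref{ v ij k} without further detail. Your use of the projection identity $v_I=\pi(F_I)$ to reduce $S^{(a)}(v_I,v_J)$ to $S^{(a)}(F_I,v_J)$ is a clean shortcut that halves the bookkeeping; one minor notational point is that you reuse $J$ for the tuple $(j_1,\dots,j_k)$ while the paper reserves $J=\{1,\dots,n\}$, so rename the tuple to avoid confusion.
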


\begin{proof}
The lemma is a straightforward corollary of \Ref{sing S k}  and \Ref{ v ij k}.
\end{proof}

\subsection{Operators  $K_i(z): V\to V$}
\label{sec ham ex Ck}

For any  subset $\{i_1,\dots,i_{k+1}\}\subset J$, we
define the linear operator
$L_{i_1,\dots,i_{k+1}} : V\to V$ by the formula
\bean
\label{def L k}
\phantom{aaa}
F_{i_1,\dots,\widehat{i_m},\dots,i_{k+1}}
& \mapsto &
(-1)^m\sum_{\ell=1}^{k+1} (-1)^\ell a_{i_\ell} F_{i_1,\dots,\widehat{i_\ell},\dots,i_{k+1}},
\qquad m=1,\dots,k+1,
\\
\notag
F_{j_1,\dots,j_{k}}
& \mapsto & 0, \qquad \text{if}\ \{j_1,\dots,j_{k}\} \ \text{is not a subset of}\
\{i_1,\dots,i_{k+1}\},
\eean
see formula \Ref{L_C}. Notice that $L_{i_1,\dots,i_{k+1}}$ does not depend on the order of ${i_1,\dots,i_{k+1}}$.

 We define the operators $K_i(z): V\to V$, $i\in J$, by the formula
\bean
\label{K ex}
K_i(z) = \sum \frac{d_{i_1,\dots,i_k}}{f_{i,i_1,\dots,i_k}}\,L_{i,i_1,\dots,i_k} ,
\eean
where the sum is over all unordered subsets $\{i_1,\dots,i_k\}\subset J-\{i\}$,
see formula \Ref{K_j}.
For any $i\in J$ and $z\in \C^n-\Delta$, the operator $K_i(z)$ preserves
the subspace $\Sing V\subset V$ and is a symmetric operator,
$S^{(a)}(K_i(z)v,w)= S^{(a)}(v, K_i(z)w)$ for all $v,w\in V$, see Theorem \ref{thm K sym}.

\begin{lem}
\label{lem Kv}
We have
\bean
\label{KJ k}
\phantom{aaaaaa}
K_{i_1}(z)v_{i_2,\dots,i_{k+1}}  &=&
\frac{d_{i_2,\dots,i_{k+1}}}{f_{i_1, i_2,\dots,i_{k+1}}}\,\sum_{\ell=1}^{k+1} (-1)^{\ell+1} a_{i_\ell} v_{i_1,\dots,\widehat{i_\ell},\dots,i_{k+1}},
\qquad \text{if}\ i_1\notin\{i_2,\dots,i_{k+1}\},
\\
\notag
K_{i_1}(z) v_{i_1,i_2,\dots,i_k} &=& - \sum_{m\notin\{i_1,\dots,i_k\}} K_{i_1}(z) v_{m,i_2,\dots,i_k}.
\eean
\end{lem}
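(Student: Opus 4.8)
The plan is to imitate the proof of the corresponding statement for generic lines on the plane in Section \ref{sec lines on plane}, with the two-index computations there replaced by the $(-1)^\sigma$-antisymmetrization built into \Ref{def L k} and the definition \Ref{ v ij k} of the vectors $v_{i_1,\dots,i_k}$.

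The first step will be to check that $K_{i_1}(z)$ annihilates $(\sing V)^\perp$. By Lemma \ref{elm orth k} this subspace is spanned by the vectors $\sum_{j\in J}F_{j,j_1,\dots,j_{k-1}}$, indexed by $(k-1)$-subsets of $J$. For each operator $L_{i_1,i'_1,\dots,i'_k}$ appearing in \Ref{K ex}, only the summands with $\{j,j_1,\dots,j_{k-1}\}\subset\{i_1,i'_1,\dots,i'_k\}$ survive; once $\{j_1,\dots,j_{k-1}\}$ is a $(k-1)$-subset of $\{i_1,i'_1,\dots,i'_k\}$, exactly two values of $j$ remain, and by \Ref{def L k} the two corresponding images are the same antisymmetrized vector multiplied by opposite signs, so they cancel. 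Hence $K_{i_1}(z)=K_{i_1}(z)\circ\pi$, and since $v_{i_2,\dots,i_{k+1}}=\pi(F_{i_2,\dots,i_{k+1}})$ by Lemma \ref{lem siNg k}(ii), we get $K_{i_1}(z)v_{i_2,\dots,i_{k+1}}=K_{i_1}(z)F_{i_2,\dots,i_{k+1}}$.

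The second step will be to evaluate $K_{i_1}(z)F_{i_2,\dots,i_{k+1}}$ straight from \Ref{K ex} and \Ref{def L k}. In the sum \Ref{K ex} the operator $L_{i_1,i'_1,\dots,i'_k}$ kills $F_{i_2,\dots,i_{k+1}}$ unless $\{i_2,\dots,i_{k+1}\}\subset\{i_1,i'_1,\dots,i'_k\}$, and since $i_1\notin\{i_2,\dots,i_{k+1}\}$ and both primed sets have $k$ elements this forces $\{i'_1,\dots,i'_k\}=\{i_2,\dots,i_{k+1}\}$; thus a single term survives. Regarding $F_{i_2,\dots,i_{k+1}}$ as $F_{i_1,\dots,\widehat{i_1},\dots,i_{k+1}}$ and applying \Ref{def L k} with $m=1$ yields $K_{i_1}(z)F_{i_2,\dots,i_{k+1}}=\frac{d_{i_2,\dots,i_{k+1}}}{f_{i_1,i_2,\dots,i_{k+1}}}\sum_{\ell=1}^{k+1}(-1)^{\ell+1}a_{i_\ell}F_{i_1,\dots,\widehat{i_\ell},\dots,i_{k+1}}$. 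By Theorem \ref{thm K sym} the left-hand side lies in $\sing V$ and $d_{i_2,\dots,i_{k+1}}\ne0$, so the displayed combination of the $F$'s is itself singular; therefore it equals its own $\pi$-image, which by Lemma \ref{lem siNg k}(ii) is obtained by replacing each $F_{i_1,\dots,\widehat{i_\ell},\dots,i_{k+1}}$ by $v_{i_1,\dots,\widehat{i_\ell},\dots,i_{k+1}}$. This gives the first line of \Ref{KJ k}. (Alternatively, the singularity of that combination can be checked directly from \Ref{sing S k} as the standard ``$\partial^2=0$'' sign cancellation, which would make the argument independent of Theorem \ref{thm K sym}.) For the second line of \Ref{KJ k}, I would use Lemma \ref{lem siNg k}(iii): since $\sum_{j\in J}v_{j,i_2,\dots,i_k}=0$ and the summands with $j\in\{i_2,\dots,i_k\}$ vanish by the convention that $v$ with a repeated index is zero, one has $v_{i_1,i_2,\dots,i_k}=-\sum_{m\notin\{i_1,\dots,i_k\}}v_{m,i_2,\dots,i_k}$, and applying $K_{i_1}(z)$ gives the claim.

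The only place I expect to need real care is the cancellation in the first step, where the antisymmetry signs of \Ref{def L k} must be matched against the generators of $(\sing V)^\perp$ supplied by Lemma \ref{elm orth k}; the rest is unwinding definitions and tracking the $(-1)^\sigma$ conventions for $F_{i_1,\dots,i_k}$ and $v_{i_1,\dots,i_k}$.
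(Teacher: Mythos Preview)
Your proposal is correct and follows essentially the same route as the paper. The only minor difference is organizational: you verify directly that each $L_{i_1,i'_1,\dots,i'_k}$ (hence $K_{i_1}(z)$) annihilates $(\sing V)^\perp$, mirroring the $k=2$ argument, whereas the paper's general-$k$ proof invokes only the weaker fact that $K_{i_1}(z)$ preserves the decomposition $\sing V\oplus(\sing V)^\perp$ (immediate from Theorem~\ref{thm K sym}, since a symmetric operator preserving $\sing V$ preserves its orthogonal complement) and writes $K_{i_1}(z)v_{i_2,\dots,i_{k+1}}=K_{i_1}(z)\pi(F_{i_2,\dots,i_{k+1}})=\pi(K_{i_1}(z)F_{i_2,\dots,i_{k+1}})$ in one line. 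Both arrive at $\pi$ applied to the same combination of $F$'s, so the content is the same.
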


\begin{proof}
The operator $K_i(z)$ preserve the decomposition $\sing V\oplus (\sing V)^\perp$. Hence
\bea
\label{KJ e k}
&&
K_{i_1}(z) v_{i_2,\dots,i_{k+1}} = K_{i_1}(z) \pi (F_{i_2,\dots,i_{k+1}}) =\pi(K_{i_1}(z) F_{i_2,\dots,i_{k+1}}) =
\\
&&
= \pi \Big(\frac{d_{i_2,\dots,i_{k+1}}}{f_{i_1, i_2,\dots,i_{k+1}}}\,\sum_{\ell=1}^{k+1} (-1)^{\ell+1} a_{i_\ell} F_{i_1,\dots,\widehat{i_\ell},\dots,i_{k+1}}\Big)=
\frac{d_{i_2,\dots,i_{k+1}}}{f_{i_1, i_2,\dots,i_{k+1}}}\,\sum_{\ell=1}^{k+1} (-1)^{\ell+1} a_{i_\ell} v_{i_1,\dots,\widehat{i_\ell},\dots,i_{k+1}}.
\eea
\end{proof}

The differential equations \Ref{dif eqn} for flat sections of the Gauss-Manin connection on
\linebreak
 $(\C^n-\Delta)\times \sing V\to \C^n-\Delta$
take the form
\bean
\label{dif eqn ex k}
\kappa \frac{\der I}{\der z_j}(z) = K_j(z)I(z),
\qquad
j\in J.
\eean
For generic $\kappa$ all the  flat sections are given by the formula
\bean
\label{I(z) k}
I_\gamma(z) =
\sum_{1\leq i_1<\dots<i_k\leq n}
\Big(\int_{\gamma(z)} \prod_{m\in J}f_m^{a_m/\kappa} \frac {d_{i_1,\dots,i_k}}{f_{i_1}\dots f_{i_k}}\,dt_1\wedge \dots\wedge dt_k\Big) F_{i_1,\dots,i_k} ,
\eean
see formula \Ref{Ig}. These generic $\kappa$ can be determined more precisely from the determinant formula in \cite{V1}.

\subsection{Algebra $A_\Phi(z)$}
\label{sec master k}

The master function of the arrangement $\A(z)$  is
\bean
\label{def mast k}
\Phi(z,t) = \sum_{j\in J}\,a_j \log f_j.
\eean
The critical point equations are
\bean
\frac{\der\Phi}{\der t_i} = \sum_{j\in J} b^i_j\frac{a_j}{f_j} = 0, \qquad i=1,\dots,k.
\eean
Introduce $H_i, i=1,\dots,k$, by the formula
\bean
\frac{\der \Phi}{\der z_i}= \frac{H_i}{\prod_{j\in J}f_j}.
\eean
The critical set is
\bean
C_\Phi(z)
&=&
\{t\in U(\A(z))\ |\ \frac{\der \Phi}{\der z_i}(z,t)=0, i=1,\dots,k\}
=
\\
\notag
&=& \{t\in U(\A(z))\ |\ H_i(z,t)=0, i=1,\dots,k\}.
\eean
The algebra of functions on the critical set is
\bean
A_\Phi(z) &=& \C(U(\A(z)))
/\Big\langle \frac{\der\Phi}{\der t_1},\dots,\frac{\der\Phi}{\der t_k}  \Big\rangle=
\C(U(\A(z)))
/\big\langle H_1,\dots, H_k \big\rangle.
\eean

\begin{lem}
\label{lem basis cr k}
We have $\dim A_\Phi(z) = {n-1\choose k}$.
\qed
\end{lem}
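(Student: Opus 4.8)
The plan is to read off $\dim A_\Phi(z)$ from two facts already established in the excerpt: the canonical isomorphism $\al(z)$ and the dimension of $\sing V$. First I would check that the hypotheses of Theorem \ref{thm alpha} hold for this family. Since all the maximal minors $d_{i_1,\dots,i_k}$ are nonzero, for $z\in\C^n-\Delta$ the arrangement $\A(z)$ is essential with normal crossings, and because $|a|\ne 0$ its weights are unbalanced --- this is exactly the observation recorded in Section \ref{sec Good exk}. Therefore the canonical map $\al(z)\colon A_\Phi(z)\to\sing V$ of \Ref{map alpha} is an isomorphism of vector spaces, so $\dim A_\Phi(z)=\dim\sing V$. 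It then remains only to quote Lemma \ref{lem siNg k}(i), which gives $\dim\sing V={n-1\choose k}$, and the assertion follows.

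A second, self-contained route avoids invoking the canonical isomorphism. Since the weights are unbalanced, Theorem \ref{lem crit 1} gives $\dim A_\Phi(z)=|\chi(U(\A(z)))|$, the sum of the Milnor numbers of $\Phi(z,\cdot)$ over its critical set. For $n$ hyperplanes in general position in $\C^k$ the Poincar\'e polynomial of the complement is $\sum_{p=0}^{k}{n\choose p}t^{\>p}$ (by deletion--restriction on the Orlik--Solomon algebra, or directly from the description of $\OS^p(\A(z))$ for a uniform matroid: a basis of $\OS^p$ is indexed by arbitrary $p$-element subsets of $J$ together with relation (iii), leaving the stated rank). Hence $\chi(U(\A(z)))=\sum_{p=0}^{k}(-1)^p{n\choose p}=(-1)^k{n-1\choose k}$ and $|\chi(U(\A(z)))|={n-1\choose k}$.

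There is essentially no obstacle here: the lemma is a corollary of the general machinery recalled in Sections \ref{Sec Arrangements}--\ref{sec par trans}. The only computational input, and only in the alternative argument, is the elementary identity $\sum_{p=0}^{k}(-1)^p{n\choose p}=(-1)^k{n-1\choose k}$, the partial alternating sum of a row of Pascal's triangle, proved by telescoping ${n\choose p}={n-1\choose p}+{n-1\choose p-1}$.
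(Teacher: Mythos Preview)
Your proposal is correct and matches the paper's implicit reasoning: the paper states the lemma with a bare \qed, relying on the general machinery of Sections~\ref{Sec Arrangements}--\ref{sec par trans}, and in particular on the remark in Section~\ref{sec bundle of alg} that for unbalanced weights $\dim A_\Phi(z)=\dim\sing V$, together with Lemma~\ref{lem siNg k}(i). Your second route via $|\chi(U)|$ is an equally valid alternative and gives a more self-contained computation, but the first route is exactly what the paper has in mind.
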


Introduce elements $w_{i_1,\dots,i_k} \in A_\Phi(z)$ by the formula
\bean
w_{i_1,\dots,i_k}= a_{i_1}\dots a_{i_k}\,\Big[\frac{d_{i_1,\dots,i_k}}{f_{i_1}\dots f_{i_k}}\Big].
\eean

\begin{lem}
\label{lem w rel k}

We have $w_{i_{\sigma(1)},\dots,i_{\sigma(k)}}=(-1)^\sigma w_{i_1,\dots,i_k}$  for $\sigma \in \Sigma_k$ and
\bean
\label{w rel k}
\sum_{i\in J}  w_{i_1,\dots,i_{k-1},i} = 0.
\eean
\qed
\end{lem}
\begin{proof}
The lemma follows from the identity
\bean
&&
\frac{a_{i_1}df_{i_1}}{f_{i_1}} \wedge\dots\wedge\frac{a_{i_{k-1}}df_{i_{k-1}}}{f_{i_{k-1}}}\wedge \frac{d\Phi}{\Phi} =
\\
\notag
&&
\phantom{aaa}
= \sum_{i\in J} a_{i_1}\dots a_{k-1} a_i\,\frac{d_{i_1,\dots,i_{k-1},i}}{f_{i_1}\dots f_{k-1}f_i} dt_1\wedge\dots\wedge dt_k +
\sum_{m\in J} dz_j\wedge \mu_j,
\eean
where $\mu_j$ are suitable $k-1$-forms.
\end{proof}

The elements $\big[ \frac{a_i}{f_i}\big]$, $i\in J$, generate $A_\Phi(z)$ by Lemma \ref{lem f_j generate}.

\begin{lem}
For $i_1,\dots,i_{k-1}\in J$, we have the following identity in $A_\Phi(z)$,
\bean
\label{a/f k}
\sum_{i\in J} d_{i,i_1,\dots,i_{k-1}} \Big[\frac{a_i}{f_i}\Big] =0.
\eean
\qed
\end{lem}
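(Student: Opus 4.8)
The plan is to deduce \Ref{a/f k} directly from the critical point equations stated just above the lemma: for each $m=1,\dots,k$ the function $\frac{\der\Phi}{\der t_m}=\sum_{j\in J}\frac{b^m_j a_j}{f_j}$ is one of the generators of the ideal $I_\Phi$, so $\big[\frac{\der\Phi}{\der t_m}\big]=0$ in $A_\Phi(z)$. The bridge between these $k$ relations and the relations \Ref{a/f k} is nothing more than a Laplace (cofactor) expansion of the determinant $d_{i,i_1,\dots,i_{k-1}}$.

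First I would fix $i_1,\dots,i_{k-1}\in J$ and expand $d_{i,i_1,\dots,i_{k-1}}$ — the determinant of the $k\times k$ matrix with first row $(b^1_i,\dots,b^k_i)$ and remaining rows $(b^1_{i_\ell},\dots,b^k_{i_\ell})$, $\ell=1,\dots,k-1$ — along its first row:
\be
d_{i,i_1,\dots,i_{k-1}} \ = \ \sum_{m=1}^k (-1)^{m-1}\,b^m_i\,M_m ,
\ee
where $M_m$ is the $(k-1)\times(k-1)$ minor built from the rows $i_1,\dots,i_{k-1}$ and the columns $\{1,\dots,k\}\setminus\{m\}$. The one substantive observation is that $M_m$ depends only on $i_1,\dots,i_{k-1}$ and \emph{not} on the index $i$.

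Next I would substitute this into the left-hand side of \Ref{a/f k}, interchange the two finite sums, and pull the $i$-independent minors out of the sum over $i$, all inside $A_\Phi(z)$:
\bean
\sum_{i\in J} d_{i,i_1,\dots,i_{k-1}}\Big[\frac{a_i}{f_i}\Big]
&=& \sum_{m=1}^k (-1)^{m-1}\,M_m\,\Big[\sum_{i\in J}\frac{b^m_i a_i}{f_i}\Big]
\\
&=& \sum_{m=1}^k (-1)^{m-1}\,M_m\,\Big[\frac{\der\Phi}{\der t_m}\Big] \ = \ 0 ,
\eean
the last step because every class $\big[\frac{\der\Phi}{\der t_m}\big]$ vanishes in $A_\Phi(z)$. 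This finishes the argument.

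The proof is essentially formal and I do not anticipate a genuine obstacle; the only things needing care are the sign bookkeeping in the expansion and the point that the minors $M_m$ are free of the summation index $i$, which is exactly what licenses factoring them out. As consistency checks, the case $k=1$ reduces to the critical point equation $\sum_{i\in J} a_i/f_i=0$ itself, and the case $k=2$ reproduces formula \Ref{a/f k=2}. (Note also that when two of the indices $i_1,\dots,i_{k-1}$ coincide the identity is trivially $0=0$, since then every coefficient $d_{i,i_1,\dots,i_{k-1}}$ vanishes, so nothing is lost by allowing arbitrary tuples.)
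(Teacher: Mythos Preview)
Your proof is correct and is exactly the intended argument: the paper states the lemma with a bare \qed, and the natural justification is precisely the cofactor expansion of $d_{i,i_1,\dots,i_{k-1}}$ along the row indexed by $i$, which exhibits the left-hand side as a linear combination of the classes $\big[\frac{\der\Phi}{\der t_m}\big]$.
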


Denote $I = \{i_1,\dots,i_{k-1}\}$. Relation \Ref{a/f k} will be called the $I$-{\it relation.}

\begin{lem}
\label{wf mult}
We have in $A_\Phi(z)$,
\bean
&&{}
\\
\notag
\Big[\frac{a_{i_1}}{f_{i_1}}\Big]*_z w_{i_2,\dots,i_{k+1}}
&=&
\frac{d_{i_2,\dots,i_{k+1}}}{f_{i_1, i_2,\dots,i_{k+1}}}\,\sum_{\ell=1}^{k+1} (-1)^{\ell+1} a_{i_\ell} w_{i_1,\dots,\widehat{i_\ell},\dots,i_{k+1}},
\qquad \text{if}\ i_1\notin\{i_2,\dots,i_{k+1}\},
\\
\notag
\Big[\frac{a_{i_1}}{f_{i_1}}\Big]*_z w_{i_1,i_2,\dots,i_k} &=& - \sum_{m\notin\{i_1,\dots,i_k\}} \Big[\frac{a_{i_1}}{f_{i_1}}\Big]*_z w_{m,i_2,\dots,i_k}.
\eean
\qed
\end{lem}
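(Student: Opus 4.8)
The plan is to reduce both identities to manipulations of explicit rational functions, using that the product $*_z$ on $A_\Phi(z)=\C(U(\A(z)))/\langle H_1,\dots,H_k\rangle$ is the quotient‑ring product $[f]\,[g]=[fg]$ and that $w_{i_1,\dots,i_k}=a_{i_1}\cdots a_{i_k}\big[\tfrac{d_{i_1,\dots,i_k}}{f_{i_1}\cdots f_{i_k}}\big]$. Then the left side of the first identity is the class of $\tfrac{a_{i_1}\cdots a_{i_{k+1}}\,d_{i_2,\dots,i_{k+1}}}{f_{i_1}\cdots f_{i_{k+1}}}$. Writing each summand $w_{i_1,\dots,\widehat{i_\ell},\dots,i_{k+1}}$ of the right side over the common denominator $f_{i_1}\cdots f_{i_{k+1}}$ (via $\tfrac1{\prod_{m\ne\ell}f_{i_m}}=\tfrac{f_{i_\ell}}{\prod_{m=1}^{k+1}f_{i_m}}$), the right side becomes the class of
\[
\frac{d_{i_2,\dots,i_{k+1}}}{f_{i_1,\dots,i_{k+1}}}\cdot\frac{a_{i_1}\cdots a_{i_{k+1}}}{f_{i_1}\cdots f_{i_{k+1}}}\sum_{\ell=1}^{k+1}(-1)^{\ell+1}d_{i_1,\dots,\widehat{i_\ell},\dots,i_{k+1}}\,f_{i_\ell}.
\]
Hence the first identity holds --- in fact already in $\C(U(\A(z)))$, before the quotient --- as soon as one has the ``Cramer'' relation $\sum_{\ell=1}^{k+1}(-1)^{\ell+1}d_{i_1,\dots,\widehat{i_\ell},\dots,i_{k+1}}\,f_{i_\ell}=f_{i_1,\dots,i_{k+1}}$, for then the displayed factor collapses to $\tfrac{a_{i_1}\cdots a_{i_{k+1}}\,d_{i_2,\dots,i_{k+1}}}{f_{i_1}\cdots f_{i_{k+1}}}$; note that $f_{i_1,\dots,i_{k+1}}$ is a function of $z$ alone, a nonzero constant on the fibre over the fixed $z\in\C^n-\Delta$, so dividing by it is legitimate.

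For the Cramer relation I would compare coefficients in $(z,t)$. Since the $d$'s are constants and $f_{i_\ell}=z_{i_\ell}+\sum_{m=1}^kb^m_{i_\ell}t_m$, the $z$-linear part of the left side is $\sum_\ell(-1)^{\ell+1}d_{i_1,\dots,\widehat{i_\ell},\dots,i_{k+1}}\,z_{i_\ell}$, which equals $f_{i_1,\dots,i_{k+1}}$ by definition \Ref{ i_1,...,i_k}. It then remains to check that the coefficient $\sum_{\ell=1}^{k+1}(-1)^{\ell+1}d_{i_1,\dots,\widehat{i_\ell},\dots,i_{k+1}}\,b^m_{i_\ell}$ of each $t_m$ vanishes; this is the cofactor expansion along the first column of the $(k+1)\times(k+1)$ matrix whose first column is $(b^m_{i_\ell})_{\ell=1}^{k+1}$ and whose remaining columns are $(b^1_{i_\ell})_\ell,\dots,(b^k_{i_\ell})_\ell$, a matrix whose first column duplicates its $(m+1)$-st column and therefore has zero determinant. (This is the general-$k$ analogue of the first identity of Lemma~\ref{Pluk}.) Alternatively, once the general-$k$ counterpart of Theorem~\ref{thm can 2}, $\al(z)(w_\bullet)=v_\bullet$, is in place, the first identity follows by applying $\al(z)^{-1}$ to Lemma~\ref{lem Kv} through Theorem~\ref{K/f}; the computation above is self-contained.

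The second identity is a formal consequence of the relation $\sum_{i\in J}w_{j_1,\dots,j_{k-1},i}=0$ of the preceding lemma. Taking $(j_1,\dots,j_{k-1})=(i_2,\dots,i_k)$ and dropping the vanishing summands with $i\in\{i_2,\dots,i_k\}$ gives $w_{i_2,\dots,i_k,i_1}+\sum_{m\notin\{i_1,\dots,i_k\}}w_{i_2,\dots,i_k,m}=0$; moving the last index to the front costs the same cyclic sign $(-1)^{k-1}$ in every term, so $w_{i_1,i_2,\dots,i_k}=-\sum_{m\notin\{i_1,\dots,i_k\}}w_{m,i_2,\dots,i_k}$, and multiplying by $\big[\tfrac{a_{i_1}}{f_{i_1}}\big]$ yields the claim (the indices being taken pairwise distinct, the remaining cases being trivial). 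No step is genuinely difficult: the only care needed is in the sign bookkeeping --- the alternating signs in the cofactor expansion and the cyclic permutation sign --- and in not conflating the ``combinatorial'' constant $f_{i_1,\dots,i_{k+1}}$ with the product $f_{i_1}\cdots f_{i_{k+1}}$.
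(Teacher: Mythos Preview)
Your argument is correct. For the first identity you reduce to the scalar relation
\[
\sum_{\ell=1}^{k+1}(-1)^{\ell+1}d_{i_1,\dots,\widehat{i_\ell},\dots,i_{k+1}}\,f_{i_\ell}=f_{i_1,\dots,i_{k+1}},
\]
which you verify by matching the $z$-part with the definition~\Ref{ i_1,...,i_k} and killing the $t$-part via a cofactor expansion of a determinant with a repeated column; this is exactly right, and the identity indeed holds in $\C(U(\A(z)))$ before passing to the quotient. Your derivation of the second identity from Lemma~\ref{w rel k} is also correct, including the sign bookkeeping.

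The paper's proof packages the same linear algebra as the logarithmic differential-form identity
\[
\frac{df_{i_1}}{f_{i_1}}\wedge\cdots\wedge\frac{df_{i_{k+1}}}{f_{i_{k+1}}}
=\frac{df_{i_1,\dots,i_{k+1}}}{f_{i_1,\dots,i_{k+1}}}\wedge\sum_{m=1}^{k+1}(-1)^{m-1}\,
\frac{df_{i_1}}{f_{i_1}}\wedge\cdots\wedge\widehat{\frac{df_{i_m}}{f_{i_m}}}\wedge\cdots\wedge\frac{df_{i_{k+1}}}{f_{i_{k+1}}}
\]
on $\C^n\times\C^k$ (an Arnold/Orlik--Solomon type relation), and reads off the rational-function identity from it. Your approach and the paper's are the same computation in different clothing: the cofactor expansion you invoke is precisely what makes the displayed form identity true. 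Your version is arguably more elementary and self-contained; the paper's version makes the connection with the Orlik--Solomon formalism of Section~\ref{Sec Arrangements} more visible. Your aside about deducing the lemma from Lemma~\ref{lem Kv} via the canonical isomorphism should be dropped, since in the paper's logical order this lemma is used on the way to establishing $\al(z)$, so that route would be circular.
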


\begin{proof}
The first formula follows from the identity
\bean
\phantom{aaa}
\frac {df_{i_1}}{f_{i_1}}\wedge\dots\wedge \frac {df_{i_{k+1}}}{f_{i_{k+1}}}
= \frac {df_{i_1,\dots,i_{k+1}}}{f_{i_{k+1}}}\wedge \sum_{m=1}^{k+1}(-1)^{m-1}
\frac {df_{i_1}}{f_{i_1}}\wedge\dots\wedge \widehat{\frac {df_{i_{m}}}{f_{i_{m}}}}\wedge\dots\wedge
\frac {df_{i_{k+1}}}{f_{i_{k+1}}}.
\eean
\end{proof}

\begin{lem}
\label{MONOM k}
Fix $i_0\in J$. Then every monomial $M = \prod_{j\in J}\big[\frac{a_{j}}{f_{j}}\big]^{s_j}\in A_\Phi(z)$ with $\sum_{j\in J} s_j=k$
can be written as a linear combination of elements $w_{i_1,\dots,i_k}$ where $i_1<\dots<i_k$ and $i_0\notin\{i_1,\dots,i_k\}$ with coefficients
independent of $z$.
\end{lem}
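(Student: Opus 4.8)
The plan is to prove the statement by an induction-style reduction that successively lowers the number of distinct factors appearing in the monomial and, simultaneously, drives the index $i_0$ out of the support. First, I would recall that by Lemma \ref{lem f_j generate} the elements $[a_j/f_j]$, $j\in J$, generate $A_\Phi(z)$, and that by Lemma \ref{lem siNg k}(iv) together with Theorem \ref{thm can 2}'s analogue (i.e.\ the canonical isomorphism $\al(z)\colon A_\Phi(z)\to\sing V$ with $w_{i_1,\dots,i_k}\mapsto v_{i_1,\dots,i_k}$, once it is established in this section) the elements $w_{i_1,\dots,i_k}$ with $i_1<\dots<i_k$ and $i_0\notin\{i_1,\dots,i_k\}$ form a basis of $A_\Phi(z)$. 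So the content of the lemma is not that such an expansion exists, but that its coefficients can be taken independent of $z$; equivalently, that the expansion can be produced purely by manipulating the algebraic relations \Ref{a/f k} ($I$-relations) and the multiplication formulas of Lemma \ref{wf mult}, none of which hide any $z$-dependence in a way that cannot be controlled.

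The core step is the following claim: any monomial $M=\prod_{j\in J}[a_j/f_j]^{s_j}$ with $\sum s_j=k$ can be rewritten, using only the $I$-relations \Ref{a/f k} with $z$-independent coefficients $d_{i,i_1,\dots,i_{k-1}}$, as a $\C$-linear combination (coefficients independent of $z$) of squarefree monomials $[a_{i_1}/f_{i_1}]\cdots[a_{i_k}/f_{i_k}]$ with $i_1<\dots<i_k$. To see this I would argue by induction on the quantity $\sum_j\max(s_j-1,0)$, the total ``excess multiplicity.'' If this is zero the monomial is already squarefree. Otherwise some $[a_j/f_j]$ occurs with $s_j\ge 2$; pick an $(k-1)$-subset $I=\{i_1,\dots,i_{k-1}\}$ of $J$ containing $j$, and use the $I$-relation to replace one factor $[a_j/f_j]$ by $-\tfrac1{d_{j,i_1,\dots,i_{k-1}}}\sum_{i\notin\{j\}\cup(I\setminus\{j\})} d_{i,i_1,\dots,i_{k-1}}[a_i/f_i]$ — one must check $d_{j,i_1,\dots,i_{k-1}}\ne0$, which holds by the genericity hypothesis on the $d$'s. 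Each resulting monomial has the same total degree $k$, and a standard argument with a suitable monomial order (order the variables so $j$ is largest among those in its support, and compare the multiset of exponents lexicographically) shows the excess multiplicity strictly drops, so the induction closes. The coefficients introduced are ratios $d_{i,i_1,\dots,i_{k-1}}/d_{j,i_1,\dots,i_{k-1}}$, manifestly independent of $z$.

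Once $M$ is a $z$-independent combination of squarefree monomials $[a_{i_1}/f_{i_1}]\cdots[a_{i_k}/f_{i_k}]$, each such squarefree monomial equals $d_{i_1,\dots,i_k}^{-1}\,w_{i_1,\dots,i_k}$ by the definition $w_{i_1,\dots,i_k}=a_{i_1}\cdots a_{i_k}[d_{i_1,\dots,i_k}/(f_{i_1}\cdots f_{i_k})]$, again a $z$-independent rescaling. Finally, to remove $i_0$ from the support: whenever $i_0$ appears among $\{i_1,\dots,i_k\}$, apply Lemma \ref{w rel k}, namely $\sum_{i\in J} w_{i_1,\dots,i_{k-1},i}=0$, which lets one express $w_{i_1,\dots,i_{k-1},i_0}$ (with $i_0$ in the last slot, after reordering by the sign rule $w_{i_{\sigma(1)},\dots,i_{\sigma(k)}}=(-1)^\sigma w_{i_1,\dots,i_k}$) as $-\sum_{i\notin\{i_1,\dots,i_{k-1}\}\cup\{i_0\}} w_{i_1,\dots,i_{k-1},i}$; the new terms still have total degree $k$ and now avoid $i_0$, though one may need to iterate since a new term could reintroduce some other repeated index — but this is handled by reusing the reduction of the previous paragraph, and termination follows because the basis $\{w_{i_1,\dots,i_k}: i_0\notin\{i_1,\dots,i_k\}\}$ is finite and linearly independent, so the process must stabilize at a genuine expansion in that basis. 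All coefficients produced throughout are rational functions of the fixed structure constants $b^m_j$ only, hence independent of $z$, which is exactly the assertion. The main obstacle I anticipate is bookkeeping the signs and the termination of the combined ``desquare + eliminate $i_0$'' loop cleanly; the cleanest route is to phrase everything in terms of the squarefree monomials first (where termination via a monomial order is transparent) and only then invoke Lemma \ref{w rel k} together with finite-dimensionality of $A_\Phi(z)$ to conclude, rather than trying to interleave the two reductions.
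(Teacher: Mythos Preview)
Your overall plan---use the $I$-relations \Ref{a/f k} to rewrite $M$ as a $z$-independent combination of squarefree monomials, then convert to $w$'s and push $i_0$ out via Lemma \ref{w rel k}---is the paper's plan too. But the central step, the squarefree reduction, has a real gap.

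First a slip: you choose $I=\{i_1,\dots,i_{k-1}\}$ \emph{containing} $j$, but then $d_{j,i_1,\dots,i_{k-1}}$ has a repeated index and vanishes, so the $I$-relation cannot be solved for $[a_j/f_j]$. You need $j\notin I$; your displayed formula is in fact the correct one for $j\notin I$.

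More importantly, with $j\notin I$ but $I$ otherwise arbitrary, the substitution replaces one factor $[a_j/f_j]$ by a combination of $[a_i/f_i]$ for $i\notin I\cup\{j\}$; such an $i$ may already occur in $M$, and then the excess multiplicity does not drop (the loss of $1$ at $j$ is offset by the gain of $1$ at $i$). Your monomial-order patch does not repair this: you let the ordering depend on the current monomial (``order the variables so $j$ is largest among those in its support''), which is not a single well-founded order on all degree-$k$ monomials, so no descent is established.

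The missing idea, and this is exactly what the paper does, is to choose $I$ to contain every \emph{other} distinct index appearing in $M$. If the factor being reduced has multiplicity $\ge 1$, the remaining distinct indices number at most $k-1$ and hence fit in $I$. With this choice every new index $i$ in the substitution is genuinely new to $M$, so each resulting monomial has strictly more distinct factors; this is bounded by $k$, so the procedure terminates at squarefree monomials. The paper runs this by first eliminating all copies of $[a_{i_0}/f_{i_0}]$ (taking $I$ to contain all other factors), and then reducing the surviving multiplicities $>1$ (now also putting $i_0$ into $I$, so it cannot reappear). This renders your separate Step 3 unnecessary---though that step is in fact correct and terminates in a single application of Lemma \ref{w rel k}, not iteratively as you feared.
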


\begin{proof}
Let us write
\bean
M = \Big[\frac{a_{i_0}}{f_{i_0}}\Big]^{\ell_{i_0}} \Big[\frac{a_{j_1}}{f_{j_1}}\Big]^{\ell_{j_1}}\dots\Big[\frac{a_{j_m}}{f_{j_m}}\Big]^{\ell_{j_m}},
\eean
where $i_0,j_1,\dots,j_m$ are distinct, $\ell_{i_0},\ell_{j_1},\dots,\ell_{j_m}$ are positive and
 $\ell_{i_0}+\ell_{j_1}+\dots+\ell_{j_m}=k$.

If $\ell_{i_0}>0$, then let us decrease $\ell_{i_0}$ by one. For that let us use an $I$-relation of formula \Ref{a/f k}, where
$I=\{p_1,\dots,p_{k-1}\}$ is any subset which contains $j_1,\dots,j_m$ but does not contain $i_0$.
By using \Ref{a/f k}, we can write
\bean
 \Big[\frac{a_{i_0}}{f_{i_0}}\Big]^{\ell_{i_0}}=-\Big[\frac{a_{i_0}}{f_{i_0}}\Big]^{\ell_{i_0}-1}\Big(\sum_{i\notin \{i_0,p_1,\dots,p_{k-1}\}}
 \frac{d_{i,p_1,\dots,p_{k-1}}}{d_{i_0,p_1,\dots,p_{k-1}}} \Big[\frac{a_i}{f_i}\Big]\Big).
 \eean
Substituting this expression into $M$, we will present $M$ as a sum of monomials $M'$
with the degree of $\big[\frac{a_{i_0}}{f_{i_0}}\big]$ equal to $\ell_{i_0}-1$. In any monomial $M'$ the degrees of initial factors
$\big[\frac{a_{j_s}}{f_{j_s}}\big]$ are the same and one new factor appears. Now to each of the monomials $M'$ we will apply the same
procedure until the monomial $ \big[\frac{a_{i_0}}{f_{i_0}}\big]$ will not appear in each of the constructed monomial. Then we will decrease those degrees
of $\ell_{j_1},\dots, \ell_{j_m}$ which are greater than one. In the end we will present $M$ as a sum of monomials of the form
$\big[\frac{a_{i_1}}{f_{i_1}}\big]\dots\big[\frac{a_{i_k}}{f_{i_k}}\big],$
where $i_1<\dots<i_k$ and $i_0\notin\{i_1,\dots,i_k\}$. Such a monomial equals $\frac 1{d_{i_1,\dots,i_k}}w_{i_1,\dots,i_k}$.
\end{proof}

\begin{lem}
\label{MONOM}
Fix $i_0\in J$. Then every monomial $M = \prod_{j\in J}\big[\frac{a_{j}}{f_{j}}\big]^{s_j}\in A_\Phi(z)$ can be written
as a linear combination of elements $w_{i_1,\dots,i_k}$ where $i_1<\dots<i_k$ and $i_0\notin\{i_1,\dots,i_k\}$.
If $\sum_{j\in J}s_j\ne k$, then the coefficients of the linear combination may depend on $z$.

\end{lem}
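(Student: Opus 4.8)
The plan is to induct on the total degree $d=\sum_{j\in J}s_j$ of the monomial $M=\prod_{j\in J}\big[\frac{a_j}{f_j}\big]^{s_j}$, using Lemma \ref{MONOM k} as the base case $d=k$ and reducing every other value of $d$ to it. Recall first that $A_\Phi(z)$ is generated as an algebra by the elements $\big[\frac{a_j}{f_j}\big]$, $j\in J$ (Lemma \ref{lem f_j generate}), so that restricting attention to monomials loses no generality; recall also the identity $[1](z)=\frac 1{|a|}\sum_{j\in J}z_j\big[\frac{a_j}{f_j}\big]$ of Lemma \ref{lem on ONE}, which is used both to raise the degree of low-degree monomials and, implicitly through Lemma \ref{wf mult}, to lower the degree of high-degree ones — in either case at the price of coefficients that depend on $z$.

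For $d=k$, Lemma \ref{MONOM k} already expresses $M$ as a $\C$-linear combination, with $z$-independent coefficients, of the elements $w_{i_1,\dots,i_k}$ with $i_1<\dots<i_k$ and $i_0\notin\{i_1,\dots,i_k\}$. For $d<k$, I would multiply $M$ by $[1](z)$ written in the form $\big(\frac 1{|a|}\sum_{j\in J}z_j\big[\frac{a_j}{f_j}\big]\big)^{k-d}$; since $[1](z)$ is the unit of $A_\Phi(z)$ this leaves $M$ unchanged, while presenting it as a $\C[z_1,\dots,z_n]$-linear combination of monomials of degree exactly $k$, each of which is then rewritten by the case $d=k$. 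The resulting coefficients lie in $\C[z_1,\dots,z_n]$, as the statement permits.

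For $d>k$ the argument is by induction on $d$. Since $d\ge 1$ there is an index $j$ with $s_j\ge 1$; write $M=\big[\frac{a_j}{f_j}\big]\,N$, where $N$ is a monomial of degree $d-1\ge k$. By the inductive hypothesis (or by Lemma \ref{MONOM k} when $d-1=k$), $N$ is a linear combination of elements $w_{j_1,\dots,j_k}$ with $i_0\notin\{j_1,\dots,j_k\}$. Multiplying by $\big[\frac{a_j}{f_j}\big]$ and applying Lemma \ref{wf mult} to each product $\big[\frac{a_j}{f_j}\big]*_z w_{j_1,\dots,j_k}$, one sees that $M$ is a linear combination, with coefficients of the shape $\frac{d_{j_1,\dots,j_k}}{f_{j,j_1,\dots,j_k}}\,a_{j_\ell}$, of elements $w_{l_1,\dots,l_k}$ of degree $k$. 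Finally, using the relation $\sum_{i\in J}w_{i_1,\dots,i_{k-1},i}=0$ of Lemma \ref{w rel k} repeatedly, replace every such $w_{l_1,\dots,l_k}$ whose index set contains $i_0$ by a combination of $w$'s avoiding $i_0$. This closes the induction; since $d>k$ the coefficients genuinely involve the functions $f_{j,j_1,\dots,j_k}$, consistent with the claim.

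The pieces — Lemma \ref{MONOM k}, Lemma \ref{wf mult}, and the relation $\sum_{i\in J}w_{i_1,\dots,i_{k-1},i}=0$ — are all in hand, so what really needs care is only the bookkeeping that keeps everything inside the span of the $i_0$-excluded $w$'s; this is also the reason the statement splits into two cases, since Lemma \ref{MONOM k} yields $z$-independent coefficients \emph{only} in degree $k$, and any passage through a different degree (a power of $[1](z)$ when $d<k$, or the factorization $M=\big[\frac{a_j}{f_j}\big]N$ followed by Lemma \ref{wf mult} when $d>k$) unavoidably introduces the $z_j$ or the $f_{j,j_1,\dots,j_k}$. As a byproduct, since there are exactly ${n-1\choose k}=\dim A_\Phi(z)$ elements $w_{i_1,\dots,i_k}$ with $i_0\notin\{i_1,\dots,i_k\}$ (Lemma \ref{lem basis cr k}) and they span $A_\Phi(z)$, they form a basis.
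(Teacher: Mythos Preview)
Your proof is correct and follows essentially the same approach as the paper, which simply cites Lemmas \ref{MONOM k}, \ref{wf mult}, and \ref{lem on ONE}; you have spelled out the details of how these combine (raising degree via powers of $[1](z)$ when $d<k$, peeling off a factor and applying Lemma \ref{wf mult} when $d>k$). Your explicit invocation of Lemma \ref{w rel k} to eliminate $i_0$ from the output of Lemma \ref{wf mult} is a detail the paper leaves implicit but which is indeed needed.
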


\begin{proof}
The lemma follows from Lemmas \ref{MONOM k}, \ref{wf mult},  \ref{lem on ONE}.
\end{proof}

\begin{thm}
\label{basis th}
Fix $i_0\in J$. Then the ${n-1\choose k}$  elements $w_{i_1,\dots,i_k}$ with $i_1<\dots<i_k$ and $i_0\notin\{i_1,\dots,i_k\}$, form a basis of $A_\Phi(z)$.
\end{thm}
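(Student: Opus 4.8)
The plan is to obtain Theorem~\ref{basis th} as a quick formal consequence of the lemmas already established, since all the genuine labour has gone into Lemma~\ref{MONOM} and its predecessors. First I would recall that, by Lemma~\ref{lem f_j generate}, the elements $\Big[\frac{a_j}{f_j}\Big]$, $j\in J$, generate $A_\Phi(z)$ as a $\C$-algebra; hence $A_\Phi(z)$ is spanned, as a $\C$-vector space, by the monomials $\prod_{j\in J}\Big[\frac{a_j}{f_j}\Big]^{s_j}$ of all total degrees. Applying Lemma~\ref{MONOM}, with the chosen index $i_0$, to each such monomial, I conclude that every monomial --- and therefore every element of $A_\Phi(z)$ --- is a $\C$-linear combination of the elements $w_{i_1,\dots,i_k}$ with $i_1<\dots<i_k$ and $i_0\notin\{i_1,\dots,i_k\}$. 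In other words, this family spans $A_\Phi(z)$.

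Second, I would count the family. Since $w_{i_1,\dots,i_k}$ is skew-symmetric in its indices and vanishes as soon as two of them coincide, restricting to the ordered tuples $i_1<\dots<i_k$ loses nothing, and the admissible index sets are precisely the $k$-element subsets of $J\setminus\{i_0\}$. As $|J\setminus\{i_0\}|=n-1$, the family has exactly ${n-1\choose k}$ members. By Lemma~\ref{lem basis cr k} we also have $\dim A_\Phi(z)={n-1\choose k}$. A spanning set whose cardinality equals the dimension of the space is automatically linearly independent, hence a basis; this proves the theorem.

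The only nontrivial point is therefore entirely upstream of the present statement: it is the reduction procedure of Lemma~\ref{MONOM}, which uses the $I$-relations \Ref{a/f k} to eliminate every factor $\Big[\frac{a_{i_0}}{f_{i_0}}\Big]$ and to bring all remaining exponents down to one, together with Lemma~\ref{wf mult} and Lemma~\ref{lem on ONE} to handle monomials whose total degree differs from $k$. Once those ingredients are in place, Theorem~\ref{basis th} requires no further computation: spanning plus the exact dimension count of Lemma~\ref{lem basis cr k} yields linear independence for free. One could instead attempt a direct proof of linear independence --- for instance by pairing the $w_{i_1,\dots,i_k}$ against a suitable dual family under the residue form, or via the canonical isomorphism $\al(z)\colon A_\Phi(z)\to\sing V$ carrying $w_{i_1,\dots,i_k}$ to $v_{i_1,\dots,i_k}$ together with the basis of $\sing V$ from Lemma~\ref{lem siNg k}(iv) --- but the dimension argument makes this detour unnecessary.
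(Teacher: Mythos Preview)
Your argument is correct and essentially identical to the paper's own proof: generation by the $\Big[\frac{a_j}{f_j}\Big]$ via Lemma~\ref{lem f_j generate}, spanning by the $w_{i_1,\dots,i_k}$ with $i_0\notin\{i_1,\dots,i_k\}$ via Lemma~\ref{MONOM}, and then the dimension count from Lemma~\ref{lem basis cr k} to conclude. Your additional commentary on where the real work lies and on alternative routes to linear independence is accurate but not needed for the proof itself.
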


\begin{proof}
The elements $\big[ \frac{a_i}{f_i}\big]$, $i\in J$, generate $A_\Phi(z)$ by Lemma \ref{lem f_j generate}.
Each polynomial in $\big[ \frac{a_i}{f_i}\big]$, $i\in J$,  is a linear combination of ${n-1\choose k}$ elements
$w_{i_1,\dots,i_k}$ with $i_1<\dots<i_k$ and $i_0\notin\{i_1,\dots,i_k\}$ by Lemma \ref{MONOM}. But $\dim A_\Phi(z) = {n-1\choose k}$.
The theorem follows.
\end{proof}

\begin{thm}
\label{thm id k}
For $i_{0} \in J$, the identity element  $[1](z)\in \Ap$ satisfies the equation
\bean
\label{O}
[1](z) &=& \frac {1}{|a|^k} \sum_{i_1<\dots<i_k
\atop i_{0}\notin\{i_1,\dots,i_k\}} \frac{f_{i_0,i_1,\dots,i_k}^k}{\prod_{m=0}^{k} (-1)^md_{i_0,\dots,\widehat{i_m},\dots,i_{k}}} w_{i_1,\dots,i_k} =
\\
\notag
&=&
\frac {1}{|a|^k} \sum_{i_1<\dots<i_k
\atop i_{0}\notin\{i_1,\dots,i_k\}} \frac{(\sum_{m=0}^{k} (-1)^{m} z_{i_m} d_{i_0,\dots,\widehat{i_m},\dots,i_{k}})^k}
{\prod_{m=0}^{k} (-1)^md_{i_0,\dots,\widehat{i_m},\dots,i_{k}}} w_{i_1,\dots,i_k}.
\eean

\end{thm}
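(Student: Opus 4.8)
The plan is to combine Lemma~\ref{lem on ONE} with the basis furnished by Theorem~\ref{basis th}, reducing Theorem~\ref{thm id k} to one explicit polynomial identity among the minors $d_{i_1,\dots,i_k}$. By Lemma~\ref{lem on ONE} we have $[1](z)=[1](z)^k=|a|^{-k}\bigl(\sum_{j\in J}z_j[a_j/f_j]\bigr)^k$, hence
\[
[1](z)=\frac1{|a|^k}\sum_{j_1,\dots,j_k\in J}z_{j_1}\cdots z_{j_k}\,\Bigl[\frac{a_{j_1}}{f_{j_1}}\Bigr]\cdots\Bigl[\frac{a_{j_k}}{f_{j_k}}\Bigr].
\]
Fix $i_0\in J$. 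Applying the reduction procedure from the proof of Lemma~\ref{MONOM k} — which writes every degree-$k$ monomial in the generators $[a_j/f_j]$ as a $\C$-linear combination, with $z$-independent coefficients, of the basis vectors $w_{i_1,\dots,i_k}$ $(i_1<\dots<i_k,\ i_0\notin\{i_1,\dots,i_k\})$ of Theorem~\ref{basis th}, using only the $I$-relations~\Ref{a/f k} and the identity $[a_{i_1}/f_{i_1}]\cdots[a_{i_k}/f_{i_k}]=d_{i_1,\dots,i_k}^{-1}w_{i_1,\dots,i_k}$ for distinct indices — yields $[1](z)=\sum_{S}c_S(z)\,w_S$, the sum over $k$-subsets $S=\{i_1<\dots<i_k\}\subset J\setminus\{i_0\}$, with each $c_S$ a homogeneous polynomial of degree $k$ in $z$. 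The first equality of the theorem is then equivalent to
\[
c_S(z)=\frac1{|a|^k}\,\frac{f_{i_0,i_1,\dots,i_k}^{k}}{\prod_{m=0}^{k}\bigl((-1)^m\,d_{i_0,\dots,\widehat{i_m},\dots,i_k}\bigr)},
\]
and its second equality is just the definition~\Ref{ i_1,...,i_k} of $f_{i_0,i_1,\dots,i_k}$.

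To identify $c_S$ I would pass to the critical set. For generic $z\in\C^n-\Delta$ all critical points of $\Phi(z,\cdot)$ are nondegenerate, so $A_\Phi(z)=\oplus_p A_{p,\Phi}(z)$ with each $A_{p,\Phi}(z)=\C$, and evaluation at a critical point $p$ sends $w_{i_1,\dots,i_k}$ to $d_{i_1,\dots,i_k}\,\xi_{i_1}\cdots\xi_{i_k}$, where $\xi_i:=a_i/f_i(p)$. Since the coefficients of both sides of the desired formula, written in the $w$-basis, are polynomials in $z$, it is enough to prove the scalar identity
\[
|a|^k=\!\!\sum_{S=\{i_1<\dots<i_k\}\subset J\setminus\{i_0\}}\!\!\frac{f_{i_0,i_1,\dots,i_k}^{k}\ \xi_{i_1}\cdots\xi_{i_k}}{\prod_{m=1}^{k}\bigl((-1)^m\,d_{i_0,\dots,\widehat{i_m},\dots,i_k}\bigr)}
\]
for every such $p$, where $(\xi_i)_{i\in J}$ is constrained by the critical-point equations $\sum_{i\in J}b^m_i\xi_i=0$ $(m=1,\dots,k)$ and by $\sum_{i\in J}z_i\xi_i=|a|$, the latter being Lemma~\ref{lem on ONE} evaluated at $p$. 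One proves this by writing $|a|^k=\bigl(\sum_i z_i\xi_i\bigr)^k$ and successively eliminating the variables $\xi_i$ with $i\in\{i_0\}$ or indexed outside $S$ via the linear constraints; for $k=1$ this is the one-line substitution that returns $\sum_i z_i\xi_i$, and for general $k$ it is the same manipulation run through a $k$-fold multinomial expansion. A useful auxiliary identity in $A_\Phi(z)$, obtained by clearing denominators in the $(k{+}1)$-term circuit relation $\sum_{m=0}^{k}(-1)^m d_{i_0,\dots,\widehat{i_m},\dots,i_k}f_{i_m}=f_{i_0,i_1,\dots,i_k}$, is
\[
\Bigl[\frac{a_{i_0}a_{i_1}\cdots a_{i_k}}{f_{i_0}f_{i_1}\cdots f_{i_k}}\Bigr]=\frac1{f_{i_0,i_1,\dots,i_k}}\sum_{m=0}^{k}(-1)^m a_{i_m}\,w_{i_0,\dots,\widehat{i_m},\dots,i_k},
\]
which one can instead use to organize the reduction of Lemma~\ref{MONOM k} symbolically and collect the coefficient of each $w_S$ directly.

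The main obstacle is the scalar identity above: it is the $k$-variable, $k$-th-power generalization of the relation $A_{i,j}+A_{j,k}+A_{k,i}=f_{i,j,k}^2/(d_{i,j}d_{j,k}d_{k,i})$ used in the proof of Lemma~\ref{ q inv}, and it asserts that the $k$-th power of the linear form $\sum_i z_i\xi_i$, restricted to the solution space $\{\sum_i\xi_i g_i=0\}$, is diagonalized by the monomials $\xi_{i_1}\cdots\xi_{i_k}$ with coefficients dictated by Cramer's rule applied to the circuit relations $\sum_{m=0}^{k}(-1)^m d_{i_0,\dots,\widehat{i_m},\dots,i_k}\,g_{i_m}=0$. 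I expect it to go through by the direct computation just sketched, the $k=1$ case being elementary; the genuinely delicate point is keeping track of the signs produced when reordering the index lists of the $w$'s into increasing order, exactly as in the proof of Lemma~\ref{wf mult} and Theorem~\ref{thm can 2}, while everything else is bookkeeping.
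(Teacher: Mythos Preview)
Your setup matches the paper's exactly: start from $[1](z)=|a|^{-k}\bigl(\sum_j z_j[a_j/f_j]\bigr)^k$, expand multinomially, and reduce each degree-$k$ monomial to the basis $\{w_{i_1,\dots,i_k}:i_0\notin\{i_1,\dots,i_k\}\}$ via the $I$-relations~\Ref{a/f k}. The detour through evaluation at critical points is logically sound but buys nothing: the scalar constraints $\sum_i b_i^m\xi_i=0$ on the $\xi_i=a_i/f_i(p)$ are word-for-word the $I$-relations, so ``eliminating $\xi_i$'' at a critical point is the same computation as eliminating $[a_i/f_i]$ in $A_\Phi(z)$. Your auxiliary identity is Lemma~\ref{wf mult} in different notation.

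The gap is in what you call bookkeeping; the paper's proof shows this is the entire content, and it requires a mechanism you have not supplied. Applying $I$-relations naively produces an uncontrolled branching of terms. The paper's device is to fix the \emph{target} basis vector $w_{i_1,\dots,i_k}$ first and then, at each reduction step, choose the $(k{-}1)$-set $I$ to be a subset of $\{i_0,i_1,\dots,i_k\}$ (omitting one not-yet-introduced index $i_{q_\ell}$). With this choice, the substitution for $[a_{i_0}/f_{i_0}]$ (or for an excess power of $[a_{i_{p_j}}/f_{i_{p_j}}]$) produces exactly one term---the one with $m=i_{q_\ell}$---that can still land on $w_{i_1,\dots,i_k}$; every other term acquires a factor $[a_m/f_m]$ with $m\notin\{i_0,\dots,i_k\}$, and by continuing the reduction of such branches with $I$-sets still drawn from $\{i_0,\dots,i_k\}$ one sees that this outside factor persists, so those branches contribute only to basis vectors $w_S$ with $m\in S$. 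Tracking the single ``main term'' through the $k-r$ steps gives the coefficient as a product of minor ratios that one recognizes as the coefficient of the monomial in the multinomial expansion of $f_{i_0,i_1,\dots,i_k}^{\,k}\big/\prod_{m=0}^k(-1)^m d_{i_0,\dots,\widehat{i_m},\dots,i_k}$. Your sketch (``eliminate the variables $\xi_i$ with $i\in\{i_0\}$ or indexed outside $S$'') points in this direction, but without the observation that the $I$-sets should be taken inside $\{i_0,\dots,i_k\}$ so as to isolate a unique surviving term at each step, the computation does not close and is not ``just bookkeeping''.
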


\begin{proof}
Our goal is to prove that the decomposition of
\bean
\label{b}
[1](z) = \frac 1{|a|^k} \Big(\sum_{j\in J} z_j\Big[\frac{a_j}{f_j}\Big]\Big)^k =
\frac 1{|a|^k} \sum_{s_1+\dots+s_n=k}{k \choose s_1,\dots,s_n} \prod_{j\in J} z_j^{s_j}\Big[\frac{a_j}{f_j}\Big]^{s_j}
\eean
with respect to the basis $(w_{i_1,\dots,i_k}, \,i_1<\dots<i_k, \,i_0\notin\{i_1,\dots,i_k\})$
equals the right hand side in \Ref{O}. For every monomial $\prod_{j\in J} z_j^{s_j}$ we need to show that
${k \choose s_1,\dots,s_n}\prod_{j\in J} \big[\frac{a_j}{f_j}\big]^{s_j} $ equals the coefficient of that monomial in \Ref{O}.
For that we need to express $\prod_{j\in J} \big[\frac{a_j}{f_j}\big]^{s_j} $ as a linear combination of basis vectors. To obtain
this  linear combination we will eliminate from this product the factor $\big[\frac{a_{i_0}}{f_{i_0}}\big]^{s_{i_0}} $ and will make
the powers of all other factors not greater than 1. This will be done by using the $I$-relations of formula \Ref{a/f k} like in the proof of
Lemma \ref{MONOM k}. At every step of that simplification we will use one of the $I$-relations. Although the steps of this procedure are not unique,
 the resulting linear combination is unique. To prove that the linear combination of basis vectors representing
${k \choose s_1,\dots,s_n}\prod_{j\in J} \big[\frac{a_j}{f_j}\big]^{s_j} $ equals the coefficient of
$\prod_{j\in J} z_j^{s_j} $ in \Ref{O}, we will fix an arbitrary basis vector $w_{i_1,\dots,i_k}$  and choose a particular sequence of $I$-relations so
that the coefficient of $w_{i_1,\dots,i_k}$ in the decomposition of ${k \choose s_1,\dots,s_n}\prod_{j\in J} \big[\frac{a_j}{f_j}\big]^{s_j} $
will be equal to the coefficient of  $\prod_{j\in J} z_j^{s_j} w_{i_1,\dots,i_k}$ in \Ref{O}.

By comparing the coefficients of a monomial $\prod_{j\in J} z_j^{s_j}$ in \Ref{O} and \Ref{b}, we observe  that the coefficients have common factors
$\frac 1{|a|^k}{k\choose s_1,\dots,s_k}$, so we will ignore these common factors in our next reasonings.

Before explaining the choice of the $I$-relations for an arbitrary pair $(\prod_{j\in J} z_j^{s_j}, w_{i_1,\dots,i_k}) $  let us consider two examples.

As the first example, we consider a monomial $M=z_{i_1}\dots z_{i_k}$, $i_1<\dots<i_k$, $i_0\notin\{i_1,\dots z_{i_k}\}$. The coefficient of $M$ in
\Ref{b} is
\bean
&&
\Big[\frac{a_{i_1}}{f_{i_1}}\Big]\dots\Big[\frac{a_{i_k}}{f_{i_k}}\Big] =
\frac 1{d_{i_1,\dots, i_k}} w_{i_1,\dots, i_k} =
\frac 1{(-1)^0d_{\widehat{i_0},i_1,\dots, i_k}} w_{i_1,\dots, i_k} =
\\
\notag
&&
\phantom{aaaa}
=
\frac{\prod_{m=1}^k (-1)^m d_{i_0,\dots, \widehat{i_m},\dots,i_k}}
{\prod_{m=0}^k (-1)^m d_{i_0,\dots,\widehat{i_m},\dots,i_k}} w_{i_1,\dots,i_k},
\eean
which is  the coefficient of $M$ in \Ref{O}.

As the second example we consider a monomial $M= z_{i_0}z_{j_i}\dots z_{j_{k-1}}$, where $i_0,j_i,\dots,j_{k-1}$
are distinct. The monomial $M$ appears in \Ref{O} in the coefficient  of a basis vector $w_{i_1,\dots,i_k}$ if
$\{j_1,\dots,j_{k-1}\}\subset \{i_1,\dots,i_k\}$. So we may assume that for some $1\leq \ell\leq k$, we have
$M= z_{i_0} \prod_{m=1\atop m\ne \ell}^kz_{i_m}$.
In \Ref{b}, the coefficient of  $M$ is
$\big[\frac{a_{i_0}}{f_{i_0}}\big] \prod_{m=1\atop m\ne \ell}^k\big[\frac{a_{i_m}}{f_{i_m}}\big]$.
By using the $I$-relation for $I=\{\widehat{i_0},i_1,\dots,\widehat{i_\ell},\dots,i_k\}$, we transform it into
\bean
\label{summand}
-\prod_{m=1\atop m\ne \ell}^k\Big[\frac{a_{i_m}}{f_{i_m}}\Big] \sum_{m \notin \{i_0,i_1,\dots,\widehat{i_\ell},\dots,i_k\}}
\frac {d_{m,i_1,\dots,\widehat{i_\ell},\dots,i_k}}{d_{i_0,i_1,\dots,\widehat{i_\ell},\dots,i_k}}
\Big[\frac{a_{m}}{f_{m}}\Big].
\eean
We choose the summand in \Ref{summand} corresponding to  $m=i_\ell$. This summand is
\bea
&&
-\prod_{m=1\atop m\ne \ell}^k\Big[\frac{a_{i_m}}{f_{i_m}}\Big]
\frac {d_{i_\ell,i_1,\dots,\widehat{i_\ell},\dots,i_k}}{d_{i_0,i_1,\dots,\widehat{i_\ell},\dots,i_k}}
\Big[\frac{a_{i_\ell}}{f_{i_\ell}}\Big] =
 \prod_{m=1}^k\Big[\frac{a_{i_m}}{f_{i_m}}\Big]
\frac {d_{i_1,\dots, {i_\ell},\dots,i_k}}{(-1)^\ell d_{i_0,i_1,\dots,\widehat{i_\ell},\dots,i_k}}
 =
\\
&&
\phantom{aaaaaa}
= \frac{\prod_{m=0\atop m\ne \ell}^k (-1)^m d_{i_0,\dots, \widehat{i_m},\dots,i_k}}
{\prod_{m=0}^k (-1)^m d_{i_0,\dots,\widehat{i_m},\dots,i_k}}   w_{i_1,\dots, {i_\ell},\dots,i_k},
\eea
which is the coefficient of $Mw_{i_1,\dots, {i_\ell},\dots,i_k}$ in \Ref{O}.

Now let $M$ be an arbitrary monomial of degree $k$ in variables $z_1,\dots,z_n$.
The monomial $M$ appears in \Ref{O} in the coefficient  of a vector $w_{i_1,\dots,i_k}$ if
there is a subset $\{p_1,\dots,p_r\} \subset \{1,\dots,k\}$ such that $M=z_{i_0}^{s_{0}}\prod_{m=1}^r z_{i_{p_m}}^{s_m}$,
$\sum_{m=0}^rs_m=k$, and all numbers $s_1,\dots,s_r$ are positive. Denote $\{q_1,\dots,q_{k-r}\} = \{1,\dots,k\} - \{p_1,\dots,p_r\}$, the complement.

In \Ref{b}, the coefficient of  $M$ is
$P=\big[\frac{a_{i_0}}{f_{i_0}}\big]^{s_{0}}\prod_{m=1}^r \big[\frac{a_{i_{p_m}}}{f_{i_{p_m}}}\Big]^{s_{m}}$.
To express this product as a linear combination of the basis vectors we need to apply to this product  $I$-relations $k-r$ times.
To calculate the coefficient of $w_{i_1,\dots,i_k}$
we first apply the $I$-relation with $I=\{\widehat{i_0},i_1,\dots,\widehat{i_{q_1}},\dots,i_k\}\subset
\{i_0,\dots,i_k\}$  and decrease the degree of $\big[\frac{a_{i_0}}{f_{i_0}}\big]$ by 1,
\bea
P= -\Big[\frac{a_{i_0}}{f_{i_0}}\Big]^{s_{0}-1}\prod_{m=1}^r \Big[\frac{a_{i_{p_m}}}{f_{i_{p_m}}}\Big]^{s_{m}}
\!\!\!\!\!
\sum_{m\notin\{i_0,\dots,\widehat{i_{q_1}},\dots,i_k\}}\!\!
 \frac{d_{m,\widehat{i_0},i_1,\dots,\widehat{i_{q_1}},\dots,i_k}}{d_{i_0,\widehat{i_0},i_1,\dots,\widehat{i_{q_1}},\dots,i_k}} \Big[\frac{a_{m}}{f_{m}}\Big].
\eea
In the next steps we will simplify further the first factors of this expression. After the future simplifications the only term of this sum that can give
$w_{i_1,\dots,i_k}$ is the term with $m={i_{q_1}}$, which is
\bean
\label{ex}
&&
-\Big[\frac{a_{i_0}}{f_{i_0}}\Big]^{s_{0}-1}\prod_{m=1}^r \Big[\frac{a_{i_{p_m}}}{f_{i_{p_m}}}\Big]^{s_{m}}
 \frac{d_{{i_{q_1}},\widehat{i_0},i_1,\dots,\widehat{i_{q_1}},\dots,i_k}}{d_{i_0,\widehat{i_0},i_1,\dots,\widehat{i_{q_1}},\dots,i_k}} \Big[\frac{a_{{i_{q_1}}}}{f_{{i_{q_1}}}}\Big]=
\\
\notag
&&
=
\Big[\frac{a_{i_0}}{f_{i_0}}\Big]^{s_{0}-1}\prod_{m=1}^r \Big[\frac{a_{i_{p_m}}}{f_{i_{p_m}}}\Big]^{s_{m}}
 \frac{d_{\widehat{i_0}, i_1,\dots,i_k}}{(-1)^{q_1} d_{i_0,i_1,\dots,\widehat{i_{q_1}},\dots,i_k}} \Big[\frac{a_{{i_{q_1}}}}{f_{{i_{q_1}}}}\Big].
\eean
We will call this term the {\it main term}.
Now we apply the $I$-relation with
$I=\{\widehat{i_0},i_1,\dots,$ $\widehat{{i_{q_2}}},\dots,i_k\}\subset
\{i_0,\dots,i_k\}$  and again decrease the degree of $\big[\frac{a_{i_0}}{f_{i_0}}\big]$ by one. After the second step
the only term of the obtained sum that may produce the vector $w_{i_1,\dots,i_k}$ is the term
\bea
\Big[\frac{a_{i_0}}{f_{i_0}}\Big]^{s_{0}-1}\prod_{m=1}^r \Big[\frac{a_{i_{p_m}}}{f_{i_{p_m}}}\Big]^{s_{m}}
 \frac{d_{\widehat{i_0}, i_1,\dots,i_k}}{(-1)^{{i_{q_1}}}d_{i_0,i_1,\dots,\widehat{{i_{q_1}}},\dots,i_k}} \Big[\frac{a_{{i_{q_1}}}}{f_{{i_{q_1}}}}\Big]
 \frac{d_{\widehat{i_0}, i_1,\dots,i_k}}{(-1)^{q_2}d_{i_0,i_1,\dots,\widehat{{i_{q_2}}},\dots,i_k}} \Big[\frac{a_{{i_{q_2}}}}{f_{{i_{q_2}}}}\Big].
\eea
This will be our {\it main term} after two steps of the simplifying procedure. We will repeat this procedure  to kill all factors  $\big[\frac{a_{i_0}}{f_{i_0}}\big]$.
After $s_{i_0}$ steps the {\it main term} will be
\bea
\prod_{m=1}^r \big[\frac{a_{i_{p_m}}}{f_{i_{p_m}}}\Big]^{s_{m}}
\prod_{m=1}^{s_0}
 \frac{d_{\widehat{i_0}, i_1,\dots,i_k}}{(-1)^{q_m}d_{i_0,i_1,\dots,\widehat{{i_{q_m}}},\dots,i_k}} \Big[\frac{a_{{i_{q_m}}}}{f_{{i_{q_2}}}}\Big].
\eea
Now     we will apply the $I$-relation with
$I=\{i_0,\dots,$ $\widehat{i_{p_1}},\dots,\widehat{i_{q_{s_0+1}}},\dots, i_k\}\subset
\{i_0,\dots,i_k\}$  and decrease the degree of $\big[\frac{a_{i_{p_1}}}{f_{i_{p_1}}}\big]$ by one.
 After this procedure the main term will  be
\bea
\Big[\frac{a_{i_{p_1}}}{f_{i_{p_1}}}\Big]^{s_{1}-1}\!\!
\prod_{m=2}^r \Big[\frac{a_{i_{p_m}}}{f_{i_{p_m}}}\Big]^{s_{m}} \!\!
\prod_{m=1}^{s_0}
 \frac{d_{\widehat{i_0}, i_1,\dots,i_k}}{(-1)^{q_m}d_{i_0,i_1,\dots,\widehat{{i_{q_m}}},\dots,i_k}} \Big[\frac{a_{{i_{q_m}}}}{f_{{i_{q_2}}}}\Big]\!
\times\! \frac{(-1)^{p_1}d_{{i_0}, i_1,\dots,\widehat{i_{p_1}}, \dots,i_k}}{(-1)^{q_{s_0+1}}d_{i_0,i_1,\dots,\widehat{i_{q_{s_0+1}}},\dots,i_k}} \Big[\frac{a_{{i_{q_{s_0+1}}} }}{f_{{i_{q_{s_0+1}}}}}\Big].
\eea
Now we will be decreasing the degree of $\big[\frac{a_{j_1}}{f_{j_1}}\big]$ to make it 1. Then we will continue this procedure of simplification, which will end with the main term
\bea
\Big(d_{\widehat{i_0}, i_1,\dots,i_k}\prod_{m=1}^k \Big[\frac{a_{i_m}}{f_{i_m}}\Big]\Big)
 \frac{((-1)^0d_{\widehat{i_0}, i_1,\dots,i_k})^{s_0}\prod_{m=1}^r ((-1)^{p_m}d_{{i_0}, i_1,\dots,\widehat{i_{p_m}}, \dots,i_k})^{s_m}
}{\prod_{m=0}^k(-1)^md_{i_0,\dots,\widehat{i_m},\dots,i_k}}.
\eea
After replacing the first factor $d_{\widehat{i_0}, i_1,\dots,i_k}\prod_{m=1}^k \Big[\frac{a_{i_m}}{f_{i_m}}\Big]$ with $w_{i_1,\dots,i_k}$ we observe that the second factor equals
the coefficient of $Mw_{i_1,\dots,i_k}$ in \Ref{O}. The theorem is proved.
\end{proof}

\subsection{Canonical isomoprhism}

The set of vectors $v_{i_1,\dots,i_k}, 1<i_1<\dots<i_k\leq n,$ is a basis of $\sv$, by Lemma \ref{lem siNg k}.
For $z\in \cd$, the set of vectors $w_{i_1,\dots,i_k}, 1<i_1<\dots<i_k\leq n,$ is a basis of $\ap$, by Theorem \ref{basis th}.

\begin{thm}
\label{thm cOnst}
For $z\in \cd$, the matrix of the canonical isomorphism $\al(z): \ap\to\sv$ with respect to these bases does not depend on $z$.

\end{thm}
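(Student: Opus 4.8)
The plan is to express the entries of the matrix of $\al(z)$ through sums of Grothendieck residues over the critical set, and then to evaluate those sums on the hyperplane at infinity, where all dependence on $z$ disappears; this extends the arguments used for $k=1$ (Theorem \ref{lem can map}) and $k=2$ (Theorem \ref{thm can 2}). For the standard basis one has $(H_{i_1},\dots,H_{i_k})=\omega_{i_1}\wedge\dots\wedge\omega_{i_k}=\tfrac{d_{i_1,\dots,i_k}}{f_{i_1}\cdots f_{i_k}}\,dt_1\wedge\dots\wedge dt_k$, so by \Ref{spec elt} the canonical element is $[E]=\sum_{i_1<\dots<i_k}\big[\tfrac{d_{i_1,\dots,i_k}}{f_{i_1}\cdots f_{i_k}}\big]\otimes F_{i_1,\dots,i_k}$, and by Theorem \ref{thm E-sing} and \Ref{map alpha},
\begin{equation*}
\al(z)\big(w_{j_1,\dots,j_k}\big)=\sum_{i_1<\dots<i_k}\frac{d_{j_1,\dots,j_k}\,d_{i_1,\dots,i_k}\,a_{j_1}\cdots a_{j_k}}{(2\pi\sqrt{-1})^{\,k}}\Big(\sum_{p\in C_\Phi(z)}\Res_p\,\omega\Big)\,F_{i_1,\dots,i_k},
\end{equation*}
\begin{equation*}
\omega=\omega_{i_1,\dots,i_k;\,j_1,\dots,j_k}=\frac{\big(\prod_{m\in J}f_m\big)^{k}}{f_{j_1}\cdots f_{j_k}\,f_{i_1}\cdots f_{i_k}\,H_1\cdots H_k}\,dt_1\wedge\dots\wedge dt_k ,
\end{equation*}
where the $H_i$ are as in \S\ref{sec master k}, namely $H_i=\sum_{m\in J}b^i_m a_m\prod_{\ell\ne m}f_\ell$ and $\der\Phi/\der t_i=H_i/\prod_m f_m$.

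Since the vectors $v_{i_1,\dots,i_k}$ are obtained from the fixed combinatorial vectors $F_{i_1,\dots,i_k}$ by the $z$-free recipe \Ref{ v ij k}, it suffices to prove that $\sum_{p\in C_\Phi(z)}\Res_p\,\omega$ does not depend on $z$; then each coefficient above is constant, so $\al(z)(w_{j_1,\dots,j_k})$ is a fixed vector of $V$, and a fortiori the matrix of $\al(z)$ in the bases $(w_{i_1,\dots,i_k})$, $(v_{i_1,\dots,i_k})$ is constant. Take $k\ge2$ (for $k=1$ use Theorem \ref{lem can map}). Each $f_m$ occurs with multiplicity $k$ in the numerator of $\omega$ and with multiplicity $\le2\le k$ in the denominator, so $\omega$ has no pole along any $f_m=0$; extended to $\Pee^k$ its polar divisor is $\{H_1=0\}+\dots+\{H_k=0\}+H_\infty$ with simple poles, where $H_\infty$ is the hyperplane at infinity. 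For $z$ in a Zariski dense subset of $\cd$ the locus $\{H_1=\dots=H_k=0\}$ is finite and contained in $U$ — a common zero on some $f_m=0$ would force a second hyperplane $f_{m'}=0$ through that point, which a generic arrangement excludes — so it coincides with $C_\Phi(z)$. Iterating the vanishing of the sum of residues of a meromorphic one-form on the complete curves $\overline{\{H_1=0\}}$, $\overline{\{H_1=H_2=0\}}$, \dots, $\overline{\{H_1=\dots=H_{k-1}=0\}}$, exactly as in the proof of Lemma \ref{lem1}, rewrites $\sum_{p\in C_\Phi(z)}\Res_p\,\omega$ as minus the sum of the corresponding iterated residues supported at infinity, i.e.\ as a sum of Grothendieck residues of the restriction of $\omega$ to $H_\infty$.

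Finally, that restriction carries no dependence on $z$. In the chart $u_p=t_p/t_k$ ($p<k$), $u_k=1/t_k$ around $H_\infty=\{u_k=0\}$ one has $u_k f_m=u_k z_m+\tilde f_m(u')$ with $\tilde f_m(u')=\sum_{p<k}b_m^p u_p+b_m^k$, and from $H_i=\sum_{m}b^i_m a_m\prod_{\ell\ne m}f_\ell$ one gets $u_k^{\,n-1}H_i\to\tilde H_i(u')=\sum_{m\in J}b_m^i a_m\prod_{\ell\ne m}\tilde f_\ell(u')$ as $u_k\to0$. Since $dt_1\wedge\dots\wedge dt_k=\pm\,u_k^{-(k+1)}du_1\wedge\dots\wedge du_k$, a degree count gives $\omega=\pm\,u_k^{-1}G(u)\,du_1\wedge\dots\wedge du_k$ with $G$ regular at $u_k=0$, whence
\begin{equation*}
\Res_{\{u_k=0\}}\omega=\pm\,\frac{\big(\prod_{m\in J}\tilde f_m\big)^{k}}{\tilde f_{j_1}\cdots\tilde f_{j_k}\,\tilde f_{i_1}\cdots\tilde f_{i_k}\,\tilde H_1\cdots\tilde H_k}\;du_1\wedge\dots\wedge du_{k-1},
\end{equation*}
which depends only on the numbers $b_m^p$, and the points at which its residues are taken, $\{\tilde H_1=\dots=\tilde H_k=0\}$, are $z$-free as well. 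Hence $\sum_{p\in C_\Phi(z)}\Res_p\,\omega$ is constant on a dense subset of $\cd$, and being a regular function of $z$ on $\cd$ it is constant there. The main obstacle is the bookkeeping in the iterated residue theorem: verifying the simple-pole structure, controlling the intermediate complete intersections $\{H_1=\dots=H_j=0\}$ for generic $z$, and ruling out spurious residues off $H_\infty$; once this is done the passage to the $z$-free residue at infinity is routine. (A little more effort — evaluating that residue at infinity as in Lemmas \ref{lem1}--\ref{two} and imposing $\al(z)(w_{i_1,\dots,i_k})\in\sv$ — yields the sharper statement $\al(z)(w_{i_1,\dots,i_k})=v_{i_1,\dots,i_k}$, i.e.\ the matrix is the identity.)
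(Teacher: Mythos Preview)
Your argument is correct and is essentially the paper's own proof: express the matrix coefficients as sums of Grothendieck residues of the rational $k$-form $\omega$, observe that for $k\ge2$ the only poles of $\omega$ in $\Pee^k$ lie on $\{H_1=0\},\dots,\{H_k=0\}$ and on $H_\infty$, trade the iterated residue over $C_\Phi(z)$ for a residue supported at infinity, and check that the resulting $(k-1)$-form on $H_\infty$ depends only on the $\tilde f_m$ and $\tilde H_i$, hence not on $z$. One small slip: the residues of the form at infinity are taken at the finite set $\{\tilde H_1=\dots=\tilde H_{k-1}=0\}$ in $H_\infty\simeq\Pee^{k-1}$, not at $\{\tilde H_1=\dots=\tilde H_k=0\}$; and your closing parenthetical overshoots what the paper establishes for general $k$, where only $\al(z)=c\,\nu(z)$ for some nonzero constant $c$ is obtained (Corollary \ref{cor ac}).
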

\begin{proof}
For $1<m_1<\dots<m_k\leq n$ and $1\leq i_1<\dots<i_k\leq n$ denote
\bean
B_{i_1,\dots,i_k} = \sum_{p\in C_\Phi(z)} \Res_p  \frac 1{\prod_{j=1}^k f_{m_j}}\frac 1{\prod_{j=1}^k f_{i_j}}  \frac {\prod_{\ell\in J} f_{\ell}^k}{\prod_{i=1}^kH_i}.
\eean
Then
\bean
\al(z) (w_{m_1,\dots,m_k}) = \sum_{1\leq i_1<\dots<i_k\leq n} \frac{ d_{m_1,\dots,m_k} d_{i_1,\dots,i_k} \prod_{j=1}^ka_{m_j}}{(2\pi\sqrt{-1})^k}B_{i_1,\dots,i_k}F_{i_1,\dots,i_k},
\eean
see formulas \Ref{res map}, \Ref{map alpha}. In order to prove the theorem we need to show that every $B_{i_1,\dots,i_k}$ does not depend on $z$.

The differential form
\bean
\omega = \frac 1{\prod_{j=1}^k f_{m_j}}\frac 1{\prod_{j=1}^k f_{i_j}}  \frac {\prod_{\ell\in J} f_{\ell}^k}{\prod_{i=1}^kH_i}\,dt_1\wedge\dots\wedge dt_k
\eean
has poles only on the hypersurfaces $H_i=0$, $i=1,\dots,k$. The poles are of first order. To calculate $B_{i_1,\dots,i_k}$,
we need to take the residue $\psi=\Res_{H_i=0, i=1,\dots, k-1}\omega$ of the form $\omega$ at the curve $\mc C=\{H_i=0, i=1,\dots,k-1\}$
and then take the residue of the form $\psi$ on the curve $\mc C$ at the points where $H_k=0$. This is the same as if we
took with minus sign the residue at infinity of the form $\psi$  on the curve $\mc C$. That residue at infinity
(up to sign) can be obtained differently in two steps. First we may take the residue of  $\omega$
at the hyperplane at infinity (denote the residue by  $\phi$) and then take the residue of $\phi$ at the points of the set $\{H_i=0, i=1,\dots, k-1\}$.

So to calculate $B_{i_1,\dots,i_k}$ we first calculate $\phi$.
The coordinates at infinity are $u_1=t_1/t_k,$ \dots, $
u_{k-1}=t_{k-1}/t_k$, $u_k= 1/t_k$. We have $f_m=(b^1_mu_1  + \dots + b^{k-1}_mu_{k-1}+b^k_m+ z_mu_k)/u_k$. Denote
 $\tilde f_m (u_1,\dots,u_{k-1}) = b^1_mu_1 +\dots +b^{k-1}_mu_{k-1}+ b^k_m $.
For $i=1,\dots,k$, we have $H_i(u_1/u_k,$\dots, $u_{k-1}/u_k,1/u_k) = \hat H_i(u_1,\dots,u_{k-1},u_k)/u_k^{n-1}$, where $\hat H_i(u_1,\dots,u_{k-1},u_k)$
is a  polynomial. Denote $\tilde H_i(u_1,\dots,u_{k-1}) = \hat H_i(u_1,\dots,u_{k-1}, 0)$.
The polynomial $\tilde H_i(u_1,\dots,u_{k-1})$ does not depend on $z$.

We have $dt_1\wedge\dots\wedge dt_k = -\frac {1}{u_k^{k+1}}du_1\wedge\dots\wedge {du_k}$. By counting all orders of $u_k$ in factors of $\omega$ we conclude that
the  form $\omega$ has the first order
pole at the hyperplane at infinity. The residue $\phi$ of $\omega $ at the infinite hyperplane
equals
\bean
\pm 2\pi\sqrt{-1}\frac 1{\prod_{j=1}^k \tilde f_{m_j}}\frac 1{\prod_{j=1}^k \tilde f_{i_j}}  \frac {\prod_{\ell\in J} \tilde f_{\ell}^k}{\prod_{i=1}^k\tilde H_i}
\, du_1\wedge\dots\wedge du_{k-1}.
\eean
This form does not depend on $z$. Now we  are supposed to take the sum of residues of $\phi$ at the points of the set $\{ \tilde H_i=0, i=1,\dots,k-1\}$
and the polynomials $\tilde H_i$ also do not depend on $z$. Hence $B_{i_1,\dots,i_k}$ does not depend on $z$. The theorem is proved.
\end{proof}

Define the {\it naive isomorphism} $\nu(z) : \ap\to\sv$ by the formula
\bean
\nu(z) : w_{i_1,\dots,i_k}  \mapsto v_{i_1,\dots,i_k}
\eean
for all $i_1,\dots,i_k\in J$.

\begin{lem}
\label{lem nu}
The map $\nu(z)$ is an isomorphism of vector spaces and for every $i\in J$ and $w\in\Ap$ we have
\bean
\label{nK}
\nu(z) \Big[\frac{a_i}{f_i}\Big] *_z w = K_i(z) \nu(z) (w).
\eean

\end{lem}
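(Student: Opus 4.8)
The plan is to prove the two assertions of the lemma separately, in each case by playing off the deliberately parallel combinatorial descriptions of $\Ap$ (via the elements $w_{i_1,\dots,i_k}$) and of $\sv$ (via the vectors $v_{i_1,\dots,i_k}$) assembled in the preceding lemmas.

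\textbf{$\nu(z)$ is a well-defined isomorphism.} Fix an index $i_0\in J$. By Theorem~\ref{basis th} the elements $w_{i_1,\dots,i_k}$ with $i_1<\dots<i_k$ and $i_0\notin\{i_1,\dots,i_k\}$ form a basis of $\Ap$, while by Lemma~\ref{lem siNg k}(iv) the corresponding vectors $v_{i_1,\dots,i_k}$ form a basis of $\sv$; sending $w_{i_1,\dots,i_k}$ to $v_{i_1,\dots,i_k}$ on this basis and extending $\C$-linearly yields a linear isomorphism $\Ap\to\sv$. To see that it agrees with the stated formula on an arbitrary tuple $(j_1,\dots,j_k)$ (and not merely on the reduced ones), one first notes that both $w_{j_1,\dots,j_k}$ and $v_{j_1,\dots,j_k}$ vanish if the $j$'s are not distinct, and otherwise reorders the tuple by skew-symmetry so that $i_0$, if present, is the last entry, then rewrites $w_{j_1,\dots,j_{k-1},i_0}$ in the chosen basis by means of the relation $\sum_{j\in J}w_{j_1,\dots,j_{k-1},j}=0$ (Lemma~\ref{w rel k}). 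Because the $v$'s obey the same skew-symmetry by construction and the same relation $\sum_{j\in J}v_{j_1,\dots,j_{k-1},j}=0$ (Lemma~\ref{lem siNg k}(iii)), the identical manipulation produces the identical expansion with $v$'s in place of $w$'s, so $\nu(z)(w_{j_1,\dots,j_k})=v_{j_1,\dots,j_k}$.

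\textbf{The intertwining identity \Ref{nK}.} Since both sides of \Ref{nK} are $\C$-linear in $w$, it suffices to verify it for $w$ in the spanning family $\{w_{i_1,\dots,i_k}\}$ of $\Ap$. Fix $i\in J$. If $i\notin\{i_1,\dots,i_k\}$, relabel the tuple as $(i_2,\dots,i_{k+1})$; the first formula of Lemma~\ref{wf mult} writes $\bigl[\tfrac{a_i}{f_i}\bigr]*_z w_{i_2,\dots,i_{k+1}}$ as $\tfrac{d_{i_2,\dots,i_{k+1}}}{f_{i,i_2,\dots,i_{k+1}}}$ times an explicit $z$-independent combination of the $w_{i,i_2,\dots,\widehat{i_\ell},\dots,i_{k+1}}$, and applying $\nu(z)$ (which by the previous step replaces each $w$ by the matching $v$) turns this into precisely the expression for $K_i(z)v_{i_2,\dots,i_{k+1}}=K_i(z)\nu(z)(w_{i_2,\dots,i_{k+1}})$ furnished by the first formula of Lemma~\ref{lem Kv}. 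If instead $i$ occurs among the indices, say $w=w_{i,i_2,\dots,i_k}$ with $i_2,\dots,i_k$ distinct and different from $i$, I apply the second formula of Lemma~\ref{wf mult} to write $\bigl[\tfrac{a_i}{f_i}\bigr]*_z w_{i,i_2,\dots,i_k}=-\sum_m\bigl[\tfrac{a_i}{f_i}\bigr]*_z w_{m,i_2,\dots,i_k}$, invoke the case just treated termwise, and resum using $v_{i,i_2,\dots,i_k}=-\sum_m v_{m,i_2,\dots,i_k}$ (Lemma~\ref{lem siNg k}(iii)) together with the second formula of Lemma~\ref{lem Kv}; this yields $\nu(z)\bigl(\bigl[\tfrac{a_i}{f_i}\bigr]*_z w\bigr)=K_i(z)\nu(z)(w)$ once more.

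\textbf{Where the difficulty is.} There is no serious obstacle here: the substance has been packed into the earlier results — Theorem~\ref{basis th} (that the $w$'s genuinely form a basis), the explicit and deliberately parallel formulas of Lemmas~\ref{wf mult} and \ref{lem Kv}, and the matching relations of Lemmas~\ref{w rel k} and \ref{lem siNg k}. The only point deserving a word is internal consistency — that the different ways of reducing an element in the second step cannot yield conflicting answers — but this is automatic, since $*_z$ is a genuine associative product on $\Ap$ and $\nu(z)$ a genuine linear isomorphism, so verifying \Ref{nK} on any one spanning set is enough and nothing is circular.
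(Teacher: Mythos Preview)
Your proof is correct and follows essentially the same approach as the paper: the paper's own proof simply cites Lemma~\ref{lem siNg k} and Theorem~\ref{basis th} for the isomorphism, and Lemmas~\ref{lem Kv} and~\ref{wf mult} for the intertwining relation, which is exactly the set of ingredients you unpack in detail. Your added verification that the assignment $w_{i_1,\dots,i_k}\mapsto v_{i_1,\dots,i_k}$ is consistent across all tuples (via the matching skew-symmetry and sum relations of Lemmas~\ref{w rel k} and~\ref{lem siNg k}(iii)) is a useful point the paper leaves implicit.
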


\begin{proof}
The map $\nu(z)$ is an isomorphism by Lemma \ref{lem siNg k} and  Theorem \ref{basis th}. Formula $\Ref{nK}$
holds by Lemmas \ref{lem Kv} and \ref{wf mult}.
\end{proof}

Introduce the linear isomorphism
\bean
\zeta = \al(z) \nu(z)^{-1} : \sv\to\sv.
\eean
By Theorem \ref{thm cOnst} and Lemma \ref{lem nu} the isomorphism $\zeta$ does not depend on $z$.
By Theorems \ref{thm cOnst} and \ref{K/f} the isomorphism $\zeta$ commutes with the action of operators
$K_i(z)$ for all $i\in J$ and $z\in \cd$, $[K_i(z),\zeta]=0$.

\begin{thm}
\label{thm const}
The isomorphism $\zeta$ is a scalar operator.
\end{thm}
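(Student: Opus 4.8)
The plan is to use only the two properties already established: $\zeta$ is independent of $z$, and $\zeta$ commutes with every $K_i(z)$, $i\in J$, $z\in\cd$. Since $\zeta$ is a fixed operator while $K_i(z)=\sum_{\{i_1,\dots,i_k\}\subset J\minus\{i\}}\tfrac{d_{i_1,\dots,i_k}}{f_{i,i_1,\dots,i_k}}\,L_{i,i_1,\dots,i_k}$ and the linear forms $f_{i,i_1,\dots,i_k}$ attached to distinct $k$-subsets of $J\minus\{i\}$ are pairwise non-proportional (the arrangement is generic), uniqueness of the partial-fraction decomposition in $z$ forces $[\zeta,L_{i,i_1,\dots,i_k}]=0$ for all $i$ and all $\{i_1,\dots,i_k\}$; equivalently $\zeta$ commutes with $L_C$ for every $(k+1)$-element subset $C\subset J$. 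It therefore suffices to prove that $\sv$ is irreducible as a module over the algebra $\mathcal A$ generated by $\{L_C:C\in\frak C\}$, for then $\zeta\in\mathrm{End}_{\mathcal A}(\sv)=\C$ by Schur's lemma.

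I would prove the irreducibility directly from \Ref{def L k} and Lemma \ref{lem siNg k}. Fix $C=\{i_1<\dots<i_{k+1}\}$ and put $\xi_C=\sum_{\ell=1}^{k+1}(-1)^{\ell+1}a_{i_\ell}v_{i_1,\dots,\widehat{i_\ell},\dots,i_{k+1}}\in\sv$. Since $|J|>k+1$, some index lies outside $C$, so $\xi_C\ne0$ by Lemma \ref{lem siNg k}(iv). Because $L_C$ preserves $\sv$ and, being symmetric, commutes with the orthogonal projection $\pi:V\to\sv$, formula \Ref{def L k} shows that $L_C$ has rank one on $\sv$ with image $\C\,\xi_C$ and that $L_C(v_{i_1,\dots,i_k})=0$ whenever $\{i_1,\dots,i_k\}\not\subset C$. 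Each $L_C$ is symmetric for $S^{(a)}$ (uniqueness of partial fractions applied to Theorem \ref{thm K sym}) and $S^{(a)}|_{\sv}$ is nondegenerate (Corollary \ref{cor nondeg}), so $\big(\sum_C\C\,\xi_C\big)^{\perp}=\bigcap_C\ker(L_C|_{\sv})$. This intersection is $0$: if $v=\sum_I c_I v_I\in\sv$ is killed by all $L_C$, then $(c_I)$ is a cocycle for the simplicial coboundary on the full simplex with vertices $J$, hence (the simplex being contractible) a coboundary, and substituting back gives $v=\sum_{\{j_1,\dots,j_{k-1}\}\subset J}(\pm b_{j_1,\dots,j_{k-1}})\big(\sum_{j\in J}v_{j_1,\dots,j_{k-1},j}\big)=0$ by Lemma \ref{lem siNg k}(iii). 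Hence the $\xi_C$ span $\sv$. Now let $0\ne W\subset\sv$ be $\mathcal A$-invariant. Since $\bigcap_C\ker(L_C|_{\sv})=0$, some $L_C$ is nonzero on $W$, so $\xi_C\in W$; and for any $C'$ with $|C\cap C'|=k$, writing $\xi_C=\pi\big(\sum_{m=1}^{k+1}\pm a_{i_m}F_{i_1,\dots,\widehat{i_m},\dots,i_{k+1}}\big)$ and using that $L_{C'}$ commutes with $\pi$ and annihilates $F_J$ for $J\not\subset C'$, only the term with $i_m$ the unique element of $C\minus C'$ survives, giving $L_{C'}(\xi_C)=\pm a_{i_m}\xi_{C'}\ne0$, so $\xi_{C'}\in W$. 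Any two $(k+1)$-subsets of $J$ are joined by a chain of such single swaps, so $W\supset\sum_{C'}\C\,\xi_{C'}=\sv$. Thus $\sv$ is $\mathcal A$-irreducible and the theorem follows.

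The only genuinely non-formal point is the vanishing $\bigcap_C\ker(L_C|_{\sv})=0$, i.e.\ that the rank-one images $\C\,\xi_C$ already span $\sv$; the passage from $\{K_i(z)\}$ to $\{L_C\}$ via partial fractions, the description of $L_C$ on the basis $\{v_{i_1,\dots,i_k}\}$, and the final application of Schur's lemma are routine. One could instead avoid the combinatorics: for generic weights $\Ap$ is semisimple (Theorem \ref{thm V,OT,S}), so $\sv$ is the sum of the eigenlines $\C\,\al(z)(w_p(z))$ of $\B(z)$ indexed by the critical points of $\Phi(z,\cdot)$, a subspace stable under $\B(z)$ for every $z\in\cd$ is a union of such eigenlines, and transitivity of the monodromy of $\pi_1(\cd)$ on the critical set forces it to be $0$ or $\sv$, the general unbalanced case following by specialization; but I expect the explicit argument above to be the one carried out here.
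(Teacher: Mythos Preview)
Your proof is correct, and it is \emph{not} the route the paper takes. Both arguments pivot on the same vectors $\xi_C=x_{i_1,\dots,i_{k+1}}=\sum_\ell(-1)^{\ell+1}a_{i_\ell}v_{i_1,\dots,\widehat{i_\ell},\dots,i_{k+1}}$ and both need that these span $\sv$; but the mechanisms by which $\zeta$ is forced to be scalar differ. The paper does not pass to the $L_C$'s at all: it invokes an external result (Lemma~4.3 of \cite{V5}) that the joint eigenvalues of the $K_i(z)$ separate eigenvectors, then lets $z$ degenerate to a generic point of the hyperplane $\{f_{i_1,\dots,i_{k+1}}=0\}$ so that $x_{i_1,\dots,i_{k+1}}$ arises as a limit of a $\B(z)$-eigenvector, hence is a $\zeta$-eigenvector; finally the identity $\sum_{\ell=1}^{k+2}(-1)^\ell a_{i_\ell}\,x_{i_1,\dots,\widehat{i_\ell},\dots,i_{k+2}}=0$ forces all eigenvalues to coincide. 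Your route---partial fractions to get $[\zeta,L_C]=0$, rank-one description of $L_C|_{\sv}$, irreducibility via the swap chain, Schur---is more self-contained (no appeal to \cite{V5}, no limit along the discriminant) and even supplies a proof of the spanning statement via the acyclicity of the full simplex, which the paper simply declares ``easy to see.'' What the paper's argument buys in exchange is a cleaner conceptual picture: the $x_C$ really are limiting Bethe eigenvectors, so the constraint on $\zeta$ is visibly the same constraint that the family of Bethe algebras $\B(z)$ imposes, whereas your $L_C$'s are residues of that family along the discriminant. Your alternative sketch (monodromy transitivity on critical points for generic weights, then specialization) is a third genuine option, closer in spirit to the paper's degeneration idea but still distinct from it.
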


\begin{proof}
By Lemma 4.3 in \cite{V5}, the eigenvalues of the operators $K_i(z)$
separate the eigenvectors. The theorem follows from the fact that the operators $K_i(z)$
have too many eigenvectors,  and $\zeta$ must preserve all of them.
More precisely, let $i_1,\dots,i_{k+1}\in J$ be distinct. Assume that $z\in\cd$ tends to a generic point $z^0$ of the hyperplane
 defined by the equation $f_{i_1,\dots,i_{k+1}}=0$. It follows from Lemma \ref{lem Kv} that  the vector
\bean
x_{i_1,\dots,i_{k+1}} = \sum_{\ell=1}^{k+1}(-1)^{\ell+1}a_{i_\ell} v_{i_1,\dots,\widehat{i_\ell},\dots,i_{k+1}}\in \sv
\eean
is the limit of an eigenvector of operators $K_i(z)$ as $z\to z^0$. Hence, $x_{i_1,\dots,i_{k+1}}$ is an eigenvector of $\zeta$.
It is easy to see that the vectors  $x_{i_1,\dots,i_{k+1}}$ generate $\sv$ and for distinct $i_1,\dots,i_{k+2}$ we have
\bean
\sum_{\ell=1}^{k+2}(-1)^\ell a_\ell x_{i_1,\dots,\widehat{i_\ell},\dots,i_{k=2}}=0.
\eean
This equation implies that $\zeta$ is a scalar operator on the subspace generated by the vectors
$x_{i_1,\dots,\widehat{i_\ell},\dots,i_{k=2}}$, $\ell=1,\dots,k+2$, and this fact implies that $\zeta$ is a scalar operator on $\sv$.
\end{proof}

\begin{cor}
\label{cor ac}
There exists  $c\in\C^\times$ such that $\al(z) = c\, \nu(z)$, that is,
\bean
\al(z) : w_{i_1,\dots,i_k} \mapsto c\,v_{i_1,\dots,i_k}
\eean
for all $i_1,\dots,i_k\in J$.
\qed
\end{cor}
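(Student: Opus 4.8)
The plan is to derive the corollary directly from Theorem~\ref{thm const}. That theorem asserts that the linear automorphism $\zeta = \al(z)\nu(z)^{-1}$ of $\sv$ is a scalar operator; write $\zeta = c\,\id$ with $c\in\C$. By Theorem~\ref{thm cOnst} together with Lemma~\ref{lem nu}, the operator $\zeta$ does not depend on $z\in\cd$, so the scalar $c$ is a genuine constant rather than a function of $z$.

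Next I would verify $c\ne 0$. Since $|a|\ne 0$, the weights $(a_j)_{j\in J}$ are unbalanced for every arrangement $\A(z)$ with $z\in\cd$, hence the canonical map $\al(z):\ap\to\sv$ is an isomorphism by Theorem~\ref{thm alpha}; and $\nu(z):\ap\to\sv$ is an isomorphism by Lemma~\ref{lem nu}. Therefore $\zeta=\al(z)\nu(z)^{-1}$ is invertible, which forces $c\in\C^\times$.

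Finally, rewriting $\zeta = c\,\id$ as $\al(z) = \zeta\,\nu(z) = c\,\nu(z)$ and evaluating on the basis $(w_{i_1,\dots,i_k})$ of $\ap$, on which $\nu(z)$ acts by $w_{i_1,\dots,i_k}\mapsto v_{i_1,\dots,i_k}$ by definition, gives $\al(z):w_{i_1,\dots,i_k}\mapsto c\,v_{i_1,\dots,i_k}$ for all $i_1,\dots,i_k\in J$, which is the assertion. There is essentially no obstacle here: all the substance has already been absorbed into Theorems~\ref{thm cOnst} and~\ref{thm const} and Lemma~\ref{lem nu}, and the corollary is merely their formal combination; the only point worth spelling out is the nonvanishing of $c$, which follows from the invertibility of both $\al(z)$ and $\nu(z)$.
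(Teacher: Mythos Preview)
Your proposal is correct and matches the paper's approach exactly: the paper marks this corollary with \qed\ immediately after Theorem~\ref{thm const}, indicating it is a formal consequence of that theorem together with the already-established independence of $\zeta$ from $z$ and the invertibility of $\al(z)$ and $\nu(z)$. You have simply spelled out the steps the paper leaves implicit, including the useful observation that $c\ne 0$ follows from both maps being isomorphisms.
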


One may expect that $c=(-1)^k$, see Theorems \ref{lem can map} and \ref{thm can 2}$^\star$.

{\let\thefootnote\relax
\footnotetext{\vsk-.8>\noindent
$^\star$\, $c=(-1)^k$ by \cite[Theorem 2.16]{V7}.}

The canonical
isomorphism $\al(z)$ induces an algebra structure on $\sv$ depending on $z\in \cd$.

\begin{cor}
\label{co1}
For any $i_0\in J$, the identity element $\{1\}(z)$ of that algebra structure satisfies the equation
\bean
\{1\}(z) =
\frac {c}{|a|^k} \sum_{i_1<\dots<i_k
\atop i_{0}\notin\{i_1,\dots,i_k\}} \frac{f_{i_0,i_1,\dots,i_k}^k}{\prod_{m=0}^{k} (-1)^md_{i_0,\dots,\widehat{i_m},\dots,i_{k}}} v_{i_1,\dots,i_k},
\eean
where $c$ is defined in Corollary \ref{cor ac}.
\qed
\end{cor}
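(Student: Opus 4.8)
The plan is to obtain the formula for $\{1\}(z)$ by simply composing two facts that have already been established: the explicit expansion of the identity element $[1](z)\in\ap$ in the $w$-basis, and the description of the canonical isomorphism as the naive isomorphism up to a scalar.

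First I would recall that, by the definition of the induced algebra structure on $\sv$ (see \Ref{oNE}), the identity element is $\{1\}(z)=\al(z)([1](z))$. Next I would invoke Theorem~\ref{thm id k} with the given index $i_0\in J$, which expresses
\[
[1](z)\;=\;\frac{1}{|a|^k}\sum_{\satop{i_1<\dots<i_k}{i_{0}\notin\{i_1,\dots,i_k\}}}\frac{f_{i_0,i_1,\dots,i_k}^k}{\prod_{m=0}^{k}(-1)^m d_{i_0,\dots,\widehat{i_m},\dots,i_{k}}}\,w_{i_1,\dots,i_k},
\]
which is a linear combination, with scalar coefficients depending on $z$, of the basis elements $w_{i_1,\dots,i_k}$ of $\ap$ provided by Theorem~\ref{basis th}.

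Then I would apply the $\C$-linear map $\al(z)$ to both sides. Since the coefficients $f_{i_0,i_1,\dots,i_k}^k\big/\prod_{m=0}^{k}(-1)^m d_{i_0,\dots,\widehat{i_m},\dots,i_{k}}$ are numbers as far as the action of $\al(z)$ on the fibre over $z$ is concerned, Corollary~\ref{cor ac}, which states $\al(z)(w_{i_1,\dots,i_k})=c\,v_{i_1,\dots,i_k}$, immediately yields the asserted expression for $\{1\}(z)$.

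There is no substantial obstacle here: all the difficulty has already been absorbed into Theorem~\ref{thm id k} (the combinatorial identity, proved via a careful bookkeeping of $I$-relations) and into Corollary~\ref{cor ac} (the scalar-operator argument of Theorem~\ref{thm const}); the present statement is merely their composition, which is why it is recorded with \qed. The only point worth double-checking is that the same index $i_0$ is used consistently — i.e.\ that the $v_{i_1,\dots,i_k}$ with $i_0\notin\{i_1,\dots,i_k\}$ genuinely form a basis of $\sv$ (Lemma~\ref{lem siNg k}(iv)) so that the right-hand side is well defined as written — but this is already guaranteed by the hypotheses in force.
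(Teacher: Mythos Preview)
Your argument is correct and matches the paper's approach: the corollary is marked with \qed\ precisely because it is the immediate composition of Theorem~\ref{thm id k} with Corollary~\ref{cor ac}, applied via $\{1\}(z)=\al(z)([1](z))$. There is nothing to add.
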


\begin{thm}
\label{thm con true generic}
Conjectures \ref{CB} and \ref{CB3} hold for this family of arrangements.
\end{thm}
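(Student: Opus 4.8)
The plan is to deduce Conjecture~\ref{CB} and Conjecture~\ref{CB3} directly from the explicit formula for the identity element $\{1\}(z)$ furnished by Corollary~\ref{co1}, exactly as was done in the two earlier model cases (points on a line, generic lines on a plane). First I would verify Conjecture~\ref{CB3}: by Corollary~\ref{co1}, writing $\{1\}(z)$ in the combinatorially flat basis $(v_{i_1,\dots,i_k})$ with $i_0\notin\{i_1,\dots,i_k\}$, each coefficient is a constant multiple of $f_{i_0,i_1,\dots,i_k}^k/\bigl(\prod_{m=0}^k(-1)^m d_{i_0,\dots,\widehat{i_m},\dots,i_k}\bigr)$, and since $f_{i_0,i_1,\dots,i_k}$ is a homogeneous linear form in $z$ (see \Ref{ i_1,...,i_k}) while the $d$'s are constants, every coefficient is a homogeneous polynomial of degree $k$ in $z$. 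Hence $\{1\}(z)$ is a homogeneous polynomial in $z$ of degree $k$, which is Conjecture~\ref{CB3}.

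For Conjecture~\ref{CB}, the core step is to check the first-order system \Ref{KZ ka=a}, namely $\frac{|a|}{k}\,\frac{\der\{1\}}{\der z_j}(z)=K_j(z)\{1\}(z)$ for all $j\in J$; the equations \Ref{KZ ka=ar} for higher $r$ then follow formally by differentiating \Ref{KZ ka=a} and using that $K_j(z)$ is $z$-independent on the combinatorial bundle only through the rational functions $d_{i_1,\dots,i_k}/f_{i,i_1,\dots,i_k}$, together with the mechanism already displayed in the proof of Theorem~\ref{thm der one}. To verify \Ref{KZ ka=a} I would fix $j$, use the $j$-centered form of Corollary~\ref{co1} (take $i_0=j$ so that $f_{j,i_1,\dots,i_k}$ appears in the denominator and $\{1\}(z)$ is expressed in the basis avoiding index $j$), differentiate term by term with respect to $z_j$, and compare with $K_j(z)\{1\}(z)$ computed via Lemma~\ref{lem Kv}, formula \Ref{KJ k}. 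The combinatorial Plücker-type identities among the $f_{i_1,\dots,i_{k+1}}$ — the higher-$k$ analogues of Lemma~\ref{Pluk}, which follow from the Grassmann–Plücker relations for the matrix $(b^m_j)$ — are what make the two sides agree after re-expanding the vectors $v_{i_1,\dots,\widehat{i_\ell},\dots,i_{k+1}}$ back into the chosen basis by means of part (iii) of Lemma~\ref{lem siNg k}. This is a direct generalization of the $k=2$ computation carried out in the proof of Theorem~\ref{thm c 2}.

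Alternatively, and more cleanly, I would bypass the term-by-term matching by invoking Corollary~\ref{cor ac}: we have $\al(z)=c\,\nu(z)$ with $\nu(z)$ the naive (combinatorially constant) isomorphism $w_{i_1,\dots,i_k}\mapsto v_{i_1,\dots,i_k}$, and by Lemma~\ref{lem nu}, $\nu(z)([a_i/f_i]*_z w)=K_i(z)\nu(z)(w)$. Applying $\al(z)=c\,\nu(z)$ to the identity $[1](z)=\frac1{|a|}\sum_{j}z_j[a_j/f_j]$ of Lemma~\ref{lem on ONE} and using Theorem~\ref{K/f}, one gets $\{1\}(z)=\al(z)([1](z))$ and $K_i(z)\{1\}(z)=c\,\nu(z)\bigl([a_i/f_i]*_z[1](z)\bigr)=\al(z)\bigl(\frac{k}{|a|}[a_i/f_i]\bigr)$ after exploiting that $[1](z)=\frac1{|a|^k}(\sum_j z_j[a_j/f_j])^k$ (so multiplication by $[a_i/f_i]$ applied to $[1](z)$ reproduces $\frac{k}{|a|}[a_i/f_i]$ up to the identity, as in the $k=2$ corollary to Theorem~\ref{thm can 2}); on the other hand $\frac{|a|}{k}\frac{\der\{1\}}{\der z_i}(z)=\al(z)\bigl(\frac1{k}\frac{\der}{\der z_i}(\sum_j z_j[a_j/f_j])^k\bigr)=\al(z)\bigl([a_i/f_i]*_z(\sum_j z_j[a_j/f_j])^{k-1}\cdot\frac1{|a|^{k-1}}\bigr)$, which equals $\al(z)\bigl(\frac{k}{|a|}[a_i/f_i]\bigr)$ by the same algebraic identity. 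Comparing, $\frac{|a|}{k}\frac{\der\{1\}}{\der z_i}(z)=K_i(z)\{1\}(z)$.

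The main obstacle, in either route, is the bookkeeping: keeping track of signs $(-1)^m$ and of the re-expansion of the $v$'s into an index-avoiding basis, and certifying that the requisite higher Plücker identities among the $f_{i_1,\dots,i_{k+1}}$ hold in the generality needed. The second route largely sidesteps this by pushing everything into the algebra $\ap$ where Lemma~\ref{MONOM k} and the $I$-relations \Ref{a/f k} already encode the combinatorics, so I expect it to be the shorter and safer path; the residual work is only to confirm the algebraic identity $[a_i/f_i]*_z\bigl(\tfrac1{|a|}\sum_j z_j[a_j/f_j]\bigr)^{k-1}=\tfrac{k}{|a|}(k-1)!\,$-type normalization inside $\ap$, which is immediate from the binomial expansion and $[1](z)=\tfrac1{|a|}\sum_j z_j[a_j/f_j]$.
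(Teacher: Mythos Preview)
Your Route~2 is essentially the paper's approach, but the execution has two genuine gaps and one arithmetic slip.

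The critical step you elide is \emph{why} the derivative $\der/\der z_i$ can be passed inside $\al(z)$ and made to hit only the explicit $z_j$-coefficients in $[1](z)=\frac{1}{|a|^k}\bigl(\sum_j z_j[a_j/f_j]\bigr)^k$. This is not automatic: the elements $[a_j/f_j]$ live in an algebra $A_\Phi(z)$ that itself varies with $z$, and $\al(z)$ is $z$-dependent as well. The paper's justification is: expand the $k$-th power into monomials $\prod_j z_j^{s_j}\cdot\prod_j[a_j/f_j]^{s_j}$; by Lemma~\ref{MONOM k} each degree-$k$ product $\prod_j[a_j/f_j]^{s_j}$ is a linear combination of the $w_{i_1,\dots,i_k}$ with coefficients \emph{independent of $z$}; by Theorem~\ref{thm cOnst} (equivalently Corollary~\ref{cor ac}) each $\al(z)(w_{i_1,\dots,i_k})$ is a constant vector in $\sv$. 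Hence after applying $\al(z)$ the only $z$-dependence sits in the polynomial coefficients $\prod_j z_j^{s_j}$, and differentiation is legitimate. You cite both ingredients but do not spell out that this is their role.

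Your derivation of the higher-$r$ equations \Ref{KZ ka=ar} by ``formally differentiating \Ref{KZ ka=a}'' does not work: differentiating the $r=0$ equation in $z_{m_1}$ produces an extra term $(\der_{m_1}K_j)(z)\{1\}(z)$, and nothing shifts the parameter from $|a|/k$ to $|a|/(k-1)$. Appealing to Theorem~\ref{thm der one} is circular, since that theorem assumes Conjecture~\ref{CB}. The paper instead proves the formula
\[
\frac{\der^r\{1\}}{\der z_{m_1}\cdots\der z_{m_r}}(z)=\frac{k(k-1)\cdots(k-r+1)}{|a|^r}\,\al(z)\Bigl(\prod_{i=1}^r\Bigl[\tfrac{a_{m_i}}{f_{m_i}}\Bigr]\Bigr)
\]
directly by induction on $r$, applying the differentiation trick above at each step to the factor $[1](z)^{k-r}=\bigl(\frac{1}{|a|}\sum_j z_j[a_j/f_j]\bigr)^{k-r}$. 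Once this lemma is in hand, every equation \Ref{KZ ka=ar} follows at once from Theorem~\ref{K/f}.

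Finally, your intermediate identities in Route~2 are off by matching factors: $[a_i/f_i]*_z[1](z)=[a_i/f_i]$ (multiplication by the identity), so $K_i(z)\{1\}(z)=\al(z)\bigl([a_i/f_i]\bigr)$, and the correct computation of the other side also yields $\frac{|a|}{k}\frac{\der\{1\}}{\der z_i}=\al(z)\bigl([a_i/f_i]\bigr)$. The spurious $\tfrac{k}{|a|}$ factors you insert on both sides cancel, so your conclusion survives, but the stated steps are wrong as written.
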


\begin{proof}
Conjecture \ref{CB3} is a direct corollary of Theorem \ref{thm cOnst}.

\begin{lem}
For  $r\leq k$ and $m_1,\dots,m_r\in J$, we have
\bean
 \frac{\der^r\{1\}}{\der z_{m_1}\dots\der z_{m_r}}(z)
 = \frac{k(k-1)\dots(k-r+1)} {|a|^r} \,\al(z) \big(\prod_{i=1}^r \Big[ \frac {a_{m_i}}   {f_{m_i}}\Big]\big).
\eean
\end{lem}
\begin{proof}
The proof is by induction on $r$. For $r=0$, the statement is true: $\{1\}=\{1\}$. Assuming   the statement is true for
some $r$, we prove the statement for $r+1$. We have
\bean
\notag
&&
\frac{\der}{\der z_j} \frac{\der^{r}\{1\}}{\der z_{m_1}\dots\der z_{m_r}}(z) =
 \frac{\der}{\der z_j}\frac{k(k-1)\dots(k-r+1)} {|a|^r} \,\al(z) \big(\prod_{i=1}^r \Big[ \frac {a_{m_i}}   {f_{m_i}}\Big]\big)=
\\
\notag
&&
\phantom{}
= \frac{\der}{\der z_j}\frac{k(k-1)\dots(k-r+1)} {|a|^r} \,\al(z) \big(\prod_{i=1}^r
 \Big[ \frac {a_{m_i}}   {f_{m_i}}\Big]\,\frac 1{|a|^{k-r}}(\sum_{i\in J}z_i\Big[\frac{a_i}{f_i}\Big])^{k-r}\big) =
\\
\notag
&&
\phantom{}
=\frac {k-r}{|a|} \al(z)\big(\Big[\frac{a_j}{f_j}\Big]\big) *_z
\frac{k(k-1)\dots(k-r+1)} {|a|^r} \,\al(z) \big(\prod_{i=1}^r \Big[ \frac {a_{m_i}}   {f_{m_i}}\Big] \times
\\
&&
\notag
\times\,\frac 1{|a|^{k-r-1}}
(\sum_{i\in J}z_i\Big[\frac{a_i}{f_i}\Big])^{k-r-1}\big)
=
 \frac{k(k-1)\dots(k-r+1)(k-r)} {|a|^{r+1}} \al(z)
 \big(\Big[\frac{a_j}{f_j}\Big]\prod_{i=1}^r \Big[ \frac {a_{m_i}}   {f_{m_i}}\Big]\big).
\eean
%Notice that in this calculation of the derivative we use Lemma \ref{MONOM k} and Theorem \ref{thm cOnst} and we don't use Theorem \ref{thm cOnst}.
\end{proof}

Let us finish the proof of Conjecture \ref{CB}. We have
\bean
\notag
&&
\frac{\der}{\der z_j} \frac{\der^{r}\{1\}}{\der z_{m_1}\dots\der z_{m_r}}(z) =\frac {k-r}{|a|} \al(z)\big(\Big[\frac{a_j}{f_j}\Big] \big)*_z \frac{k(k-1)\dots(k-r+1)} {|a|^r} \,\al(z) \big(
\prod_{i=1}^r \Big[ \frac {a_{m_i}}   {f_{m_i}}\Big]\big)=
\\
\notag
&&
\phantom{aaaaaaaaaaaaaaaaaaa}
=\frac {k-r}{|a|} K_j(z) \frac{\der^{r}\{1\}}{\der z_{m_1}\dots\der z_{m_r}}(z).
\eean
\end{proof}

Introduce the {\it potential function of second kind}
\bean
\label{POT2k}
\tilde P(z_1,\dots,z_n) = \frac{c^2}{(2k)!} \sum_{1\leq i_1<\dots <i_{k+1}\leq n}
\frac{\prod_{\ell=1}^{k+1}a_{i_\ell}}
{\prod_{\ell=1}^{k+1}d^2_{i_1,\dots,\widehat{i_\ell},\dots, i_{k+1}}} f_{i_1,\dots,i_{k+1}}^{2k}\log f_{i_1,\dots,i_{k+1}},
\eean
where $c$ is the constant defined in Corollary \ref{cor ac}.

\begin{thm}
\label{pot2k generic thm}
For any $m_0,\dots,m_{2k}\in J$, we have
\bean
\label{2k+1deR}
\frac{\der^{2k+1}\tilde P}{\der z_{m_0}\dots\der z_{m_{2k}}}(z) = (-1)^k
\Big(\Big[\frac{a_{m_0}}{f_{m_0}}\Big]*_z\dots*_z \Big[\frac{a_{m_{2k}}}{f_{m_{2k}}}\Big], [1](z)\Big)_z.
\eean

\end{thm}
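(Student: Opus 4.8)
The plan is to transform both sides of \Ref{2k+1deR} into explicit homogeneous rational functions of $z$ of degree $-1$ whose only singularities are first order poles along the discriminant hyperplanes, and then to match them. I would begin with the right-hand side. Since $\beta(z)(\der_{m_i})=\bigl[\tfrac{a_{m_i}}{f_{m_i}}\bigr]$, and since $\al(z)$ is an algebra isomorphism (by the very construction of $*_z$) with $\al(z)\bigl(\bigl[\tfrac{a_j}{f_j}\bigr]\cdot x\bigr)=K_j(z)\al(z)(x)$ by Theorem \ref{K/f} and $\al(z)([1](z))=\{1\}(z)$, iterating gives $\al(z)\bigl(\prod_{i=0}^{2k}\bigl[\tfrac{a_{m_i}}{f_{m_i}}\bigr]\bigr)=K_{m_0}(z)\cdots K_{m_{2k}}(z)\{1\}(z)$, so by Theorem \ref{thm alpha} the right-hand side of \Ref{2k+1deR} equals $S^{(a)}\bigl(K_{m_0}(z)\cdots K_{m_{2k}}(z)\{1\}(z),\{1\}(z)\bigr)$. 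The operators $K_j(z)$ commute (they generate $\B(z)$) and are symmetric for $S^{(a)}$; so for any partition $\{0,\dots,2k\}=\{j_0\}\sqcup B_1\sqcup B_2$ with $|B_1|=|B_2|=k$ the right-hand side equals $S^{(a)}\bigl(K_{m_{j_0}}(z)\bigl(\prod_{j\in B_1}K_{m_j}(z)\bigr)\{1\}(z),\ \bigl(\prod_{j\in B_2}K_{m_j}(z)\bigr)\{1\}(z)\bigr)$. Now by Theorem \ref{thm der one} (Conjecture \ref{CB} holds here by Theorem \ref{thm con true generic}), applying any $k$ of the $K_{m_j}(z)$ to $\{1\}(z)$ gives $\tfrac{|a|^{k}}{k!}$ times a partial derivative of $\{1\}(z)$, and by Conjecture \ref{CB3} that derivative is a \emph{constant} vector of $\sv$ since $\{1\}(z)$ is a homogeneous polynomial of degree $k$ in the combinatorially flat basis. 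Hence the right-hand side of \Ref{2k+1deR} is, up to the explicit scalar $|a|^{2k}/(k!)^{2}$, a single matrix entry $S^{(a)}\bigl(K_{m_{j_0}}(z)\,u,\ w\bigr)$ of the single operator $K_{m_{j_0}}(z)$ between the two constant singular vectors $u=\bigl(\prod_{j\in B_1}\der_{m_j}\bigr)\{1\}(z)$, $w=\bigl(\prod_{j\in B_2}\der_{m_j}\bigr)\{1\}(z)$; by Lemma \ref{lem Kv} it has first order poles only along the hyperplanes $f_{m_{j_0},i_1,\dots,i_k}=0$.

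Next I would make both $u,w$ and the left-hand side explicit. Corollary \ref{co1} expresses $\{1\}(z)$ as $\tfrac{c}{|a|^{k}}\sum' \tfrac{f_{i_0,i_1,\dots,i_k}^{k}}{\prod_m(-1)^m d_{i_0,\dots,\widehat{i_m},\dots,i_k}}\,v_{i_1,\dots,i_k}$, a homogeneous polynomial of degree $k$ in $z$, so each $k$-fold derivative of it is a constant combination of the $v_{i_1,\dots,i_k}$ with coefficients built from the minors $d_{\cdots}$ and the binomial factors. For the left-hand side, each summand of $\tilde P$ has the shape $C_I\,f_I^{2k}\log f_I$ with $f_I=f_{i_1,\dots,i_{k+1}}$ linear in $z$; from the one-variable identity $\tfrac{d^{2k+1}}{dx^{2k+1}}\bigl(x^{2k}\log x\bigr)=\tfrac{(2k)!}{x}$ and the chain rule,
\[
\frac{\der^{2k+1}}{\der z_{m_0}\dots\der z_{m_{2k}}}\bigl(f_I^{2k}\log f_I\bigr)=\frac{(2k)!}{f_I}\prod_{j=0}^{2k}\lambda^{I}_{m_j},
\]
where $\lambda^I_p$ is the coefficient of $z_p$ in $f_I$, namely $\pm d_{i_1,\dots,\widehat{i_m},\dots,i_{k+1}}$ if $p=i_m$ and $0$ otherwise. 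Therefore the left-hand side of \Ref{2k+1deR} is a sum of terms proportional to $1/f_I$ over those $(k+1)$-subsets $I$ that contain all of $m_0,\dots,m_{2k}$; in particular it vanishes identically unless at most $k+1$ distinct indices occur among $m_0,\dots,m_{2k}$, and when exactly $k+1$ of them are distinct it collapses to a single explicit term with pole along $f_{\{m_0,\dots,m_{2k}\}}=0$.

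Finally I would match the two expressions, following the case analysis used for $k=2$ in the proof of Theorem \ref{pot2k thm}. The $I$-relations $\sum_i d_{i,p_1,\dots,p_{k-1}}\bigl[\tfrac{a_i}{f_i}\bigr]=0$ of \Ref{a/f k}, which the right-hand side of \Ref{2k+1deR} visibly respects and which one checks the $(2k+1)$-st derivatives of $\tilde P$ respect as well, together with the freedom to relabel $J$, reduce the claim to two essential situations: (a) more than $k+1$ distinct indices among $m_0,\dots,m_{2k}$, where the left-hand side vanishes and one must verify that the residues of $S^{(a)}\bigl(K_{m_{j_0}}(z)u,w\bigr)$ along every hyperplane cancel, using the singularity relations $\sum_i a_i c_{i,\dots}=0$ obeyed by the constant vectors $u,w$ together with the Plücker-type identities among the maximal minors $d_{\cdots}$ and the linear forms $f_{\cdots}$ that generalize Lemma \ref{Pluk}; and (b) exactly $k+1$ distinct indices, where both sides are single rational functions with a pole along one hyperplane and equality reduces to a direct computation with Lemma \ref{lem Kv}, Lemma \ref{lem Shap}, Corollary \ref{co1} and the same $d$--$f$ identities. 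The main obstacle is precisely this bookkeeping: showing that the alternating sums of products of minors $d_{\cdots}$ produced when $K_{m_{j_0}}(z)$ acts on $u$ and is paired against $w$ collapse — to zero in case (a), and in case (b) to the single term $f_{\{m_0,\dots,m_{2k}\}}^{-1}$ carrying exactly the coefficient appearing in $\der^{2k+1}\tilde P$. This is where the higher Plücker relations do the genuine work, exactly as Lemma \ref{Pluk} did in the plane case; everything else is the bilinear-algebra machinery already assembled in Sections \ref{sec par trans} and \ref{sec arrs k}.
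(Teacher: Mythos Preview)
Your proposal is correct and follows essentially the same route as the paper: both sides are shown to respect the $I$-relations \Ref{a/f k} in each variable, which reduces \Ref{2k+1deR} to the situation where $m_1,\dots,m_k$ are distinct, $m_{k+1},\dots,m_{2k}$ are distinct, and $m_0\notin\{m_1,\dots,m_k\}$, after which the identity becomes \Ref{dERpk2} and is checked directly using Lemmas \ref{lem Kv} and \ref{lem Shap}. One small correction: you overstate the role of Pl\"ucker-type identities in the final verification --- in the $k=2$ proof of Theorem \ref{pot2k thm} Lemma \ref{Pluk} is not invoked at all, and in general the vanishing in your case (a) and the single-term match in your case (b) come straight from the contravariant-form values in Lemma \ref{lem Shap} once $K_{m_0}(z)v_{m_1,\dots,m_k}$ is expanded via Lemma \ref{lem Kv}; no further determinantal identities are needed.
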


Theorem \ref{pot2k generic thm} proves Conjecture \ref{CB4} for this family of arrangements.

If $m_1,\dots,m_k$ are distinct and $m_{k+1},\dots, m_{2k}$ are distinct, equation \Ref{2k+1deR} takes the form
\bean
\label{dERpk2}
&&
\\
\notag
&&
c^2 S^{(a)}(K_{m_{0}}(z)v_{m_1,\dots,m_k},v_{m_{k+1},\dots,m_{2k}}) =
 d_{m_1,\dots,m_k}d_{m_{k+1},\dots,m_{2k}} \frac{\der^{2k+1}\tilde P}{\der z_{m_0}\dots\der z_{m_{2k}}}(z).
\eean

\begin{cor}
\label{an combk}
The matrix elements of the operators $K_i(z)$ with respect to the (combinatorially constant) vectors $v_{i_1,\dots,i_k}$ are described by the $(2k+1)$-st
derivatives of the potential function of second kind.

\end{cor}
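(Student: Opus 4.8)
The plan is to deduce the corollary from the already-available identity \eqref{2k+1deR} of Theorem \ref{pot2k generic thm}, passing through its reformulation \eqref{dERpk2} and then observing that the pairings occurring there determine the operators $K_i(z)$ in the combinatorially constant basis. First I would establish \eqref{dERpk2}. Starting from \eqref{2k+1deR} and using that $\ap$ is a Frobenius algebra, so that $(fg,h)_z=(f,gh)_z$ and $[1](z)$ is the unit, I regroup the $*_z$-product of the $2k+1$ generators as
\[
\Big(\prod_{s=0}^{2k}\Big[\tfrac{a_{m_s}}{f_{m_s}}\Big],[1](z)\Big)_z
=\Big(\Big[\tfrac{a_{m_0}}{f_{m_0}}\Big]*_z\prod_{s=1}^{k}\Big[\tfrac{a_{m_s}}{f_{m_s}}\Big],\ \prod_{s=k+1}^{2k}\Big[\tfrac{a_{m_s}}{f_{m_s}}\Big]\Big)_z .
\]
Since the multiplication on $\ap$ is induced from multiplication of functions, for distinct $m_1,\dots,m_k$ one has $\prod_{s=1}^{k}[a_{m_s}/f_{m_s}]=\frac{1}{d_{m_1,\dots,m_k}}\,w_{m_1,\dots,m_k}$, and likewise for the second group. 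I then apply Theorem \ref{thm alpha} to replace $(\,\cdot\,,\,\cdot\,)_z$ by $(-1)^kS^{(a)}(\al(z)\,\cdot\,,\al(z)\,\cdot\,)$, Theorem \ref{K/f} to rewrite $\al(z)\big([a_{m_0}/f_{m_0}]*_z(\,\cdot\,)\big)=K_{m_0}(z)\al(z)(\,\cdot\,)$, and Corollary \ref{cor ac} to evaluate $\al(z)(w_{i_1,\dots,i_k})=c\,v_{i_1,\dots,i_k}$. The two factors $(-1)^k$ cancel, and collecting the scalars $c,\ d_{m_1,\dots,m_k},\ d_{m_{k+1},\dots,m_{2k}}$ yields exactly \eqref{dERpk2}.

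Second, I would convert \eqref{dERpk2} into a statement about matrix entries. Fix $i_0\in J$. By Lemma \ref{lem siNg k}(iv) the combinatorially constant vectors $v_{i_1,\dots,i_k}$ with $i_0\notin\{i_1,\dots,i_k\}$ form a basis of $\sv$. By Corollary \ref{cor nondeg} the restriction of $S^{(a)}$ to $\sv$ is nondegenerate, so the Gram matrix $G=\big(S^{(a)}(v_\beta,v_\gamma)\big)$ in this basis, whose entries are given explicitly by Lemma \ref{lem Shap}, is invertible. For a fixed lower index $m_0$, the right-hand side of \eqref{dERpk2} is indexed precisely by pairs of such basis vectors, so \eqref{dERpk2} computes every pairing $S^{(a)}(K_{m_0}(z)v_\beta,v_\gamma)$ as a scalar multiple of a $(2k+1)$-st derivative of $\tilde P$. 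Writing the matrix of $K_{m_0}(z)$ in the $v$-basis as the matrix of these pairings times $G^{-1}$ then expresses each entry of $K_{m_0}(z)$ as an explicit $\C$-linear combination of the derivatives $\frac{\der^{2k+1}\tilde P}{\der z_{m_0}\der z_{m_1}\cdots\der z_{m_{2k}}}(z)$, which is the assertion of the corollary.

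All the ingredients are supplied by earlier results, so I expect no serious obstacle; the proof is in essence a reformulation of \eqref{dERpk2}. The only steps needing care are bookkeeping ones: verifying that the regrouping of the $*_z$-product into the two blocks of size $k$ is legitimate (it is, by the Frobenius property of $\ap$), and matching the index set of \eqref{dERpk2} with the chosen basis so that the Gram-matrix inversion acts on the correct $\binom{n-1}{k}$-dimensional space. The one conceptual point is simply to make the phrase ``described by'' precise: since $S^{(a)}|_{\sv}$ is nondegenerate and constant with respect to the combinatorial connection, knowing all pairings $S^{(a)}(K_{m_0}(z)v_\beta,v_\gamma)$ is equivalent to knowing the operator $K_{m_0}(z)$ in the $v$-basis, and \eqref{dERpk2} identifies these pairings with the $(2k+1)$-st derivatives of the potential function of second kind.
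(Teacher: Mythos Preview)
Your proposal is correct and follows the paper's approach: the paper states the corollary without proof, treating it as an immediate consequence of \eqref{dERpk2}, which it records as the specialization of \eqref{2k+1deR} to the case of distinct $m_1,\dots,m_k$ and distinct $m_{k+1},\dots,m_{2k}$. You have simply made explicit both the derivation of \eqref{dERpk2} from \eqref{2k+1deR} (via the Frobenius property, Theorem~\ref{thm alpha}, Theorem~\ref{K/f}, and Corollary~\ref{cor ac}) and the passage from the pairings $S^{(a)}(K_{m_0}(z)v_\beta,v_\gamma)$ to the matrix entries via the invertible, $z$-independent Gram matrix of Lemma~\ref{lem Shap}; these are exactly the details the paper leaves to the reader.
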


 Notice that
\bean
c^2 S^{(a)}(v_{m_1,\dots,m_k},v_{m_{k+1},\dots,m_{2k}}) = d_{m_1,\dots,m_k}d_{m_{k+1},\dots,m_{2k}} \frac{|a|^{2k}}{(2k)!}\frac{\der^{2k}P}{\der z_{m_1}\dots\der z_{m_{2k}}}(z),
\eean
where $P(z)$ is the potential function  of first kind, see Theorem \ref{thm MULTI}.

\begin{proof} We have the $I$-relation $\sum_{j\in J}d_{j,i_1,\dots,i_{k-1}}\big[\frac{a_j}{f_j}  \big] = 0$ for any
$i_1,\dots,i_{k-1}\in J$, see \Ref{a/f k},
and the relation
\bean
\label{kerPotk gener}
\sum_{j\in J} d_{i,i_1,\dots,i_{k-1}} \frac{\der}{\der z_j}   \frac{\der^{2k}\tilde P}{\der z_{m_1}\dots\der z_{m_{2k}}}(z)
=0
\eean
 for any $m_1,\dots,m_{2k},i_1,\dots,i_{k-1}\in J$. By using these two relations and by reordering the set $J$ if necessary,
 we can reduce formula  \Ref{2k+1deR} to the case in which $(m_1,\dots,m_k)$ are distinct,
$(m_{k+1},\dots,m_{2k})$ are distinct, and $m_{0}\notin\{m_1,\dots,m_k\}$. After that we need to check
identity \Ref{dERpk2}. That is done by direct calculation of the left and right hand sides, cf. the proof
of  Theorem \ref{pot2k thm}.

For example, the most difficult case is if  $(m_0,\dots,m_{2k})=(k+1,1,\dots,k,1,\dots,k)$. Then
\bean
\frac{\der^{2k+1}\tilde P}{\der z_{m_0}\dots\der z_{m_{2k}}}(z)= c^2 \frac{\prod_{m=1}^{k+1}a_m}{(-1)^k d_{1,\dots,k}f_{1,\dots,k+1}}
\eean
and
\bean
&&
(-1)^k\Big(\Big[\frac{a_{m_0}}{f_{m_0}}\Big]*_z\dots*_z \Big[\frac{a_{m_{2k}}}{f_{m_{2k}}}\Big], [1](z)\Big)_z =
c^2 \frac 1{d^2_{1,\dots,k}}S^{(a)}(K_{k+1}(z)v_{1,\dots,k},v_{1,\dots,k})=
\\
\notag
&&
\phantom{aa}
= c^2 \frac {1}{d_{1,\dots,k}f_{k+1,1,\dots,k}}
S^{(a)}(a_{k+1}v_{1,\dots,k}+\sum_{\ell=1}^k(-1)^\ell a_\ell v_{k+1,1,\dots,\widehat{\ell},\dots,k},v_{1,\dots,k}) =
\\
\notag
&&
\phantom{aa}
= c^2 \frac {1}{d_{1,\dots,k}f_{k+1,1,\dots,k}}
\Big( \frac{\prod_{m=1}^{k+1}a_{m}}{|a|}\sum_{\ell\notin\{1,\dots,k\}}a_\ell + \frac{\prod_{m=1}^{k+1}a_{m}}{|a|}\sum_{\ell\in\{1,\dots,k\}}a_\ell\Big) =
\\
\notag
&&
\phantom{aa}
= c^2 \frac {1}{d_{1,\dots,k}f_{k+1,1,\dots,k}}
\prod_{m=1}^{k+1}a_{m}.
\eean
 \end{proof}

%!!!

\bigskip

\end{document}